\documentclass[12pt]{amsart}
\usepackage{amsmath,amssymb,amsbsy,amsfonts,latexsym,amsopn,amstext,cite,
                                               amsxtra,euscript,amscd,bm}

\usepackage{mathrsfs}
\usepackage{enumitem}
\usepackage{color}
\usepackage{marginnote}

\usepackage{ifthen}
\newboolean{showcomments}
\setboolean{showcomments}{true}

\usepackage{color}
\usepackage{float}
\usepackage{epstopdf}

\usepackage[np]{numprint}
\npdecimalsign{\ensuremath{.}}

\usepackage{bibentry}

\usepackage[english]{babel}
\usepackage{mathtools}
\usepackage{url}
\usepackage[norefs,nocites]{refcheck}

\newtheorem{theorem}{Theorem}
\newtheorem{lemma}[theorem]{Lemma}

\newtheorem{cor}[theorem]{Corollary}

\newtheorem{prop}[theorem]{Proposition}

\numberwithin{equation}{section}
\numberwithin{theorem}{section}
\numberwithin{table}{section}
\numberwithin{figure}{section}

\def\squareforqed{\hbox{\rlap{$\sqcap$}$\sqcup$}}
\def\qed{\ifmmode\squareforqed\else{\unskip\nobreak\hfil
\penalty50\hskip1em \nobreak\hfil\squareforqed
\parfillskip=0pt\finalhyphendemerits=0\endgraf}\fi}

\newfont{\teneufm}{eufm10}
\newfont{\seveneufm}{eufm7}
\newfont{\fiveeufm}{eufm5}
\newfam\eufmfam
     \textfont\eufmfam=\teneufm
\scriptfont\eufmfam=\seveneufm
     \scriptscriptfont\eufmfam=\fiveeufm
\newcommand{\bmu}{{\mu}}

\def\eqref#1{(\ref{#1})}

\def\le{\leqslant}

\def\ge{\geqslant}

\def\v0{\mathbf 0}

\def\e{{\mathbf{\,e}}}

 \def\0{{\mathbf{0}}}

\def\({\left(}
\def\){\right)}

\newcommand{\commentby}[3]{%
  \ifthenelse{\boolean{showcomments}}%
    {\textcolor{#1}{[\textbf{#2}: #3]}}%
    {}%
}

\def \N{{\mathbb N}}
\def \Z{{\mathbb Z}}

\def \C{{\mathbb C}}

 \def\1{\,\rlap{\mbox{\small\rm 1}}\kern.15em 1}
 
 \def\build#1_#2^#3{\mathrel{\mathop{\kern 0pt#1}\limits_{#2}^{#3}}}
 \def\tend#1#2{\build\hbox to 12mm{\rightarrowfill}_{#1\rightarrow #2}^{ }}

 \def\tendN{\tend{N}{\infty}}
 \def\converge#1#2#3#4{\build\hbox to
 	#1mm{\rightarrowfill}_{#2\rightarrow #3}^{\hbox{\scriptsize #4}}}
 \def\Ima#1{{\rm {Im}}({#1})}
 \def\Rea#1{{\rm {Re}}({#1})}

 \newcommand{\ds}{\displaystyle}

 \newtheorem{sarnak}{Sarnak's conjecture}

\begin{document}

\title[Operator ergodic theorems with M\"obius "weights"]
{Operator ergodic theorems with M\"obius "weights"}

\author[e.~H.~el Abdalaoui]{ {\MakeLowercase{el}}  Houcein  {\MakeLowercase{el}}  Abdalaoui}
\address{Laboratoire de Math{\'e}matiques Rapha{\"e}l Salem, UMR 5085 CNRS,
Universit{\'e} de Rouen-Normandie,
F76801 Saint-{\'E}tienne-du-Rouvray, France}
\email{elhoucein.elabdalaoui@univ-rouen.fr}

 \author[M. Lin] {Michael Lin}
\address{Department of Mathematics, Ben-Gurion University of the Negev, Beer-Sheva, Israel}
\email{lin@math.bgu.ac.il}

\dedicatory{\large Dedicated to the memory of Alexandra Bellow}

\begin{abstract}
Motivated by Sarnak's conjecture in topological dynamics for the M\"obius function $\mu$,
we study, for a power-bounded $T$ on a Banach space $E$, the weak convergence
$$
(*) \qquad \qquad  \frac1N\sum_{n=1}^N \mu(n)T^nv  \to 0 \text{ weakly } \forall v\in E.
$$

For that, we introduce a notion of dynamical entropy for operators, which we denote 
$h^*_{top}(T)$, and show that if Sarnak's conjecture is true, then $h^*_{top}(T)=0$ 
implies the desired convergence (*). We conclude an equivalent operator formulation of 
Sarnak's conjecture.
	
For several classes of operators we prove that (*) holds, and that $h^*_{top}(T)=0$.
\end{abstract}

\keywords{ modulated ergodic theorems, M\"obius function, Sarnak's conjecture,
topological entropy, operator entropy, power-bounded operators}
\subjclass[2020]{Primary: 47A35, 37B40, 11N37; Secondary: 11L07, 37A44}

\maketitle

\tableofcontents

\section{Introduction}

\subsection{Background }

The M\"{o}bius function  is defined for the positive integers $n$ by
\begin{equation}\label{Mobius}
\bmu(n)= \begin{cases}
(-1)^r\ {\rm {~if~}} n {\rm {~is~the~product~of~}} r {\rm {~distinct~primes}}; \\
0\  {\rm {~otherwise, i.e.~}} n {\rm {~has~a~square~factor}};
\end{cases}
\end{equation}
This function is of great importance in number theory because of its  connection with
 the Riemann $\zeta$-function, via the formulae (see \cite[Theorems 300, 287 and 302]{HW})
\[
\sum_{n=1}^{+\infty}\frac{\bmu(n)}{n^s}=\frac{1}{\zeta(s)}, \qquad \text{and} \qquad
\sum_{n=1}^{+\infty}\frac{|\bmu(n)|}{n^s}=\frac{\zeta(s)}{\zeta(2s)}\ ,
\qquad \Rea{s}>1.
\]
Furthermore,   the estimate
\[
\left|\ds \sum_{n \le t}\bmu(n)\right|=O_\varepsilon\left(t^{\frac12+\varepsilon}\right)\qquad
{\rm as} \quad  t \longrightarrow +\infty,\quad \forall \varepsilon >0,
\]
is equivalent to the Riemann Hypothesis (\cite[p. 370]{Titchmarsh}).
\medskip

The M\"{o}bius function satisfies
$\frac{1}{t}\sum_{n \leq t}\mu(n) \tend{t}{+\infty}0$; this follows from Kronecker's 
lemma combined with von Mangoldt's result \cite{vM} (see also  \cite{Lan}):
\[
\sum_{n \geq 1}\frac{\mu (n)}{n} =0.
\]
This last relation also implies that $\frac{1}{t}\sum_{n \leq t}\mu(n) \tend{t}{\infty}0$
 is equivalent to the prime number theorem \cite{Lan1}.

We note that $\lim_{N\to \infty} \frac1N\sum_{n=1}^N |\mu(n)| =\frac6{\pi^2}$
\cite[p. 270, Theorem 334]{HW}. Consequently, $\sum_{n=1}^\infty \frac{|\mu(n)|}n=\infty$.

\subsection{The M\"obius randomness law}

The dynamical version of the M\"{o}bius randomness law was  initiated by
P. Sarnak in \cite{sarnak1},\cite{sarnak}.
\smallskip

In this work a {\it topological dynamical system} is a pair  $(X,\tau)$, with $X$  a compact
Hausdorff space and $\tau$ a continuous map from $X$ into $X$ (metrizability of $X$ or 
invertibilty of $\tau$ are not assumed).
\smallskip

A  bounded sequence $(a_n)_{n\in\mathbb N}$ is called {\it deterministic} if there is 
 a  topological dynamical system $(X,\tau)$ with zero topological entropy
(called a {\it deterministic} system), such that for some continuous function 
$f \in C_\C (X)$  and some point $x\in X$, $\ a_n=f(\tau^n x)$, $\forall n\ge 0$.
 
 Sarnak made the following conjecture, which he called "the M\"obius randomness law".

\begin{sarnak}\label{conj-sarnak}
The M\"obius function is orthogonal to any deterministic sequence $(a_n)_{n \in \N}$,
that is,
	\begin{eqnarray}\label{sarnak-conj}
	\frac{1}{N}\sum_{n=1}^{N} \mu(n)a_n \tendN 0.
	\end{eqnarray}
\end{sarnak}
An equivalent formulation of Sarnak's conjecture is:
\begin{sarnak}
    For {\bf every} topological
dynamical system $(X,\tau)$ with zero topological entropy, the operator $Tf=f\circ \tau$,
$f \in C(X)$, satisfies
\begin{equation} \label{sarnak-op}
\frac1N\sum_{n=1}^{N} \mu(n) T^nf \to 0 \quad \text{\rm weakly in }\ C(X),\
	\forall f\in C(X),
\end{equation}
which  is equivalent to
\begin{equation} \label{sarnak-op1}
\frac1N\sum_{n=1}^{N} \mu(n) f(\tau^nx) \to 0 \quad
\forall f \in  C(X) \ \text{ and every } \ x \in X.
\end{equation}
\end{sarnak}

{\bf Remark.} If \eqref{sarnak-op1} holds for every continuous real function then it does also
for every continuous complex function.
\smallskip

The importance of Sarnak's conjecture is that if it fails, then the long-standing Chowla's
conjecture, believed to hold true by most number theorists, also fails \cite{sarnak}.
\smallskip

It was proved in \cite[Corollary 10]{AKLR} that if Sarnak's conjecture  (in the above
formulation) is true {\it for all zero entropy systems}, then \eqref{sarnak-op}
and \eqref{sarnak-op1} can be strengthened to
\begin{equation} \label{sarnak-norm}
\Big\|\frac1N\sum_{n=1}^{N} \mu(n) T^nf\Big\|_\infty \to 0 \quad \forall f\in C_\mathbb C(X).
\end{equation}

\medskip

The orthogonality of the M\"obius function to any sequence arising from a rotation
dynamical system ($X$ is the unit circle $\mathbb T$, identified with $[0,1)$ mod 1,
 and $Tx=x+\alpha \mod 1$, $\ \alpha \in [0,1)$) follows
from the following inequality of Davenport \cite{Da}
\begin{equation} \label{davenport}
\max_{\theta \in [0,1)}\left|\displaystyle\sum_{k \leq t}\mu(k)e^{2\pi ik\theta}\right| \le
 C_\varepsilon \frac{t}{\log(t)^{\varepsilon}}, \qquad \text{\rm for any}\  \varepsilon >0.
\end{equation}



\subsection{Th M\"obius randomness law for operators}

Following Veech \cite{V2},\cite{V}, the main goal of this paper is to 
investigate the M\"{o}bius randomness law for  linear operators on Banach spaces.
Given a power-bounded operator $T$ on a Banach space $E$, the problem, motivated by
\eqref{sarnak-op}, is to obtain conditions for the convergence
\begin{equation} \label{sarnak-op-E}
\frac1N\sum_{n=1}^{N} \mu(n) T^nv \to 0 \quad \text{\rm weakly in } \  E, \quad
\text{\rm for every } v \in E.
\end{equation}
We thus introduce notions  of dynamical topological entropy for power-bounded
 operators. We then  show that the original Sarnak's conjecture
implies that \eqref{sarnak-op-E} holds for power-bounded operators for which our
dynamical topological operator entropy is zero.

For some operators, most importantly weakly almost periodic ones, we prove 
\eqref{sarnak-op-E} directly, and show that their dynamical topological operator entropy
is zero. 
\medskip

\section{Modulated ergodic theorems with M\"obius weights} \label{modulated}

Let $T$ be a power-bounded operator ($\sup_{n \ge 0}\|T^n\| < \infty$) on a Banach space $E$ 
and let $(b_n)$ be a sequence of real numbers (complex when $E$ is over $\mathbb C$). 
Lin, Olsen and Tempelman \cite{LOT} studied conditions for the norm convergence of 
$\frac1N\sum_{n=1}^N b_n T^n v$ for every $v\in E$. The question is obviously meaningful 
only when $\limsup_N \frac1N \sum_{n=1}^N |b_n| >0$.
\smallskip

{\bf Definition.}  A bounded operator $T$ on a Banach space $E$ is called {\it weakly 
almost periodic} (WAP) if for every $v \in E$ the orbit $(T^nv)$ is conditionally weakly
 compact. Equivalently, by the Eberlein-Shmulyan theorem, for every $v \in E$ the orbit
 is weakly sequentially compact. By the definition, a WAP operator is power-bounded
By the mean ergodic theorem, a WAP operator is mean ergodic
(the averages $\frac1N\sum_{n=1}^N T^n$ converge in the strong operator topology).
\smallskip

{\bf Remarks.} 1. Power-bounded operators on reflexive Banach spaces are always
weakly almost periodic.

2. If $T$ is WAP on a complex Banach space, then $\lambda T$ is WAP for every $|\lambda|=1$.

3. If $T^n$ converges in the weak operator topology, e.g. the shift $Tf(n)=f(n+1)$ on $c_0$,
 then $T$ is weakly almost periodic.
\smallskip

A weakly almost periodic $T$ on a complex Banach space $E$ induces the
{\it Jacobs-DeLeeuw-Glicksberg decomposition} \cite[Section 2.4]{K} $E=E_0 \oplus E_1$, with
$$
E_0=\{w\in E: T^{n_j}w \to 0 \quad \text{for some } (n_j) \};
$$
$$
E_1=\text{closed lin. span} \{u\in E:  Tu=\lambda u,\ |\lambda|=1 \}.
$$

\begin{theorem} \label{wap}
Let $T$ be weakly almost periodic on a Banach space $E$. Then
\begin{equation} \label{strong}
\Big\| \frac1N\sum_{n=1}^{N} \mu(n) T^nv\Big\|  \to 0 \qquad \text{\rm for every }\ v\in E.
\end{equation}
\end{theorem}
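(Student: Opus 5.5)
We will use the Jacobs--DeLeeuw--Glicksberg decomposition $E=E_0\oplus E_1$ and treat the two components separately. If $E$ is real, we first pass to the complexification $E_{\mathbb C}$. An operator $T$ that is WAP on $E$ remains WAP on $E_{\mathbb C}$, since its orbits are pairs of weakly conditionally compact orbits. Norm convergence on $E_{\mathbb C}$ then restricts to $E$. Since $T$ is power-bounded, say $\sup_n\|T^n\|=K$, the averages $A_N v:=\frac1N\sum_{n=1}^N\mu(n)T^nv$ satisfy $\|A_N\|\le K$ uniformly in $N$. Hence the set of $v$ for which \eqref{strong} holds is a closed subspace. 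It therefore suffices to prove \eqref{strong} for the eigenvectors spanning $E_1$ and for every $w\in E_0$.

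On $E_1$ the argument is immediate. If $Tu=\lambda u$ with $\lambda=e^{2\pi i\theta}$, then
$$A_Nu=\Big(\frac1N\sum_{n=1}^N\mu(n)e^{2\pi i n\theta}\Big)u,$$
and by Davenport's inequality \eqref{davenport} the scalar factor is $O(1/\log(N)^{\varepsilon})$ uniformly in $\theta$. So $A_Nu\to0$ in norm, and by linearity and closedness this holds on all of $E_1$.

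The main obstacle is $E_0$, the flight vectors. On $E_0$ we know only that $0$ lies in the weak closure of the orbit, and Möbius weights give no direct control there. The plan is to use the structure of $E_0$ coming from the JdLG theory. The weak operator closure $\mathcal S$ of $\{T^n\}$ is a compact semitopological semigroup with minimal ideal a group whose identity is a projection $P$ onto $E_1$ along $E_0$. For $w\in E_0$ this yields the standard characterization: for every $x^*\in E^*$,
$$\frac1N\sum_{n=1}^N|\langle T^nw,x^*\rangle|\to 0,$$
and the same holds uniformly for $x^*$ in any norm-compact set. To get norm convergence rather than weak convergence, we will use that the restriction of $T$ to $E_0$ is WAP with trivial unimodular point spectrum. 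Applying the mean ergodic theorem to $\lambda T$ for every $|\lambda|=1$, we get $\big\|\frac1N\sum_{n=1}^N\lambda^nT^nw\big\|\to0$ for each $\lambda$.

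We then want a uniform Wiener--Wintner version, $\sup_{|\lambda|=1}\big\|\frac1N\sum\lambda^nT^nw\big\|\to0$ for $w\in E_0$. We would try to derive it by a van der Corput argument applied to the vector sequence $T^nw$, together with weak compactness of the orbit, or alternatively cite the Lin--Olsen--Tempelman framework \cite{LOT}. Once the uniform version is available, we approximate $\mu$ on long intervals, in the Besicovitch-type seminorm, by combinations of exponentials. This step fails as stated, because $\mu$ is not Besicovitch almost periodic; its Besicovitch seminorm distance to trigonometric polynomials is positive.

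So the fallback for $E_0$ is to use instead that $\mu$ has a correlation structure making it \emph{good} for flight vectors, and we would try this first. Van der Corput reduces $\|A_Nw\|^2$ to averages over $h$ of
$$\frac1N\sum_n\mu(n)\mu(n+h)\,\langle T^{n+h}w,\,\cdot\,\rangle.$$
The mean-zero estimate $\frac1N\sum|\langle T^nw,x^*\rangle|\to0$ then kills these terms because $|\mu|\le1$, via the bound
$$\Big|\frac1N\sum_n\mu(n)\langle T^nw,x^*\rangle\Big|\le\frac1N\sum_n|\langle T^nw,x^*\rangle|.$$
Making this uniform over the unit ball of $E^*$, using weak compactness of the orbit of $w$, is the crux of the argument.
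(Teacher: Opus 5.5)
\textbf{There is a genuine gap on the $E_0$ part.} Your reduction (complexification, $\|A_N\|\le K$, closedness of the set of good vectors) and your treatment of $E_1$ by Davenport's inequality match the paper. The $E_0$ part, however, is left unproved, and the missing step is exactly the one you call ``the crux.''

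The pointwise estimate $\frac1N\sum_{n=1}^N|\langle T^nw,x^*\rangle|\to0$, even uniformly on norm-compact sets of functionals, gives only weak convergence of $A_Nw$. To get $\|A_Nw\|=\sup_{\|x^*\|\le1}|\langle A_Nw,x^*\rangle|\to0$ you need
\[
\sup_{\|x^*\|\le 1}\frac1N\sum_{n=1}^N|\langle T^nw,x^*\rangle|\longrightarrow 0 ,
\]
and the unit ball of $E^*$ is not norm compact. Neither tool you propose supplies this. The Besicovitch approximation fails, as you note yourself. Van der Corput's inequality relies on an inner product, so it is not available for the norm of a general Banach space. Even in Hilbert space it produces terms $\mu(n)\mu(n+h)\langle T^{n+h}w,T^nw\rangle$ with a moving functional, and it drags in correlations of $\mu$ about which nothing usable is known. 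On $E_0$ nothing about $\mu$ beyond $|\mu|\le1$ is needed.

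The missing idea is that uniformity follows from the pointwise statement by compactness of the dual ball in the weak$^*$ topology. This is the Jones--Lin fact \cite{JL1}; the paper uses it via \cite{LOT}, in the form ``boundedness of $(\mu(n))$ implies convergence on $E_0$.'' Here is the argument.
\begin{itemize}
\item Renorm by $\sup_n\|T^nv\|$ so that $T^*$ maps the unit ball $B$ of $E^*$ into itself. Give $B$ the weak$^*$ topology and put $g(\phi):=|\phi(w)|$. Then $g$ is weak$^*$ continuous, bounded by $\|w\|$, and $\frac1N\sum_{n=1}^N g(T^{*n}\phi)\to0$ for every $\phi\in B$.
\item For every $T^*$-invariant probability $\nu$ on $B$, invariance gives $\int g\,d\nu=\int\frac1N\sum_{n=1}^N g\circ T^{*n}\,d\nu$. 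By dominated convergence this tends to $0$, so $\int g\,d\nu=0$.
\item Suppose $\frac1{N_k}\sum_{n=1}^{N_k}g(T^{*n}\phi_k)\ge\delta>0$ for some $\phi_k\in B$ and $N_k\to\infty$. Any weak$^*$ cluster point of the empirical measures $\frac1{N_k}\sum_{n=1}^{N_k}\delta_{T^{*n}\phi_k}$ is $T^*$-invariant, since $|\int f\circ T^*-\int f|\le 2\|f\|_\infty/N_k$. Since $g$ is continuous, that cluster point also satisfies $\int g\,d\nu\ge\delta$, which is a contradiction.
\end{itemize}
With this uniform estimate, your own trivial bound gives $\|A_Nw\|\le\sup_{\|x^*\|\le1}\frac1N\sum_{n=1}^N|\langle T^nw,x^*\rangle|\to0$. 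This holds for any bounded weight sequence, not only $\mu$, and it closes the proof.
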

\begin{proof}
We first assume that $E$ is a complex Banach space. As noted in the proof of
\cite[Theorem 1.2]{LOT},  the boundedness of $(\mu(n))$ implies  the convergence \eqref{strong}
for every $v\in E_0$. By \eqref{davenport}, $\frac1N\sum_{n=1}^{N} \mu(n)\lambda^n \to 0$
 for every $\lambda \in \mathbb T$; this yields the convergence \eqref{strong} for the
generators of $E_1$, hence for every $v \in E_1$.
\smallskip

We now prove the theorem for $E$ real, using a complexification technique \cite{MST}. Since
$T$ is a contraction in the equivalent norm $|\|v|\|:=\sup_{n\ge 0}\|T^nv\|$, we assume now
that $T$ is a contraction.  Let $\tilde E=E\times E$ with addition coordinate-wise, and
multiplication by scalars defined  by
$$
(\alpha+i\beta)(u,v):=(\alpha u - \beta v,\beta u+\alpha v) \quad
\forall u,v\in E,\ \forall \alpha,\beta \in \mathbb R.
$$
Then $\tilde E$ is a vector space over $\mathbb C$, conveniently denoted by $E\oplus iE$.
On $\tilde E$ we use the Taylor norm (see \cite[p. 6]{MST})
$$
\|u+iv\|:=\sup_{0\le \theta \le 2\pi} \|u\cos\theta - v\sin\theta\| =
\sup_{\|\phi\|_{E^*}\le 1} \sqrt{\phi(u)^2+\phi(v)^2}.
$$
Then $E$ is isometrically isomorphic (over $\mathbb R$) to $E\times\{0\}$. We define
$\tilde T(u,v)=(Tu,Tv)$; by \cite[Proposition 4]{MST} $\tilde T$ is a contraction on
$\tilde E$, and extends $T$. We prove that $\tilde T$ is weakly almost periodic when $T$ is.
Fix $(u,v) \in \tilde E$. For  $\psi \in\tilde E^*$,  $\ \phi_1(u):=\Rea\psi((u,0))$
and $\phi_2(u):=\Ima\psi((u,0))$ are elements of $E^*$. Then
$$
\psi(((T^nu,0))=\phi_1(T^nu)+i\phi_2(T^nu),\quad \psi((0,T^nv))= i\phi_1(T^nv)-\phi_2(T^nv).
$$
Given a sequence $(n_j)$, there is a subsequence $(\ell_k) \subset(n_j)$ such that
$T^{\ell_k}u$ and $T^{\ell_k}v$ both converge weakly, say to $u_0$ and $v_0$. Hence 
$\psi(\tilde T^{\ell_k}(u,v))=\psi((T^{\ell_k}u,T^{\ell_k}v))$ converges to
$\psi((u_0,v_0))$ for every $\psi \in \tilde E^*$, so $\tilde T^{\ell_k}(u,v)$
converges weakly to $(u_0,v_0)$.
\smallskip

Applying the theorem already proved for the complex case to $\tilde T$ on $\tilde E$, 
we obtain \newline
$\big\| \frac1N\sum_{n=1}^{N} \mu(n) T^nu\big\|  =
\big\| \frac1N\sum_{n=1}^{N} \mu(n)\tilde T^n(u,0)\big\|\to 0 $.
\end{proof}

{\bf Remarks.} 1. Weak convergence of the averages \eqref{sarnak-op-E} when $T$ is WAP
 follows from Veech \cite[Proposition 10.1]{V2}, since for any $\phi \in E^*$ the sequence
$(\phi(T^nv))$ is weakly almost periodic for the shift on $\ell_\infty(\mathbb N)$ when
$T$ is almost periodic
\cite[p. 36]{B}. The special case of $T$ induced on $C(X)$ by a weakly almost periodic
continuous map of a compact metric space $X$ was proved in \cite[Corollary 6.2]{EN}.

2. If $E$ is a Banach space with a basis such that all its power-bounded operators are WAP, 
then,  since WAP operators are mean ergodic, $E$ is reflexive by \cite{FLW}.

\begin{cor} \label{stationary}
Let $T$ be a positive contraction of $L^1$ of a probability space $(X,\Sigma,m)$
satisfying $T{\mathbf 1}={\mathbf 1}$. Then every $f \in L^1(X,m)$ satisfies
\begin{equation} \label{L1}
\Big\| \frac1N\sum_{n=1}^N \mu(n)T^n f \Big\|_1 \to 0.
\end{equation}
\end{cor}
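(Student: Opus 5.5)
The plan is to deduce the corollary from Theorem \ref{wap} by showing that $T$ is weakly almost periodic on $L^1(X,m)$. Since $T$ is a contraction, it is power-bounded, so the only issue is the conditional weak compactness of the orbits $(T^nf)$. We will first do this for bounded $f$, and then pass to general $f$ by an approximation argument applied directly to the M\"obius averages.

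\textbf{Step 1 (bounded functions).} Let $f\in L^\infty$. Since $T$ is positive and $T\mathbf 1=\mathbf 1$, we have $|Tg|\le T|g|$. Hence $-\|f\|_\infty\mathbf 1\le T^nf\le \|f\|_\infty \mathbf 1$, so $\|T^nf\|_\infty\le\|f\|_\infty$ for all $n$. Thus the orbit of $f$ lies in a ball of $L^\infty$, which is uniformly integrable. By the Dunford--Pettis theorem it is conditionally weakly compact in $L^1$.

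\textbf{Step 2 (all of $L^1$).} Given $f\in L^1$ and $\varepsilon>0$, choose $g\in L^\infty$ with $\|f-g\|_1<\varepsilon$. Since $\|T^n\|\le 1$, the orbit of $f$ lies within $\varepsilon$ of the orbit of $g$, which is relatively weakly compact by Step 1. By Grothendieck's lemma, a set that is $\varepsilon$-close to a relatively weakly compact set for every $\varepsilon>0$ is itself relatively weakly compact. Hence $T$ is WAP, and Theorem \ref{wap} gives \eqref{L1}.

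Alternatively, one can avoid Grothendieck's lemma. Apply Theorem \ref{wap} only on the closed $T$-invariant subspace $L^2\subset L^1$. The operator $T$ is a contraction of $L^2$: by the Riesz--Thorin theorem, or directly from Jensen's inequality $|Tg|^2\le T|g|^2$ together with $\int Th\,dm=\int h\,dm$, which holds because $T^*\mathbf 1=\mathbf 1$. Hmm — this last identity actually requires $T^*\mathbf 1=\mathbf 1$, which is the invariance of $m$, and that is not assumed. So we use Step 1 instead, applied to the space $\overline{L^\infty}^{\,L^1}=L^1$. Then, since $\|\frac1N\sum_{n=1}^N\mu(n)T^n\|\le 1$, the norm convergence passes from the dense set $L^\infty$ to all of $L^1$ by a standard $\varepsilon/3$ argument. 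This makes Step 2 routine even without Grothendieck's lemma: we can apply Theorem \ref{wap} on the closed invariant subspace generated by a fixed bounded $g$, where the orbit is weakly compact, and then approximate.

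\textbf{Main obstacle.} The main point is Step 1, namely that the hypotheses $T\ge0$ and $T\mathbf 1=\mathbf 1$ make $T$ an $L^\infty$-contraction, so that Dunford--Pettis applies. Everything after that is soft.
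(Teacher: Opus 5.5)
Your main argument is correct, but it takes a different route from the paper. The paper never shows that $T$ is weakly almost periodic on $L^1$. It notes that $T$ is a contraction of $L^2(X,m)$, applies Theorem \ref{wap} in the reflexive space $L^2$, and bounds the $L^1$ norm by the $L^2$ norm. It then extends to all of $L^1$ using density of $L^2$ and the uniform bound $\|\frac1N\sum_{n=1}^N\mu(n)T^n\|_1\le 1$.

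You instead prove that $T$ is WAP on $L^1$ itself: Dunford--Pettis handles the orbits of bounded functions, and Grothendieck's lemma passes to all of $L^1$. The second step can also be done with the paper's Proposition \ref{wap-T}, since $WAP(T)$ is closed and contains the dense subspace $L^\infty$. This is exactly Komorn\'\i k's result quoted in the remark after the corollary, so your proof makes that remark self-contained.

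What each route buys:
\begin{itemize}
\item Yours gives more structure on $L^1$: the Jacobs--de Leeuw--Glicksberg decomposition there, and $h^*_{top}(T)=0$ via Proposition \ref{wap-zero} when $L^1(m)$ is separable.
\item The paper's is shorter. Through Proposition \ref{H-contraction} it also gives the rate $C_\alpha(\log N)^{-\alpha}\|f\|_2$ for $f\in L^2$.
\end{itemize}

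Two side remarks on your ``alternative'' paragraph; neither affects Steps 1--2.

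First, your reason for discarding the $L^2$ route is mistaken. The Jensen argument $\int|Tg|^2\,dm\le\int T|g|^2\,dm\le\int|g|^2\,dm$ needs only $\int Th\,dm\le\int h\,dm$ for $h\ge 0$, i.e.\ $T^*\mathbf 1\le\mathbf 1$. That follows from positivity and $\|T\|_1\le 1$; invariance of $m$ is not needed. Likewise, the interpolation argument behind the paper's claim uses only $\|T\|_1\le 1$ and the $L^\infty$-contractivity from your Step 1.

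Second, restricting to the closed invariant subspace generated by a bounded $g$ does not actually avoid Grothendieck's lemma. Theorem \ref{wap} needs every orbit in that subspace to be relatively weakly compact. Passing from finite combinations of the $T^kg$ to their norm limits is the same closure step again.
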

\begin{proof} It is known that $T$ is also a contraction of $L^2(X,m)$ (e.g.
\cite[p. 65]{K}). For $f \in L^2(X,m)$  we have
$$
\Big\| \frac1N\sum_{n=1}^N \mu(n)T^n f \Big\|_1 \le
\Big\| \frac1N\sum_{n=1}^N \mu(n)T^n f \Big\|_2 \to 0,
$$
by Theorem \ref{wap}. Since $L^2$ is dense in $L^1$ and
$\big\| \frac1N\sum_{n=1}^N \mu(n)T^n \Big\|_1 \le 1$ for every $N$,
 \eqref{L1} holds for every $f\in L^1(X,m)$.
\end{proof}

{\bf Remark.} In fact, by Komorn\'\i k \cite[Proposition 1.4]{Kom}, $T$ of Corollary
\ref{stationary} is WAP in $L^1(X,m)$.
\medskip

{\bf Example.} {\it Corollary \ref{stationary} may fail when $m(X)=\infty$.}

\noindent
On $\ell^1(\mathbb Z)$ define $T(f_k)=(f_{k-1})$. Then the unit vectors $e^j$,
defined by $e^j_k=\delta_{jk}$, satisfy $T^ne^j=e^{j+n}$, and $T^ne^0=e^n$.
Let $\phi\in\ell^\infty$ be given by $\phi_n=\mu(n)$ for $n> 0$ and
$\phi_n=0$ for $n \le 0$. Then
$$
\frac1N \sum_{n=1}^N \mu(n)\langle \phi,T^ne^0 \rangle =
\frac1N \sum_{n=1}^N \mu(n) \langle \phi,e^n\rangle =
\frac1N \sum_{n=1}^N \mu(n)^2 \to\ \frac 6{\pi^2} \ne 0.
$$
Hence even \eqref{sarnak-op-E} fails.

\begin{prop}\label{H-contraction}
Let $T$ be a contraction in a Hilbert space $\mathcal H$.
Then $\| \frac1N \sum_{n=1}^N \mu(n) T^n\| \to 0$. Precisely,
$\| \frac1N \sum_{n=1}^N \mu(n) T^n\| \le C_\alpha/(\log N)^\alpha$ for any $\alpha>0$.
\end{prop}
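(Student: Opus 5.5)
Since $T$ is a contraction on a Hilbert space, von Neumann's inequality gives $\|p(T)\|\le \sup_{|z|\le 1}|p(z)| = \sup_{|z|=1}|p(z)|$ for every polynomial $p$. We apply it to
$$
p_N(z)=\frac1N\sum_{n=1}^N \mu(n) z^n .
$$
This yields
$$
\Big\|\frac1N\sum_{n=1}^N\mu(n)T^n\Big\| \le \sup_{\theta\in[0,1)}\Big|\frac1N\sum_{n=1}^N \mu(n)e^{2\pi i n\theta}\Big| .
$$
The supremum over the closed disc equals the supremum over the circle by the maximum modulus principle.

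Davenport's inequality \eqref{davenport}, with $t=N$, bounds the right-hand side by $C_\varepsilon/(\log N)^{\varepsilon}$ for every $\varepsilon>0$. Taking $\varepsilon=\alpha$ gives $\|\frac1N\sum_{n=1}^N\mu(n)T^n\|\le C_\alpha/(\log N)^\alpha$. Since the right-hand side tends to $0$, this also proves the operator norm convergence.

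The only point needing care is the real case. If $\mathcal H$ is a real Hilbert space, we first complexify: set $\tilde{\mathcal H}=\mathcal H\oplus i\mathcal H$ with the natural inner product, and let $\tilde T(u+iv)=Tu+iTv$. Then $\tilde T$ is a contraction on the complex Hilbert space $\tilde{\mathcal H}$, and $\mathcal H$ embeds isometrically, so norms of real polynomials in $T$ are bounded by the corresponding norms for $\tilde T$. Apart from this, the main ingredient is simply invoking von Neumann's inequality; no step is a real obstacle.
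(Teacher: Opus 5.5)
Your proof is correct and is essentially the paper's argument. The paper dilates $T$ to a unitary $U$ via Sz.-Nagy's theorem ($T^n=PU^n|_{\mathcal H}$) and bounds $\|\frac1N\sum_{n=1}^N\mu(n)U^n\|$ by Davenport's inequality through the spectral theorem, which is exactly the standard proof of the von Neumann inequality you invoke; the real case is handled by the same complexification, and your version coincides with the paper's proof of Proposition~\ref{poly-bounded} with $K=1$.
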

\begin{proof}  We first assume that $\mathcal H$ is over $\mathbb C$.
	
As noted in \cite[Corollary 2.11]{CW}, for $U$ unitary, \eqref{davenport} and the spectral
theorem yield $\| \frac1N \sum_{n=1}^N \mu(n) U^n\| \le C_\alpha/(\log N)^\alpha$ for any
$\alpha> 0$.  For a contraction $T$, the unitary dilation theorem \cite{SN} says that there
exists a Hilbert space $\mathcal K \supset \mathcal H$ and a unitary $U$ on $\mathcal K$
such that $T^n=PU^n$ for $n \ge 0$, where $P$ is the orthogonal projection of $\mathcal K$
onto $\mathcal H$. This proves the theorem in the complex case.

When $\mathcal H$ is a real Hilbert space, we use the  vector space complexification
$\tilde{\mathcal H}=\mathcal H \times \mathcal H$ with the norm
$\|u+iv\|_H^2:=\|u\|^2+\|v\|^2$ \cite[Proposition 5]{MST}.  Then $\tilde{\mathcal H}$ is
a complex Hilbert space \cite{MW}. It is easily checked that if $T$ is a contraction on
$\mathcal H$, then $\tilde T(x,y):=(Tx,Ty)$ is a contraction on $\tilde H$, and we deduce
the result from the complex case.
\end{proof}

{\bf Definition.} A linear operator $T$  on a Banach space $E$ is called {\it polynomially 
bounded} if there exists $K$ such that
$\|P(T)\| \le K \sup_{|\lambda|\le 1} |P(\lambda)|$ for every polynomial $P(t)$ with
 scalar coefficients.

Polynomially bounded operators are power-bounded, but the converse is false. Solving a
long standing problem, Pisier \cite{Pi} constructed a polynomially bounded operator on a
Hilbert space $\mathcal H$ which is not similar to a contraction (note that contractions
on $\mathcal H$ are polynomially bounded, by von Neumann's inequality).
When $T$ is invertible on $\mathcal H$ with $\sup_{n\in \mathbb Z}\|T^n\|< \infty$,
it is similar to a unitary operator \cite{SN0}.

Cohen \cite{Co} constructed an invertible $T$ on $L^p$, $p\ne 2$, such that
$T$ is polynomially bounded, so $\sup_{n\in \mathbb Z}\|T^n\|< \infty$, but $T$ is not
similar to an invertible isometry. Zarrabi \cite{Z} studied conditions for a contraction
on a Banach space to be polynomially bounded.
\smallskip

{\bf Example.} {\it A non-WAP polynomially bounded operator.}

\noindent
Let $\mathbb D$ be the unit disk and $X=\overline{\mathbb D}$. On $C_\mathbb C(X)$ define
$Tf(z):= zf(z)$. Then $\|P(T)f(z)\|_\infty=\max_{z\in X} |P(z)f(z)| \le \|P\|_\infty\|f\|_\infty$,
so $T$ is polynomially bounded.
 However, $T$ is not mean ergodic, since for $f \equiv 1$ the averages
 $\frac1n\sum_{k=1}^n T^kf$ do not converge  to a continuous function; hence $T$ is not WAP.

The similar definition $Tf(z):=zf(z)$ for $f \in C_\mathbb C(\mathbb T)$ yields an invertible
isometry which is polynomially bounded and not mean ergodic, therefore not WAP.

\begin{prop} \label{poly-bounded}
Let $T$ be polynomially bounded on a Banach space $E$. Then
$$\Big\| \frac1N \sum_{n=1}^N \mu(n) T^n\Big\| \to 0.$$
\end{prop}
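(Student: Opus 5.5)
Apply the definition of polynomial boundedness directly to the polynomial $P_N(t):=\frac1N\sum_{n=1}^N \mu(n)t^n$. Then
$$
\Big\|\frac1N\sum_{n=1}^N \mu(n)T^n\Big\| = \|P_N(T)\| \le K \sup_{|\lambda|\le 1}|P_N(\lambda)|.
$$
By the maximum modulus principle, the supremum of the polynomial $P_N$ over the closed unit disk is attained on the unit circle $\mathbb T$. Writing $\lambda=e^{2\pi i\theta}$, we obtain
$$
\sup_{|\lambda|\le1}|P_N(\lambda)| = \frac1N\max_{\theta\in[0,1)}\Big|\sum_{k\le N}\mu(k)e^{2\pi i k\theta}\Big| \le \frac{C_\varepsilon}{(\log N)^\varepsilon}
$$
by Davenport's inequality \eqref{davenport}. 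Hence $\|P_N(T)\|\le KC_\varepsilon/(\log N)^\varepsilon\to 0$, and in fact the same rate $C_\alpha/(\log N)^\alpha$ holds as in Proposition \ref{H-contraction}.

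This is essentially the whole argument in the complex case. The only point requiring care is that the supremum in the definition is taken over the closed disk, and the maximum principle settles it. No dilation theory is needed: polynomial boundedness plays exactly the role that von Neumann's inequality and the unitary dilation played for Hilbert space contractions.

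If $E$ is a real Banach space, the coefficients $\mu(n)$ are real, so $P_N$ has real coefficients and the definition applies to it as stated. However, the supremum in the definition is over complex $\lambda$ with $|\lambda|\le1$, which is still controlled by Davenport's bound. If instead the definition over $\mathbb R$ is read with real $\lambda\in[-1,1]$, the supremum over $[-1,1]$ is dominated by the supremum over the complex disk, so the same bound holds. Either way no complexification is needed.

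The main (mild) obstacle is thus only making sure that the scalar estimate \eqref{davenport} controls the supremum of $P_N$ over the entire closed disk; the maximum modulus principle handles this.
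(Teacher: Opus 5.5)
Your proposal is correct and follows the paper's proof: apply polynomial boundedness to $P_N(\lambda)=\frac1N\sum_{n=1}^N\mu(n)\lambda^n$, pass from the closed disk to the unit circle by the maximum modulus principle, and invoke Davenport's inequality, which also gives the rate $KC_\varepsilon/(\log N)^\varepsilon$. Your remark on the real-scalar case, that either reading of the supremum in the definition is bounded by the supremum over the complex disk, is a harmless addition that the paper leaves implicit.
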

\begin{proof}
By the maximum modulus principle,
$$\|P(T)\| \le K\sup_{|\lambda|\le 1} |P(\lambda)|= K\sup_{|\lambda|=1} |P(\lambda)|.$$
The result follows by applying \eqref{davenport} to 
$P(\lambda)=\frac1N\sum_{n=1}^N \mu(n)\lambda^n$, which also gives the rate of convergence.
\end{proof}

{\bf Remark.} Propositions \ref{H-contraction} and \ref{poly-bounded} show a common
rate of convergence for all polynomially bounded operators (which include Hilbert space
contractions). Algom and Wang \cite{AW} showed that for every rate function
$r_n \downarrow 0$ there is a topological dynamical system $(X,\tau)$ with $h_{top}(\tau)=0$
which satisfies \eqref{sarnak-op}, but for some $g \in C(X)$ and $x\in X$,
$\limsup \frac{\frac1N\sum_{n=1}^N \mu(n) g(\tau^n x)}{r_N}>0$; this means that
 convergence in \eqref{sarnak-op1} can be arbitrarily slow.

\begin{prop} \label{wap-T}
 Let $T$ be power-bounded on $E$ and define
$WAP(T):=\{v \in E: (T^nv) \text{ is weakly sequentially compact}\}$. Then $WAP(T)$ is
a $T$-invariant closed subspace of $E$, and $T_{|WAP(T)}$ is weakly almost periodic.
\end{prop}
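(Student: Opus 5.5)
Throughout, "weakly sequentially compact" for the orbit $(T^nv)$ means relatively (conditionally) weakly sequentially compact: every sequence in the orbit has a weakly convergent subsequence with limit in $E$. By the Eberlein--Shmulyan theorem this is the same as the orbit being conditionally weakly compact, which is the property in the definition of WAP. Let $M:=\sup_n\|T^n\|$.

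\textbf{Subspace and invariance.} Let $u,v\in WAP(T)$, let $\alpha,\beta$ be scalars, and let $(n_j)$ be a sequence. A diagonal extraction gives a subsequence $(\ell_k)\subset(n_j)$ along which $T^{\ell_k}u$ and $T^{\ell_k}v$ both converge weakly. Then $T^{\ell_k}(\alpha u+\beta v)$ converges weakly, so $\alpha u+\beta v\in WAP(T)$. For invariance, let $v\in WAP(T)$ and pick $(\ell_k)$ with $T^{\ell_k}v\to w$ weakly. Since $T$ is bounded, it is weak--weak continuous, so $T^{\ell_k}(Tv)=T(T^{\ell_k}v)\to Tw$ weakly. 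Hence $Tv\in WAP(T)$.

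\textbf{Closedness.} This is the step needing care. I would use that the weakly relatively compact subsets of $E$ form a closed family in the following sense (Grothendieck's lemma): if for every $\varepsilon>0$ there is a weakly relatively compact $K_\varepsilon$ with $A\subset K_\varepsilon+\varepsilon B_E$, then $A$ is weakly relatively compact. Let $v_m\in WAP(T)$ with $v_m\to v$ in norm. Then
\[
\{T^nv\}\subset\{T^nv_m\}+M\|v-v_m\|B_E ,
\]
and each $\{T^nv_m\}$ is weakly relatively compact by Eberlein--Shmulyan. Grothendieck's lemma then shows $\{T^nv\}$ is weakly relatively compact, and Eberlein--Shmulyan again gives weak sequential compactness, so $v\in WAP(T)$.

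If one prefers to avoid the lemma, a direct diagonal argument also works. Extract nested subsequences along which $T^{\ell}v_m$ converges weakly to some $w_m$, for every $m$. Along the diagonal sequence, $(\phi(T^{\ell_k}v))_k$ is Cauchy for each $\phi\in E^*$, by a $3\varepsilon$ estimate with constant $M$. This gives a weak Cauchy sequence, and its limit exists in $E$ because $\|w_m-w_{m'}\|\le M\|v_m-v_{m'}\|$ (weak lower semicontinuity of the norm). Hence $(w_m)$ converges in norm to some $w$, and one checks that $T^{\ell_k}v\to w$ weakly.

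\textbf{Restriction.} Set $F:=WAP(T)$, a closed $T$-invariant subspace. By Hahn--Banach, weak convergence in $F$ is the same as weak convergence in $E$ for sequences in $F$. The limits lie in $F$ since $F$ is closed and convex, hence weakly closed. Therefore every orbit of $T_{|F}$ is weakly sequentially compact in $F$, and $T_{|F}$ is WAP. The main obstacle is the closedness step; the rest is routine.
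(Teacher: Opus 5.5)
Your proof is correct. Linearity and invariance are handled as the paper says (``easy'', ``obvious''). Your fallback argument for closedness is essentially the paper's proof: nested extraction, the bound $\|w_m-w_{m'}\|\le M\|v_m-v_{m'}\|$ from weak lower semicontinuity of the norm, then an $\varepsilon$-estimate giving $T^{\ell_k}v\to w$ weakly. The only cosmetic difference is that the paper first renorms so that $T$ is a contraction, which makes $M=1$.

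Your primary route is genuinely different. You note that $\{T^nv\}\subset\{T^nv_m\}+M\|v-v_m\|B_E$ and apply Grothendieck's lemma, using Eberlein--Shmulyan at both ends to pass between sequential and non-sequential weak compactness. This is shorter and more conceptual: closedness of $WAP(T)$ becomes an instance of the fact that relative weak compactness survives uniform $\varepsilon$-approximation by relatively weakly compact sets. The cost is that it imports two nontrivial theorems, and Grothendieck's lemma rests on a weak$^*$-closure argument in the bidual. The paper's diagonal argument is entirely elementary, works only with sequences, and never leaves $E$.

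You also explicitly verify the final assertion, which the paper leaves implicit. By Hahn--Banach, weak convergence of sequences in $F=WAP(T)$ is the same in $F$ as in $E$, and the limits stay in $F$ because $F$ is closed and convex, hence weakly closed.
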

\begin{proof} We assume $WAP(T) \ne \{0\}$, and also that $\|T\|\le 1$. It is easy to see
that $WAP(T)$ is a linear manifold.  Let $(v_k) \subset WAP(T)$ converge in norm to $v$.
Fix a subsequence $(n_j)$.  By the diagonal process, there is a subsequence
$(r_\ell) \subset (n_j)$ such that $T^{r_\ell}v_k$ converges weakly to $w_k$ as
$\ell\to\infty$, for each $k$. For $\phi \in E^*$ we have
$$
|\phi(w_k)-\phi(w_p)| =
\lim_{\ell\to\infty} |\phi(T^{r_\ell}(v_k-v_p))| \le \|\phi\|\cdot\|v_k-v_p\|.
$$
This implies $\|w_k-w_p\| \le \|v_k-v_p\|$; hence $w_k$ converges, say to $w$.
For $\phi \in E^*$  we then have
$$
\limsup_{\ell\to\infty}|\phi(T^{r_\ell}v)-\phi(w)| \le \|\phi\|(\|v-v_k\|+\|w_k-w\|).
$$
This yields that $T^{r_\ell}$ converges weakly to $w$. Since this holds for any
subsequence $(n_j)$, we conclude that $v \in WAP(T)$. Thus $WAP(T)$ is closed;
its invariance under $T$ is obvious
\end{proof}

\begin{theorem} \label{dor}
Let $E$ be a complex Banach space which does not contain an isomorphic copy of $\ell_1$.
Let $T$ be a power-bounded operator on $E$ such that
$$
\Big\|\frac1N\sum_{n=1}^N \lambda^n T^nv\Big\| \to 0 \quad
\forall v\in E,\  \forall  \ |\lambda|=1.
$$
Then
\begin{equation} \label{strong-all}
\Big\|\frac1N\sum_{n=1}^N b_n T^nv\Big\| \to 0 \quad
 \text{ for every } v\in E \text{ and } (b_n)_{n\ge 1} \in \ell_\infty.
\end{equation}
In particular, every $v \in E$ satisfies \eqref{strong}.
\end{theorem}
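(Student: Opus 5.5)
The plan is to reduce to the weakly almost periodic case, so that the Jacobs--DeLeeuw--Glicksberg decomposition is available. We then show that the eigenvector part of that decomposition is trivial, which leaves only the ``flight'' part $E_0$, where any bounded weights work.

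\textbf{Step 1: $T$ is WAP.} Since $E$ contains no copy of $\ell_1$, Rosenthal's $\ell_1$ theorem applies. Every bounded sequence in $E$, in particular every orbit $(T^nv)$, has a weakly Cauchy subsequence. This alone is not weak sequential compactness in a non-weakly-sequentially-complete space, so we must use the hypothesis with $\lambda=1$. It says that $\frac1N\sum_{n=1}^N T^nv\to 0$ in norm for every $v$, so $T$ is mean ergodic with trivial fixed space. The same holds for each $\bar\lambda T$, so $T$ has no unimodular eigenvalues. To get weak compactness of orbits I would invoke a known result: for power-bounded $T$ on a space without $\ell_1$, mean ergodicity of all $\lambda T$, $|\lambda|=1$, yields WAP. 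I expect this to be the route used in the literature (Dor / ``no $\ell_1$'' arguments, cf.\ the paper \cite{LOT}). Concretely, I would take a weakly Cauchy subsequence $T^{n_j}v$ and show that its weak$^*$ limit in $E^{**}$ lies in $E$. The argument is to compare $T^{n_j}v$ with averages of it, which converge in norm by Mazur-type arguments combined with the assumed convergence. \emph{This step is the main obstacle}, and if it resists I will look for the precise theorem in \cite{LOT} giving \eqref{strong-all} under these hypotheses.

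\textbf{Step 2: $E_1=\{0\}$.} Once $T$ is WAP, we have $E=E_0\oplus E_1$. If $Tu=\lambda u$ with $|\lambda|=1$ and $u\ne0$, then $\frac1N\sum_{n=1}^N\bar\lambda^nT^nu=u\not\to0$, contradicting the hypothesis. Hence $E_1=\{0\}$ and $E=E_0$.

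\textbf{Step 3: bounded weights on $E_0$.} For $v\in E_0$, the argument quoted in the proof of Theorem \ref{wap} (from \cite[Theorem 1.2]{LOT}) gives $\|\frac1N\sum b_nT^nv\|\to0$ for every bounded $(b_n)$. The idea is that, given $\varepsilon$, one can find $k$ with $\|T^kv\|$ small in an averaged sense. More precisely, on $E_0$ the zero vector lies in the closed convex hull of the orbit, uniformly along shifts, since $T^{n_j}v\to0$ weakly. By Mazur there is a convex combination $w=\sum_i c_iT^{k_i}v$ with $\|w\|<\varepsilon$. Then $\frac1N\sum b_nT^nv$ differs from $\frac1N\sum_n b_n T^n w$, suitably rearranged, by $O(\max k_i/N)$ plus a term $\le \sup|b|\sup\|T^n\|\,\varepsilon$. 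This gives \eqref{strong-all}. Taking $b_n=\mu(n)$ then gives \eqref{strong}.
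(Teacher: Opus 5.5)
Your Step~1 is not merely the main obstacle: it is false, so the reduction to the Jacobs--DeLeeuw--Glicksberg decomposition cannot work. The hypotheses do not imply weak almost periodicity, even when $E^*$ is separable.

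Here is a counterexample. Take positive integers $n_1<n_2<\cdots$ with $n_{i+1}-n_i\to\infty$. Let $(Sy)(k)=y(k+1)$ be the shift on $\{0,1\}^{\mathbb Z}$, put $z_j:=\mathbf 1_{\{0\}\cup\{n_i:\, i\ge j\}}$, and let $K$ be the closure of $\{S^mz_j:\ m\in\mathbb Z,\ j\ge 1\}$. Since the gaps grow, $K$ consists of the points $S^mz_j$, the points $\mathbf 1_{\{p\}}$ ($p\in\mathbb Z$), and $\mathbf 0$. Thus $K$ is countable, $C(K)^*=\ell_1(K)$ is separable, and $C(K)$ contains no copy of $\ell_1$. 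The only periodic point is $\mathbf 0$, so $\delta_{\mathbf 0}$ is the only invariant probability.

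Set $E:=\{f\in C(K): f(\mathbf 0)=0\}$ and $Tf:=f\circ S$. Unique ergodicity gives $\sup_{y\in K}\frac1N\sum_{n=1}^N|f(S^ny)|\to 0$ for every $f\in E$. Hence the hypothesis of the theorem holds, and so does \eqref{strong-all}. Yet $g(y):=y(0)$ lies in $E$, and $T^{n_i}g(y)=y(n_i)$ converges pointwise on $K$ to the indicator of $\{z_j:\ j\ge 1\}$. This limit is discontinuous at $\mathbf 1_{\{0\}}=\lim_j z_j$. Since point evaluations belong to $E^*$, no subsequence of $(T^{n_i}g)$ converges weakly in $E$. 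So $T$ is not WAP, and the weak$^*$ limit of your weakly Cauchy subsequence need not lie in $E$.

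Your Step~3 is also flawed as sketched. Replacing $v$ by a Mazur combination $w=\sum_i c_iT^{k_i}v$ turns the weights into $b_{n-k_i}$. That only handles shift-invariant weights such as $b_n\equiv 1$, not arbitrary $(b_n)\in\ell_\infty$.

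The missing idea is that no compactness of orbits is needed, only a \emph{uniform} weak-mixing estimate. This is the paper's proof:
\begin{itemize}
\item By Jones--Lin \cite{JL2}, when $E$ contains no $\ell_1$, the hypothesis is equivalent to \eqref{weak-mix}, i.e. $\frac1N\sum_{n=1}^N|\phi(T^nv)|\to 0$ for all $v\in E$ and $\phi\in E^*$.
\item By Jones--Lin \cite{JL1}, this convergence is uniform over $\|\phi\|\le 1$.
\item Then $\big\|\frac1N\sum_{n=1}^N b_nT^nv\big\|=\sup_{\|\phi\|\le 1}\big|\frac1N\sum_{n=1}^N b_n\phi(T^nv)\big|\le\|(b_n)\|_\infty\sup_{\|\phi\|\le1}\frac1N\sum_{n=1}^N|\phi(T^nv)|\to 0$, which is \eqref{strong-all}.
\end{itemize}
The no-$\ell_1$ assumption is used only to pass from ``no unimodular eigenvalues of $T^*$'' to \eqref{weak-mix}. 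It is not used to extract weakly convergent subsequences of orbits, which in general do not exist.
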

\begin{proof}
By \cite[Corollary 5 and Theorem 2]{JL2}, the assumption is equivalent to
\begin{equation} \label{weak-mix}
\frac1N \sum_{n=1}^N |\phi(T^nv)| \to 0\quad  \forall v\in E,\ \forall \phi \in E^*.
\end{equation}
Fix $v \in  E$. By Jones and Lin \cite{JL1} for the convergence, for $(b_n) \in\ell_\infty$ we obtain
$$
\Big\| \frac1N \sum_{n=1}^N b_n T^nv \Big\| =
\sup_{\|\phi\|_{E^*}\le 1} \Big| \frac1N \sum_{n=1}^N b_n \phi(T^nv) \Big|\le
$$
$$
 \|(b_n)\|_\infty \sup_{\|\phi\|_{E^*}\le 1}\frac1N \sum_{n=1}^N \Big|  \phi(T^nv) \Big| \to 0.
$$
 Note: The assumption on $T$  means that $T^*$ has no unimodular eigenvalues
\cite[Theorem 1.3, p. 73]{K}, and it yields \eqref{weak-mix} by the assumption on $E$.
\end{proof}

{\bf Remark.} The condition on $E$ is satisfied when $E^*$ is separable; a simple example
is $c_0$. An example with $E^{**}$ separable is the (separable) James space \cite{Ja} $J$,
with $\dim(J^{**}/J)=1$.
\smallskip

The next result shows that \eqref{weak-mix}, which by the end of the proof of Theorem \ref{dor}
 yields \eqref{strong-all},  does not imply weak almost periodicity.

\begin{prop} \label{not-wap}
There exists an invertible isometry $T$ on a separable Banach space $E$ such that
\eqref{weak-mix} and \eqref{strong-all} hold, in particular $T$ is mean ergodic,
 but $T$ is not weakly almost periodic.
\end{prop}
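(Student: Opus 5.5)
The plan is to build $T$ from a weakly mixing, non-rigid measure-preserving transformation, and to take $E$ to be a space of functions on which $T$ acts by composition but whose norm is too weak for orbits to be weakly compact. Concretely, let $(Y,\nu)$ be a standard probability space and $\sigma$ an invertible weakly mixing measure-preserving transformation. We work with $E=C_0(\mathbb Z)$ realized in a suitable way. A more tractable choice: let $E=c_0(\mathbb Z)$ (real or complex), or better a space of the form $E=\overline{\text{span}}$ of a single orbit. I would instead use the following simpler concrete candidate: $E=C(K)$ for a compact space $K$, and $T$ induced by a homeomorphism $\tau$ which is uniquely ergodic with weakly mixing invariant measure, but such that $(K,\tau)$ is not WAP (i.e.\ not equicontinuous/tame in the Ellis sense). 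The natural tool for \eqref{weak-mix} is then: $\phi\in C(K)^*$ is a measure $\eta$. Condition \eqref{weak-mix} for all measures is too strong in general, though, since for point masses it forces $\frac1N\sum|f(\tau^nx)|\to0$, which is false. So $C(K)$ is the wrong setting, and I commit instead to a separable space without $\ell_1$, so that Theorem~\ref{dor} reduces \eqref{weak-mix} to the absence of unimodular eigenvalues of $T^*$ plus mean convergence.

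Committed construction: take $E=C_0(\mathbb Z)=c_0(\mathbb Z)$ twisted by a weight, i.e.\ $E=\{f:\mathbb Z\to\mathbb C,\ \lim_{|k|\to\infty} f(k)=0\}$ is WAP for shifts, so instead I would take a separable space $E$ with separable dual in which a shift-like isometry has orbits without weakly convergent subsequences; the James-type space $J(\mathbb Z)$ with the bilateral shift is my candidate. Let $T$ be the bilateral shift $Te_k=e_{k+1}$ on the two-sided James space, normed by $\|x\|=\sup\bigl(\sum_i |x_{p_{i+1}}-x_{p_i}|^2\bigr)^{1/2}$ over increasing finite sequences, completed from finitely supported vectors. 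This is shift-invariant, so $T$ is an invertible isometry and $E$ is separable. Moreover $E^{**}$ is $E$ plus constants (as for $J$), so $E^*$ is separable and $E$ contains no $\ell_1$.

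The key steps are as follows. First, show that $T$ is not WAP. The vector $e_0$ has orbit $e_n$, which is weakly Cauchy with weak$^*$ limit in $E^{**}$ a nonzero "constant" element not in $E$, so no subsequence converges weakly in $E$. Second, verify the hypothesis of Theorem~\ref{dor}: for $|\lambda|=1$ and $v\in E$, $\|\frac1N\sum\lambda^nT^nv\|\to0$. By density and isometry it suffices to treat $v=e_0$, where $\frac1N\sum_{n\le N}\lambda^ne_n$ has James norm $O(N^{-1/2})$, since the square-variation of the coefficient sequence is at most $\sum_n|\Delta|^2\le 4N\cdot N^{-2}$ plus boundary terms of size $N^{-2}$. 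Third, Theorem~\ref{dor} then gives \eqref{weak-mix} and \eqref{strong-all}, and mean ergodicity is the case $\lambda=1$.

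The main obstacle is the structural claim about the two-sided James space: that $E^*$ is separable (equivalently, that $E$ contains no $\ell_1$) and that the weak$^*$ limit of $e_n$ lies outside $E$. I would handle this by transferring the classical facts about $J$ via the isomorphism between two-sided and one-sided James spaces; $J(\mathbb Z)$ is naturally a sum of two copies of $J$ up to a one-dimensional correction. Failing that, I would prove directly that the basis $(e_k)$ is shrinking, i.e.\ $\|\phi|_{[k\ge n]}\|\to0$, using the square-variation estimate.
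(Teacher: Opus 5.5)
There is a genuine gap, and it breaks the construction: the bilateral shift on the two-sided James space \emph{is} weakly almost periodic. You claim that the orbit $(e_n)$ of $e_0$ is weakly Cauchy with a nonzero ``constant'' weak$^*$ limit in $E^{**}$. This confuses the unit vectors with the summing vectors $\sum_{k=0}^{n}e_k$. It is the summing vectors that converge weak$^*$ to a constant element of $E^{**}\setminus E$; the unit vectors are weakly null.

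Your own square-variation count shows this. For distinct $n_1,\dots,n_m$ and unimodular scalars $\epsilon_j$ one has $\|\sum_{j=1}^m\epsilon_je_{n_j}\|\le\sqrt{8m}$, since in any partition each support point enters at most two differences, each of modulus at most $2$. Suppose some $\phi\in E^*$ satisfied $|\phi(e_{n_j})|\ge\delta>0$ along a subsequence. Choosing $\epsilon_j$ with $\epsilon_j\phi(e_{n_j})=|\phi(e_{n_j})|$ would give $m\delta\le\|\phi\|\sqrt{8m}$ for every $m$, which is absurd. So $T^ne_k=e_{n+k}\to0$ weakly for each $k$. Since $\|T^n\|=1$ and finitely supported vectors are dense, $T^n\to0$ in the weak operator topology. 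By Remark 3 after the definition of weak almost periodicity in Section~\ref{modulated}, $T$ is WAP.

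This is the same reason you rightly discarded $c_0(\mathbb Z)$. The James norm only replaces a bounded upper estimate on $\pm$-sums of distinct unit vectors by an $\ell_2$-type one, and any uniform $o(m)$ estimate of this kind forces the same conclusion for a shift. (Also, $E^{**}/E$ is two-dimensional here, since $J(\mathbb Z)\cong J\oplus J$ and bidual elements may have different limits at $\pm\infty$.) Your isometry, separability and $O(N^{-1/2})$ computations are correct, but they only show that this shift is a mixing WAP operator.

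The missing idea is a space in which some orbit has no weakly null subsequence, yet is weakly null along a set of times of density one. That is exactly the room between \eqref{weak-mix} and weak almost periodicity. The paper does not construct this by hand. It takes the invertible isometry of Jones and Lin \cite[Theorem 6]{JL2} on the closed span of $\{T^ku:k\in\mathbb Z\}$, where $\frac1N\sum_{n=1}^N|\psi(T^nu)|\to0$ for all $\psi$ but no subsequence of $(T^nu)$ converges weakly to $0$. The paper then argues as follows:
\begin{itemize}
\item \eqref{weak-mix} passes from $u$ to each $T^ku$ by shifting the averages, and to all of $E$ by density.
\item \eqref{strong-all} follows as at the end of the proof of Theorem~\ref{dor}.
\item $T$ is not WAP because $0$ is a weak cluster point of the orbit of $u$ that is not the weak limit of any subsequence.
\end{itemize}
This route needs neither the absence of $\ell_1$ nor the hypothesis-to-conclusion step of Theorem~\ref{dor}.
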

\begin{proof}
We first prove the real case. By Theorem 6 of Jones and Lin \cite{JL2}, there exists an invertible isometry $T$
on a separable  real Banach space $E$, such that for some $u \in E$, for no subsequence $(n_j)$ does $T^{n_j}u$
converge to 0 weakly, but $\frac1N\sum_{n=1}^N |\psi(T^nu)| \to 0$ for every $\psi \in E^*$.

By the construction in \cite{JL2}, $E:= \text{closed lin. span }\{T^ku: k \in \mathbb Z\}$.
We first prove \eqref{weak-mix}.

Fix $k \ge 0$. For $ \psi \in E^*$ we have, for $N>k$,
$$
\frac1N \sum_{n=1}^N |\psi(T^{n+k}u)| = \frac1N \sum_{n=k+1}^{N+k} |\psi(T^{n}u)| =
$$
$$
 \frac1N \sum_{n=1}^{N} |\psi(T^{n}u)| +  \frac1N \sum_{n=N+1}^{N+k} |\psi(T^{n}u)| -
 \frac1N \sum_{n=1}^{k} |\psi(T^{n}u)| \to 0.
$$
Hence
$T^ku \in E_0:=\{v\in E: \frac1N \sum_{n=1}^{N} |\psi(T^{n}v)| \to 0 \ \forall \psi \in E^*\}$.
Similarly, $T^ku\in E_0$ when $k<0$. Thus $E_0$ contains the linear combinations of
$(T^ku)_{k\in \mathbb Z}$, which are dense in $E$. We prove $E_0=E$.

Let $v \in E$. For $\varepsilon >0$ there exists $w \in E_0$ with $\|v-w\|<\varepsilon$;
then for $\psi \in E^*$ we obtain
$\limsup_N \frac1N \sum_{n=1}^{N} |\psi(T^{n}v)| \le \|\psi\|\varepsilon$.
Letting $\varepsilon \to 0$ we conclude that $v \in E_0$. Hence \eqref{weak-mix} holds.

Since for no subsequence does $(T^{n_j}u)$ converge weakly to 0 ,
but 0 is a weak cluster point of $(T^ku)$, this sequence is not weakly sequentially compact,
so $T$ is not weakly almost periodic. Thus the proposition  is proved for the real case.

An inspection of the proof of \cite[Theorem 6]{JL2} shows that if we start with the vector
space of  bilateral complex sequences with finitely many non-zero terms, then we obtain
a separable complex Banach space with an isometry $T$ with the same properties as stated
in that theorem. Then the above arguments prove our proposition in the complex case.
\end{proof}

\begin{theorem} \label{dor2}
Let $E$ be a complex Banach space which does not contain an isomorphic copy of $\ell_1$.
Let $T$ be a power-bounded operator on $E$ such that $T^*$ has finitely many unimodular
eigenvalues $\lambda_1, \dots,\lambda_d$, and assume that $\lambda_j^{-1}T$ is mean
ergodic for $1\le j\le d$. Then
$$
\Big\|\frac1N\sum_{n=1}^N \mu(n)T^nv\Big\| \to 0 \quad \text{ for every } v\in E .
$$
\end{theorem}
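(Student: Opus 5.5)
The plan is to split $E$ into a finite-dimensional-type "eigenvector" part and a "weakly mixing" part, and then handle each piece as in Theorems \ref{wap} and \ref{dor}. Since each $\lambda_j^{-1}T$ is mean ergodic, the averages $P_j v:=\lim_N \frac1N\sum_{n=1}^N \lambda_j^{-n}T^nv$ exist in norm. By the standard mean ergodic theorem, $P_j$ is a bounded projection onto $\ker(T-\lambda_j I)$, and $\ker P_j=\overline{(T-\lambda_j I)E}$.

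First I will check that the $P_j$ are mutually annihilating: $P_jP_k=0$ for $j\ne k$. This holds because $P_k v$ is a $\lambda_k$-eigenvector, so $\frac1N\sum_n \lambda_j^{-n}\lambda_k^n P_kv\to 0$. Set $Q:=I-\sum_{j=1}^d P_j$, a bounded projection, and write $E=\bigoplus_j P_jE\oplus F$ with $F:=QE$. Each summand is $T$-invariant, since $T$ commutes with each $P_j$.

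On $P_jE$ we have $T^nv=\lambda_j^nv$, so by \eqref{davenport}
\[
\frac1N\sum_{n=1}^N\mu(n)T^nv=\Big(\frac1N\sum_{n=1}^N\mu(n)\lambda_j^n\Big)v\to0 .
\]

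On $F$ I want to apply Theorem \ref{dor} to $S:=T_{|F}$. The space $F$ is a closed subspace of $E$, so it still contains no copy of $\ell_1$, and $S$ is power-bounded. It remains to verify $\|\frac1N\sum_n\lambda^nS^nv\|\to0$ for all $|\lambda|=1$ and $v\in F$. By the Note in the proof of Theorem \ref{dor} (via \cite[Theorem 1.3, p. 73]{K}), it suffices to show that $\bar\lambda S$ is mean ergodic with zero limit, or equivalently that $S^*$ has no unimodular eigenvalues.

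So the key step is: $S^*$ on $F^*$ has no unimodular eigenvalue. Suppose $S^*\psi=\lambda\psi$ with $\psi\ne0$. Extend by $\phi:=\psi\circ Q\in E^*$. Because $Q$ commutes with $T$, we get $T^*\phi=\psi\circ QT=\psi\circ SQ=\lambda\phi$, and $\phi\neq0$. Hence $\lambda=\lambda_j$ for some $j$. But $\phi$ vanishes on $P_jE$. Moreover, since $\phi$ is a $\lambda_j$-eigenvector of $T^*$, it vanishes on $(T-\lambda_jI)E$, hence on $\ker P_j$. Since $E=P_jE\oplus\ker P_j$, this forces $\phi=0$, a contradiction.

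Therefore $S$ satisfies the hypothesis of Theorem \ref{dor}, and \eqref{strong} holds on $F$. Since every $v\in E$ decomposes as a finite sum of pieces from these invariant subspaces, the convergence holds for all $v\in E$.

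The main obstacle I expect is the step "$T^*$ has no unimodular eigenvalues $\Rightarrow$ the hypothesis of Theorem \ref{dor}." This holds only because the absence of $\ell_1$ makes the mean ergodicity of every $\bar\lambda S$ automatic; I will rely on \cite{JL2} exactly as the Note after Theorem \ref{dor} does.
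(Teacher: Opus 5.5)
Your proof is correct and is essentially the paper's argument. Your $QE=\bigcap_j\ker P_j=\bigcap_j\overline{(\lambda_jI-T)E}$ and $\bigoplus_jP_jE$ are exactly the paper's $E_0$ and $E_1$, with Davenport's inequality on $E_1$ and Theorem \ref{dor} on $E_0$. The differences are minor: you build the splitting by hand from the mutually annihilating idempotents $P_j$ instead of citing \cite{CL}, and you verify the hypothesis of Theorem \ref{dor} on $E_0$ through the dual eigenvalue condition, whereas the paper checks the rotated averages directly.

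One correction to your closing remark. The implication ``$S^*$ has no unimodular eigenvalue $\Rightarrow \|\frac1N\sum_{n=1}^N\lambda^nS^nv\|\to0$'' needs no assumption on the space. Hahn--Banach gives density of $(\bar\lambda I-S)F$, the averages telescope to $0$ on that range, and power-boundedness extends this to all of $F$. The absence of $\ell_1$ is used only inside Theorem \ref{dor}, to pass to \eqref{weak-mix}.
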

\begin{proof}
In the proof of \cite[Theorem 2.4]{CL} it was proved that if $T_1,\dots, T_d$ are
commuting mean ergodic power-bounded operators,  then\footnote{The proof depends on
\cite[Lemma 2.3]{CL}, where it is shown, in the notations there, that the projection
$P:=\prod_{j=1}^d P_j$ annihilates $X_0:=\overline{\sum_{j=1}^d(I-T_j)X}$.
To show $E_0= \ker P$ as claimed there, note that by Hahn-Banach separation,
if $z \in\ker P$ is not in $E_0$, we get $\phi\in X^*$ such that $T_j^*\phi=\phi$
for every $j$, so $P^*\phi=\phi$, and then $0\ne \phi(z)=P^*\phi(z)= \phi(Pz)=0$,
a contradiction.}
$$
E=\big[\bigcap_{j=1}^d \overline{(I-T_j)E}\big] \oplus \overline{\sum_{1\le j\le d} F(T_j)},
$$
where $F(T):=\{v: Tv=v\}$. Applying this decomposition  to $T_j=\lambda_j^{-1}T$ we obtain
$E=E_0\oplus E_1$, with $E_0:=\bigcap_{j=1}^d \overline{(\lambda_j I-T)E}$ and
$$
E_1:= \overline{\sum_{1\le j\le d} F(T_j)}=
\text {closed lin. span } \{v: Tv=\lambda_j v,\ 1\le j \le d\}.
$$
For $\lambda \in \mathbb T$ not an eigenvalue of $T^*$ we have
$\|\frac1N \sum_{n=1}^N \bar \lambda^nT^n v\|\to 0$ for every $v \in E$. For each
$\lambda_j$ we have $ \|\frac1N \sum_{n=1}^N \bar \lambda_j^n T^n v\|\to 0$ for $v\in E_0$.
Applying Theorem \ref{dor} to the restriction of $T$ to $E_0$ we have
$$
\Big\|\frac1N\sum_{n=1}^N b_n T^nv\Big\| \to 0 \quad
\text{ for every } v\in E_0 \text{ and } (b_n)_{n\ge 1} \in \ell_\infty.
$$
To the generators of $E_1$ we apply Davenport's inequality, and obtain
$$\Big\|\frac1N\sum_{n=1}^N \mu(n)T^nv\Big\| \to 0 \quad \text{ for every } v\in E_1 ,$$
This completes the proof of the theorem.
\end{proof}

{\bf Remark.} In some of our theorems, when $E$ is over $\C$, $\ T$ is {\it rotationally mean
ergodic}, meaning that all the operators $\lambda T$, $|\lambda|=1$, are mean ergodic.

\begin{prop} \label{quasi-compact}
Let $E$ be a Banach space which does not contain an isomorphic copy of $\ell_1$.
Let $T$ be a power-bounded quasi-compact operator on $E$, i.e.  for some
$n\ge 1$ there exists a compact operator  $Q$ such that $\|T^n-Q\|<1$. Then
$$
\Big\|\frac1N\sum_{n=1}^N \mu(n)T^nv\Big\| \to 0 \quad \text{ for every } v\in E .
$$
\end{prop}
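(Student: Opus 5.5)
The plan is to reduce to Theorem \ref{dor2}, after first passing to the complex case if $E$ is real.

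\emph{Spectral consequences of quasi-compactness.} For a power-bounded quasi-compact $T$, the standard theory (e.g.\ \cite{K}, Yosida--Kakutani) gives the following:
\begin{enumerate}[label=(\roman*)]
\item the peripheral spectrum $\sigma(T)\cap\mathbb T$ is finite;
\item it consists of poles of the resolvent of order one, each with a finite-dimensional eigenspace;
\item $T$ is uniformly ergodic.
\end{enumerate}
For every $|\lambda|=1$ the operator $\lambda T$ is again power-bounded and quasi-compact, since $\|(\lambda T)^n-\lambda^nQ\|<1$ and $\lambda^nQ$ is compact. Hence $\lambda T$ is also uniformly, in particular mean, ergodic, so $T$ is rotationally mean ergodic.

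The unimodular eigenvalues of $T^*$ are contained in $\sigma(T^*)\cap\mathbb T=\sigma(T)\cap\mathbb T$, so there are finitely many of them, say $\lambda_1,\dots,\lambda_d$. By the above, each $\lambda_j^{-1}T$ is mean ergodic.

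\emph{Complex case.} Since $E$ contains no copy of $\ell_1$, all hypotheses of Theorem \ref{dor2} hold, and it yields $\|\frac1N\sum_{n=1}^N\mu(n)T^nv\|\to 0$ for every $v\in E$.

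\emph{Real case.} I would complexify as in the proof of Theorem \ref{wap}. First renorm with $|\|v|\|=\sup_n\|T^nv\|$ so that $T$ is a contraction. Then take $\tilde E=E\oplus iE$ with the Taylor norm and $\tilde T(u,v)=(Tu,Tv)$. Three properties must be checked.
\begin{itemize}
\item $\tilde T$ is a contraction on $\tilde E$, by \cite{MST}.
\item $\tilde T$ is quasi-compact. The Taylor norm is equivalent to the product norm, so $\tilde Q(u,v)=(Qu,Qv)$ is compact and $\|\tilde T^n-\tilde Q\|\le C\|T^n-Q\|$ for some constant $C$. If $C\|T^n-Q\|\ge 1$, I would instead use powers: $\|T^{nk}-(\text{compact})\|\le\|T^n-Q\|^k\to0$, since $T^{nk}-(T^n-Q)^k$ is compact. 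Choosing $k$ large gives the required strict inequality.
\item $\tilde E$ contains no copy of $\ell_1$. It is isomorphic to $E\times E$, and if $\ell_1$ embedded in $E\times E$, then by Rosenthal's $\ell_1$ theorem (a bounded sequence without weakly Cauchy subsequence projects to such a sequence in one coordinate) it would embed in $E$.
\end{itemize}
Applying the complex case to $\tilde T$ and restricting to $E\times\{0\}$ then gives the result.

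\emph{Main obstacle.} The delicate step is justifying rigorously the spectral facts (i)--(iii) for quasi-compact power-bounded operators. I would cite the Yosida--Kakutani theory: power-boundedness together with $\|T^n-Q\|<1$ gives that the essential spectral radius is less than $1$. Consequently the spectrum outside a disk of radius $r<1$ consists of finitely many poles with finite-rank spectral projections. Power-boundedness forces the poles on $\mathbb T$ to be simple, and this yields the uniform ergodicity of each $\lambda T$.

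An alternative avoiding Theorem \ref{dor2} would decompose $E$ as the span of the peripheral eigenvectors plus a $T$-invariant complement on which the spectral radius of $T$ is less than $1$. On the complement $\|T^n\|\to0$ exponentially, so the averages tend to $0$ trivially for any bounded weights. On the finitely many eigenvectors one uses \eqref{davenport}. This route does not even need the $\ell_1$ hypothesis, so I would keep it as a backup.
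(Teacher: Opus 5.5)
Your main argument is essentially the paper's. Dunford--Schwartz (Yosida--Kakutani) gives finitely many unimodular eigenvalues of $T^*$ and mean ergodicity of each $\lambda_j^{-1}T$, and Theorem \ref{dor2} then settles the complex case. For real $E$ you complexify with the Taylor norm. There your check that $\tilde E\cong E\times E$ contains no $\ell_1$ supplies a step the paper leaves implicit. Your power trick correctly replaces the identity $\|\tilde A\|=\|A\|$ from \cite{MST}, but run it in the original norm: the renorming to a contraction is unnecessary, since Theorem \ref{dor2} only needs power-boundedness, and it could spoil $\|T^n-Q\|<1$.

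Your backup route via $T^n=\sum_j\lambda_j^nP_j+T^n(I-P)$, with $\|T^n(I-P)\|\to0$ geometrically, is also correct. It is in fact stronger: it gives operator-norm convergence at Davenport's rate and does not need the $\ell_1$ hypothesis.
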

\begin{proof} We first prove the proposition for $E$ over $\mathbb C$. We observe that
for $|\lambda|=1$ we have $\|(\lambda T)^n -\lambda^nQ\|<1$, so $\lambda T$ is
quasi-compact.
	
By \cite[Theorem VIII.8.3 and Corollary VIII.8.4]{DS},  $T$ is uniformly mean ergodic, and
$\sigma(T)\cap \mathbb T$ is finite. Hence also $\sigma(T^*)\cap \mathbb T$ is finite,
so in particular $T^*$ has only finitely many unimodular eigenvalues $\lambda_1,\dots\,\lambda_m$.
Since all $\lambda_j^{-1} T$ are quasi-compact, they are (uniformly) mean ergodic.
Now the claim of our proposition follows from Theorem \ref{dor2}.
\smallskip

We now prove the proposition when $E$ is over $\mathbb R$. Let $\tilde E$ be the
complexification of $E$ with the Taylor norm, as described in the proof of Theorem
\ref{wap}, and $\tilde T(u,v)=(Tu,Tv)$ for $(u,v) \in\tilde E$. Then $\tilde T$ is 
power-bounded, and by \cite[Proposition 4]{ MST} $\|\tilde T^n -\tilde Q\|<1$. It is easy 
to show that $\tilde Q(u,v)=(Qu,Qv)$ is compact on $\tilde E$, so $\tilde T$ is quasi-compact,
and the first part of the proof applies to $\tilde T$. Hence for $v \in E$ we have
$$
\Big\|\frac1N\sum_{n=1}^N \mu(n)T^nv\Big\| =
\Big\|\frac1N\sum_{n=1}^N \mu(n)\tilde T^n(v,0) \Big\| \to 0
$$
\end{proof}

{\bf Remark.} Quasi-compactness is used in the study of Markov operators. See the notes in
\cite[p. 730]{DS}.
\medskip

\begin{lemma} \label{ell_1}
Let $E$ be a complex Banach space which does not contain an isomorphic copy of the complex $\ell_1$. Then $E$ as a real Banach space does not contain an isomorphic
copy of the real $\ell_1$.
\end{lemma}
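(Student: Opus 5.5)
We argue by contraposition. Suppose $E_{\mathbb R}$ (that is, $E$ regarded as a real space) contains an isomorphic copy of the real $\ell_1$. We will produce a copy of the complex $\ell_1$ inside $E$. The most robust route is Rosenthal's $\ell_1$ theorem, which gives a characterization that does not depend on the scalar field: a Banach space (real or complex) contains no copy of $\ell_1$ if and only if every bounded sequence has a weakly Cauchy subsequence. So the plan reduces to three checks. First, $E$ and $E_{\mathbb R}$ have the same bounded sequences. Second, they have the same weakly Cauchy sequences. Third, Rosenthal's theorem is valid in the complex case.

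For the first check, the norm is the same in both, so bounded sequences coincide. For the second, we compare the duals. For $\phi\in E^*$ (complex linear), the map $\Rea\phi$ is real linear and continuous, so it lies in $(E_{\mathbb R})^*$. Conversely, any $\psi\in(E_{\mathbb R})^*$ equals $\Rea\phi$ for $\phi(x):=\psi(x)-i\psi(ix)$, and $\phi$ is a complex linear functional with $\|\phi\|=\|\psi\|$. Now $\phi(x_n)$ converges if and only if both $\Rea\phi(x_n)$ and $\Rea(-i\phi)(x_n)=\Ima\phi(x_n)$ converge. Hence a sequence is weakly Cauchy in $E$ exactly when it is weakly Cauchy in $E_{\mathbb R}$.

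The third check is the main obstacle: Rosenthal's theorem is usually stated for real spaces. We can avoid relying on it in the complex case by working directly. Suppose $(x_n)$ is equivalent to the unit vector basis of the real $\ell_1$ in $E_{\mathbb R}$. By Rosenthal's theorem in the real space, this bounded sequence has no weakly Cauchy subsequence in $E_{\mathbb R}$. By the dual comparison above, it also has none in $E$. We then apply the complex version of Rosenthal's theorem, which holds; Rosenthal's combinatorial proof works verbatim over $\mathbb C$, or one can reduce to the real case via real and imaginary parts. This yields a subsequence equivalent to the complex $\ell_1$ basis, contradicting the hypothesis on $E$.

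Alternatively, there is an elementary route that avoids Rosenthal entirely. Let $(x_n)$ be real-$\ell_1$ equivalent, with $\|\sum a_nx_n\|\ge c\sum|a_n|$ for real $a_n$. For complex $z_n=a_n+ib_n$, choose a rotation $\theta$ so that $\sum|\Rea(e^{i\theta}z_n)|\ge\frac{1}{\pi}\sum|z_n|$; an averaging argument over $\theta$ guarantees such a $\theta$ exists. The obstruction is that $\|\sum z_nx_n\|$ does not directly control the real combination $\sum \Rea(e^{i\theta}z_n)x_n$, since the imaginary parts interact through the vectors $ix_n$. This interaction is exactly why we expect to need the Rosenthal argument, or at least a Hahn–Banach argument producing real functionals $\psi_n$ that separate the $x_n$. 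So our first attempt would be the Rosenthal route described above, keeping the elementary estimate as a fallback.
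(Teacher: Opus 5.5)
Your main route is correct and is essentially the paper's. The paper obtains a copy of complex $\ell_1$ directly from Rosenthal's treatment of the complex case (last paragraph of \cite{Ro}). You feed the same input through weakly Cauchy sequences and the identification $\psi=\operatorname{Re}\phi$ of $(E_{\mathbb R})^*$ with $E^*$, which is a correct and clean reduction; the step ``real-$\ell_1$ implies no weakly Cauchy subsequence'' needs no theorem at all.

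One caution: the real proof does not carry over verbatim, nor does it reduce easily via real and imaginary parts. That reduction runs into exactly the obstruction you describe in your last paragraph. Cite the complex $\ell_1$ dichotomy as a known result instead: as far as I recall, its full complex form is due to L.~Dor. The paper itself uses only Rosenthal's weaker remark, namely that either a subsequence or some other sequence is complex-$\ell_1$.
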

\begin{proof} assume that $E$ contains an isomorphic copy of the real  $\ell_1$, with
$(v_j)_{j\ge 1}$ equivalent to the basis of the real $\ell_1$. By the last paragraph
of Roesenthal's \cite{Ro}, if $(v_j)$ does not contain a subsequence equivalent to
 the basis of the complex $\ell_1$, then there is another sequence in $E$ which is
 equivalent to  the basis of the complex $\ell_1$. In either case, this contradicts the assumption on $E$; hence $E$ does not contain an isomorphic copy of the real $\ell_1$.
\end{proof}

\begin{theorem} \label{rosenthal-rep}
Let $E$ be a (real or complex) Banach space which does not contain an isomorphic
copy of $\ell_1$. Let $T$ be an invertible operator on $E$ with
$\sup_{n\in \mathbb Z}\|T^n\| < \infty$. Then  \eqref{sarnak-op-E} holds.
\end{theorem}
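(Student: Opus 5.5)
We reduce the statement to Veech-type weak orthogonality for bounded two-sided scalar sequences, using Rosenthal's $\ell_1$ theorem to obtain the needed compactness. Fix $v\in E$ and $\phi\in E^*$, and set $a_n:=\phi(T^nv)$ for $n\in\mathbb Z$. The goal is to show that $\frac1N\sum_{n=1}^N\mu(n)a_n\to 0$. Since $\sup_{n\in\mathbb Z}\|T^n\|<\infty$, the sequence $(a_n)_{n\in\mathbb Z}$ is bounded.

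The first step is to view $(T^nv)_{n\in\mathbb Z}$ as a bounded sequence in $E$, which does not contain $\ell_1$. In the complex case we use Lemma \ref{ell_1} to pass to the real structure. By Rosenthal's $\ell_1$ theorem, every subsequence of $(T^nv)$ then has a weakly Cauchy subsequence. Hence every sequence of shifts of $(a_n)$ has a pointwise convergent subsequence; here we use that translating $n$ corresponds to applying $T^{k}$, so that $\phi(T^{n+k_j}v)=(T^{*n}\phi)(T^{k_j}v)$. This is exactly the property that the orbit closure $X$ of $a=(a_n)_{n\in\mathbb Z}$ under the two-sided shift $S$, taken in the product space $[-M,M]^{\mathbb Z}$ (or the product of discs), is a \emph{tame} (Rosenthal compact) system in the sense of K\"ohler and Glasner. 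Concretely, each function in the Ellis enveloping semigroup is a pointwise limit of a sequence of shifts, i.e.\ $X$ is ``Baire class one'' and contains no independent $\ell_1$-structure. Invertibility of $T$, together with two-sided boundedness, is what makes the two-sided shift orbit and its closure well controlled by $E$.

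The second step uses the known fact that tame systems have zero topological entropy (Kerr--Li: positive entropy yields infinite independence sets, which produce $\ell_1$-sequences). Thus $(X,S)$ is deterministic and $a_n=f(S^na)$ with $f(x)=x_0$ continuous. The catch is that we cannot invoke Sarnak's conjecture. Instead we would use the stronger, unconditional result that M\"obius disjointness holds for tame systems; Huang, Wang and Ye proved this via the structure theorem that minimal tame systems are almost automorphic and uniquely ergodic with discrete spectrum. Alternatively, we can stay operator-theoretic: $\phi(T^nv)$ is then a ``tame'' sequence whose Furstenberg systems have discrete spectrum, and Davenport's estimate \eqref{davenport} handles each eigenfunction.

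The main obstacle is the second step: justifying unconditionally that the tame sequence $(a_n)$ is orthogonal to $\mu$, including along non-generic points that are not in a minimal subsystem. My first attempt would be to invoke the Huang--Wang--Ye theorem for tame systems directly. Failing that, I would prove via the Veech criterion (as in \cite[Proposition 10.1]{V2}) that every Furstenberg limit of $(a_n)$ gives a system whose ergodic components are rotations, and then conclude by Davenport's inequality. The real case needs no complexification, since the scalar sequence argument works over $\mathbb R$ as well.
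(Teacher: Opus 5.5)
Your overall route is the paper's: get tameness from the absence of $\ell_1$, then apply the unconditional M\"obius disjointness of tame systems \cite[Corollary 1.5]{HWY}. However, the tameness step, which is the heart of the matter, is not established.

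\textbf{The gap.} From Rosenthal's theorem you derive only this: along a subsequence with $(T^{k_j}v)$ weakly Cauchy, each coordinate $(S^{k_j}a)_n=(T^{*n}\phi)(T^{k_j}v)$ converges. That is convergence of $S^{k_j}$ only at the orbit points $S^na$, and it carries no information. Every bounded sequence in $\mathbb C^{\mathbb Z}$ has shifts with a coordinatewise convergent subsequence, by compactness and a diagonal argument. For instance, a transitive point of the full shift on $\{-1,1\}^{\mathbb Z}$ has this property, yet its orbit closure has positive entropy. Tameness of the orbit closure $(X,S)$ requires more: from every $(k_j)$ one must extract a subsequence along which $f\circ S^{k_j}$ converges at \emph{every} point of $X$, for every $f\in C(X)$. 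This is what excludes $\ell_1$-sequences in $\{f\circ S^k\}$, and what makes each element of the enveloping semigroup a sequential limit of shifts. Points of $X$ off the orbit of $a$ are not covered by your identity, so ``this is exactly the property that $X$ is tame'' does not follow.

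\textbf{The missing idea} is to realize all of $X$ through $E^*$. Let $C:=\overline{\{T^{*m}\phi: m\in\mathbb Z\}}^{w^*}$, which is weak$^*$-compact, and define $\pi(\psi):=(\psi(T^nv))_{n\in\mathbb Z}$. Then $\pi$ is continuous from $C$ into the product topology and satisfies $\pi\circ T^*=S\circ\pi$. Hence $X\subset\pi(C)$, i.e.\ every $x\in X$ has the form $x_n=\psi(T^nv)$ for some $\psi\in C$. It follows that $(S^{k_j}x)_n=(T^{*n}\psi)(T^{k_j}v)$ converges for all $x\in X$ and all $n$ simultaneously, because one weakly Cauchy subsequence of $(T^kv)$ controls every functional. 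By continuity, $f\circ S^{k_j}$ then converges pointwise on $X$, so it is weakly Cauchy in $C(X)$, and $(X,S)$ is tame.

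With this added, your argument is a self-contained proof of the special case of Glasner--Megrelishvili \cite[Theorem 6.1]{GMe} that the paper cites for the full dual-ball system $(B_{E^*},T^*)$; your $X$ is a factor of its subsystem $C$. It also has the small bonus that $X$ is automatically compact metric.

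Your fallback routes do not replace \cite{HWY}. Discrete spectrum of the invariant measures, plus Davenport's inequality applied to each (merely measurable) eigenfunction, does not give convergence at every point of $X$. So you must cite \cite[Corollary 1.5]{HWY} as the paper does, and the Kerr--Li zero-entropy step then becomes unnecessary.
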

\begin{proof} Lemma \ref{ell_1} shows that it is enough to prove for $E$  real.
We may assume that $T$ is an invertible isometry of $E$, since it is so
under the equivalent norm $\||v\||:= \sup_{n\in \mathbb Z}\|T^nv\|$. Let $X$ be the unit ball
of $E^*$, which is compact in the weak* topology, and define $\tau: X\longrightarrow X$
by $\tau \phi=T^*\phi$, $\phi\in X$.

{\it Claim: The system $(X,\tau)$ is tame (in the sense of \cite{G}).}

\noindent
We show that $(X,\tau)$ is faithfully represented in $E$. The Abelian group $G=\mathbb Z$
 is clearly  represented in the group  $Iso(E)$ of invertible linear isometries of $E$, by
$h(n):= T^n,\  n\in\mathbb Z$. Let $\alpha: X \to E^*$ be the identity embedding.
By the definition of $\tau$ we have $\tau^n\alpha(\phi)(v)= \phi(T^nv)=\phi(h(n)v)$.
Obviously, the pair $(h,\alpha)$ is a faithful representation in $E$, by the definition in
\cite[Definition 3.1]{GMe}. Since $E$ does not contain an isomorphic copy of $\ell_1$
($E$ is a Rosenthal space in the terminology of \cite{GMe}), Theorem 6.1 of Glasner and
Megrelishvili \cite{GMe} yields that $(X,\tau)$ is tame.
\smallskip

By \cite[Corollary 1.5]{HWY}, the tame system $(X,\tau)$ satisfies \eqref{sarnak-op}.
For $v \in E$ define $f_v \in C(X)$ by $f_v(\phi)=\phi(v)$, $\phi\in X$. Then
$$
\phi\Big(\frac1N\sum_{n=1}^N\mu(n)T^n v\Big)=
\frac1N\sum_{n=1}^N\mu(n) f_v(\tau^n \phi) \to 0, \qquad \forall \phi \in X.
$$
This yields the theorem.
\end{proof}

{\bf Remark.} The example following Corollary \ref{stationary} shows that without
the condition that $E$ does not contain $\ell_1$, Theorem \ref{rosenthal-rep} may fail.
\medskip

{\bf Definition.} An operator $T$ on a Banach space $E$ is called {\it (weakly)  rigid} if
 for some subsequence $(n_k)$ we have $T^{n_k} v \to v$ (weakly) for every $v\in E$.

If $T$ is a rigid power-bounded operator, then $T$ is invertible, and $T^{-1}$ is also
power-bounded and rigid \cite[Proposition 3.2]{CMP}. If $T$ is a rigid contraction, it is
an (invertible) isometry, since $\|v\|=\lim_k \|T^{n_k}v\|=\lim\|T^n v\|\le \|Tv\| \le \|v\|$.

\begin{cor} \label{rigid-sarnak}
Let $E$ be a Banach space which does not contain an isomorphic copy of $\ell_1$, and
let $T$ be a weakly rigid power-bounded operator on $E$. Then  \eqref{sarnak-op-E} holds.
\end{cor}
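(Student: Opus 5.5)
The plan is to reduce Corollary \ref{rigid-sarnak} to Theorem \ref{rosenthal-rep}. That theorem already gives \eqref{sarnak-op-E} for invertible $T$ with $\sup_{n\in\mathbb Z}\|T^n\|<\infty$ on a space not containing $\ell_1$. So it suffices to show that a weakly rigid power-bounded $T$ is invertible and that its inverse is power-bounded. This is the weak analogue of the fact quoted from \cite[Proposition 3.2]{CMP} for rigid operators. If the weak version is not covered there, I would prove it directly as follows.

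\textbf{Step 1: renorm.} Set $M=\sup_{n\ge0}\|T^n\|$ and $\||v\||=\sup_{n\ge0}\|T^nv\|$. This norm is equivalent to the original one and makes $T$ a contraction. Weak rigidity is unchanged, since the weak topology is the same.

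\textbf{Step 2: $T$ is an isometry for $\||\cdot\||$.} Take $(n_k)$ with $T^{n_k}v\to v$ weakly. By weak lower semicontinuity of the norm,
$$\||v\||\le\liminf_k \||T^{n_k}v\||\le \||Tv\||\le \||v\||.$$
The middle inequality uses $n_k\ge1$ and the contractivity of $T$. Hence $T$ is an isometry, and in particular injective with closed range.

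\textbf{Step 3: $T$ is surjective.} The range $TE$ is a closed subspace, hence weakly closed. For $n_k\ge1$ each $T^{n_k}v$ lies in $TE$, and $T^{n_k}v\to v$ weakly, so $v\in TE$. Therefore $T$ is an invertible isometry for $\||\cdot\||$. It follows that $\sup_{n\in\mathbb Z}\|T^n\|<\infty$ in the original norm, with bound at most $M$.

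\textbf{Step 4: conclude.} Theorem \ref{rosenthal-rep} now applies to $T$ on $E$, in either the real or the complex case, and yields \eqref{sarnak-op-E}.

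The only delicate point is Steps 2--3, namely checking that weak rather than strong rigidity already forces invertibility. This is handled by the lower semicontinuity of the norm and the weak closedness of the closed range. No other obstacle is expected.
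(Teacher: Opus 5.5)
Your proof is correct and follows the paper's route. The paper deduces the corollary from Proposition~\ref{rigid-isometry}, which shows that a weakly rigid power-bounded operator is invertible with power-bounded inverse using the same renorming and the same isometry estimate, and then applies Theorem~\ref{rosenthal-rep}. The one difference is at surjectivity: the paper uses Krein--Shmulian to place $v$ in the norm-closed convex hull of $(T^{n_k}v)\subset T(E)$, whereas your observation that the closed range is weakly closed does the same job more directly.
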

\begin{proof}
By Proposition \ref{rigid-isometry} $T$ is invertible and $T^{-1}$ is power-bounded,
so Theorem \ref{rosenthal-rep} applies.
\end{proof}

{\bf Remark.} A doubly power-bounded invertible $T$ (i.e.
$\sup_{n\in\mathbb Z}\|T^n\|=M<\infty$) on an infinite-dimensional Banach space is
 never quasi-compact. Indeed, if $T$ is quasi-compact, then, by \cite[Lemma 2.2.4, p. 88]{K},
there exists a sequence of compact operators $(Q_n)_{n\ge 1}$ such that $\|T^n-Q_n\| \to 0$,
and then $\|I-T^{-n}Q_n\| \le M\|T^n-Q_n\| \to 0$, so for large $n$ the compact operator
$T^{-n}Q_n$ is invertible, which is impossible.

Consequently, for $T$ power-bounded,  quasi-compactness and weak rigidity are mutually
exclusive.

\begin{prop} \label{not-UE}
There exists a minimal topological system $(X,\tau)$ which is not uniquely ergodic,  such that
\eqref{sarnak-op} holds for every $f \in C(X)$.
\end{prop}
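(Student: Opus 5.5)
Take $(X,\tau)$ to be a minimal \emph{tame} system that is not uniquely ergodic. Then \eqref{sarnak-op} holds for every $f\in C(X)$ by the result of Huang, Wang and Ye \cite[Corollary 1.5]{HWY}, which was already used in the proof of Theorem \ref{rosenthal-rep}: tame systems satisfy \eqref{sarnak-op}. So the whole problem reduces to producing such a system.

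\textbf{Route 1 (tame, non-uniquely ergodic).} A natural source is a minimal almost 1-1 extension of a rotation. Use a Sturmian-like or Toeplitz-type coding of an irrational rotation $x\mapsto x+\alpha$ on $\mathbb T$ by a suitable closed set, for instance a Cantor set of positive measure, chosen so that the resulting minimal subshift carries more than one invariant measure. Tameness would be checked via the structure theorem: a minimal system that is an almost automorphic extension of its maximal equicontinuous factor, with the set of non-injectivity points of measure zero for the Haar measure, is tame. Examples of this kind (tame, minimal, non-uniquely ergodic) appear in the work of Glasner, Fuhrmann and collaborators. The main obstacle is this step: a tame minimal system is uniquely ergodic when the non-injectivity set is null, so one needs a construction where that set is Haar-null yet several invariant measures still exist. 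I am not sure this combination is possible.

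\textbf{Route 2 (fallback, avoiding tameness).} Take a minimal, non-uniquely ergodic system of zero entropy for which Sarnak's conjecture is already known. Good candidates are:
\begin{itemize}
\item Furstenberg's minimal skew product $(x,y)\mapsto(x+\alpha,y+g(x))$ on $\mathbb T^2$, which is minimal but not uniquely ergodic for suitable smooth $g$;
\item a minimal non-uniquely ergodic Toeplitz system.
\end{itemize}
For these, \eqref{sarnak-op1} is established in the literature: Liu--Sarnak and Kułaga-Przymus--Lemańczyk for skew products, and Abdalaoui--Kasjan--Lemańczyk for Toeplitz systems. In the skew product case the argument goes through Fourier expansion in $y$. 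For characters $e(ky)$ one gets sequences $e\big(k\sum_{j<n}g(x+j\alpha)\big)$, and orthogonality to $\mu$ follows by applying Davenport's estimate \eqref{davenport} together with the Kátai--Bourgain--Sarnak--Ziegler criterion on the rotation factor.

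I would present Route 1 if a citable tame example exists, and otherwise Route 2 with the Furstenberg skew product. In either case the final step is the same: pass from every $x\in X$ and $f\in C(X)$ in \eqref{sarnak-op1} to weak convergence in $C(X)$, i.e. \eqref{sarnak-op}, by dominated convergence against measures in $C(X)^*$.
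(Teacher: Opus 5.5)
Of your two routes, only Route~2 works, and it is exactly the paper's proof. Route~1 is not merely hard; it is impossible. By results of Huang (2006) and Glasner (Invent.\ Math.\ 2018), every tame minimal metric $\mathbb Z$-system is almost automorphic and uniquely ergodic. So the Huang--Wang--Ye theorem for tame systems can never supply a minimal system that fails unique ergodicity.

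Your own formulation already contains the contradiction. Let $\pi:X\to Z$ be the map onto the maximal equicontinuous factor, a minimal rotation and hence uniquely ergodic with Haar measure $\lambda$. Suppose the set $Z_0$ of points with one-point fibres satisfies $\lambda(Z_0)=1$. Then every invariant probability $\nu$ on $X$ has $\pi_*\nu=\lambda$, so $\nu(\pi^{-1}Z_0)=1$. Since $\pi$ is a bijection of $\pi^{-1}Z_0$ onto $Z_0$, $\nu$ is the image of $\lambda$ under its inverse, hence unique. The structural statement you quote is, moreover, known in the opposite direction: tameness forces the non-injectivity set to be Haar-null (Fuhrmann--Glasner--J\"ager--Oertel). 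Route~1 should simply be dropped.

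Route~2 is the paper's argument. Furstenberg's construction gives an irrational $\alpha$ and a smooth $h$ with $(x,y)\mapsto(x+\alpha,y+h(x))$ minimal and not uniquely ergodic, and Liu--Sarnak's Theorem~1.2 gives \eqref{sarnak-op}. Your passage from \eqref{sarnak-op1} to \eqref{sarnak-op} by dominated convergence is fine. Present this, with two corrections.

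\emph{First,} you need one $h$ that satisfies Furstenberg's requirements and, at the same time, the hypotheses of the disjointness theorem. That theorem concerns analytic $h$ with additional conditions, so ``suitable smooth $g$'' does not put you under it. This is what the paper's phrase ``with some additional assumptions on $h$'' refers to.

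\emph{Second,} drop your sketch of why disjointness holds. Davenport's estimate \eqref{davenport} only handles the Fourier modes $k=0$, i.e.\ functions of $x$ alone. For $k\neq 0$ the sequences $a_n=e^{2\pi i k S_nh(x)}$, with $S_nh(x)=\sum_{j<n}h(x+j\alpha)$, do not live on the rotation factor. The K\'atai--Bourgain--Sarnak--Ziegler criterion would require showing $\frac1N\sum_{n\le N}a_{pn}\overline{a_{qn}}\to 0$ for distinct primes $p,q$, which is a statement about orbits of the whole skew product. That is the real content of Liu--Sarnak and does not follow from Davenport, so cite the theorem instead of the heuristic.
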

\begin{proof}
By Furstenberg's construction \cite{F} (cf. also \cite[p. 85]{P}), there exist an irrational
$\alpha$ and $h$ a smooth periodic function of period $1$  such that the map of the torus
defined by $(x,y) \mapsto (x+\alpha,y+h(x)) \mod 1\,$ is minimal and not uniquely ergodic.
By Liu and Sarnak  \cite[Theorem 1.2]{LS}, with some additional  assumptions on $h$, this
map satisfies \eqref{sarnak-op}.
\end{proof}

\begin{cor}
There exists a contraction $T$ on a Banach space $E$ which satisfies \eqref{sarnak-op-E}
and is not mean ergodic.
\end{cor}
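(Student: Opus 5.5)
Take the system $(X,\tau)$ of Proposition \ref{not-UE}: minimal, not uniquely ergodic, and satisfying \eqref{sarnak-op} for every $f\in C(X)$. Put $E=C(X)$ and $Tf=f\circ\tau$. Then $T$ is a positive linear operator with $\|Tf\|_\infty\le\|f\|_\infty$, so it is a contraction, and \eqref{sarnak-op-E} is exactly \eqref{sarnak-op}, which holds by the choice of $(X,\tau)$. (Because $\tau$ is continuous on the compact torus, $T$ maps $C(X)$ into itself.) The only remaining task is to show that $T$ is not mean ergodic.

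For this I would use the standard fact that $f\mapsto f\circ\tau$ on $C(X)$ is mean ergodic only if the ergodic averages of every $f$ converge uniformly. In that case the limit $Pf$ is a $T$-invariant continuous function, and $Pf$ is constant by minimality. The argument for constancy: $g=Pf$ satisfies $g\circ\tau=g$; each level set $\{g\ge c\}$ is closed and $\tau$-invariant, hence is either $\emptyset$ or $X$ by minimality, so $g$ is constant. Now let $\nu$ be any $\tau$-invariant probability measure. Integrating the averages against $\nu$ gives $\int f\,d\nu=\int Pf\,d\nu=Pf$, a constant that does not depend on $\nu$. So any two invariant measures agree on all of $C(X)$, which makes $(X,\tau)$ uniquely ergodic. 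That contradicts Proposition \ref{not-UE}.

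The step that needs the most care is turning strong convergence of the averages into the identity $\int f\,d\nu=Pf$. Since the convergence is in sup norm, the integrals against $\nu$ converge. Invariance of $\nu$ gives $\int T^nf\,d\nu=\int f\,d\nu$, so every average has integral $\int f\,d\nu$, and passing to the limit yields $\int Pf\,d\nu=\int f\,d\nu$. The invariance and continuity of $Pf$ used above also follow from the mean ergodic theorem, because $Pf$ is a norm limit in $C(X)$ and $TP=P$. Alternatively, one can skip minimality and use Krylov--Bogolyubov: uniform convergence to a constant for all $f$ is the classical characterization of unique ergodicity. Either way, $T$ is a contraction satisfying \eqref{sarnak-op-E} that is not mean ergodic.
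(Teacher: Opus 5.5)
Your proof is correct and follows the paper's: the same operator $Tf=f\circ\tau$ on $C(X)$ for the system of Proposition \ref{not-UE}. The paper cites Krengel for the fact that a minimal, non-uniquely-ergodic $\tau$ induces a non-mean-ergodic operator, whereas you prove it directly from the constancy of continuous invariant functions. Your closing aside is off, though: minimality cannot be skipped, because mean ergodicity alone does not force the limit to be constant (the identity on a two-point space is mean ergodic but not uniquely ergodic).
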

\begin{proof}
Let $(X,\tau)$ be the system of Proposition \ref{not-UE} and $E=C(X)$ with $Tf=f\circ\tau$ 
the induced operator on $C(X)$. Then $T$ satisfies \eqref{sarnak-op-E}.
Since $\tau$ is minimal and not uniquely ergodic, the induced operator $T$ is not mean ergodic
on $C(X)$ \cite[p. 180]{K}.
\end{proof}

\medskip

\section{Dynamical topological entropy for power-bounded operators}

In this section we define topological entropy for a power-bounded operator on a Banach space.
We show that our definition is well-adapted to deal with Sarnak's conjecture.
\medskip

The topological entropy $h_{top}(\tau)$ of a compact Hausdorff dynamical system
$(X,\tau)$, with $\tau$ a continuous self map of $X$, was defined by Adler et al.
\cite{AKM}, who proved  its basic properties. A corollary of \cite[Theorem 4]{AKM}
is that if $Y \subset X$ is closed and $\tau$-invariant, then
$h_{top}(\tau_{|Y}) \le h_{top}(\tau)$. The proof of \cite[Theorem 5]{AKM} shows that
if $\pi$ is a continuous factor map of $(X,\tau)$ onto some $(Y,\varphi)$ (i.e.
$\pi\circ\tau=\varphi\circ \pi$), then $h_{top}(\varphi) \le h_{top}(\tau)$.

Goodman \cite{Go} proved the following {\it variational principle} (conjectured in
\cite{AKM}):
\begin{equation} \label{goodman}
h_{top}(\tau)=
\sup\{h_\nu(\tau): \nu\circ\tau^{-1}=\nu, \quad\nu \text{ a probability on } X\},
\end{equation}
where $h_\nu(\tau)$ is the Kolmogorov-Sinai entropy of the probability preserving system
$(X,\nu,\tau)$.
\smallskip

When $X$ is compact metric, there are two other definitions of topological entropy of $\tau$, 
which are both equal to $h_{top}(\tau)$ as defined in \cite{AKM}; see \cite[pp. 160-163]{Do}.


\medskip

\subsection{Topological entropy for contractions using the dual operator.}

One way of defining the entropy of a contraction $T$ on a Banach space $E$ (every
power-bounded $T$ is a contraction in an equivalent norm) is the following.
Let $B$ be the unit ball in $E^*$. Then $B$ is a compact Hausdorff space in the
weak* topology, which is metrizable when $E$ is separable. Since $T$ is a contraction,
$T^*$ maps $B$ into itself, and is weak* continuous as a dual operator. We now
define $h^*_{top}(T):=h_{top}(T^*{|B})$, the topological entropy of the system
$(B,T^*{|B})$.

\begin{prop} \label{subspace}
Let $T$ be a contraction on a Banach space $E$, and let $E_0 \subset E$ be
a $T$-invariant closed subspace. The $h^*_{top}(T_{|E_0})\le h^*_{top}(T)$.
\end{prop}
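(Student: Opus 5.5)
The plan is to exhibit the system $(B_0, (T_{|E_0})^*_{|B_0})$, where $B_0$ is the unit ball of $E_0^*$, as a factor of $(B, T^*_{|B})$, and then invoke the fact recalled above from the proof of \cite[Theorem 5]{AKM} that topological entropy does not increase under continuous factor maps.

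Define $\pi: B\to B_0$ by restriction, $\pi(\phi):=\phi_{|E_0}$. The key steps are as follows.

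\begin{enumerate}[label=(\roman*)]
\item Since $\|\phi_{|E_0}\|\le\|\phi\|\le 1$, the map $\pi$ does land in $B_0$.
\item $\pi$ is weak*-to-weak* continuous. Convergence in the weak* topology of $B_0$ is tested on $v\in E_0$, and $\pi(\phi)(v)=\phi(v)$ is a weak* continuous function of $\phi$.
\item $\pi$ is onto. By the Hahn--Banach theorem every $\psi\in B_0$ extends to some $\phi\in E^*$ with $\|\phi\|=\|\psi\|\le1$, so $\phi\in B$ and $\pi(\phi)=\psi$.
\item $\pi$ intertwines the two maps. Write $S:=T_{|E_0}$, which is a contraction on $E_0$ because $E_0$ is $T$-invariant. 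For $v\in E_0$,
\[
\pi(T^*\phi)(v)=\phi(Tv)=\phi_{|E_0}(Sv)=(S^*\pi(\phi))(v),
\]
which gives $\pi\circ T^*_{|B}=S^*_{|B_0}\circ\pi$.
\end{enumerate}

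Steps (i)--(iv) show that $\pi$ is a continuous factor map from $(B,T^*_{|B})$ onto the compact Hausdorff system $(B_0,S^*_{|B_0})$. By the factor monotonicity of \cite{AKM}, $h_{top}(S^*_{|B_0})\le h_{top}(T^*_{|B})$, which is exactly $h^*_{top}(T_{|E_0})\le h^*_{top}(T)$.

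The only real obstacle is surjectivity, step (iii). The factor inequality in \cite{AKM} requires an \emph{onto} map, and this is exactly what Hahn--Banach supplies. There is also a mild point about field scalars: in the complex case the complex Hahn--Banach theorem handles it the same way.

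We do not need metrizability, because \cite{AKM} works for arbitrary compact Hausdorff spaces.
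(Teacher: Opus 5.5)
Your proposal is correct and follows essentially the same route as the paper: restrict functionals to $E_0$, use Hahn--Banach for surjectivity onto $B_0$, check weak* continuity and the intertwining $\pi\circ T^*=(T_{|E_0})^*\circ\pi$, then apply factor monotonicity of topological entropy from \cite{AKM}.
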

\begin{proof}
It is known that $E_0^*=E^*/E_0^\perp$ (e.g \cite[Exercise 18(a), p. 72]{DS}).
Denote $T_0=T|E_0$, and let $B_0$ be the unit ball of $E_0^*$.
For $\phi \in E^*$ let $\pi(\phi) \in E_0^*$ be defined by $\pi(\phi)(w)= \phi(w)$,
$w \in E_0$. Obviously $\|\pi(\phi)\| \le \|\phi\|$, so $\pi$ maps $B$ into $B_0$.
By the Hahn-Banach theorem, for $\psi \in B_0$ there is $\phi \in E$ with
$\phi(w)=\psi(w)$ for every $w \in E_0$ and $\|phi\|=\|\psi\|$. Thus $\pi$ maps
$B$ onto $B_0$.
	
For $\phi \in B$ and $w \in E_0$ we have
$$
(T_0^*(\pi(\phi))w=\pi(\phi)(T_0w)=\phi(T_0w)=(T^*\phi)w = (\pi(T^*\phi))w.
$$
Hence $(T_0^*(\pi(\phi)) = \pi(T^*\phi)$, for every $\phi \in B$, so $\pi$ is
a factor map of $(B,T^*|B)$ onto $(B_0,T^*_0|B_0)$. We prove continuity of $\pi$
in the weak* topologies, using nets. If $\phi_\alpha \to \phi$ weak*, then
$(\pi(\phi_\alpha))w =\phi_\alpha(w) \to \phi(w)= =(\pi(\phi))w$ for every
$w \in E_0$. Thus $\pi(\phi_\alpha) \to \pi(\phi)$ in the weak* topology of $B_0$,
to $\pi$ is a continuous factor map. By \cite{AKM}, $h_{top}(T_0^*|B_0)\le h_{top}(T^*|B)$,
	which is $h^*_{top}(T_0) \le h^*_{top}(T)$ by definition.
\end{proof}
	
\begin{prop} \label{h*}
Let $T$ be a contraction on a Banach space $E$. If $h_{top}^*(T)=0$
and Sarnak's conjecture holds, then \eqref{sarnak-op-E} holds.
Moreover, every power $T^n$ satisfies \eqref{sarnak-op-E}.
\end{prop}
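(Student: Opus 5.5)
The plan is to transfer the problem to the topological system $(B,T^*|B)$, exactly as in the proof of Theorem \ref{rosenthal-rep}. Here $B$ is the unit ball of $E^*$ with the weak* topology, and $\tau:=T^*|B$. This is a compact Hausdorff space with a continuous self-map, and by hypothesis $h_{top}(\tau)=h^*_{top}(T)=0$. So $(B,\tau)$ is a deterministic system in the sense of the formulation \eqref{sarnak-op1} of Sarnak's conjecture. Neither metrizability nor invertibility is required there.

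For $v\in E$, define $f_v(\phi):=\phi(v)$ for $\phi\in B$. This function is weak* continuous on $B$, and it is real or complex according to the scalar field of $E$; the Remark after the conjecture covers the complex case. For $\phi\in B$ we have $f_v(\tau^n\phi)=(T^{*n}\phi)(v)=\phi(T^nv)$. Sarnak's conjecture, in the form \eqref{sarnak-op1}, then gives
$$\phi\Big(\frac1N\sum_{n=1}^N\mu(n)T^nv\Big)=\frac1N\sum_{n=1}^N\mu(n)f_v(\tau^n\phi)\to 0 \qquad \forall \phi\in B.$$
Every $\psi\in E^*$ is a scalar multiple of an element of $B$, so this yields weak convergence to $0$, which is \eqref{sarnak-op-E}.

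For the powers, fix $k\ge1$. The operator $T^k$ is again a contraction, and $(T^k)^*|B=\tau^k$. It is a standard property from \cite{AKM} that $h_{top}(\tau^k)=k\,h_{top}(\tau)$, hence $h^*_{top}(T^k)=0$. The first part, applied to $T^k$, then gives \eqref{sarnak-op-E} for $T^k$.

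If one prefers to avoid the power formula, there is a fallback: use the variational principle \eqref{goodman}. Any $\tau^k$-invariant measure $\nu$ yields a $\tau$-invariant measure $\frac1k\sum_{j<k}\nu\circ\tau^{-j}$, and then compare the Kolmogorov--Sinai entropies. The citation route is simpler.

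The main obstacle is essentially bookkeeping. One must make sure the conjecture is applied in the pointwise form \eqref{sarnak-op1} at \emph{every} point $\phi\in B$, which is exactly what the stated equivalent formulation provides for arbitrary compact Hausdorff systems of zero entropy. One must also keep the scalar field consistent: in the real case the $f_v$ are real-valued, and that case is covered directly.
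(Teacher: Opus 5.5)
Your proposal is correct and is essentially the paper's own proof. You apply Sarnak's conjecture in the pointwise form \eqref{sarnak-op1} to the compact Hausdorff system $(B,T^*_{|B})$ with the weak* continuous functions $f_v(\phi)=\phi(v)$, and you handle powers via $h_{top}(\tau^k)=k\,h_{top}(\tau)$ from \cite[Theorem 2]{AKM}; the paper also remarks that \cite[Corollary 10]{AKLR} upgrades the conclusion to norm convergence, which the statement does not need.
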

\begin{proof}
For $v \in E$ the function $f_v(\phi):=\phi(v), \ \phi \in B$, is continuous on $B$
with its weak* topology, so  Sarnak's conjecture applied to $(B,T^*_{\ |B})$ yields
\begin{equation} \label{sarnak*}
\frac1N\sum_{n=1}^N \mu(n) \phi(T^n v) =
\frac1N\sum_{n=1}^N \mu(n) f_v(T^{*n}\phi) \to 0, \quad \forall  \phi\in B,
\end{equation}
which yields \eqref{sarnak-op-E}. In fact, In view of \cite[Corollary 10]{AKLR},
\eqref{sarnak-norm} yields in Proposition \ref{h*} that
$ \big\| \frac1N\sum_{n=1}^N \mu(n) T^n v\big\| \to 0 \quad \forall v \in E $.

Fix $n >1$. Then $h_{top}^*(T^n)= n\cdot h_{top}^*(T)=0$ by \cite[Theorem 2]{AKM},
so $T^n$ satisfies  \eqref{sarnak-op-E} by the first part of the proof.
\end{proof}

{\bf Remark.} In Propositon \ref{h*} it is enough to assume that Sarnak's conjecture
holds for continuous maps of compact {\it metric} spaces. Indeed, by Proposition
\ref{subspace}  $h^*_{top}(T)=0$ implies that for fixed $v$, also $h^*_{top}(T_{|E_v})=0$,
where $E_v$ is the closed subspace genereated by $(T^nv)_{n\ge0}$, which is separable;
then the weak* topology of the  unit ball in $E_v^*$ is metrizable. This also shows that
when $E$ is not separable, the assumption $h^*_{top}(T)=0$ can be weakened to {\it for
every $T$-invariant separable subapace $E_0\subset E$ we have $h^*_{top}(T_{|E_0})=0$.}

\begin{prop} \label{bernoulli}
Define $T$ on $\ell^1(\mathbb Z)$ by $T(f_k)=(f_{k-1})$. Then $h^*_{top}(T) >0$.
\end{prop}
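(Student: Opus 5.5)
Since $h^*_{top}(T)=h_{top}(T^*|_B)$, where $B$ is the unit ball of $\ell^\infty(\mathbb Z)=\ell^1(\mathbb Z)^*$ with its weak* topology, and $T^*$ is the shift $(T^*\phi)_k=\phi_{k+1}$, I would prove positivity by exhibiting a closed $T^*$-invariant subset of $B$ with positive entropy. By \cite[Theorem 4]{AKM}, $h_{top}(T^*|_Y)\le h_{top}(T^*|_B)$ for such a subset $Y$, so this suffices.

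For $Y$ I take $Y:=\{\phi\in\ell^\infty(\mathbb Z): \phi_k\in\{-1,1\}\ \forall k\}\subset B$. It is clearly invariant under the shift $T^*$. I then check that it is weak* closed and that on $B$ the weak* topology coincides with the product (coordinatewise convergence) topology. The second point holds because $B$ is bounded and the finitely supported sequences are dense in $\ell^1(\mathbb Z)$. Consequently $Y$, with the restricted weak* topology, is homeomorphic to $\{-1,1\}^{\mathbb Z}$ with the product topology. The homeomorphism conjugates $T^*|_Y$ to the full two-sided shift on two symbols.

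It is classical that this full shift has topological entropy $\log 2>0$. One can see this by direct counting: the open cover by the two cylinders $\{\phi_0=\pm1\}$ has $n$-fold join consisting of $2^n$ disjoint cylinders, none of which can be omitted. Alternatively, by the variational principle \eqref{goodman}, the Bernoulli $(\frac12,\frac12)$ measure has Kolmogorov--Sinai entropy $\log 2$. Therefore $h^*_{top}(T)\ge \log 2>0$.

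The only step needing care is the identification of the weak* topology on $Y$ with the product topology. Given that, and given that the entropy is computed in the setting of \cite{AKM}, where metrizability is not needed although here $B$ is in fact metrizable since $\ell^1$ is separable, the rest is routine.
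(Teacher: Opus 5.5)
Your proposal is correct and matches the paper's proof. Both restrict the shift $T^*$ to the closed invariant set $Y=\{-1,1\}^{\mathbb Z}\subset B$, use the monotonicity of topological entropy under passing to closed invariant subsets, and get positivity from the Bernoulli $(\frac12,\frac12)$ measure; you additionally check that the weak* topology on $Y$ is the product topology and give a direct count showing $h^*_{top}(T)\ge\log 2$.
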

\begin{proof} The dual $T^*$ is the left shift $S(g_k)=(g_{k+1})$ on
$\ell^\infty(\mathbb Z)$.  Let $B$ be the unit ball of $\ell^\infty(\mathbb Z)$ 
and denote by  $S_B$ the restriction of $S$ to $B$. Then $h^*_{top}(T)=h_{top}(B,S_B)$. 
	Let $Y:=\{-1,1\}^\mathbb Z \subset B$. Then $Y$ is $S$-invariant,
and the product measure $\nu^\mathbb Z$, with $\nu(1)=\nu(-1)=\frac12$ is invariant
for $S_{|Y}$; it is known that $h_{\nu^\mathbb Z}(S_{|Y})> 0$. Hence
$h_{top}(B,S_B) \ge h_{top}(Y,S_{|Y})>0$.
\end{proof}

{\bf Remark.} In the example following Corollary \ref{stationary}, $T$ fails
\eqref{sarnak-op-E}; however, it is not a counter-example to Sarnak's conjecture,
since $h^*_{top}(T)=h_{top}(B,S_B)\ne 0$. The latter inequality is probably well-known, 
but we have found no reference.

\begin{prop}  \label{tau-on-X}
Let $\tau$ be a continuous map of a compact metric space $X$ to itself
and let $Tf:=f\circ \tau$ be the induced contraction on $C(X)$.
Then $h^*_{top}(T) \ge h_{top}(\tau)$.
\end{prop}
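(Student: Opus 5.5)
Our plan is to realize $(X,\tau)$ as a subsystem of $(B,T^*_{|B})$, where $B$ is the unit ball of $C(X)^*=M(X)$ with the weak* topology, and then use monotonicity of topological entropy on closed invariant subsets \cite{AKM}. The natural candidate is the Dirac embedding $\delta: X\to B$, $x\mapsto \delta_x$.

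First we check equivariance. For $f\in C(X)$ we have $(T^*\delta_x)(f)=\delta_x(Tf)=f(\tau x)=\delta_{\tau x}(f)$, so $T^*\delta_x=\delta_{\tau x}$, that is, $T^*\circ\delta=\delta\circ\tau$.

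Next we check that $\delta$ is a homeomorphism onto its image. If $x_\alpha\to x$ in $X$, then $\delta_{x_\alpha}(f)=f(x_\alpha)\to f(x)$ for every $f\in C(X)$, so $\delta$ is weak* continuous. It is injective, since continuous functions separate points of the compact metric space $X$ (Urysohn). A continuous injection from a compact space into a Hausdorff space is a homeomorphism onto its image. Hence $Y:=\delta(X)$ is weak* closed in $B$, and it is $T^*$-invariant because $T^*\delta_x=\delta_{\tau x}$.

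Thus $(Y,T^*_{|Y})$ is topologically conjugate to $(X,\tau)$, so $h_{top}(T^*_{|Y})=h_{top}(\tau)$; a conjugacy is a factor map in both directions, and the factor inequality from \cite{AKM} gives equality. By the corollary of \cite[Theorem 4]{AKM} for closed invariant subsets,
$$h^*_{top}(T)=h_{top}(T^*_{|B})\ge h_{top}(T^*_{|Y})=h_{top}(\tau).$$

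There is no serious obstacle here. The only points needing care are that the image of $\delta$ is closed and that $\delta$ is a homeomorphism onto it, both of which follow from compactness as above. Metrizability of $X$ is not actually needed, since Urysohn's lemma holds on any compact Hausdorff space.
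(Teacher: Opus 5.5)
Your proposal is correct and follows the paper's proof: embed $X$ into the unit ball $B$ of $C(X)^*$ via $x\mapsto\delta_x$, use $T^*\delta_x=\delta_{\tau x}$ to get a closed $T^*$-invariant copy of $(X,\tau)$, and apply monotonicity of entropy from \cite{AKM}. You also spell out the continuity, injectivity and compactness details that the paper leaves implicit. Your remark that metrizability is not needed is also correct.
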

\begin{proof} The map $\varphi(x)=\delta_x$ is a continuous map of $X$ into  $B$,
the unit ball of $C(X)^*$, with its weak* topology. Then $T^*\delta_x =\delta_{\tau x}$,
so $\varphi(X)$ is a compact $T^*$-invariant set in $B$. By the homeomorphism of
$X$ and $\varphi(X)$ we have $h_{top}(\tau)= h_{top}(T^*_{|\varphi(X)})\le
h_{top}(T^*_{|B})=h^*_{top}(T)$.
\end{proof}


\begin{prop} \label{homeo}
Let $\tau$ be a homeomorphism of a compact Hausdorff space and $Tf:=f\circ\tau$ the 
induced operator on $C(X)$. Then $h^*_{top}(T)=0$ if and only if $h_{top}(\tau)=0$.
\end{prop}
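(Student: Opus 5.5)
The plan is to prove the two implications separately. The direction $h^*_{top}(T)=0 \Rightarrow h_{top}(\tau)=0$ is the argument of Proposition \ref{tau-on-X}; the converse rests on the Glasner--Weiss theorem on quasi-factors of zero entropy homeomorphisms, and the main obstacle is applying it without a metrizability assumption on $X$.

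\emph{Step 1: $h^*_{top}(T)=0$ implies $h_{top}(\tau)=0$.} Let $B$ be the unit ball of $C(X)^*$ with the weak* topology, and set $\varphi(x)=\delta_x$. This map is continuous and injective, since $C(X)$ separates points of a compact Hausdorff space. As $X$ is compact and $B$ is Hausdorff, $\varphi$ is a homeomorphism of $X$ onto the closed set $\varphi(X)$. Moreover $T^*\delta_x=\delta_{\tau x}$, so $\varphi(X)$ is $T^*$-invariant. The restriction result of \cite{AKM} then gives $h_{top}(\tau)=h_{top}(T^*_{|\varphi(X)})\le h^*_{top}(T)$. Metrizability is not used here, and neither is invertibility of $\tau$.

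\emph{Step 2: reduce $B$ to a factor of a product.} By the Riesz representation theorem, $B$ is the set of regular Borel measures $\nu$ on $X$ with $\|\nu\|\le 1$, and $T^*\nu=\nu\circ\tau^{-1}=\tau_*\nu$. Let $M_1(X)$ be the regular probability measures with the weak* topology; $\tau_*$ acts on it as a homeomorphism. In the real case, consider
$$\Phi:(\mu_1,\mu_2,a,b)\mapsto a\mu_1-b\mu_2, \qquad \mu_1,\mu_2\in M_1(X),\ a,b\ge 0,\ a+b\le 1 .$$
In the complex case use four measures and the coefficients $a_1,-a_2,ia_3,-ia_4$, with $a_j\ge 0$ and $\sum a_j\le 1$. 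The map $\Phi$ is weak* continuous, and it is onto $B$ by the Jordan decomposition (respectively, by splitting the real and imaginary parts). It also intertwines $\tau_*\times\tau_*\times\mathrm{id}$ (with the appropriate number of factors) and $T^*$. Since factor maps do not increase entropy \cite{AKM}, and the entropy of a product is the sum of the entropies \cite{AKM}, with the identity on the simplex contributing $0$, we obtain
$$h^*_{top}(T)\le 4\,h_{top}(M_1(X),\tau_*).$$

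\emph{Step 3 (main obstacle): $h_{top}(M_1(X),\tau_*)=0$ when $h_{top}(\tau)=0$.} This is the theorem of Glasner and Weiss on quasi-factors of zero entropy systems, which needs $\tau$ to be a homeomorphism; that is why invertibility is assumed. Their theorem is stated for compact metric $X$, so I would first reduce to that case. Every compact Hausdorff system $(X,\tau)$ with $\tau$ a homeomorphism is an inverse limit of metrizable factors $(X_i,\tau_i)$: take the factors generated by countable $\tau$-invariant subalgebras of $C(X)$. Each $X_i$ has $h_{top}(\tau_i)\le h_{top}(\tau)=0$, and Glasner--Weiss gives $h_{top}(M_1(X_i),\tau_{i*})=0$.

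It remains to recover $M_1(X)$ from the $M_1(X_i)$. Because $C(X)=\overline{\bigcup_i C(X_i)}$, the space $M_1(X)$ is the inverse limit of the $M_1(X_i)$. Topological entropy of an inverse limit is the supremum of the entropies of its terms, since open covers of the limit are refined by pullbacks of covers of the factors. Hence $h_{top}(M_1(X),\tau_*)=0$, and Step 2 gives $h^*_{top}(T)=0$.

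Alternatively, following the Remark after Proposition \ref{h*}, one could work only with the separable invariant subspaces $C(X_i)\subset C(X)$. This requires checking that the entropy of $(B,T^*_{|B})$ is the supremum over these restrictions, which is again the inverse-limit statement above.
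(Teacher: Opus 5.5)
Your proof is correct in substance, but it takes a different route from the paper. The paper's proof is a one-line combination of two results of Kerr and Li \cite{KL}. Their CA-entropy satisfies $hc(T)=h_{top}(\tau)$ for $Tf=f\circ\tau$ with $\tau$ a homeomorphism of a compact Hausdorff space (Proposition 3.1). And $hc(T)=0$ if and only if $h^*_{top}(T)=0$ for every isometric automorphism (Theorem 3.5). Since Kerr--Li work in that generality, metrizability never arises.

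You instead argue as follows. You prove the easy direction by hand; your Step 1 is Proposition \ref{tau-on-X} with the metric hypothesis correctly dropped. You realize $(B,T^*)$ as a factor of a finite product of copies of $(M_1(X),\tau_*)$ with a trivial system. You then apply Glasner--Weiss \cite[Theorem A]{GW} on metrizable factors and pass to general $X$ by an inverse limit.

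What your route buys is transparency. Together with the converse noted in the Remark after Theorem \ref{metric} (probability measures form a closed invariant subset of $B$), it shows that for homeomorphisms the proposition is equivalent to the Glasner--Weiss theorem by elementary arguments. It also gives the explicit bound $h^*_{top}(T)\le 4\,h_{top}(M_1(X),\tau_*)$. The paper's citation is shorter, and it fits the later use of CA-entropy and $\ell_1$-sequences in Proposition \ref{kerr-li-1}.

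Step 2 needs a small repair in the complex case: with $\sum a_j\le 1$, $\Phi$ is not onto $B$. For $\nu=e^{i\pi/4}\delta_x$ we have $\|\nu\|=1$. Yet any representation forces $a_1+a_2\ge\|\mathrm{Re}\,\nu\|=1/\sqrt2$ and $a_3+a_4\ge\|\mathrm{Im}\,\nu\|=1/\sqrt2$. In general $\|\mathrm{Re}\,\nu\|+\|\mathrm{Im}\,\nu\|\le\sqrt2\,\|\nu\|$. So allow $\sum a_j\le\sqrt2$ and restrict $\Phi$ to the closed invariant set $\Phi^{-1}(B)$, which maps onto $B$; the entropy bound is unaffected.

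In Step 3, take the separable subalgebras generated by $\{f\circ\tau^n: f\in F,\ n\in\mathbb Z\}$ with $F$ countable. These are invariant under $\tau^{-1}$ as well, so the induced maps $\tau_i$ are homeomorphisms, as Glasner--Weiss requires. Your inverse-limit entropy argument is then fine, since the index set is directed and the projections $M_1(X)\to M_1(X_i)$ are onto.
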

\begin{proof} This is a combination of Proposition 3.1 and Theorem 3.5 of Kerr and Li 
\cite{KL}.
\end{proof}

Let $X$ be a compact metric space. Then $C(X)$ is separable, so $\mathcal M_1(X)$,
the unit ball of $C(X)^*$, is weak* compact metric; it consists of signed
measures $\nu$ with $\|\nu\| \le 1$. A continuous map $\tau: X \mapsto X$ induces a 
continuous map $\tau^*$ on $\mathcal M_1(X)$, defined by $\tau^*\nu:=\nu\tau^{-1}$.
If $Tf=f\circ\tau$ is the induced operator on $C(X)$, then $\tau^*$ is the restriction  
of $T^*$ to $\mathcal M_1(X)$.

\begin{theorem} \label{metric}
Let $(X,\tau)$ be a topological system with $X$  metric, and let $Tf:=f\circ\tau$ be the 
induced operator on $C(X)$. Then $h^*_{top}(T)=0$ if and only if $h_{top}(\tau)=0$.
\end{theorem}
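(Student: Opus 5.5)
The implication $h^*_{top}(T)=0\Rightarrow h_{top}(\tau)=0$ is immediate from Proposition \ref{tau-on-X}, since $h_{top}(\tau)\le h^*_{top}(T)$. So the work is in the converse: assuming $h_{top}(\tau)=0$, show that $\tau^*$ on $\mathcal M_1(X)$ has zero topological entropy. Proposition \ref{homeo} (Kerr and Li) already settles this when $\tau$ is a homeomorphism. The plan is to reduce the general continuous case to that one, first by passing to a surjective map and then by taking the natural extension.

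\emph{Step 1: reduction to surjective $\tau$.} Let $X_\infty:=\bigcap_{n\ge0}\tau^nX$. This is a nonempty closed set with $\tau X_\infty=X_\infty$, by compactness. Since $\tau^{*n}\nu$ is carried by the closed set $\tau^nX$, every point of $\bigcap_n\tau^{*n}\mathcal M_1(X)$ is carried by $X_\infty$. Hence $\bigcap_n \tau^{*n}\mathcal M_1(X)\subset\mathcal M_1(X_\infty)$, where $\mathcal M_1(X_\infty)$ is viewed as a closed $\tau^*$-invariant subset of $\mathcal M_1(X)$.
\begin{itemize}
\item Every $\tau^*$-invariant probability on the compact metric space $\mathcal M_1(X)$ is carried by the eventual image $\bigcap_n\tau^{*n}\mathcal M_1(X)$.
\item By the variational principle \eqref{goodman}, $h^*_{top}(T)=h_{top}(\tau^*_{|\mathcal M_1(X_\infty)})$.
\item The latter is $h^*_{top}$ of the operator induced by $\tau_{|X_\infty}$ on $C(X_\infty)$.
\end{itemize}
Also $h_{top}(\tau_{|X_\infty})\le h_{top}(\tau)=0$. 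So we may assume $\tau$ is onto.

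\emph{Step 2: natural extension.} For surjective $\tau$, let
$$\tilde X=\{(x_n)_{n\ge0}\in X^{\mathbb N}: \tau x_{n+1}=x_n\}$$
with the shift homeomorphism $\tilde\tau(x_n)=(\tau x_0,x_0,x_1,\dots)$ and projection $p(x_n)=x_0$. Then:
\begin{itemize}
\item $\tilde X$ is compact metric and $p$ is a continuous surjection with $p\circ\tilde\tau=\tau\circ p$.
\item It is standard that $h_{top}(\tilde\tau)=h_{top}(\tau)=0$ (e.g. via the variational principle, since invariant measures and their entropies correspond under the natural extension).
\item Proposition \ref{homeo} applied to $\tilde\tau$ gives $h_{top}(\tilde\tau^*_{|\mathcal M_1(\tilde X)})=0$.
\end{itemize}

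\emph{Step 3: factor map on measures.} Define $p^*\nu:=\nu\circ p^{-1}$ from $\mathcal M_1(\tilde X)$ to $\mathcal M_1(X)$. This is the restriction of the adjoint of the isometry $f\mapsto f\circ p$ of $C(X)$ into $C(\tilde X)$, so it is weak* continuous. It satisfies $p^*\tilde\tau^*=\tau^*p^*$.

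The main obstacle is surjectivity of $p^*$ onto the whole unit ball of signed measures. To handle it, write $\nu\in\mathcal M_1(X)$ as $\nu=\nu^+-\nu^-$ via its Jordan decomposition. Each positive measure $\nu^\pm$ lifts to a positive measure $\tilde\nu^\pm$ on $\tilde X$ of the same mass: extend the positive functional $f\circ p\mapsto\nu^\pm(f)$ from the subspace $\{f\circ p\}$ by Hahn–Banach, norm equal to value at $\mathbf 1$, and use the Riesz representation theorem. Then $\tilde\nu=\tilde\nu^+-\tilde\nu^-$ satisfies $\|\tilde\nu\|\le\|\nu^+\|+\|\nu^-\|=\|\nu\|\le1$ and $p^*\tilde\nu=\nu$.

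Hence $p^*$ is a continuous factor map of $(\mathcal M_1(\tilde X),\tilde\tau^*)$ onto $(\mathcal M_1(X),\tau^*)$. By \cite{AKM}, $h^*_{top}(T)\le h_{top}(\tilde\tau^*_{|\mathcal M_1(\tilde X)})=0$.

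A secondary point to check carefully is the Step 1 claim that invariant measures of $\tau^*$ live on its eventual image. This follows because the sets $\tau^{*n}\mathcal M_1(X)$ are compact and decreasing, and an invariant $m$ satisfies $m(\tau^{*n}\mathcal M_1(X))\ge m\big((\tau^{*})^{-n}\tau^{*n}\mathcal M_1(X)\big)=1$.
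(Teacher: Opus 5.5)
Your proof is correct, and its core is the same as the paper's. You pass to the natural extension, use $h_{top}(\hat\tau)=h_{top}(\tau)$ (the paper's Proposition \ref{both0}), and apply Kerr--Li (Proposition \ref{homeo}) to the homeomorphism. You then push zero entropy down through the weak$^*$-continuous factor map $\pi^*:\mathcal M_1(\hat X)\to\mathcal M_1(X)$, using monotonicity of topological entropy under factors.

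\textbf{Step~1 fills a gap in the paper's argument.} The paper goes directly from ``$\tau$ not invertible'' to the extension. It then asserts that $\pi$ maps $\hat X$ onto $X$ and that $\pi^*$ maps onto $\mathcal M_1(X)$. This tacitly requires $\tau X=X$. Indeed, if $\pi$ is onto and $\pi\hat\tau=\tau\pi$ with $\hat\tau$ onto, then $X=\pi(\hat\tau\hat X)=\tau(\pi\hat X)=\tau X$. Otherwise the range of $\pi^*$ consists of measures carried by $\pi(\hat X)\subset\tau X$, so it misses $\delta_x$ for every $x\notin\tau X$. Your reduction to the eventual image $X_\infty$ handles the non-surjective case correctly. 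It rests on two facts: every $\tau^*$-invariant probability on $\mathcal M_1(X)$ is carried by $\bigcap_n\tau^{*n}\mathcal M_1(X)\subset\mathcal M_1(X_\infty)$, and the variational principle then transfers the entropy computation to $\mathcal M_1(X_\infty)$.

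\textbf{Step~3 supplies an argument the paper lacks.} The paper only invokes ``the construction of the natural extension (for $\nu^+$ and $\nu^-$ separately)'', which does not literally apply to non-invariant $\nu^\pm$. Your lift is valid, but it can be shortened. Since $p$ is onto, $f\mapsto f\circ p$ is an isometric embedding of $C(X)$ into $C(\tilde X)$, and $p^*$ is its adjoint. So Hahn--Banach directly yields a preimage of norm $\|\nu\|$, exactly as in Proposition \ref{subspace}. This needs neither the Jordan decomposition nor positivity. It also covers complex scalars, where splitting $\nu$ into positive parts would not give $\|\tilde\nu\|\le\|\nu\|$.
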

\begin{proof} If $\tau$ is invertible, this is Proposition \ref{homeo}. We therefore assume
that $\tau$ is not invertible. By Proposition \ref{tau-on-X} $h^*_{top}(T) \ge h_{top}(\tau)$,
so we have to prove only that $ h_{top}(\tau)=0$ implies $h^*_{top}(T)=0$. We thus 
assume $ h_{top}(\tau)=0$.

Let $(\hat X,\hat\tau)$ be the extension of $(X,\tau)$ given by Proposition \ref{extension},
and let $\hat T$ be the induced operator on $C(\hat X)$.
By Proposition \ref{both0} $ h_{top}(\tau)=0$ implies $h_{top}(\hat\tau)=0$, and by 
Proposition \ref{homeo} $h_{top}(\mathcal M_1(\hat X),\hat\tau^*)= h^*_{top}(\hat T)=0$.
 The factor map $\pi$ of $(\hat X,\hat\tau)$ onto $(X,\tau)$ induces a map $ \pi^*$ 
from $\mathcal M_1(\hat X)$ into $\mathcal M_1(X)$, defined by $\pi^*\eta=\eta\pi^{-1}$. 
Given $\nu \in\mathcal M_1(X)$, we have $\pi^*\hat\nu=\nu$ by the construction of 
the natural extension (for $\nu^+$ and $\nu^-$ separately), so $\pi^*$ is onto
$\mathcal M_1(X)$. The factor map satisfies $\tau\circ \pi=\pi\circ \hat\tau$, which yields
$$
\tau^*(\pi^* \eta)= \eta\pi^{-1}\tau^{-1}=
\eta\hat\tau^{-1}\pi^{-1} =\pi^*(\hat\tau^*\eta), \qquad \forall\eta\in\mathcal M_1(\hat X).
$$
Thus $\tau^*\pi^*=\pi^* \hat\tau^*$, so $\pi^*$ is a factor map of 
$\mathcal M_1(\hat X)$ onto $\mathcal M_1(X)$; hence
$h^*_{top}(T)=h_{top}(\mathcal M_1(X),\tau^*) \le 
h_{top}(\mathcal M_1(\hat X),\hat\tau^*) =0$.
\end{proof}

{\bf Remark.} Theorem \ref{metric} implies that \cite[Theorem A]{GW} of  Glasner and Weiss
holds also for $(X,\tau)$ with non-invertible $\tau$.

\begin{theorem} \label{sarnak-equiv}
The following conditions are equivalent:

(i) Every system $(X,\tau)$ of a compact metric space with $h_{top}(\tau)=0$
satisfies \eqref{sarnak-op1}.

(ii) Every contraction $T$ on a separable Banach space $E$ with $h^*_{top}(T)=0$
satisfies \eqref{sarnak-op-E}.

(iii) Every contraction $T$ on a  Banach space $E$ with $h^*_{top}(T)=0$
satisfies \eqref{sarnak-op-E}.
\end{theorem}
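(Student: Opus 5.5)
The plan is to prove the cycle (i) $\Rightarrow$ (iii) $\Rightarrow$ (ii) $\Rightarrow$ (i).

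\emph{(i) $\Rightarrow$ (iii).} Let $T$ be a contraction on $E$ with $h^*_{top}(T)=0$, and fix $v\in E$. Let $E_v$ be the closed subspace generated by $(T^nv)_{n\ge0}$. It is separable and $T$-invariant. By Proposition \ref{subspace}, $h^*_{top}(T_{|E_v})\le h^*_{top}(T)=0$. Since $E_v$ is separable, the unit ball $B_v$ of $E_v^*$ with its weak* topology is a compact metric space. So (i) applies to the system $(B_v,T_{|E_v}^*)$ with the continuous function $f_v(\psi)=\psi(v)$, giving
$$
\frac1N\sum_{n=1}^N\mu(n)\psi(T^nv)\to0 \qquad \forall \psi\in B_v .
$$
Now take $\phi\in E^*$ with $\|\phi\|\le1$. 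Its restriction to $E_v$ lies in $B_v$, and $\phi(T^nv)$ equals the restricted functional evaluated at $T^nv$. Hence $\frac1N\sum_{n=1}^N\mu(n)T^nv\to0$ weakly, which is \eqref{sarnak-op-E}. This is just the argument of Proposition \ref{h*} combined with the Remark following it.

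\emph{(iii) $\Rightarrow$ (ii).} This is immediate, since (ii) is the special case of (iii) for separable $E$.

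\emph{(ii) $\Rightarrow$ (i).} Let $(X,\tau)$ be a system on a compact metric space with $h_{top}(\tau)=0$, and let $Tf=f\circ\tau$ on $C(X)$. Then $E=C(X)$ is separable, $T$ is a contraction, and $h^*_{top}(T)=0$ by Theorem \ref{metric}. By (ii), $\frac1N\sum_{n=1}^N\mu(n)T^nf\to0$ weakly for every $f\in C(X)$. Evaluating against the functional $\delta_x\in C(X)^*$ gives $\frac1N\sum_{n=1}^N\mu(n)f(\tau^nx)\to0$ for every $x\in X$, which is \eqref{sarnak-op1}.

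The scalar field causes no difficulty. In the complex case, (i) is applied to complex continuous functions, as allowed by the Remark after Sarnak's conjecture.

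The only substantive ingredient is Theorem \ref{metric}, which handles non-invertible $\tau$ through its natural extension. Since that theorem is available, I do not expect any real obstacle. The one point to check is that in (i) $\Rightarrow$ (iii) we reduce to a metric space. This reduction is necessary because (i) only covers compact metric systems, while $B$ itself need not be metrizable when $E$ is not separable.
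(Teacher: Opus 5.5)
Your proof is correct and uses the same ingredients as the paper. You use Theorem \ref{metric} for (ii) $\Rightarrow$ (i), and Proposition \ref{subspace} applied to the separable invariant subspace $E_v$, together with the argument of Proposition \ref{h*} on the metrizable dual ball, for the other direction. The only difference is organizational: the paper proves (i) $\Rightarrow$ (ii) and (ii) $\Rightarrow$ (iii) as separate steps, whereas you fold the reduction to $E_v$ into a single step (i) $\Rightarrow$ (iii).
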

\begin{proof} Assume (i). Let $T$ be a contraction of a separable $E$, with $h^*_{top}(T)=0$.
Then $B$, the unit ball of $E^*$, is compact metric in  the weak* topology,
and $\tau:=T^*_{|B}$ is a continuous self-map of $B$ with
 $h_{top}(\tau)=0$, by definition of $h^*_{top}(T)$. Then
\eqref{sarnak-op-E} follows from (i), as in the proof of Proposition \ref{h*}.
\smallskip

Assume (ii). Let $(X,\tau)$ be a topological system with $X$ compact metric, satisfying
 $h_{top}(\tau)=0$, and define $Tf =f\circ \tau$ for $f \in C(X)$. 
By Theorem \ref{metric}  $h^*_{top}(T)=0$, so by (ii), since $E:=C(X)$ is separable,
$T$ satisfies \eqref{sarnak-op-E}, which in this case is  \eqref{sarnak-op}.
\smallskip

Obviously (iii) implies (ii). Assume (ii). Let $E$ be a non-separable Banach space and $T$
a contraction with $h^*_{top}(T)=0$. Given $v \in E$, let $E_v$ be the closed linear manifold 
generated by $(T^n v)_{n\ge 0}$. Then $E_v$ is a separable $T$-invariant space. 
By Proposition \ref{subspace}, $h^*_{top}(T_{|E_v})=0 \le h^*_{top}(T)=0$, so by
(ii) $\frac1N\sum_{n=1}^N\mu(n)T^nv \to 0$ weakly. Since this holds for any $v\in E$,
\eqref{sarnak-op-E} is satisfied.
\end{proof}

{\bf Remark.} Theorem \ref{sarnak-equiv} yields an operator formulation equivalent to
Sarnak's conjecture for metric spaces.

\begin{prop} \label{wap-zero}
Let $T$ be a weakly almost periodic contraction on a separable  Banach space $E$.
Then $h^*_{top}(T)=0$.
\end{prop}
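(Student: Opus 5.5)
We plan to use the variational principle \eqref{goodman}. Since $E$ is separable, $B$ (the unit ball of $E^*$ with the weak* topology) is compact metric. So it suffices to show that every $T^*$-invariant Borel probability $\nu$ on $B$ has $h_\nu(T^*_{|B})=0$. The natural route is through the structure of WAP systems. A topological system whose enveloping semigroup consists of continuous maps (a WAP system in the sense of Ellis) has zero topological entropy. This is a theorem of Glasner--Weiss / Huang--Ye: WAP systems are tame, hence null, hence have zero entropy. So the plan reduces to showing that $(B,T^*_{|B})$ is a WAP dynamical system: for every $f\in C(B)$, the orbit $\{f\circ (T^*)^n\}$ is relatively weakly compact in $C(B)$.

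\textbf{Step 1: the coordinate functions.} For $v\in E$ consider $f_v(\phi)=\phi(v)$. Then $f_v\circ (T^*)^n=f_{T^nv}$. The map $v\mapsto f_v$ is a linear isometry of $E$ into $C(B)$, and weak-to-weak continuous (by Hahn--Banach, every functional on the image extends). Since $(T^nv)$ is weakly relatively compact, $\{f_v\circ (T^*)^n\}$ is weakly relatively compact in $C(B)$. Equivalently, by Grothendieck's double-limit criterion, $\lim_m\lim_n \phi_m(T^nv)=\lim_n\lim_m\phi_m(T^nv)$ whenever the limits exist, for $\phi_m\in B$.

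\textbf{Step 2: closing up.} The set of $f\in C(B)$ whose orbit is weakly relatively compact is a closed subalgebra containing the constants. Here one uses that products of WAP functions are WAP, which again follows from the double-limit criterion on the compact space $B$. The functions $f_v$ separate points of $B$, so by Stone--Weierstrass this subalgebra is all of $C(B)$. Hence $(B,T^*_{|B})$ is a WAP system.

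\textbf{Step 3: conclusion.} Invoke zero entropy of WAP systems to get $h^*_{top}(T)=h_{top}(T^*_{|B})=0$. If one prefers a more self-contained argument, use the fact that WAP metric systems have all invariant measures supported on the closure of the union of minimal sets, and that minimal WAP systems are equicontinuous. Then \eqref{goodman} together with the vanishing of entropy for equicontinuous (isometric) systems gives zero measure-theoretic entropy for every ergodic invariant measure.

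The main obstacle is Step 3: locating and justifying the correct citation that WAP (or tame) metric systems have zero topological entropy, and checking that it applies to non-invertible maps such as $T^*_{|B}$. If invertibility is required, one can pass to the natural extension as in the proof of Theorem \ref{metric}. Alternatively, one could try Step 1 plus a Kerr--Li style independence argument: positive entropy would produce an infinite independence set, which yields an $\ell_1$-sequence inside the orbit $\{f_{T^nv}\}$, contradicting weak compactness by Rosenthal's theorem. This second route may be the cleaner one to attempt first.
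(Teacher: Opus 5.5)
Your Steps 1 and 2 are exactly the paper's argument. The set of $f\in C(B)$ whose orbit under $Sf=f\circ T^*$ is weakly (sequentially) compact is a closed subspace by Proposition \ref{wap-T}. It contains the constants and all $f_v$, and it is closed under products (and under conjugation in the complex case). Stone--Weierstrass then makes $(B,T^*_{|B})$ a WAP system.

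The gap is in Step 3. The chain ``WAP $\Rightarrow$ tame $\Rightarrow$ null $\Rightarrow$ zero entropy'' has a false middle link: null implies tame, not conversely (Kerr and Li exhibit tame non-null systems). Tame systems do have zero entropy, since positive entropy produces IE-pairs and hence infinite independence sets. However, that theory is for group actions, and $T^*_{|B}$ need not be invertible.

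Your fallback through the natural extension does not work ``as in the proof of Theorem \ref{metric}''. There, only zero entropy had to be transferred to $(\hat X,\hat\tau)$, and Proposition \ref{both0} does that. Here it is the hypothesis (WAP or tameness) that would have to pass to the natural extension, and you do not justify this. The same invertibility issue affects your Kerr--Li $\ell_1$ route: the paper has it only for invertible isometries (Proposition \ref{kerr-li-1}), and a WAP contraction need not be invertible.

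The paper avoids all of this by localizing to minimal sets:
\begin{itemize}
\item Since $S$ is WAP on $C(B)$, it is mean ergodic, so the support $\sigma_\nu$ of every ergodic invariant probability $\nu$ is a minimal set.
\item $(\sigma_\nu,T^*_{|\sigma_\nu})$ is then a minimal WAP, hence minimal tame, system. By Glasner's theorem (valid without invertibility) it has zero topological entropy, so $h_\nu=0$.
\item The ergodic decomposition and the variational principle conclude.
\end{itemize}

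Your ``self-contained'' alternative is essentially this route, with the classical fact that minimal WAP systems are equicontinuous in place of Glasner's theorem; that substitute is fine. But ``invariant measures are supported on the closure of the union of minimal sets'' is too weak as stated. That closure is not itself minimal, and equicontinuity of each minimal set separately does not control the dynamics on it. You need each ergodic measure to be carried by a single minimal set, which is exactly what mean ergodicity of $S$ provides. With that, your argument closes.
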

\begin{proof}
Let $B$ be the unit ball of $E^*$, and for $f \in C(B)$ put $Sf(\phi)=f(T^*\phi)$.
We first show that $S$ is WAP on $C(B)$. Define $A=WAP(S)$, where
$$
WAP(S)=\{f\in C(B): (S^n f)_{n\ge 0}\ \text{\rm is weakly sequentially compact}\}.
$$
Then $A$ is a closed $S$-invariant subspace of $C(B)$, by Proposition \ref{wap-T}.
For $v \in E$, the function $f_v(\phi):=\phi(v)$ is in $A$ since $T$ is WAP.
Since $f \in A$ if and only  if every sequence $(n_j)$ has a subsequence  $(k_\ell)$ with
$S^{k_\ell}f$ converging pointwise to a continuous function, it follows easily
that $A$ is closed under products. Constants are in $A$, and the functions
$\{f_v: v\in E\} \subset A$ separate the points of $B$. When $E$ is over $\mathbb C$, $A$
is closed under conjugation; hence by the Stone-Weierstrass theorem, $WAP(S)=A=C(B)$.

By a result of Bourgain, Fremlin and Talagrand, see \cite[Theorem 3.11]{vD},
if $(X,\tau)$ is a WAP system, then for any $f \in C(X)$, the sequence
$(f\circ\tau^n)_{n\ge 0}$ does not contain an $\ell_1$ sequence, i.e. the system is
	{\it tame} in the terminology of \cite{G}.

Since $S$ is WAP on $C(B)$, it is mean ergodic, i.e. $\frac1N\sum_{k=1}^N S^kf$ converges
in norm for any $f \in C(B)$. It follows that the support $\sigma_\nu$ of any ergodic invariant
probability $\nu=S^*\nu$ is a minimal invariant (absorbing) set \cite[Corollary 2.3]{S}.
Obviously $S_{\nu}:= S_{|\sigma_\nu}$ is WAP on $C(\sigma_\nu)$, so
$(\sigma_\nu, T^*_{|\sigma_\nu})$ is a weakly almost periodic minimal topological dynamical
system, hence a minimal tame system. By Glasner's \cite[Theorem 1.10]{G}\footnote{Glasner
told us that the result holds also without invertibility}, $h_{top}(T^*_{|\sigma_\nu})=0$,
hence $h_\nu(T^*_{|\sigma_\nu})=0$. Using the ergodic decomposition of a general $S$-invariant
probability $\nu$, the integral representation given in \cite[p. 78, Theorem 13.3(b)]{DGS}
yields $h_\nu(T^*_{|B})=0$, and the variational principle yields
$h^*_{top}(T)=h_{top}(T^*_{|B})=0$.
\end{proof}

\begin{cor} \label{compact}
Let $T$ be a contraction on a separable Banach space $E$. If some power $T^n$ is
weakly compact, then $h^*_{top}(T)=0$, and \eqref{strong} holds.
\end{cor}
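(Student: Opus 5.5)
The plan is to show that a contraction $T$ with some power $T^n$ weakly compact is weakly almost periodic. Once that is done, Proposition \ref{wap-zero} gives $h^*_{top}(T)=0$, since $E$ is separable, and Theorem \ref{wap} gives \eqref{strong}.

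\textbf{Key step: $T$ is WAP.} Fix $v\in E$. Write $k\ge n$ as $k=n+j$ with $j\ge 0$. Then
$$T^kv=T^n(T^jv),$$
and $\|T^jv\|\le\|v\|$ because $T$ is a contraction. So
$$\{T^kv: k\ge n\}\subset T^n(\|v\|\,U),$$
where $U$ is the closed unit ball of $E$. Since $T^n$ is weakly compact, the set $T^n(\|v\|U)$ is relatively weakly compact. The remaining finitely many vectors $v,Tv,\dots,T^{n-1}v$ do not affect relative weak compactness. Hence the orbit $(T^kv)_{k\ge0}$ is conditionally weakly compact, which is exactly the definition of WAP. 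By the Eberlein--Shmulyan theorem it is also weakly sequentially compact, in the form used in Proposition \ref{wap-zero}.

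\textbf{Conclusion.} With $T$ WAP on the separable space $E$, Proposition \ref{wap-zero} yields $h^*_{top}(T)=0$. Theorem \ref{wap} yields $\big\|\frac1N\sum_{k=1}^N\mu(k)T^kv\big\|\to0$ for every $v\in E$, which is \eqref{strong}. Theorem \ref{wap} holds in both the real and complex cases, so no complexification argument is needed here.

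\textbf{Expected obstacle.} There is no serious obstacle. The only point needing care is the uniform bound $\sup_j\|T^jv\|<\infty$, which holds because $T$ is a contraction; power-boundedness would suffice. This bound is what places the tail of the orbit inside a single bounded set, so that the weak compactness of $T^n$ applies to all of it at once.
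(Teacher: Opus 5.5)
Your proof is correct and follows essentially the same route as the paper. Both show that $T$ is WAP because the tail of each orbit lies in the image under $T^n$ of a bounded set, and then invoke Proposition \ref{wap-zero} and Theorem \ref{wap}. Your direct inclusion $\{T^kv:k\ge n\}\subset T^n(\|v\|U)$ is a slightly cleaner way of saying what the paper does with its residue-class subsequence argument.
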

\begin{proof} Remember that $T^n$ is weakly compact if it maps bounded sets to
weakly sequentially  compact sets. Fix $v \in E$. Given a subsequence $(n_i)$, 
for some $0\le j<n$ there are infinitely many $n_i\equiv j \mod n$, so there is a 
subsequence $(k_\ell=r_\ell n+j)$ such that $T^{k_\ell}v =T^{r_\ell n}(T^jv)$
converges weakly. Hence $T$ is WAP, and the assertions follow from Proposition
\ref{wap-zero} and Theorem \ref{wap}.
\end{proof}

\begin{prop} \label{wm}
Let $E$ be a Banach space with separable dual. Let $T$ be a contraction such that
$$
\frac1N \sum_{n=1}^N |\phi(T^nv)| \to 0\quad  \forall v\in E,\ \forall \phi \in E^*.
$$
Then $h^*_{top}(T)=0$.
\end{prop}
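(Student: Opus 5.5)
The plan is to use Goodman's variational principle \eqref{goodman} for the compact system $(B,T^*_{|B})$, where $B$ is the weak* compact unit ball of $E^*$. I will show that the only $T^*$-invariant Borel probability on $B$ is the point mass $\delta_0$ at the zero functional. Its Kolmogorov--Sinai entropy is $0$, so $h^*_{top}(T)=h_{top}(T^*_{|B})=0$. Since $E^*$ is separable, $E$ is separable and $B$ is compact metric. This makes all the measure-theoretic steps standard, although the argument below does not really need it.

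Step 1: fix a $T^*$-invariant probability $\nu$ on $B$ and $v\in E$. Put $f_v(\phi):=\phi(v)$, which is weak* continuous on $B$ and bounded by $\|v\|$. The hypothesis says that for every $\phi\in B$
$$
\frac1N\sum_{n=1}^N |f_v(T^{*n}\phi)| = \frac1N\sum_{n=1}^N |\phi(T^nv)| \to 0 .
$$
These averages are bounded by $\|v\|$, so dominated convergence gives $\frac1N\sum_{n=1}^N\int_B |f_v\circ T^{*n}|\,d\nu \to 0$. By invariance of $\nu$, each integral equals $\int_B|f_v|\,d\nu$. Hence $\int_B|f_v|\,d\nu=0$, i.e.\ $f_v=0$ $\nu$-a.e.

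Step 2: since $f_v$ is continuous and vanishes $\nu$-a.e., it vanishes on the (closed) support $\operatorname{supp}\nu$. This holds for every $v\in E$, so every $\phi\in\operatorname{supp}\nu$ satisfies $\phi(v)=0$ for all $v$, i.e.\ $\phi=0$. Thus $\operatorname{supp}\nu=\{0\}$ and $\nu=\delta_0$. In the metrizable case one can instead take $v$ in a countable dense subset of $E$ and intersect the corresponding full-measure sets.

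Step 3: $\delta_0$ is supported on a fixed point of $T^*$, so $h_{\delta_0}(T^*_{|B})=0$. The variational principle \eqref{goodman} then yields $h^*_{top}(T)=\sup_\nu h_\nu(T^*_{|B})=0$. I expect no real obstacle. The only points needing care are the use of dominated convergence together with invariance in Step 1, and the applicability of \eqref{goodman} to the possibly non-invertible map $T^*_{|B}$ on the compact space $B$; Goodman's theorem covers continuous self-maps.
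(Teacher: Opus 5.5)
Your proof is correct, but it takes a more direct route than the paper's.

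\textbf{The paper's route.} It first invokes the Jones--Lin result \cite[Corollary 4]{JL2} to get, for each $v$, a density-one sequence along which $T^{n_j}v\to 0$ weakly. A separability/diagonal argument then produces a single sequence $(k_j)$ with $T^{k_j}v\to 0$ weakly for every $v\in E$. Dually, $T^{*k_j}\phi\to 0$ weak* for every $\phi\in B$, so $f(T^{*k_j}\phi)\to f(0)$ for all $f\in C(B)$. Integrating against an invariant $\nu$ gives $\nu=\delta_0$, i.e.\ unique ergodicity, and the variational principle finishes.

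\textbf{Your route.} You skip the subsequence extraction entirely. Because the hypothesis already controls $|\phi(T^nv)|$, you integrate the Ces\`aro averages of $|f_v|\circ T^{*n}$ against $\nu$, use invariance, and get $\int_B |f_v|\,d\nu=0$ directly. Since the $f_v$ separate points of $B$, this forces $\nu=\delta_0$. The absolute values in the hypothesis are exactly what make this work; without them you would only obtain $\int_B f_v\,d\nu=0$.

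\textbf{What each route buys.} Yours is more economical and more general. It never uses the separability of $E^*$, which the paper needs to run the Jones--Lin diagonal argument yielding one sequence $(k_j)$ that works for all $v$ and $\phi$ simultaneously. Even separability of $E$ serves you only to make $B$ metrizable. With Radon measures (whose support is nonempty with null complement) and Goodman's principle \eqref{goodman} for compact Hausdorff spaces, your argument goes through for arbitrary $E$. The paper's route gives in addition a stronger topological fact: $T^{k_j}\to 0$ in the weak operator topology along one sequence, so every $T^*$-orbit in $B$ converges weak* to $0$ along a common subsequence. That fact is not needed for the entropy conclusion.
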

\begin{proof} By \cite[Corollary 4]{JL2}, the assumption yields that for every $v\in E$
there is a sequence $(n_j)$ of density 1 such that $T^{n_j}v \to 0$ weakly. By separability
of $E$ we can construct, as in the proof of \cite[Corollary 4]{JL2}, a sequence $(k_j)$
such that $T^{k_j}v \to 0$ weakly for every $v \in E$. This yields that for every
$\phi \in E^*$ we have $T^{*k_j}\phi \to 0$ weak*. Using the notations in the proof
of Proposition \ref{wap-zero}, we obtain that $S^{k_j}f(\phi) \to f(0)$ for every
$f \in C(B)$ and $\phi \in B$. This implies that $(B,T^*_{|B})$ is uniquely ergodic
with $\delta_0$ the unique $S$-invariant probability. Hence $h_{top}^*(T)=0$.
\end{proof}
{\bf Remarks.} 1. When $E$ is over $\mathbb C$, the assumption on $T$ is equivalent to
$T^*$ having no unimodular eigenvalues \cite{JL2}, and Theorem \ref{dor} applies.

2. By the proof of Theorem \ref{dor}, the assumption on $T$ in Proposition \ref{wm}
implies \eqref{strong-all} also in the real case.
\medskip

Recall that an operator $T$ on a Banach space $E$ is called {\it (weakly)  rigid} if
 for some subsequence $(n_k)$ we have $T^{n_k} v \to v$ (weakly) for every $v\in E$.


\begin{theorem} \label{rigid}
Let $T$ be a weakly rigid contraction on a separable Banach space $E$. Then $h^*_{top}(T)=0$.
\end{theorem}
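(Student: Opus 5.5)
Weak rigidity of $T$ on $E$ means $T^{n_k}v\to v$ weakly for all $v$. Dually, for every $\phi\in B$ (the unit ball of $E^*$) and every $v\in E$,
$$
(T^{*n_k}\phi)(v)=\phi(T^{n_k}v)\to\phi(v),
$$
so $T^{*n_k}\phi\to\phi$ weak*. Thus the system $(B,\tau)$, with $\tau=T^*_{|B}$, is a \emph{pointwise rigid} topological system along the single sequence $(n_k)$. Since $E$ is separable, $B$ is compact metric. The plan is to show that every $\tau$-invariant probability $\nu$ on $B$ has $h_\nu(\tau)=0$, and then conclude $h^*_{top}(T)=h_{top}(\tau)=0$ by the variational principle \eqref{goodman}.

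\textbf{Step 1: rigidity of the Koopman operators.} Fix an invariant probability $\nu$ and let $U f=f\circ\tau$ on $L^2(B,\nu)$. For every $f\in C(B)$ we have $f(\tau^{n_k}\phi)\to f(\phi)$ pointwise, with $|f\circ \tau^{n_k}|\le \|f\|_\infty$. By dominated convergence, $\|U^{n_k}f-f\|_{L^2(\nu)}\to0$. Since $C(B)$ is dense in $L^2(\nu)$ and $U$ is an isometry, this gives $U^{n_k}\to I$ strongly on $L^2(\nu)$. Hence the measure preserving system $(B,\nu,\tau)$ is rigid in the ergodic-theoretic sense.

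\textbf{Step 2: rigid systems have zero entropy.} This is the main point, and I would invoke the classical fact that a rigid measure-preserving transformation has zero Kolmogorov--Sinai entropy. A transformation with positive entropy has, by the Pinsker/Sinai factor theorem, a Bernoulli factor (or at least a $K$-factor). On the $L^2$ of that factor the maximal spectral type is Lebesgue (countable Lebesgue spectrum on the orthocomplement of constants), so $\langle U^n g,g\rangle\to0$ for $g$ of mean zero in the factor. Rigidity, however, passes to factors and forces $\langle U^{n_k}g,g\rangle\to\|g\|^2$, which is a contradiction unless $g=0$. A possible complication is that $\tau$ is not invertible: $T^*$ need not be invertible, although rigidity does force $\tau$ to be injective on $B$. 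In that case I would pass to the natural extension, which has the same entropy and is still rigid, or work directly with the Pinsker factor via Rokhlin--Sinai. The rigidity of the invertible extension follows because $L^2$ of the natural extension is the closure of $\bigcup_m U^{-m}L^2(\nu)$, on which $U^{n_k}\to I$ strongly still holds.

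\textbf{Step 3: conclusion.} From Step 2, $h_\nu(\tau)=0$ for every invariant $\nu$, so $h^*_{top}(T)=\sup_\nu h_\nu(\tau)=0$ by \eqref{goodman}.

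The expected obstacle is justifying Step 2 cleanly in the non-invertible setting. An alternative that avoids measure theory is to cite Proposition \ref{homeo}-type results (Kerr--Li) after showing that $T$ is invertible with a power-bounded inverse, as the paper does in Corollary \ref{rigid-sarnak}. On this route I would view $(B,\tau)$ as the dual action of an invertible isometry (after renorming), but renorming changes $B$ only up to a conjugacy issue, so I would prefer the measure-theoretic argument above.
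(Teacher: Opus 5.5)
Your argument is correct and is essentially the paper's proof: pointwise weak* rigidity of $\tau=T^*_{|B}$ gives $L^2(\nu)$-rigidity for every invariant $\nu$, rigidity gives $h_\nu(\tau)=0$, and the variational principle gives $h^*_{top}(T)=0$. Your Step 1 is in fact more direct than the paper's, since $f(\tau^{n_k}\phi)\to f(\phi)$ for every $f\in C(B)$ already follows from continuity of $f$ and $\tau^{n_k}\phi\to\phi$ weak*, so the paper's Stone--Weierstrass argument is not needed.

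Two minor corrections. First, $\tau$ is actually a homeomorphism of $B$, contrary to your remark that $T^*$ need not be invertible: a weak* cluster point $\psi$ of $\tau^{n_k-1}\phi$ satisfies $\tau\psi=\phi$, and equivalently $T$ is an invertible isometry by Proposition \ref{rigid-isometry}. The natural-extension detour is therefore unnecessary. Second, the Bernoulli-factor form of Sinai's theorem requires ergodicity. Run Step 2 for ergodic $\nu$ (each is rigid by Step 1) and pass to general $\nu$ by ergodic decomposition, as in the paper's footnote.
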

\begin{proof}
Let $T$ be a weakly rigid contraction, with corresponding $(n_k)$.
By definition, $T^{*n_k}\phi \to \phi$ weak* for any $\phi \in B$.
We show that the restriction of $T^*$ to $B$ is a homeomorphism. If $T^*\phi_1=T^*\phi_2$,
then $T^{*n}\phi_1=T^{*n}\phi_2$ for every $n \ge 1$, so by rigidity $T^*$ is one-to-one.
Fix $\phi \in B$ and let a subsequence $T^{*(n_{k_j}-1)}\phi$ converge weak* to $\psi \in B$.
Then $T^*\psi=\phi$,  proving that $T^*$ is onto.

 In the notations of Proposition \ref{h*},
for $v \in E$ and $\phi \in B$ we have $f_v(T^{*n_k}\phi) \to f_v(\phi)$.
For $f \in C(B)$ define $Sf(\phi):=f(T^*\phi)$, and put
$$
A:=\{f\in C(B): S^{n_k}f (\phi)\to f(\phi)\quad \forall \phi \in B\}.
$$
It is obvious that $A$ is a linear manifold in $C(B)$, and is closed under multiplication.
 We show that $A$ is closed. Let $(f_j) \subset A$ converge to $f$. For $\varepsilon>0$ fix $j$
with $\|f_j-f\|< \varepsilon$. Then
$$|S^{n_k}f(\phi)-f(\phi)| \le
|S^{n_k}f(\phi)-S^{n_k}f_j(\phi)| + |S^{n_k}f_j(\phi)-f_j(\phi)| +|f_j(\phi)-f(\phi)|
$$
$$ \le |S^{n_k}f_j(\phi)-f_j(\phi)| +2\varepsilon.
$$
Hence $\limsup_k |S^{n_k}f(\phi)-f(\phi)| \le 2\varepsilon$, which yields $f \in A$. Constants
are in $A$, and the functions $f_v$, which are in $A$, separate points of $B$.
When $E$ is over $\mathbb C$ we note that $A$ is closed under conjugation.
By the Stone-Weierstrass theorem, $A=C(B)$.

Let $\nu=S^*\nu$ be an invariant probability. Then $\|S^{n_k}f-f\|_2 \to 0$ in $L_2(\nu)$,
for every $f \in C(B)$. Since $C(B)$ is dense in $L_2(\nu)$, we conclude that the system
$(B,T^*,\nu)$ is rigid, so $h_\nu(T^*)=0$ by \cite[Proposition F.9]{V}\footnote{Eli
Glasner proposed the following short proof for the ergodic case:  If $h_\nu(T^*)>0$,
by Sinai's factor theorem there is a Bernoulli factor of positive entropy; but the factor
is strongly mixing, and also rigid -- a contradiction. From this we obtain $h_\nu(T^*)=0$
for general invariant $\nu$ by the integral representation given in \cite{DGS}.}.
By the variational principle, $h_{top}(B,T^*)=0$, i.e. $h^*_{top}(T)=0$.
\end{proof}
\smallskip

{\bf Remarks.} 1. If $T$ is weakly rigid on $E \ne \{0\}$ complex, then its spectral radius is 1.
Indeed, if $T^{n_k} \to I$ in the weak operator topology, then by Banach-Steinhaus
$(\|T^{n_k}v \|)$ is bounded for every $v \in E$, and then $(\|T^{n_k}\|)$ is bounded,
say by $M$. By the spectral mapping theorem, $r(T)^{n_k}=r(T^{n_k}) \le M$, so
 $r(T) \le M^{1/n_k} \to_k 1$. If $r(T)<1$ then $\|T^n\| \to 0$,  contradicting
the weak rigidity. (See \cite[Proposition 2.18]{CMP} for $T$ rigid).
Note: a rigid $T$ need not be power-bounded \cite[p. 1352, Th\'eor\`eme]{MS}

2. If $T$ is power-bounded on $E$ separable, and $E$ is generated by the  eigenvectors of
unimodular eigenvalues, then $T$ is rigid \cite[Proposition IV.3.3]{E1} (or
\cite[Proposition 2.3]{E}).

3. If $T$ is a {\it weakly} rigid contraction on a (real or complex) Hilbert space
$\mathcal H$, then $T$ is rigid. Indeed, let $T^{n_k}v \to v$ weakly (we use only
$\langle T^{n_k}v,v\rangle \to \|v\|^2$); then
$\|v\|^2 \ge\|T^{n_k}v\|\cdot \|v\|\ge   |\langle  T^{n_k} v,v\rangle| \to  \|v\|^2$
yields $\|T^{n_k}v\|=\|v\|$ for every $k$, so $\|T^{n_k}v-v\|^2 \to 0$. By\cite{CMP}
$T$ is unitary. In general, weak rigidity does not imply rigidity --
see Proposition \ref{not-rigid}.

4. A rigid contraction need not be mean ergodic -- see Proposition \ref{not-ME}.


\begin{prop} \label{klr}
Let $(n_k)$ be an increasing sequence of  integers, satisfying
$$
\sup_k \Big\{ \underset{\overset{p|n_k}{p\  prime}} \sum \frac1p \Big\} <\infty.
$$
If $T$ is a power-bounded operator, weakly  rigid along $(n_k)$ (i.e. $T^{n_k}v \to v$ weakly
 for every $v \in E$), then  \eqref{sarnak-op-E} holds.
\end{prop}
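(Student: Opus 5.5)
Our plan is to reduce the statement to a known theorem in topological dynamics: Kanigowski, Lemańczyk and Radziwiłł prove Sarnak's conjecture for systems that are rigid along a sequence $(n_k)$ satisfying exactly this prime-reciprocal condition. As in Theorem \ref{rosenthal-rep}, we first replace the norm by the equivalent norm $|\|v|\|:=\sup_{n\ge0}\|T^nv\|$. This keeps weak convergence and weak rigidity along $(n_k)$ and makes $T$ a contraction. Let $B$ be the unit ball of $E^*$ with its weak* topology, $\tau:=T^*_{|B}$, and $f_v(\phi)=\phi(v)$ as in Proposition \ref{h*}. The target \eqref{sarnak-op-E} is then equivalent to
\[
\frac1N\sum_{n=1}^N\mu(n) f_v(\tau^n\phi)\to0 \quad \text{for all } v\in E,\ \phi\in B .
\]
So it suffices to prove Sarnak's orthogonality for the system $(B,\tau)$, at least for the functions $f_v$ and all points.

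Next we transfer rigidity from $T$ to the system. By assumption $\tau^{n_k}\phi\to\phi$ weak* for every $\phi\in B$. We then repeat the Stone--Weierstrass argument from the proof of Theorem \ref{rigid}. The set $A=\{f\in C(B): f\circ\tau^{n_k}\to f \text{ pointwise}\}$ is a closed subalgebra, it is self-adjoint in the complex case, it contains the constants, and it contains the point-separating family $\{f_v\}$. Hence $A=C(B)$. For every $\tau$-invariant probability $\nu$, dominated convergence then gives $\|f\circ\tau^{n_k}-f\|_{L^2(\nu)}\to0$ for all $f\in C(B)$, and by density for all $f\in L^2(\nu)$. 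Thus every invariant measure of $(B,\tau)$ is rigid along the \emph{same} sequence $(n_k)$. The proof of Theorem \ref{rigid} also shows that $\tau$ is a homeomorphism, should the cited result require invertibility.

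Finally we invoke the theorem of Kanigowski--Lemańczyk--Radziwiłł: if every invariant measure of a topological system is rigid along $(n_k)$ with $\sup_k\sum_{p|n_k}1/p<\infty$, then the system satisfies \eqref{sarnak-op1}. Applied to $(B,\tau)$ and $f=f_v$, this gives the display above, and hence \eqref{sarnak-op-E}. The same reduction handles the real case directly, since $f_v$ is real-valued.

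The main obstacle is matching hypotheses with the cited theorem. That theorem may be stated for compact metric spaces, and $B$ is metrizable only when $E$ is separable. For fixed $v$ we therefore pass to the separable invariant subspace $E_v$ generated by $(T^nv)$. The restriction of $T$ to $E_v$ is still weakly rigid along $(n_k)$, since weak convergence in $E_v$ follows from weak convergence in $E$ via Hahn--Banach. Moreover, $\phi(T^nv)$ for $\phi\in E^*$ depends only on the restriction of $\phi$ to $E_v$, which lies in the unit ball of $E_v^*$. So it suffices to treat separable $E$. A second thing to check is whether the cited result needs rigidity of the invariant measures only or of the topological system; we have verified both above, so either form applies.
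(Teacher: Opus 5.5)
Your proposal is correct and follows the paper's proof essentially step for step. You pass to the equivalent norm so that $T$ is a contraction. You use the Stone--Weierstrass argument of Theorem \ref{rigid} to show that every invariant probability of $(B,T^*_{|B})$ is $L^2$-rigid along the same $(n_k)$. You then apply Kanigowski--Lema\'nczyk--Radziwi\l\l\ on the metrizable ball, and reduce the non-separable case to the closed span of $(T^nv)_{n\ge0}$.

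The only minor difference concerns the invertibility hypothesis of \cite{KLR}. You verify directly that $T^*_{|B}$ is a homeomorphism, using the proof of Theorem \ref{rigid} or, equivalently, Proposition \ref{rigid-isometry}. The paper instead cites \cite{W} to remove that hypothesis.
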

\begin{proof} By changing to an equivalent norm we may assume $\|T\|\le 1$.

 We first prove for $E$ separable.
Let $B=\{\phi \in E^*: \|\phi\|\le 1\}$, which by separability is a compact metric space
in the weak* topology. As shown in the proof of Proposition \ref{rigid}, for every
probability $\nu$ invariant for $(B,T^*_{|B})$ and $f \in L_2(\nu)$ we have
$\|S^{n_k}f -f\|_2 \to 0$. By Kanigowski, Lema\'nczyk and Radziwi\l\l
\cite[Theorem 1.1]{KLR} (the invertibility assumption of \cite{KLR} was removed in \cite{W}), 
every $v \in E$ satisfies \eqref{sarnak*}, which yields \eqref{sarnak-op-E}.

When  $E$ is not separable, we fix $v \in E$ and look at the closed linear subspace spanned
by $(T^nv)_{n\ge 0}$, which is separable and $T$-invariant, and apply the previous result.
\end{proof}

In general, we have the following (weaker) result for rigid operators.
\begin{prop} \label{log}
Let $T$ be a power-bounded operator on $E$. Assume that for each $v \in E$ there exists
$(n_k)$ such that $\|T^{n_k}v-v\| \to 0$. Then
\begin{equation} \label{log-sarnak-op}
\frac1{\log N}\sum_{n=1}^N \frac{\mu(n) T^nv}n \to 0 \quad\text{\rm weakly in } \ E, \quad
\text{\rm for every }\ v \in E.
\end{equation}
\end{prop}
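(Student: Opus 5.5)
Fix $v\in E$ and $\phi\in E^*$, and renorm so that $\|T\|\le 1$. We will prove that the scalar sequence $a_n:=\phi(T^nv)$ satisfies
$$\frac1{\log N}\sum_{n\le N}\frac{\mu(n)a_n}{n}\to 0 .$$
The intended input is the known result that the logarithmic Sarnak conjecture holds for sequences that are rigid in a suitable sense. Precisely: if $(X,\tau)$ is a topological system and $x\in X$ is a \emph{rigid} point, meaning $\tau^{n_k}x\to x$ for some $n_k\to\infty$, or more usefully if the orbit closure carries only rigid invariant measures, then logarithmic M\"obius orthogonality holds. Alternatively, we can use the theorem of Frantzikinakis--Host, or the Tao logarithmic-averaging results, that logarithmic Sarnak holds for systems with at most countably many ergodic measures. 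We will, however, try to reduce to something that can be proved directly.

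The first step is to build the system. Let $K$ be the norm closure of the orbit $\{T^nv\}_{n\ge0}$. The hypothesis applied to each $T^mv$ shows that every point of the orbit is norm-recurrent. We want $K$ to be norm-compact, because then $T|_K$ is an isometric-type (equicontinuous) system. Recurrence in norm alone does not give compactness. Instead, we take the closed subspace $E_v$ spanned by the orbit and note that the hypothesis holds for every vector of $E$. The set of vectors $w$ with $\liminf_n\|T^nw-w\|=0$ is therefore all of $E$.

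The key step is as follows. Consider the weak$^*$ topological system $(B,T^*|_B)$ for $E_v$, together with $f_v(\psi)=\psi(v)$, exactly as in Proposition~\ref{h*}. For any invariant probability $\nu$ on $B$ we have
$$\int|S^{n}f_w-f_w|^2\,d\nu\le\|T^nw-w\|^2 ,$$
so along the sequence $(n_k)$ attached to $w$ each $f_w$ is rigid in $L^2(\nu)$. The sequence depends on $w$, which is a problem. To handle this we pass to the Kronecker-type argument: the functions $f_w$ generate $C(B)$ by Stone--Weierstrass, so each $f\in C(B)$ has $\liminf_n\|S^nf-f\|_{L^2(\nu)}=0$. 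In a measure-preserving system this means every $L^2$ function is recurrent in norm. This forces the Koopman operator to have no weakly mixing component, because a weakly mixing vector $g\ne0$ has $\langle S^ng,g\rangle\to0$ in density, and since the orbit is norm-recurrent only along a thin set we have to be careful. We plan to use the hypothesis in its equicontinuous form instead: $\liminf\|T^nw-w\|=0$ for all $w$, together with Baire category on the separable space $E_v$, yields a single sequence along which $T^{n_k}\to I$ strongly on a dense set, and then on all of $E_v$ by power-boundedness. So $T|_{E_v}$ is rigid.

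With a common rigidity sequence in hand, every invariant measure of $(B,T^*|_B)$ is rigid, as in the proof of Theorem~\ref{rigid}. We then invoke the theorem of Kanigowski--Lema\'nczyk--Radziwi\l\l{} in its logarithmic form, which requires no arithmetic condition on $(n_k)$: if all invariant measures of a system are rigid, then logarithmic M\"obius orthogonality holds at every point. Applying this at the point $\phi\in B$ gives the claim. For non-separable $E$ we reduce to $E_v$ as in Proposition~\ref{klr}.

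The main obstacle is the Baire/diagonal step producing a common rigidity sequence from pointwise recurrence. We would first try to show that the set $\{w:\ \|T^{n}w-w\|<\varepsilon \text{ for some } n\ge m\}$ is open and dense for each $m$ and $\varepsilon$. If this fails, the fallback is to apply the logarithmic result at the single point, using that its orbit closure is norm-recurrent.
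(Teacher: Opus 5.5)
There is a genuine gap, and it is the step you flag as the main obstacle: producing one sequence along which $T^{n_k}\to I$ strongly on $E_v$. Your Baire plan cannot deliver it. Under the hypothesis, each set $\{w:\ \|T^nw-w\|<\varepsilon \text{ for some } n\ge m\}$ is already all of $E$, so the Baire intersection only restates that every vector is recurrent. It says nothing about \emph{simultaneous} recurrence of several vectors along a single sequence, and that is exactly what a common rigidity sequence on a dense set requires. The fallback (``apply the logarithmic result at the single point'') is not an argument. Your intermediate Stone--Weierstrass claim, that every $f\in C(B)$ satisfies $\liminf_n\|S^nf-f\|_{L^2(\nu)}=0$, is also unjustified. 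Recurrence along different sequences is not preserved under sums or products.

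The missing idea is elementary: you never need the hypothesis for any vector other than $v$. The space $E_v$ is generated by the orbit of $v$, and $T^{n_k}$ commutes with every $T^j$. Hence the sequence $(n_k)$ attached to $v$ already works on all of $E_v$:
$$
\|T^{n_k}(T^jv)-T^jv\| \le \|T^j\|\,\|T^{n_k}v-v\|\to 0 \qquad (j\ge 0).
$$
So $T^{n_k}\to I$ strongly on the linear span of the orbit, and therefore on $E_v$, because $\sup_k\|T^{n_k}\|<\infty$. This is what the paper uses when it says the restriction of $T$ to $E_v$ is rigid along the sequence determined by $v$.

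With this step inserted, the rest of your outline coincides with the paper's proof. As in Theorem \ref{rigid}, every $T^*$-invariant probability on the unit ball $B$ of $E_v^*$ is rigid along $(n_k)$. Logarithmic M\"obius disjointness for such systems is then applied at the point $\phi_{|E_v}/\|\phi\|$; note that $\phi$ itself lives in $E^*$, not in $B$.

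Check your citation for that black box. The Kanigowski--Lema\'nczyk--Radziwi\l\l{} theorem used in Proposition \ref{klr} concerns Ces\`aro averages and needs the arithmetic condition on $(n_k)$. For logarithmic averages with no condition on $(n_k)$, the paper invokes Qiu, Wei and Xu \cite[Corollary 1.6]{QWX}.
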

\begin{proof} As before, we may assume $\|T\| \le 1$. Fix $v \in E$, and let $E_v$ be the
closed subspace generated by $(T^nv)_{n\ge 0}$. Then $E_v$ is separable and $T$ invariant,
with the restriction of $T$ rigid on $E_v$, along the subsequence $(n_k)$ determined by $v$.

Let $B=\{\phi \in E_v^*: \|\phi\|\le 1\}$, which by separability is a compact metric space
in the weak* topology. As shown in the proof of Proposition \ref{rigid}, for every
probability $\nu$ invariant for $(B,T^*_{|B})$ and $f \in L_2(\nu)$ we have
$\|S^{n_k}f -f\|_2 \to 0$.  By Qiu, Wei and Xu \cite[Corollary 1.6]{QWX},
$$
 \frac1{\log N}\sum_{n=1}^N \frac{\mu(n)\phi(T^nv)}n =
 \frac1{\log N}\sum_{n=1}^N \frac{\mu(n)f_v(T^{*n}\phi)}n \to 0 \quad \forall \phi \in B,
 $$
which yields \eqref{log-sarnak-op}.
\end{proof}

\begin{prop} \label{kerr-li-1}
Let $T$ be an invertible isometry of a Banach space $E$.  If for every $v \in E$ the  orbit
$(T^nv)_{n\in \mathbb Z}$ does not contain a subset equivalent to the standard $\ell_1$ basis,
then $h^*_{top}(T)=0$.
\end{prop}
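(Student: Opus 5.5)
Since $T$ is an invertible isometry, $T^*$ restricted to $B$ (the weak* compact unit ball of $E^*$) is a homeomorphism: $T^*$ is an invertible isometry of $E^*$ and is weak* continuous, with weak* continuous inverse $(T^{-1})^*$. So $(B,T^*_{|B})$ is an invertible compact Hausdorff system, and we may use the Kerr--Li theory of independence, as in Proposition \ref{homeo}. The plan is to show that this system has no nontrivial IE-pairs, i.e.\ that it is null in the entropy sense. By Kerr and Li \cite{KL}, positive topological entropy of a homeomorphism is equivalent to the existence of a pair of disjoint closed sets $A_0,A_1$ with an independence set of positive density. In the linear setting this is the Kerr--Li characterization (\cite[Theorem 3.5 or its variants]{KL}, already used for Proposition \ref{homeo}). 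It states that $h_{top}(T^*_{|B})>0$ if and only if there is $v\in E$ such that the orbit $(T^nv)_{n\in\mathbb Z}$ has an $\ell_1$-isomorphic subset $\{T^n v: n\in J\}$, with $J\subset\mathbb Z$ of positive density and equivalent to the $\ell_1$ basis. I will invoke this, or reprove the needed direction.

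For the direction we need, suppose $h^*_{top}(T)>0$. First reduce to functions of the form $f_v(\phi)=\phi(v)$. The functions $f_v$, $v\in E$, separate points of $B$, and by Stone--Weierstrass the algebra they generate is dense in $C(B)$. Positive entropy yields a finite open cover of $B$ with positive entropy. Standard localization (Kerr--Li, via IE-tuples) then gives a positive entropy pair $(\phi_0,\phi_1)$, $\phi_0\ne\phi_1$, which can be separated by some $f_v$: choose $v$ with $\mathrm{Re}\,\phi_0(v)<a<b<\mathrm{Re}\,\phi_1(v)$. Since $(\phi_0,\phi_1)$ is an IE-pair, the sets $\{\mathrm{Re}\,f_v\le a'\}$ and $\{\mathrm{Re}\,f_v\ge b'\}$, which are neighborhoods of $\phi_0$ and $\phi_1$, have an independence set $J\subset\mathbb Z$ of positive density for the action of $T^*$. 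Independence means that for every finite $F\subset J$ and every $\sigma:F\to\{0,1\}$ there is $\phi\in B$ with $\mathrm{Re}\,\phi(T^n v)\le a'$ when $\sigma(n)=0$ and $\mathrm{Re}\,\phi(T^nv)\ge b'$ when $\sigma(n)=1$. By Rosenthal's classical argument (the Rosenthal--Dor lemma on Boolean independence), a bounded sequence of real functions with this independence property is equivalent to the $\ell_1$ basis. Apply it to the functions $\phi\mapsto \mathrm{Re}\,\phi(T^nv)$, $n\in J$, which are the real parts of $f_{T^nv}$ and have sup norm $\|T^nv\|=\|v\|$. This gives $\|\sum_{n\in F}c_nT^nv\|\ge \delta\sum|c_n|$ for real, and then complex, coefficients. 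Hence $\{T^nv\}_{n\in J}$ is equivalent to the $\ell_1$ basis, contradicting the hypothesis. So $h^*_{top}(T)=0$.

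The main obstacle is the localization step. Positive entropy of the ball system must be converted into a single $v$ and two level sets of $\mathrm{Re}\,f_v$ carrying an infinite independence set. In general positive entropy only gives IE-pairs of points, and independence for neighborhoods of those points. The key is that the subbasic weak* neighborhoods are defined by finitely many $f_{v_i}$, so a priori independence involves several vectors. To reduce to one vector, I would use linearity: the separating functional difference $\phi_1-\phi_0$ can be detected by a single $v$, and sets of the form $\{\mathrm{Re}\,\phi(v)\le a'\}$ already contain a neighborhood of $\phi_0$. Therefore independence of the pair of neighborhoods implies independence of these larger half-space sets, and monotonicity of independence under enlarging the sets finishes it. Note that only an infinite independence set, not one of positive density, is needed to get an $\ell_1$ subsequence, so it suffices to cite \cite[Proposition 3.1 / Theorem 3.5]{KL} for the existence of a nondiagonal IE-pair.
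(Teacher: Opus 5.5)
Your route is sound and different from the paper's, but one step fails as written: ``for real, and then complex, coefficients.'' Your half-plane sets $\{\mathrm{Re}\,f_v\le a'\}$ and $\{\mathrm{Re}\,f_v\ge b'\}$ control only $\mathrm{Re}\,\phi(T^nv)$. Rosenthal's estimate therefore bounds $\|\sum_{n\in F}c_nT^nv\|$ from below only for real $c_n$. For complex $E$ nothing transfers to complex $c_n$ on the same index set $J$. For example, for any $w\ne 0$ the functions $\mathrm{Re}\,f_w$ and $\mathrm{Re}\,f_{iw}=-\mathrm{Im}\,f_w$ are Boolean independent on $B$ with a gap, yet $w+i(iw)=0$.

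The repair is easy, because an IE-pair gives independence for \emph{every} pair of neighbourhoods of $\phi_0,\phi_1$:
\begin{itemize}
\item Put $z_j=\phi_j(v)$, $\delta=|z_1-z_0|>0$ and $U_j=\{\phi\in B: |\phi(v)-z_j|<\delta/8\}$, and let $J$ be an independence set for $(U_0,U_1)$.
\item Given finite $F\subset J$ and complex $c_n$, choose signs $\epsilon_n$ with $|\sum\epsilon_nc_n|\ge\frac12\sum|c_n|$ (signs of the real parts or of the imaginary parts).
\item Let $\sigma(n)=1$ iff $\epsilon_n=1$, set $\bar\sigma=1-\sigma$, and let $\phi_\sigma,\phi_{\bar\sigma}\in B$ realize these two patterns.
\item Since $\sum c_nz_{\sigma(n)}-\sum c_nz_{\bar\sigma(n)}=(z_1-z_0)\sum\epsilon_nc_n$, one of $|\phi_\sigma(\sum c_nT^nv)|$, $|\phi_{\bar\sigma}(\sum c_nT^nv)|$ is at least $(\delta/4-\delta/8)\sum|c_n|$.
\end{itemize}
This is the complex $\ell_1$ lower bound on $J$ itself. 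Alternatively, keep your real estimate and pass to a subsequence using the complex form of Rosenthal's $\ell_1$ theorem (Dor). A sequence equivalent to the real $\ell_1$ basis has no weakly Cauchy subsequence, hence it has a subsequence equivalent to the complex $\ell_1$ basis, and that subsequence still lies in the orbit. The same observation shows your ``main obstacle'' is not one: sets defined by a single $v$ are already weak* open neighbourhoods of $\phi_0,\phi_1$.

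With this fix, your argument is a genuinely different proof. The paper's proof is a pure citation of Theorem 3.5 of \cite{KL}. That theorem identifies $h^*_{top}(T)=0$ with vanishing of the CA-entropy $hc(T)$, and shows that $hc(T)>0$ produces an orbit containing an $\ell_1$-isomorphism set. (Your opening option to ``invoke this'' is exactly that.)

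You bypass CA-entropy altogether. Positive entropy of the homeomorphism $T^*_{|B}$ gives a non-diagonal IE-pair. Linearity lets a single $f_v$ separate that pair. Rosenthal's estimate then turns the independence set into an $\ell_1$ subset of the orbit of $v$, even one of positive density. This makes visible why $\ell_1$ appears and proves exactly the direction needed. The price is reliance on the IE-pair criterion for positive entropy, which is itself a nontrivial combinatorial theorem.

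Fix the attribution as well. Proposition 3.1 of \cite{KL} computes $hc$ for isometries of $C(X)$, and Theorem 3.5 there is the CA-entropy statement. The non-diagonal IE-pair criterion comes from Kerr and Li's later paper on independence in topological dynamics (Math.\ Ann.\ 2007).
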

\begin{proof} Kerr and Li \cite{KL} defined a "CA-entropy" for invertible isometries of Banach
spaces, denoted by $hc(T)$. In \cite[Theorem 3.5]{KL} they prove that $hc(T)=0$ if and only
if $h^*_{top}(T)=0$. They also prove there that if $hc(T)>0$, there exists $v \in E$
such that $(T^nv)_{n\in \mathbb Z}$ contains a subset equivalent to the standard $\ell_1$ basis.
This yields the assertion.
\end{proof}

\begin{cor} \label{kerr-li}
Let $T$ be an invertible isometry of a Banach space $E$ which does not contain an isomorphic
copy of $\ell_1$. Then $h^*_{top}(T)=0$.
\end{cor}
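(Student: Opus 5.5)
The plan is to deduce the corollary directly from Proposition \ref{kerr-li-1}. It suffices to check that, when $E$ contains no isomorphic copy of $\ell_1$, no orbit $(T^nv)_{n\in\mathbb Z}$ can contain a subset equivalent to the standard $\ell_1$ basis.

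Fix $v\in E$ and suppose, for contradiction, that some subsequence $(T^{n_j}v)_{j\ge1}$ of the orbit is equivalent to the unit vector basis of $\ell_1$. This means there are constants $0<c\le C$ such that
$$
c\sum_j|a_j|\le\Big\|\sum_j a_jT^{n_j}v\Big\|\le C\sum_j|a_j|
$$
for all finitely supported scalar sequences $(a_j)$. In fact $C=\|v\|$ works, because $T$ is an isometry.

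The map $(a_j)\mapsto\sum_j a_jT^{n_j}v$ then extends by continuity from finitely supported sequences to all of $\ell_1$. The two-sided estimate shows the extension is an isomorphism of $\ell_1$ onto the closed linear span of $\{T^{n_j}v\}$. So $E$ would contain an isomorphic copy of $\ell_1$, contradicting the hypothesis.

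Hence the orbit condition of Proposition \ref{kerr-li-1} holds for every $v\in E$, and that proposition yields $h^*_{top}(T)=0$.

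The only point needing care is the scalar field. If $E$ is complex, we must decide whether ``equivalent to the standard $\ell_1$ basis'' is meant over $\mathbb C$ or over $\mathbb R$.
\begin{itemize}
\item If the Kerr--Li conclusion produces a real $\ell_1$ sequence, we first apply Lemma \ref{ell_1}: $E$, viewed as a real Banach space, still contains no copy of the real $\ell_1$. The argument above then runs verbatim over $\mathbb R$.
\item If it produces a complex $\ell_1$ sequence, the contradiction is immediate.
\end{itemize}
Either way no further obstacle is expected, and the corollary follows.
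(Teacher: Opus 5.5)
Your proof is correct and is essentially the paper's argument: the corollary is stated as an immediate consequence of Proposition \ref{kerr-li-1}, since an orbit subset equivalent to the $\ell_1$ basis would give an isomorphic copy of $\ell_1$ in $E$. Your extra scalar-field precaution via Lemma \ref{ell_1} is harmless and fills in a detail the paper leaves implicit.
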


{\bf Remarks.} 1. In \cite[Corollary 3.9]{KL} Kerr and Li conclude that $hc(T)=0$
when the Banach space has separable dual, since then it does not contain an
isomorphic copy of $\ell_1$. However, this latter condition does not imply
separability of $E^*$ when $E$ is separable; see \cite{LiS}.

2. A weakly rigid contraction $T$ is an invertible isometry by Proposition
\ref{rigid-isometry}, and then $h^*_{top}(T)=0$ by Theorem \ref{rigid}; when $E$
contains an isomorphic copy of $\ell_1$, Corollary \ref{kerr-li} does not apply.

3. If $T$ is invertible doubly power-bounded
(i.e $\sup_{n \in \mathbb Z} \|T^n\|<\infty$), then $T$ is an invertible isometry
in the equivalent norm $\||v\||:=\sup_{n\in\mathbb Z}\|T^n v\|$.
Such $T$ satisfies \eqref{strong} if and only if $T^{-1}$ does.

4.  Here is an alternative proof of  Corollary \ref{kerr-li} when $E$ is separable.
Then $X$, the unit ball of $E^*$, is compact metric in the weak* topology.
It was shown in the proof of Theorem \ref{rosenthal-rep} that $(X,\tau)$ is tame.
By Theorem 5.2 of Huang \cite{Hu}, for every $\tau$-invariant measure  $\nu$
 the measure preserving system $(X,\tau,\nu)$ has discrete spectrum.
Hence $h_\nu(\tau)=0$ (e.g. \cite[p. 255]{Pe}). By the variational principle,
  $h_{top}(\tau)=0$.
\medskip

We give the following properties of the class of contractions on $E$ with zero entropy.

\begin{prop}
Let $\mathcal{T}_0=\{T\in\mathcal L(E): \|T\|\le 1,\  h^*_{top}(T)=0\}$.
Then $\mathcal{T}_0$ is a convex set, stable under multiplication.
\end{prop}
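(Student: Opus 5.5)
The plan is to convert the entropy condition into a combinatorial one about orbits of vectors, and then check that condition under convex combinations and products. Directly on the compact systems $(B,T^*_{|B})$ neither operation is natural. $(tT_1+(1-t)T_2)^*$ is not a factor of, or contained in, any obvious product system built from $T_1^*$ and $T_2^*$. Likewise $(T_1T_2)^*=T_2^*T_1^*$ is a factor of something natural only when $T_1$ and $T_2$ commute. The tool I would use is the local entropy theory of Kerr and Li \cite{KL}, as in the proof of Proposition \ref{kerr-li-1}. For a contraction $T$, $h^*_{top}(T)>0$ should be equivalent to the existence of $v\in E$, $\delta>0$ and a set $J\subset\mathbb N$ of positive density on which $(T^nv)_{n\in J}$ is $\delta$-equivalent to the $\ell_1$ basis (independence along positive density). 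Before anything else I would check that the argument of \cite[Theorem 3.5]{KL} extends from invertible isometries to contractions. This should go through: positive entropy of a linear system on the weak* compact ball $B$ yields an IE-pair, and linearity lets one shift the pair to $(\phi,-\phi)$. This is the same mechanism that produces $\ell_1$ in Proposition \ref{kerr-li-1}.

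Granting this criterion, I would first reduce to separable $E$. By Proposition \ref{subspace} it suffices to work on the separable closed invariant subspaces generated by finitely many orbits, so $B$ is metrizable. For the product $T_1T_2$, I would argue by contradiction. Suppose $((T_1T_2)^nv)_{n\in J}$ is $\ell_1$-independent on a positive density set $J$. Applying the Rosenthal $\ell_1$ dichotomy to the bounded families involved, I would try to push the independence onto an orbit of $T_1$ or of $T_2$, contradicting $h^*_{top}(T_i)=0$.

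For convex combinations, I would expand
$$(tT_1+(1-t)T_2)^n=\sum_{w}t^{|w|_1}(1-t)^{|w|_2}\,T_w,$$
where $T_w$ runs over the words of length $n$ in $T_1,T_2$. I would then use the fact that an $\ell_1$-independent family cannot be a norm-convex average of families each carrying no independence on positive density sets. This would be done via a Ramsey/averaging argument of Kerr--Li type, using that $\ell_1$-independence of positive density passes to a positive proportion of the terms in a convex average. The same lemma for single words $T_w$ would also handle the product case.

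I expect the main obstacle to be exactly this transfer of independence through non-commuting words in $T_1,T_2$. The orbit of $tT_1+(1-t)T_2$ is not an orbit of either operator, and the words $T_w$ do not form a dynamical system. If the combinatorial route fails, my fallback is to realize $tT_1^*+(1-t)T_2^*$ on $B$ as a factor of a skew product over the full shift $\{1,2\}^{\mathbb N}$ with Bernoulli$(t)$ measure, mapping $(\omega,\phi)\mapsto(\sigma\omega,T_{\omega_1}^*\phi)$, followed by averaging. I would then estimate entropy through the variational principle \eqref{goodman} applied to a fibre entropy that vanishes when each $h^*_{top}(T_i)=0$. The difficulty there is that the averaging map is not a factor map, so this step needs genuinely new input.
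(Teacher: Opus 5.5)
\textbf{The step that fails.} The step that cannot be carried out is transferring $\ell_1$-independence from the orbit of $T_1T_2$ (or from families of words $T_wv$) to an orbit of $T_1$ or of $T_2$. This is not a technical difficulty: the multiplicative half of the statement is false.

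On $E=\ell^1(\mathbb Z)$ put $T_1(f_k)=(f_{-k})$ and $T_2(f_k)=(f_{1-k})$.
\begin{itemize}
\item These are isometric involutions, so the restriction of $T_i^*$ to $B$ squares to $\mathrm{id}_B$. Then \cite[Theorem 2]{AKM} gives $2h^*_{top}(T_i)=h_{top}(\mathrm{id}_B)=0$, so $T_1,T_2\in\mathcal T_0$. Every orbit of $T_i$ has at most two points.
\item But $(T_2T_1f)_k=(T_1f)_{1-k}=f_{k-1}$. This is exactly the operator of Proposition \ref{bernoulli}, so $h^*_{top}(T_2T_1)>0$; indeed $(T_2T_1)^ne^0=e^n$ is the $\ell_1$ basis.
\end{itemize}
The same happens for Koopman operators. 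The maps $(x,y)\mapsto(x+y,y)$ and $(x,y)\mapsto(x,x+y)$ on $\mathbb T^2$ have zero entropy, while their composition $(x,y)\mapsto(x+y,x+2y)$ is hyperbolic. Propositions \ref{homeo} and \ref{tau-on-X} transfer this to $C(\mathbb T^2)$.

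So your suspicion that $(T_1T_2)^*$ is not a factor of anything natural built from $T_1^*,T_2^*$ was well founded. The paper's proof rests on exactly such a claim: that $\alpha T^*+(1-\alpha)S^*$ is a factor of $T^*\times S^*$, with ``a similar argument'' for $TS$. However, $(\phi,\psi)\mapsto\alpha\phi+(1-\alpha)\psi$ does not intertwine these maps, and for products the conclusion is refuted by the example above.

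\textbf{The convexity half.} Your convexity argument fails along with the product step. It presupposes that families $(T_{w_n}v)$ indexed by words carry no positive-density independence, which is false for $w_n=(T_2T_1)^n$.

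The example does not refute convexity itself. Here $W=\frac12(T_1+T_2)$ satisfies $W^2=\frac12 I+\frac14(T_2T_1+T_1T_2)$, so $\|W^{2n}(W^2-I)\|=O(n^{-1/2})$. This forces every invariant probability of $W^{*2}$ on $B$ to be carried by fixed points, which gives $h^*_{top}(W)=0$. The independent words are averaged out rather than absent. A proof of the convexity half would therefore have to exploit this averaging over words. Neither your proposal nor the paper supplies such an argument, so that half remains unproved.

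Two smaller points:
\begin{itemize}
\item The equivalence of $h^*_{top}(T)>0$ with positive-density $\ell_1$-independence of an orbit is asserted but not verified for non-invertible contractions.
\item Proposition \ref{subspace} bounds the entropy of a restriction from above. That is the wrong direction for reducing $h^*_{top}(W)=0$ to separable invariant subspaces.
\end{itemize}
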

\begin{proof} Let $T, S \in \mathcal{T}_0$ and $\alpha \in (0,1)$. 
Since  $\alpha T^* +(1-\alpha) S^*$ is a factor of $T^*\times S^*$,
$$
h^*_{top}(\alpha T +(1-\alpha) S) \leq h_{top}(T^*\times S^*)=h^*_{top}(T)+h^*_{top}(S)=0,
$$
the last equality by \cite[Theorem 3]{AKM}.

A similar argument yields that $h^*_{top}(TS)=0.$
\end{proof}

\subsection{Topological entropy for power-bounded operators using orbits.} \label{orbital}

Inspired by a blog of Tao \cite{T} and an early work of Veech \cite{V1}, we define
another type of entropy for power-bounded operators.

Let $\mathbf{a}:=(a_n)_{n\ge 0}$ be a bounded sequence, and let $S$ be the left shift
 on $\ell_\infty=\ell_\infty(\mathbb N)=\ell_1^*$, defined by $Sf(n)=f(n+1)$ for
$f \in \ell_\infty$. Let $K(\mathbf{a})$ be the weak*closure in $\ell_\infty$ of
$\{S^k\mathbf a\}$. Since $S$ is a dual operator (of the right shift in $\ell_1$),
it is continuous in the weak* topology of $\ell_\infty$, so $(K(\mathbf a),S)$ is a
metric topological dynamic system. We define the topological entropy of $\mathbf a$
by
$$
h_{top}(\mathbf a)=h_{top}\big((a_n)\big) := h_{top}(K(\mathbf a),S).
$$
\smallskip

Fix a power-bounded linear operator $T$ on a Banach space $E$. For $v\in E$ and
$\phi \in E^*$, we put
$$
a_n=a_n(v,\phi)=\langle T^nv,\phi\rangle, \quad n\ge 0.
$$
We denote $\mathbf a(v,\phi):=(a_n)_{n\in\mathbb N}$ and
$K(v,\phi):=K\big(\mathbf a(v,\phi)\big)$.
We  define the entropy of $v \in E$ as
$$
h_{top}(v)=\sup_{\phi \in E^*} h_{top}\big(\mathbf a(v,\phi)\big)=
\sup_{\phi \in E^*} h_{top}\big(K(v,\phi),S)\big).
$$


Finally, define the topological entropy of the linear operator $T$ as
$$
h_{top}(T)=\sup_{v \in E}h_{top}(v)=
\sup_{v\in E}\sup_{\phi \in E^*} h_{top}\big(\mathbf a\big(v,\phi\big)\big)=
\sup_{v\in E}\sup_{\phi \in E^*} h_{top}\big(K(v,\phi),S)\big).
$$
Since $a_n(v,\phi)=a_n(\alpha v,\alpha^{-1}\phi)$ for $\alpha>0$, we have
\begin{equation}\label{entropy2}
h_{top}(T)=
\sup_{v\in E}\sup_{\|\phi\|_{E^*}\le 1} h_{top}\big(\mathbf a\big(v,\phi\big)\big)=
\sup_{v\in E}\sup_{\|\phi\|_{E^*}\le 1} h_{top}\big(K(v,\phi),S)\big).
\end{equation}

\begin{prop} \label{h-T}
Let $T$ be a contraction on a Banach space $E$. If $h_{top}(T)=0$
and Sarnak's conjecture holds, then \eqref{sarnak-op-E} holds.
\end{prop}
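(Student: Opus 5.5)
Fix $v\in E$ and $\phi\in E^*$; by linearity and \eqref{entropy2} we may assume $\|\phi\|\le 1$. Weak convergence in \eqref{sarnak-op-E} means exactly that
$$\frac1N\sum_{n=1}^N \mu(n)\,a_n(v,\phi)\to 0,$$
so the plan is to show that the sequence $\mathbf a=\mathbf a(v,\phi)$ is a deterministic sequence and then invoke Sarnak's conjecture. Since $h_{top}(T)=0$, the definition gives $h_{top}(K(\mathbf a),S)=0$. Thus $(K(\mathbf a),S)$ is a topological dynamical system of zero entropy; it is compact because $\mathbf a$ is bounded ($|a_n|\le\|v\|$ since $T$ is a contraction), so $K(\mathbf a)$ lies in a weak*-compact ball of $\ell_\infty$.

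Next I realize $\mathbf a$ as an orbit sequence of this system. The point is $x=\mathbf a\in K(\mathbf a)$, and the function is the evaluation at the coordinate $0$, $f(g):=g(0)=\langle g,\delta_0\rangle$ for $g\in K(\mathbf a)$. Since $\delta_0\in\ell_1$, $f$ is weak*-continuous on $K(\mathbf a)$. Moreover $f(S^n\mathbf a)=(S^n\mathbf a)(0)=a_n$ for all $n\ge0$. Hence $(a_n)$ is deterministic in the sense of the definition preceding Sarnak's conjecture, and \eqref{sarnak-conj} yields $\frac1N\sum_{n=1}^N\mu(n)\phi(T^nv)\to0$. Since $\phi$ was arbitrary, \eqref{sarnak-op-E} follows.

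There is essentially no obstacle. The only points to check are the weak* continuity of $S$ and of $f$ on $K(\mathbf a)$, and the compactness of $K(\mathbf a)$, which makes it a legitimate topological dynamical system; it is also compact metric, so the metric form of the conjecture suffices. In the real case $(a_n)$ is real and Sarnak's conjecture applies directly. In the complex case it applies to complex-valued continuous functions, as in the Remark after \eqref{sarnak-op1}. As in Proposition \ref{h*}, one could further upgrade to norm convergence via \cite[Corollary 10]{AKLR}, but that is not needed here.
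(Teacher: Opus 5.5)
Your proof is correct and is essentially the paper's argument. Both realize $a_n=\phi(T^nv)$ as the orbit sequence of the point $\mathbf a(v,\phi)$ in the zero-entropy system $(K(v,\phi),S)$, read off through the weak*-continuous evaluation $g\mapsto g(0)=\langle e_0,g\rangle$ with $e_0\in\ell_1$, and then apply Sarnak's conjecture. The extra checks you list (compactness and metrizability of $K(\mathbf a)$, its $S$-invariance, the complex case) are routine details that the paper leaves implicit.
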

\begin{proof}
Fix $v \in E$ and $\phi \in E^*$. By the definition, $h_{top}(T)=0$ implies that
the topological entropy of $(K(v,\phi),S)$ is zero. Let $e_0 \in \ell_1$ be
the unit vector $(1,0,\dots0,\dots)$, which is continuous on $K(v,\phi)$.
By the assumption that Sarnak's conjecture holds, we obtain, with
$\tilde Sf=f\circ S$ for $f$ continuous on $K(v,\phi)$,
$$
\frac1N\sum_{n=1}^N\mu(n)\langle \tilde S^n e_0,(b_k) \rangle \to 0  \quad
\text{ for any } (b_k) \in K(v,\phi),
$$
in particular for $(a_k):=(\langle T^kv,\phi\rangle)$.
Since $\langle \tilde S^ne_0,(a_k) \rangle = \langle e_0,S^n(a_k) \rangle =a_n$,
we have
$$
\frac1N\sum_{n=1}^N\mu(n) \langle T^{n}v,\phi \rangle =
\frac1N\sum_{n=1}^N \mu(n) a_{n}  \to 0,
$$
which yields \eqref{sarnak-op-E}.
\end{proof}

\begin{prop} \label{compare-h}
Let $T$ be a contraction on a Banach space $E$.Then $h_{top}(T) \le h^*_{top}(T)$.
\end{prop}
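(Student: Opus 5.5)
Fix $v\in E$ and $\phi\in E^*$ with $\|\phi\|\le 1$; by \eqref{entropy2} it suffices to show $h_{top}(K(v,\phi),S)\le h^*_{top}(T)$. The approach is to realize $(K(v,\phi),S)$ as a factor of a closed $T^*$-invariant subset of $(B,T^*_{|B})$, where $B$ is the unit ball of $E^*$ with the weak* topology. Then the results of \cite{AKM} quoted at the start of this section (entropy decreases under passing to closed invariant subsets and under continuous factor maps) give the inequality.

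The map is $\Phi:B\to\ell_\infty$, $\Phi(\psi):=(\langle T^nv,\psi\rangle)_{n\ge0}$. Since $T$ is a contraction, $|\langle T^nv,\psi\rangle|\le\|v\|$, so $\Phi(B)$ lies in the ball of radius $\|v\|$ in $\ell_\infty$, where the weak* topology coincides with coordinatewise convergence. Each coordinate $\psi\mapsto\psi(T^nv)$ is weak* continuous, so $\Phi$ is continuous from $(B,w^*)$ into this ball. The intertwining relation is immediate:
$$
\Phi(T^*\psi)=(\langle T^{n}v,T^*\psi\rangle)_n=(\langle T^{n+1}v,\psi\rangle)_n=S\Phi(\psi).
$$
In particular $\Phi(T^{*k}\phi)=S^k\mathbf a(v,\phi)$.

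Let $Y$ be the weak* closure in $B$ of the orbit $\{T^{*k}\phi:k\ge0\}$. It is compact and $T^*$-invariant, since $T^*$ is weak* continuous. Its image $\Phi(Y)$ is compact and contains the orbit $\{S^k\mathbf a(v,\phi)\}$, so it contains $K(v,\phi)$. Conversely, $\Phi(Y)$ lies in the closure of $\Phi(\text{orbit})$ by continuity, hence $\Phi(Y)=K(v,\phi)$. Thus $\Phi:(Y,T^*_{|Y})\to(K(v,\phi),S)$ is a continuous factor map. By \cite{AKM},
$$
h_{top}(K(v,\phi),S)\le h_{top}(Y,T^*_{|Y})\le h_{top}(B,T^*_{|B})=h^*_{top}(T).
$$
Taking the supremum over $v$ and $\phi$ gives the claim.

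The main point needing care is the topological one: the weak* topology on $K(v,\phi)\subset\ell_\infty$ agrees with the product (pointwise) topology on bounded sets, which is what makes $\Phi$ continuous and surjectivity onto $K(v,\phi)$ hold. Neither metrizability nor separability of $E$ is required, since \cite{AKM} works for compact Hausdorff spaces. In the real case nothing changes; in the complex case the same argument applies verbatim with complex sequences.
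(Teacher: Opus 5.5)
Your proof is correct and follows essentially the paper's route. You use the same map $\psi\mapsto(\psi(T^kv))_{k\ge0}$ from the weak* unit ball $B$ of $E^*$ into a bounded ball of $\ell_\infty$, the same intertwining with the shift, and the same monotonicity of topological entropy under factors and closed invariant subsets from \cite{AKM}.

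Your restriction to the weak* orbit closure $Y$ of $\phi$ is in fact more careful than the paper, which asserts that $\pi_v(B)$ coincides with $K(v,\phi)$. In general only $K(v,\phi)\subseteq\pi_v(B)$ holds: for $T=I$ on the scalar field, $\pi_v(B)$ is a whole family of constant sequences while $K(v,\phi)$ is a single point. So either your $Y$ or the subsystem inequality is needed to conclude.
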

\begin{proof}
Let $B$ be the unit ball of $E^*$, and fix $v\in E$. Define $\pi_v: B \longrightarrow
\ell^\infty$ by $\pi_v(\phi)= (\phi(T^kv))_{k\in \mathbb N}$. Then, with $S$ the shift,
we have $\pi_v(T^*\phi)= \big(\phi(T^{k+1}v)\big)_{k\in\mathbb N}= S(\pi_v\phi)$.
Standard computations show that $\pi_v$ is continuous into the ball of radius $\|v\|$ in $\ell^\infty$ with the weak-* topology (see details in the proof of Proposition
\ref{separable}).  Then $\pi_v(B)$ is weak-* compact and $S$-invariant,  and coincides
with $K(v,\phi)$. Thus the system $\big(K(v,\phi),S\big)$ is a factor the system $(B,T^*)$,
so $h_{top}\big(a(v,\phi)\big) \le h^*_{top}(T)$. By \eqref{entropy2} we obtain
$h_{top}(T) \le h^*_{top}(T)$.
\end{proof}

\smallskip

{\bf Remarks.} 1. Proposition \ref{h*}, proved directly, is an immediate consequence of
Propositions \ref{compare-h} and \ref{h-T}.

2. For all the operators $T$ for which we previously proved that $h^*_{top}(T)=0$,
Proposition \ref{compare-h} yields that also $h_{top}(T)=0$.



%

\begin{prop} \label{minimal}
Let $(X,\tau)$ be a minimal topological dynamical system with $X$ metric, and let
$Tf=f\circ\tau$ be the induced contraction on $C(X)$.
If $h_{top}(T)=0$ then $h_{top}(\tau)=0$.
\end{prop}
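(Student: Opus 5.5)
We prove the contrapositive: assuming $h_{top}(\tau)>0$, we produce $v=f\in C(X)$ and $\phi\in C(X)^*$ with $h_{top}(\mathbf a(f,\phi))>0$. The natural choice is $\phi=\delta_{x_0}$ for a point $x_0\in X$. Then $a_n=\langle T^nf,\delta_{x_0}\rangle = f(\tau^n x_0)$, so $K(f,\delta_{x_0})$ is the weak* closure of the shifts of the sequence $(f(\tau^n x_0))_{n\ge0}$.

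We relate this to $(X,\tau)$ through the map $\pi_f:X\to\ell_\infty$ given by $\pi_f(x)=(f(\tau^k x))_{k\ge 0}$. It is continuous into the ball of radius $\|f\|_\infty$ with the weak* topology, since on bounded sets weak* convergence is coordinatewise, and it satisfies $\pi_f\circ\tau = S\circ\pi_f$. Minimality makes the orbit of $x_0$ dense in $X$, so $\pi_f(X)$ is the closure of $\{S^k\pi_f(x_0)\}$, which equals $K(f,\delta_{x_0})$. Hence $(K(f,\delta_{x_0}),S)$ is a factor of $(X,\tau)$; this is where minimality is used.

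It then remains to choose $f$ so that the factor has positive entropy. Because $X$ is compact metric, $C(X)$ is separable; let $(f_j)$ be a dense sequence with $\|f_j\|\le 2^{-j}$, chosen so that the family separates points. The map $x\mapsto (f_j(\tau^k x))_{j,k}$ is then injective and continuous, hence a conjugacy onto its image. A single function cannot in general generate the whole system, so we avoid that step. Instead we use that the joint factor generated by $f_1,\dots,f_m$ is the factor $\pi_{(f_1,\dots,f_m)}$, and that its entropy increases to $h_{top}(\tau)$ as $m\to\infty$. Concretely, $X$ is the inverse limit of the factors $X_m=\pi_{(f_1,\dots,f_m)}(X)$, and topological entropy of an inverse limit is the supremum over the stages. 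We therefore get some $m$ with $h_{top}(X_m,S)>0$.

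The main obstacle is reducing from the $m$-tuple $(f_1,\dots,f_m)$ to a single $f$ and a single $\phi$. To do this we use linear combinations. The system $X_m\subset (\ell_\infty)^m$ embeds into the product $\prod_{i\le m}K(f_i,\delta_{x_0})$, so one has
\[
0<h_{top}(X_m)\le\sum_{i\le m}h_{top}(K(f_i,\delta_{x_0}),S)
\]
by the product formula \cite[Theorem 3]{AKM} together with monotonicity under closed invariant subsets. It follows that some single $i$ satisfies $h_{top}(K(f_i,\delta_{x_0}),S)>0$. Then $h_{top}(T)\ge h_{top}(\mathbf a(f_i,\delta_{x_0}))>0$, which proves the contrapositive and hence the proposition. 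The step needing the most care is the inverse-limit entropy identity; if it is awkward to cite, we would instead apply Goodman's variational principle \eqref{goodman} and use that the Kolmogorov–Sinai entropy of the partition-generated factors increases to $h_\nu(\tau)$.
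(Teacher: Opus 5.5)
Your proof is correct, and it begins exactly as the paper's does: with $v=f$ and $\phi=\delta_{x_0}$, the hypothesis $h_{top}(T)=0$ forces every orbit sequence $(f(\tau^n x_0))_{n\ge 0}$ to have zero entropy. At that point the paper simply cites Proposition~1.4 of Devianne~\cite{De}, whereas you prove the needed implication yourself.

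Your three ingredients are all sound. First, minimality identifies $K(f,\delta_{x_0})$ with the factor $\pi_f(X)$. Second, the identity $h_{top}(\tau)=\sup_m h_{top}(X_m)$ for a point-separating sequence $(f_j)$ is standard and easy to check directly. By compactness and point separation, every finite open cover of $X$ is refined, for some $m$, by the pull-back under $\pi_{(f_1,\dots,f_m)}$ of a cover of $X_m$ defined through the coordinates at time $0$. Third, subadditivity of entropy on products, applied to the closed invariant set $X_m\subset\prod_{i\le m}K(f_i,\delta_{x_0})$, isolates a single $f_i$. Two small corrections: this last step uses the product embedding, not ``linear combinations'' as your text announces; and only point separation of the $f_j$ is needed, not density or the normalization $\|f_j\|\le 2^{-j}$.

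The paper's citation gives a one-line proof. Your argument is self-contained and makes visible that minimality is used only to know that $x_0$ has a dense forward orbit. That observation yields more than the proposition. Suppose $h_{top}(\tau)>0$. The variational principle \eqref{goodman}, together with the ergodic decomposition of entropy, gives an ergodic $\nu$ with $h_\nu(\tau)>0$. By Birkhoff's theorem, the forward orbit closure $Y$ of a $\nu$-typical point $x_0$ contains $\operatorname{supp}\nu$, so $h_{top}(\tau_{|Y})\ge h_\nu(\tau)>0$. Running your argument on $(Y,\tau_{|Y})$ with the restrictions of the $f_j$ then gives $h_{top}(K(f_i,\delta_{x_0}))>0$ for some $i$. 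So your approach appears to remove the minimality hypothesis altogether. That would answer Problem~7 positively and remove minimality from Corollary~\ref{all-zero}.
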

\begin{proof} Assume $h_{top}(T)=0$. Then \eqref{entropy2}, with $v=f\in C(X)$ and
$\phi=\delta_x$, yields $h_{top}\Big(\big(f(\tau^n x)\big)_{n\ge 0}\Big)=0$; this for
every $f \in C(X)$ and $x\in X$.
	By Proposition 1.4 of Devianne \cite{De}, $h_{top}( \tau)=0$.
\end{proof}

\begin{cor} \label{all-zero}
Let $(X,\tau)$ be a minimal topological dynamical system with $X$ metric, 
and let $Tf=f\circ\tau$ be the induced operator on $C(X)$. 
Then the following are equivalent:

(i) $h_{top}(\tau)=0$.

(ii) $h_{top}(T)=0$.

(iii) $h^*_{top}(T)=0$.
\end{cor}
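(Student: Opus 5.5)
The plan is to close the cycle of implications (i)$\Rightarrow$(iii)$\Rightarrow$(ii)$\Rightarrow$(i), using only results already established in this section.

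\emph{(i)$\Rightarrow$(iii).} Since $X$ is metric and $h_{top}(\tau)=0$, Theorem \ref{metric} gives $h^*_{top}(T)=0$ at once. Minimality is not needed for this step.

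\emph{(iii)$\Rightarrow$(ii).} This is Proposition \ref{compare-h}: $0\le h_{top}(T)\le h^*_{top}(T)=0$. Positivity is automatic, because $h_{top}(T)$ is a supremum of topological entropies.

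\emph{(ii)$\Rightarrow$(i).} This is exactly Proposition \ref{minimal}, and it is the only place where minimality of $(X,\tau)$ is used. There one takes $v=f\in C(X)$ and $\phi=\delta_x$ in \eqref{entropy2}. Every sequence $(f(\tau^nx))_{n\ge0}$ then has zero entropy, and Devianne's criterion yields $h_{top}(\tau)=0$.

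The substantive difficulty is therefore not in this corollary. The real work lies in Theorem \ref{metric} for noninvertible $\tau$, which relies on the natural extension. For the corollary itself the only care needed is to note which implications use metrizability (Theorem \ref{metric}) and which use minimality (Proposition \ref{minimal}). Both hypotheses are present in the statement.
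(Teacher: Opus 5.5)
Your proposal is correct and is essentially the paper's own proof: it applies Theorem \ref{metric} for (i)$\Leftrightarrow$(iii), Proposition \ref{compare-h} for (iii)$\Rightarrow$(ii), and Proposition \ref{minimal} for (ii)$\Rightarrow$(i), noting that minimality enters only in the last implication. Arranging the implications as a single cycle means you need only one direction of Theorem \ref{metric}; otherwise the two arguments coincide.
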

\begin{proof} Equivalence of (i) and (iii) is by Theorem \ref{metric}.
By Proposition \ref{compare-h},  (iii) implies (ii).

 By Proposition \ref{minimal} (ii) implies (i); this is the only implication which 
requires minimality.
\end{proof}

\medskip

\subsection{Previous definitions of entropy for operators}

 The notion of entropy of strictly positive-definite operators on  Hilbert space
was introduced by  Nakamura and Megaki \cite{NM}; it is given by $h(A) =-A\log(A)$,
 where $A$ is a strictly positive-definite operator on a Hilbert space.
But, in this case, for any contraction $A$ \eqref{sarnak-op-E} holds, even strongly,
 independently of the value of $h(A)$, by Theorem \ref{wap}.
\medskip

Downarowicz and Frej \cite[Section 4]{DF} defined a topological entropy for Markov operators
on $C(X)$ of a compact Hausdorff space $X$. They proved that when the Markov operator is 
given by $Tf=f\circ \tau$ with $\tau$ a continuous map of $X$ into itself, their topological 
entropy of $T$, which we denote $h^{DF}_{top}(T)$, equals $h_{top}(\tau)$. It follows that the 
condition $h^{DF}_{top}(T)=0$ is equivalent to conditions (i) and (iii) in Corollary \ref{all-zero}.
They also showed that if $P$ is a Markov operator on $C(X)$ such that $(P^n f)_{n\ge 0}$
converges uniformly for every $f \in C(X)$, then $h^{DF}_{top}(P)=0$; in that case, also
$h_{top}(P) \le h^*_{top}(P)=0$, by Propositions \ref{compare-h} and \ref{wap-zero}.
\medskip

Kerr and Li \cite{KL} defined a "CA-entropy" for {\it invertible}  isometries  (called there {\it
isometric automorphisms}) of Banach spaces. In \cite[Proposition 3.1]{KL} they showed that if
$\tau$ is a homeomorphism of a compact Hausdorff space and $Tf:=f\circ \tau$ is the induced
isometry on $C(X)$, then the CA-entropy of $T$, denoted by $hc(T)$, equals $h_{top}(\tau)$.
In \cite[Theorem 3.5]{KL}, Kerr and Li proved that (in our notation)  for every
invertible isometry $T$ on a Banach space, $hc(T)=0$ if and only if $h^*_{top}(T)=0$.
Since topological entropy can be define without invertibility,  our definition of
$h^*_{top}(T)$ applies to all contractions.
\medskip

Herrmann et al. \cite{HKL} applied a general definition of entropy for uniformly continuous
maps of non-compact metric spaces (due to Rufus Bowen \cite{Bow}) to define the topological 
entropy of a bounded linear operator $T$ on a Banach space. They proved that if $T$ is a 
contraction, then their entropy of $T$ is zero; therefore their operator entropy has no 
relevance to our problem, since there exist contractions (obtained from topological systems 
with positive topological entropy) which do not satisfy \eqref{sarnak-op-E}, e.g. \cite{DK},
\cite{AKL}, \cite{Kar}, \cite[Section 7]{AKLR1}.
\medskip

Chernyshev \cite{Che} discusses entropy for unitary operators on Hilbert space.
In view of Propositions \ref{H-contraction} and \ref{wap-zero}, \cite{Che} is
not relevant to the question of convergence of \eqref{sarnak-op-E}.

\bigskip

\section{Reductions to the shift on $\ell^\infty$}

\subsection{Reduction to the shift on $\ell^\infty(\mathbb Z)$}

Veech \cite{V2} studied Sarnak's conjecture using the left shift on the complex
$\ell^\infty:=\ell^\infty(\mathbb Z)$, defined by $Sf(n)=f(n+1)$ for
$f=\big(f(n)\big)_{n\in \mathbb Z} \in \ell^\infty$. In this subsection $B$ denotes
the unit ball of $\ell^\infty$ with the weak-* topology (induced by
$\ell^1(\mathbb Z)$), and $S_B$ is the restriction of $S$ to $B$.

%
%

Following Veech \cite{V2}, for $f=\big(f(k)\big)\in  \ell^\infty(\mathbb Z)$, let 
$\mathcal A_f$ be the weak-* closed *-subalgebra of $\ell^\infty$ generated by the orbit
$\{S^n f: n\in \mathbb Z\}$ and the constant sequences, and denote its maximal ideal
space (Gelfand space) by $X_f$. Then $X_f$ can be identified with the set of weak-* limits 
of subsequences of the orbit of $f$. Since $X_f$ is bounded, it is metrizable (and compact) 
in the weak-* topology. We then have a topological dynamical system $(X_f, S_{|X_f})$, and 
$C(X_f)$ is identified with $\mathcal A_f$ by Gelfand's theorem.

\begin{prop} \label{separable}
Let $T$ be an invertible isometry of a complex Banach space $E$ with separable dual $E^*$.
For fixed $v \in E$ and $\phi \in E^*$, define $f \in \ell^\infty$ by $f(k)=\phi(T^kv)$.
Then $X_f$ is separable in the $\ell^\infty$ norm.
\end{prop}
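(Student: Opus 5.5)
We show that every element of $X_f$ has the form $k\mapsto \psi(T^k v)$ with $\psi$ in a norm-separable set, and that the map $\psi\mapsto(\psi(T^kv))_k$ is norm-continuous from $E^*$ into $\ell^\infty$.

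Consider the map $\Pi_v: E^*\to \ell^\infty(\mathbb Z)$ given by $\Pi_v(\psi)=\big(\psi(T^kv)\big)_{k\in\mathbb Z}$. Since $T$ is an invertible isometry,
$$
\|\Pi_v\psi-\Pi_v\psi'\|_\infty=\sup_{k\in\mathbb Z}|(\psi-\psi')(T^kv)|\le \|\psi-\psi'\|\,\|v\|,
$$
so $\Pi_v$ is norm-to-norm continuous and linear. We also have $S^n f=\Pi_v(T^{*n}\phi)$ for every $n\in\mathbb Z$, because $(S^nf)(k)=\phi(T^{k+n}v)=(T^{*n}\phi)(T^kv)$. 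Moreover, $\Pi_v$ is weak*-to-weak* continuous on bounded sets. Indeed, if $\psi_\alpha\to\psi$ weak* and the net is bounded, then $\Pi_v\psi_\alpha\to\Pi_v\psi$ coordinatewise; on bounded subsets of $\ell^\infty(\mathbb Z)$, coordinatewise convergence coincides with weak* convergence (against $\ell^1$), by dominated convergence.

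Let $X_f$ be the set of weak* limits of $(S^{n_j}f)$, as described before the statement. The orbit $\{T^{*n}\phi\}$ lies in the ball $\|\phi\|B_{E^*}$, which is weak* compact and, since $E$ is separable (because $E^*$ is separable), also weak* metrizable. Given $g=\lim_j S^{n_j}f$ weak*, pass to a subsequence along which $T^{*n_j}\phi\to\psi$ weak* for some $\psi$ with $\|\psi\|\le\|\phi\|$. By the previous step, $g=\Pi_v(\psi)$. Hence $X_f\subset \Pi_v(\|\phi\|B_{E^*})$.

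Since $E^*$ is norm-separable, the ball $\|\phi\|B_{E^*}$ is norm-separable. Its image under the norm-continuous map $\Pi_v$ is therefore norm-separable in $\ell^\infty$. A subset of a separable metric space is separable, so $X_f$ is separable in the $\ell^\infty$ norm. If $X_f$ is intended to include the constants or other elements of the Gelfand space, note that the maximal ideal space is identified with the weak* closure of the orbit, so nothing more is needed.

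The main point requiring care is the identification of weak* limits in $\ell^\infty$ with images of weak* limits in $E^*$. This uses the weak* continuity of $\Pi_v$ on bounded sets together with metrizability, so that subsequences rather than nets suffice. Even without metrizability, nets work equally well, since $\Pi_v$ of a weak* compact set is weak* compact and contains the orbit. Separability of $E^*$ is used only in the last step.
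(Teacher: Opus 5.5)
Your proof is correct and follows essentially the paper's argument: the map $\psi\mapsto(\psi(T^kv))_{k\in\mathbb Z}$, its weak* continuity on bounded sets to place $X_f$ inside its image, and the estimate $\|\pi\psi-\pi\psi'\|_\infty\le\|\psi-\psi'\|\,\|v\|$ to transfer norm separability from $E^*$. The only cosmetic difference is that you take the image of the ball $\|\phi\|B_{E^*}$, whereas the paper takes the image of the weak* closure $C$ of the orbit $\{T^{*n}\phi\}$.
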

\begin{proof}
Let $K=\|\phi\|\cdot\|v\|$ and denote by $C$ the weak-* closure of 
$\{T^{*n}\phi: n\in\mathbb Z\}$ in $E^*$. Then $C$ is compact, and norm separable
 since $E^*$ is separable. Define $\pi: C\longrightarrow \ell^\infty$ by
$\pi(\psi)=\big(\psi(T^kv)\big)_{k\in\mathbb Z}$. Since $E$ is separable, the weak-* 
topology on bounded sets in $E^*$ is metrizable. Let $\psi_j\in C$ with $\psi_j\to\psi$
weak-*. Then for $g=(g_k)_{k\in \mathbb Z }\in\ell^1(\mathbb Z)$ we have
$\langle \pi(\psi_j), g\rangle = \sum_{k=-\infty}^\infty \psi_j(T^kv)g_k$.
Since weak-* convergence in $\ell^\infty$ is coordinate-wise convergence, we have
for the unit vectors  $e^\ell=(\delta_{k\ell})_{k\in\mathbb Z}$ of $\ell^1(\mathbb Z)$,
$$
\langle \pi(\psi_j),e^\ell\rangle= \psi_j(T^\ell v) \underset{j\to\infty}
 \longrightarrow \psi(T^\ell v) =\langle \pi(\psi),e^\ell \rangle.
$$
Hence $\pi$ is continuous from  $C$ into the ball of radius $K$ in $\ell^\infty$ with 
its weak-* topology. Then $\pi(C)$ is weak-* compact in $\ell^\infty$. Since $C$ is
$T^*$-invariant, the relationship $\pi(T^*\psi)= S\pi(\psi)$ shows that $\pi(C)$ is 
$S$-invariant, and contains the weak-* closure of 
$\big(S^n\pi(\phi)\big)_{n\in\mathbb Z}= (S^nf)_{n\in\mathbb Z}$.

Since $E^*$ is norm separable, so is $C$, and let $\{\phi_r\} \subset C$ be a countable 
set, norm dense in $C$. Fix $\psi \in C$ and let $\|\phi_{r_j}-\psi\| \to 0$. Then
$$
\|\pi(\phi_{r_j})-\pi(\psi)\|_\infty =  \sup_{\|g\|_1\le 1}
\Big|\sum_{k=-\infty}^\infty g_k\big(\phi_{r_j}(T^kv)-\psi(T^kv)\big) \Big|\le
$$
$$
\sup_{\|g\|_1\le 1}\sum_{k=-\infty}^\infty |g_k|\cdot |\phi_{r_j}(T^kv)-\psi(T^kv)|
\le \|\phi_{r_j}-\psi\|_{E^*}\|v\| \to 0,
$$
which shows that $\pi(C)$ is norm separable in $\ell^\infty$. Since $X_f \subset\pi(C)$, 
also $X_f$ is norm separable.
\end{proof}

Veech \cite[Theorem 9.1]{V2}  proved that when $X_f$ is norm separable, then
$h_{top}(X_f,S_{|X_f})=0$.
Huang, Wang and Ye \cite[Corollary 1.4]{HWY} proved that if $X_f$ is norm separable,
then
$$
\frac1N\sum_{n=1}^N \mu(n)f(n) \to 0.
$$
The proof given in \cite{HWY} was revised and completed by el Abdalaoui-Nerurkar 
in \cite{EN}.

\begin{cor}
Let $T$ be an invertible isometry of a complex Banach space $E$ with separable dual
 $E^*$. Then \eqref{sarnak-op-E} holds.
\end{cor}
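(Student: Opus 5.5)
The plan is to reduce weak convergence to scalar sequences and then apply the result of Huang, Wang and Ye quoted just before the statement. Fix $v\in E$. Weak convergence of $\frac1N\sum_{n=1}^N\mu(n)T^nv$ to $0$ means that
$$
\frac1N\sum_{n=1}^N \mu(n)\phi(T^nv)\to 0 \qquad \text{for every } \phi\in E^*.
$$
So I will fix $\phi\in E^*$ and set $f(k)=\phi(T^kv)$ for $k\in\mathbb Z$. Since $T$ is an invertible isometry, $|f(k)|\le\|\phi\|\,\|v\|$, hence $f\in\ell^\infty(\mathbb Z)$. The quantity above is then exactly $\frac1N\sum_{n=1}^N\mu(n)f(n)$.

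By Proposition \ref{separable}, whose hypotheses (invertible isometry on a complex Banach space with separable dual) are precisely those of the corollary, $X_f$ is norm separable in $\ell^\infty$. Then \cite[Corollary 1.4]{HWY}, in the revised form of \cite{EN}, gives $\frac1N\sum_{n=1}^N\mu(n)f(n)\to0$. Since $\phi$ was arbitrary, \eqref{sarnak-op-E} follows.

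I see no real obstacle, because all the work is in Proposition \ref{separable} and the cited theorem. The one point to check is that the sequence $f$ in \cite{HWY} is the same bilateral sequence with the same shift, which it is, since $X_f$ is built from the $\mathbb Z$-orbit of $f$. We need only the forward averages over $n\ge1$, which is the form in which the cited result is stated. If one wanted to drop the complex assumption, one could complexify as in Theorem \ref{wap}, but the statement does not require this.
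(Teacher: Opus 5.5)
Your proof is correct and is exactly the argument the paper intends; the corollary is stated without proof right after Proposition \ref{separable} and the citation of \cite[Corollary 1.4]{HWY}. Fixing $v$ and $\phi$, setting $f(k)=\phi(T^kv)$, getting norm separability of $X_f$ from Proposition \ref{separable}, and then applying Huang--Wang--Ye is the whole argument.
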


{\bf Remark.} The assumption on $E$ in Theorem \ref{rosenthal-rep} is weaker.
\medskip
 
The class of $f \in \ell^{\infty}(\Z)$ for which $h_{top}(X_f,S_{|X_f})=0$ is denoted by 
$\textbf{SD}$ (for "shift deterministic").
By definition $\mathcal A_{Sf}=\mathcal A_f$ and $X_{Sf}=X_f$, so {\bf SD} is $S$-invariant.

\begin{theorem} \label{veech-conj}
 Sarnak's M\"{o}bius  randomness law conjecture for homeomorphisms is equivalent to
\begin{equation} \label{SD}
\frac1N\sum_{n=1}^N \mu(n)f(n) \to 0 \quad \text{\rm for every } f \in \textbf{SD}.
\end{equation}
\end{theorem}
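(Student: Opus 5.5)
We prove the two implications separately. By "Sarnak's conjecture for homeomorphisms" we mean: for every homeomorphism $\tau$ of a compact (metric) space $X$ with $h_{top}(\tau)=0$, every $g\in C(X)$ and every $x\in X$, we have $\frac1N\sum_{n=1}^N\mu(n)g(\tau^nx)\to 0$.

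\emph{Conjecture $\Rightarrow$ \eqref{SD}.} Fix $f\in\textbf{SD}$. The system $(X_f,S_{|X_f})$ is a compact metric system, and $S$ is a homeomorphism of $X_f$: the shift on $\ell^\infty(\mathbb Z)$ is invertible, weak-* continuous both ways, and maps the set of weak-* limits of the orbit of $f$ onto itself. By definition of $\textbf{SD}$ it has zero entropy. The point $f$ belongs to $X_f$. Take the evaluation $g(h):=h(0)$ for $h\in X_f$. This is pairing with the unit vector $e^0\in\ell^1(\mathbb Z)$, so $g$ is weak-* continuous, and $g(S^nf)=f(n)$. The conjecture applied to $(X_f,S,g,f)$ gives \eqref{SD} directly. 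The only point to check is that the orbit closure and invariance are set up so that $S$ really is a homeomorphism of $X_f$, which follows from the description of $X_f$ as the set of weak-* limits of subsequences of the two-sided orbit.

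\emph{\eqref{SD} $\Rightarrow$ Conjecture.} Let $\tau$ be a homeomorphism of $X$ with $h_{top}(\tau)=0$, let $g\in C(X)$, and let $x\in X$. Define $f(k):=g(\tau^kx)$ for $k\in\mathbb Z$; then $f\in\ell^\infty(\mathbb Z)$. We must show $h_{top}(X_f,S)=0$. Consider the map $\pi:\overline{\{\tau^kx\}}\to\ell^\infty(\mathbb Z)$ given by $\pi(y)=(g(\tau^ky))_{k\in\mathbb Z}$. It is continuous into the weak-* topology, since weak-* convergence on bounded sets is coordinatewise, and it intertwines $\tau$ with $S$. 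Its image is compact and $S$-invariant, and it contains the orbit of $f$, hence contains $X_f$. In fact it equals the orbit closure of $f$; identifying this with $X_f$ is again the description of $X_f$ as the weak-* orbit closure. Thus $(X_f,S)$ is a factor of the subsystem $(\overline{\{\tau^kx\}},\tau)$. By the results of \cite{AKM} quoted above on subsystems and factors, $h_{top}(X_f,S)\le h_{top}(\tau)=0$, so $f\in\textbf{SD}$. Then \eqref{SD} yields $\frac1N\sum_{n=1}^N\mu(n)g(\tau^nx)\to0$.

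\emph{Real versus complex.} $\ell^\infty$ here is complex, so complex $g$ causes no difficulty. If the conjecture is stated for real $g$, the Remark after the conjecture covers the passage to complex $g$.

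\emph{Main obstacle.} The delicate point is the identification of $X_f$, the Gelfand space of $\mathcal A_f$, with the weak-* closure of the orbit $\{S^nf\}$. Both directions use this in an essential way: it makes $f$ a point of $X_f$, it makes $h\mapsto h(0)$ a continuous function on $X_f$, and it makes $\pi$ land onto $X_f$. The paper asserts this identification following Veech, so we will take it as given and only verify that evaluation at $0$ corresponds to the generator $f$ under the Gelfand isomorphism.
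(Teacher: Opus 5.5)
Your proof is correct and follows the paper's argument. In one direction you apply the conjecture to $(X_f,S_{|X_f})$ at the point $f$, with the weak-* continuous evaluation $h\mapsto h(0)$. In the other you realize $X_f$ as a zero-entropy factor of a subsystem of $(X,\tau)$ via $y\mapsto (g(\tau^k y))_{k\in\mathbb Z}$. The only cosmetic difference is the order of the two steps: you restrict $\pi$ to the orbit closure of $x$ first (a factor of a subsystem), whereas the paper maps all of $X$ onto $Y=\{f_x : x\in X\}$ and then passes to the invariant subset $X_{f_x}\subset Y$ (a subsystem of a factor).
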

\begin{proof} Assume Sarnak's conjecture \eqref{sarnak-op1} holds for all homeomorphisms of 
zero entropy.  Let $f \in \mathbf{SD}$. By definition  $h_{top}(X_f, S|_{X_f}) = 0$.
The function $\delta_0(g)=g(0)=\langle e_0,g\rangle$ is weak* continuous on $\ell^\infty$,
so the function $\phi : g \in X_f \mapsto \phi(g) = g(0)$ is in $C(X_f)$, and then 
$\phi(S^nf)=f(n)$. Applying \eqref{sarnak-op1} to $\phi$, with $\tau=S|_{X_f}$ and $x=f$, 
we obtain
$$
 \frac{1}{N}\sum_{n=1}^N \mu(n)f(n)= \frac{1}{N}\sum_{n=1}^N \mu(n)\phi(S^nf) \to 0.
$$
\medskip

{\bf Conversely,} assume \eqref{SD}.  Let $\tau$ be a homeomorphism of a compact space $X$,
with $h_{top}(X,\tau) = 0$, and fix $f \in C(X)$. For $x \in X$ define 
	$f_x \in \ell_\infty(\mathbb Z)$ by
\[
{f}_x(n) = f(\tau^n x), \quad n\in\mathbb Z.
\]
Let $Y:=\{f_x: x \in X\}$, which is bounded in $\ell^\infty$ by $\|f\|_\infty$.
The map $\pi x = f_x$ is continuous from $X$ onto $Y$ with the weak-* topology, so
$Y$ is compact. Since $S(\pi x)=f_{\tau x}=\pi(\tau x)$, $(Y,S_{|Y})$ is a factor of $(X,\tau)$.
Hence $h_{top}(Y,S_{|Y})=0$.

We now fix $x \in X$. Then the weak-* closure of 
$\{S^nf_x: n\in\mathbb Z\}= \{f_{\tau^n x}: n\in\mathbb Z\}$ is an $S$-invariant 
subset of $Y$, and so is $X_{{f}_x}$.  Hence, $h_{top}(X_{{f}_x},S_{|X_{{f}_x}})=0$, 
so ${f}_x \in \textbf{SD}$. Applying \eqref{SD}, we obtain
\[
 \frac{1}{N}\sum_{n=1}^{N}\mu(n) f(\tau^n x) = 
\frac{1}{N}\sum_{n=1}^{N}\mu(n){f}_x(n) \to 0.
\]
This completes the proof.
 \end{proof}

{\bf Remark.} The statement \eqref{SD} is a conjecture of Veech \cite[p. 1158]{V2}.
\medskip

For a subset $A\subset  \Z$ with $A\cap\mathbb N$ infinite, we identify $A$ with  
$f_A :=(1_A(k))_{k\in\Z}  \in \ell^\infty(\Z)$. Denote by $X_A$ the closure of the orbit,
 under the shift  in  $\{0,1\}^{\Z}$, of $(1_A(k))_{k\in\Z}$. 
 Then $(X_A,S_{|X_A})$ is isomorphic to  $(X_{f_A},S_{|X_{f_A}})$.
\begin{cor}
Let $A$ be a subset of $\mathbb Z$ with $A\cap\mathbb N$ infinite, such that the subshift 
$(X_A,S_{|X_A})$ has topological entropy zero. If Sarnak's M\"{o}bius  randomness law 
holds, then 
\begin{equation} \label{subset}
\frac{1}{N}  \underset{\overset{1\le n \le N}{n\in A}}\sum\mu(n) \tend{n}{+\infty}0.
\end{equation}
\end{cor}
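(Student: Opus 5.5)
The plan is to apply the first direction of Theorem \ref{veech-conj} to the sequence $f_A\in\ell^\infty(\mathbb Z)$. The conjecture for homeomorphisms suffices, and it follows from the general Sarnak conjecture. Then I translate the resulting average back into the sum over $A$.

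First I identify the dynamical systems. The map $X_A\to X_{f_A}$ is the identity on sequences. It is continuous because the product topology on $\{0,1\}^{\mathbb Z}$ and the weak-* topology on bounded subsets of $\ell^\infty(\mathbb Z)$ both amount to coordinatewise convergence. It is a bijection between compact Hausdorff spaces, hence a homeomorphism, and it commutes with $S$. This is the isomorphism asserted just before the corollary. One point needs care: $X_{f_A}$ is described as the set of weak-* limits of subsequences of the orbit, while $X_A$ is the orbit closure. These coincide up to the orbit points themselves, and in any case $X_{f_A}$ sits as a closed $S$-invariant subset of the orbit closure. Entropy decreases on closed invariant subsets by \cite{AKM}, so $h_{top}(X_{f_A},S_{|X_{f_A}})\le h_{top}(X_A,S_{|X_A})=0$. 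Hence $f_A\in\mathbf{SD}$.

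Next I apply the argument of Theorem \ref{veech-conj}. The shift $S$ on the full bilateral shift is a homeomorphism, so its restriction to the invariant compact set $X_A$ is a homeomorphism of zero entropy. Sarnak's conjecture in the form \eqref{sarnak-op1}, applied to the continuous function $g\mapsto g(0)$ at the point $x=f_A$, gives
$$\frac1N\sum_{n=1}^N\mu(n)f_A(n)\to 0.$$
Working directly on $X_A$ avoids the subtlety of whether $f_A$ belongs to $X_{f_A}$; equivalently one can invoke \eqref{SD}. Since $f_A(n)=1_A(n)$, the left side equals $\frac1N\sum_{1\le n\le N,\,n\in A}\mu(n)$, which is \eqref{subset}.

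The main obstacle, though a mild one, is the careful identification of topologies and invariant sets: showing that the orbit closure in $\{0,1\}^{\mathbb Z}$ is a compact invariant set on which $S$ is a homeomorphism with zero entropy, and that the evaluation at coordinate $0$ is continuous. The hypothesis that $A\cap\mathbb N$ is infinite is not really needed for the argument. If $A\cap\mathbb N$ is finite, the sum is bounded and the average tends to $0$ trivially.
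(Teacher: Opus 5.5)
Your proof is correct and follows the paper's argument. The paper notes $f_A\in\mathbf{SD}$ via the isomorphism $(X_A,S)\cong(X_{f_A},S)$ and cites the first direction of Theorem \ref{veech-conj}, whose proof is exactly your evaluation of $g\mapsto g(0)$ along the orbit of $f_A$. Running that argument directly on the orbit closure $X_A$, which visibly contains $f_A$, is a harmless shortcut that makes your first paragraph about $X_{f_A}$ unnecessary.
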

\begin{proof} By the assumption, $f_A \in \textbf{SD}$, so \eqref{SD} applies to $f_A$ by 
Theorem \ref{veech-conj}.
\end{proof}

{\bf Remark.} When $A\cap\mathbb N$ has zero  density, \eqref{subset} holds independently 
of Sarnak's conjecture. Since $\frac1N\sum_{n=1}^N \mu(n) \to 0$, \eqref{subset} holds 
also whenever $A\cap\mathbb N$ has density 1. 
 By Dirichlet's Theorem \cite[Chapter 7]{Apo}, the corollary applies when $d > 1$
 and $A = \{r + nd : n \in \mathbb Z\}$ for some $r\in \{0,\dots , d - 1\}$:
$A\cap\mathbb N$ has density $1/d$, while $X_A$ is finite so has entropy 0.
\medskip

\subsection{Reduction to the shift on $\ell_\infty(\mathbb N)$}

The above approach of Veech \cite{V2} applies to invertible isometries of Banach spaces, as 
seen in Proposition \ref{separable} and its corollary. However, in general operators are not 
invertible, so in order to deal with contractions we should use $\ell_\infty(\mathbb N)$ 
instead of $\ell^\infty(\mathbb Z)$. Now $S$ denotes the left shift on the complex 
$\ell_\infty:=\ell_\infty(\mathbb N)$, defined by $Sf(n)=f(n+1)$ for 
$f=\big(f(n)\big)_{n\ge 0} \in \ell_\infty=\ell_\infty(\mathbb N)$.
 In this subsection $B$ denotes the unit ball of $\ell_\infty$ with the weak-*
 topology, and $S_B$ is the restriction of $S$ to $B$.

Adapting the approach of Veech, for $f=\big(f(k)\big)_{k \ge 0}\in \ell_\infty(\mathbb N)$, 
let $\mathcal A_f$ be the weak-* closed *-subalgebra of $\ell_\infty$ generated by the orbit
$\{S^n f: n\ge 0\}$ and the constant sequences, and denote its maximal ideal
space (Gelfand space) by $X_f$. Then $X_f$ can be identified with the set of weak-* limits 
of subsequences of the orbit of $f$. Since $X_f$ is bounded, it is metrizable (and compact) 
in the weak-* topology. We then have a topological dynamical system $(X_f, S_{|X_f})$, and 
$C(X_f)$ is identified with $\mathcal A_f$ by Gelfand's theorem.

\begin{theorem} \label{veech-conjecture}
The following statements are equivalent:

(i) For every $f\in \ell_\infty(\mathbb N)$ with $h_{top}(X_f,S_{|X_f})=0$,
$\frac1N\sum_{n=1}^N \mu(n)f(n) \to 0$.

(ii) For every compact metric topological dynamical system $(X,\tau)$, $\tau$ not 
necessarily invertible, with $h_{top}(X,\tau)=0$, \eqref{sarnak-op} holds.

(iii) Veech's conjecture \eqref{SD} holds.
\end{theorem}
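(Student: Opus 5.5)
I would prove the cycle (i)$\Rightarrow$(ii)$\Rightarrow$(iii)$\Rightarrow$(i), modelling each step on the proof of Theorem \ref{veech-conj} and on the non-invertible machinery of Theorem \ref{metric}.

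\emph{(i)$\Rightarrow$(ii).} Let $(X,\tau)$ be compact metric with $h_{top}(\tau)=0$, and fix a real $g\in C(X)$ and $x\in X$; by the Remark after \eqref{sarnak-op1}, real functions suffice. Define $\pi:X\to\ell_\infty(\mathbb N)$ by $\pi(y)=\big(g(\tau^ny)\big)_{n\ge0}$. Since weak-* convergence on bounded sets is coordinatewise, $\pi$ is continuous. It also satisfies $S\pi=\pi\tau$. Hence $(\pi(X),S)$ is a factor of $(X,\tau)$ and has zero entropy by \cite{AKM}. Put $f=\pi(x)$. Every weak-* limit of $S^{n_j}f$ lies in the closed invariant set $\pi(X)$, so $X_f\subset\pi(X)$ is closed and $S$-invariant, and therefore $h_{top}(X_f,S)=0$. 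Applying (i) to $f$ gives $\frac1N\sum\mu(n)g(\tau^nx)\to0$. This is \eqref{sarnak-op1}, which is equivalent to \eqref{sarnak-op}.

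\emph{(ii)$\Rightarrow$(iii).} Homeomorphisms are special cases of (ii), except that (ii) is stated only for metric $X$. Theorem \ref{veech-conj}, however, only uses systems $X_{f_x}\subset\ell^\infty(\mathbb Z)$, which are metrizable. So I would rerun its first half directly. Take $f\in\mathbf{SD}$; then $(X_f,S)$ is a compact metric system of zero entropy. Apply (ii) to it with the weak-* continuous coordinate function $\phi(g)=g(0)$ and the point $x=f$. Weak convergence in $C(X_f)$ tested against $\delta_f$ gives $\frac1N\sum\mu(n)f(n)\to0$.

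\emph{(iii)$\Rightarrow$(i).} This is the main obstacle: a one-sided $f\in\ell_\infty(\mathbb N)$ with $h_{top}(X_f,S)=0$ must be turned into a two-sided sequence in $\mathbf{SD}$ agreeing with $f$ on $\mathbb N$. The plan uses the natural extension, as in Theorem \ref{metric}. Let $(Y,S)$ be the orbit closure of $f$ in $\ell_\infty(\mathbb N)$; it is compact metric with zero entropy. Take its natural extension $\hat Y=\{(y_k)_{k\in\mathbb Z}: y_k\in Y,\ Sy_k=y_{k+1}\}$. By Proposition \ref{both0} this has zero entropy. The difficulty is that $S$ need not be onto $Y$, so $f$ itself may have no preimages. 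To repair this, I first build an auxiliary sequence $f'\in\ell_\infty(\mathbb Z)$: extend $f$ to negative indices by a zero-entropy padding, e.g.\ $f'(k)=f(0)$ for $k<0$, or better, reverse-time constant padding. I then check that the two-sided orbit closure of $f'$ is contained in a zero-entropy system. That system is the union of $\hat Y$-type points with the finitely many eventually-constant "transient" pieces, and adding these should not raise entropy, since by the variational principle invariant measures live on the recurrent part. Then $f'\in\mathbf{SD}$ and $f'(n)=f(n)$ for $n\ge1$, so (iii) yields (i). The entropy check for the padded orbit closure is the step I expect to require the most care. If it fails, the fallback is to prove (iii)$\Rightarrow$(ii) via Theorem \ref{veech-conj} and Theorem \ref{metric}'s natural extension, lifting $g$ and $x$ to $\hat X$ (possible since $\pi$ is onto), and then to show (ii)$\Rightarrow$(i) by the argument of the second step.
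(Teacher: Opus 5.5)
Your steps (i)$\Rightarrow$(ii) and (ii)$\Rightarrow$(iii) are the paper's arguments, and your fallback for the remaining implication is exactly the paper's proof. The paper proves (iii)$\Rightarrow$(ii) by passing to the natural extension $(\hat X,\hat\tau)$ of Proposition \ref{extension}, which has zero entropy by Proposition \ref{both0}. It applies Theorem \ref{veech-conj} to that system and lifts $x$ to $\hat x$ and $g$ to $g\circ\pi$. It then gets (ii)$\Rightarrow$(i) by applying (ii) to $(X_f,S_{|X_f})$ with $\delta_0$.

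Your primary route, (iii)$\Rightarrow$(i) by padding, is genuinely different and correct, so the hedge is unnecessary. The entropy check does need to be stated more precisely than you did.

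Take $f'(k)=f(0)$ for $k<0$. The two-sided orbit closure $X_{f'}$ is the union of three sets:
\begin{itemize}
\item the countable orbit $\{S^nf'\}_{n\in\mathbb Z}$;
\item the constant fixed point, which is the limit as $n\to-\infty$;
\item the set $\Omega'$ of limits as $n\to+\infty$.
\end{itemize}
So the transient part is a countable orbit plus a fixed point, not ``finitely many pieces''.

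Every forward tail of a point of $\Omega'$ lies in the $\omega$-limit set of $f$, hence in $X_f$. So $\Omega'$ is conjugate to a closed invariant subset of the inverse-limit natural extension of $(X_f,S)$, which has zero entropy by Proposition \ref{both0}.

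If $f'$ is periodic, $X_{f'}$ is finite. Otherwise the orbit points are distinct. Since $S$ is injective, they all carry the same mass under any invariant probability, hence mass zero. Every invariant measure therefore lives on the fixed point together with $\Omega'$, and the variational principle gives $h_{top}(X_{f'})=0$, that is, $f'\in\mathbf{SD}$.

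Your route yields a sequence-level fact: every one-sided sequence with zero-entropy orbit closure is the restriction to $\mathbb N$ of a two-sided $\mathbf{SD}$ sequence. This shows directly that (i) is a special case of \eqref{SD}. The paper's route avoids analysing a specific orbit closure and instead reuses Theorem \ref{veech-conj} wholesale for arbitrary non-invertible systems, at the cost of routing through (ii).
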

\begin{proof}
Assume (ii). For $f \in \ell_\infty(\mathbb N)$, $\delta_0(f):=\langle e_0,f\rangle=f(0)$ 
defines a weak* continuous  function on $\ell_\infty(\mathbb N)$, with 
$\delta_0(S^nf) =f(n)$. Since $\delta_0$ is weak* continuous,  $\delta_0 \in C(X_f)^*$. 
 When $h_{top}(X_f,S_{|X_f})= 0$, we apply (ii) to $(X_f,S_{|X_f})$ and obtain that
$$
\frac1N \sum_{n=1}^N \mu(n)f(n) =
	\frac1N \sum_{n=1}^N \mu(n)\delta_0(S^nf)=
\langle \e_0, \frac1N \sum_{n=1}^N \mu(n)S^nf\rangle \to 0.
$$
Hence (i) holds.
\smallskip

Assume (i). Let $(X,\tau)$ be a metric topological system  with $h_{top}(X,\tau) = 0$. 
	We proceed as in the proof of Theorem \ref{veech-conj}.
	Fix $f \in C(X)$. For $x \in X$ define $f_x \in \ell_\infty(\mathbb N)$ by
\[
{f}_x(n) = f(\tau^n x), \quad n \in\mathbb N.
\]
Let $Y:=\{f_x: x \in X\}$, which is bounded in $\ell^\infty$ by $\|f\|_\infty$.
The map $\pi x = f_x$ is continuous from $X$ onto $Y$ with the weak-* topology, so
$Y$ is compact. Since $S(\pi x)=f_{\tau x}=\pi(\tau x)$, $(Y,S_{|Y})$ is a factor of $(X,\tau)$.
Hence $h_{top}(Y,S_{|Y})=0$.

We now fix $x \in X$. Then the weak-* closure of 
$\{S^nf_x: n\in\mathbb N\}= \{f_{\tau^n x}: n\in\mathbb N\}$ is an $S$-invariant subset of $Y$, 
and so is $X_{{f}_x}$.  Hence, $h_{top}(X_{{f}_x},S_{|X_{{f}_x}})=0$. 
Applying (i), we obtain
\[
 \frac{1}{N}\sum_{n=1}^{N}\mu(n) f(\tau^n x) = 
\frac{1}{N}\sum_{n=1}^{N}\mu(n){f}_x(n) \to 0.
\]
Thus the system satisfies \eqref{sarnak-op1}, which is equivalent to \eqref{sarnak-op}.
This completes the proof of equivalence of (i) and (ii).
\smallskip

By Theorem \ref{veech-conj}, (ii) implies (iii). We prove that (iii) implies (ii).
 Let $(X,\tau)$ be a metric topological system  with $h_{top}(X,\tau) = 0$. If $\tau$ is 
invertible, \eqref{sarnak-op} and \eqref{sarnak-op1} hold by Theorem 
\ref{veech-conj}, so we assume that $\tau$ is not invertible. Let $(\hat X,\hat \tau)$
be the extension  topological system with a homeomorphism $\hat\tau$ given by Proposition 
\ref{extension}. By Proposition \ref{both0}, also $h_{top}(\hat X,\hat\tau)=0$, and by
Theorem \ref{veech-conj} the system $(\hat X,\hat \tau)$ satisfies \eqref{sarnak-op1}.
Let $\pi$ be the factor map of $(\hat X,\hat \tau)$ onto $(X,\tau)$. Given $f \in C(X)$,
define $\hat f=f\circ\pi$. Since $\pi$ is onto, for $x \in X$ there is $\hat x \in\hat X$ 
with $\pi\hat x=x$. Since  $(\hat X,\hat \tau)$ satisfies \eqref{sarnak-op1}, we obtain
$$
\frac1N\sum_{n=1}^N \mu(n) f(\tau^n x) =
\frac1N\sum_{n=1}^N \mu(n) f(\tau^n \pi\hat x)=
\frac1N\sum_{n=1}^N \mu(n) \hat f(\hat\tau^n \hat x) \to 0;
$$
thus $(X,\tau)$ satisfies \eqref{sarnak-op1} and therefore also \eqref{sarnak-op}.
This completes the equivalence of (ii) and (iii).
 \end{proof}

{\bf Remark.} Condition (iii) of Theorem \ref{veech-conjecture} is Veech's  conjecture
\cite{V2}; condition (i) is its adaptation to $\ell_\infty(\mathbb N)$.
Condition (ii) is Sarnak's conjecture.


\begin{cor} If condition  (i) of Theorem \ref{veech-conjecture} holds, then for every contraction
 $T$ on a separable Banach space $E$ with $h^*_{top}(T)=0$, \eqref{sarnak-op-E} holds.
\end{cor}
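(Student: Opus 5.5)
The plan is to chain Theorem \ref{veech-conjecture} with Theorem \ref{sarnak-equiv}, so that no new dynamics is needed. Condition (i) of Theorem \ref{veech-conjecture} is equivalent to its condition (ii): every compact metric system $(X,\tau)$, with $\tau$ not necessarily invertible and $h_{top}(X,\tau)=0$, satisfies \eqref{sarnak-op}. Since \eqref{sarnak-op} is equivalent to \eqref{sarnak-op1}, this is exactly condition (i) of Theorem \ref{sarnak-equiv}. That theorem shows (i) implies (ii), which is precisely the claimed statement for contractions on separable Banach spaces with $h^*_{top}(T)=0$.

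It is also worth writing out the direct argument to avoid relying on the chain. Let $T$ be a contraction on a separable $E$ with $h^*_{top}(T)=0$, and let $B$ be the unit ball of $E^*$. By separability $B$ is compact metric in the weak* topology, and $(B,T^*_{|B})$ has zero topological entropy by definition. Fix $v\in E$ and $\phi\in B$, and set $f(k)=\phi(T^kv)$ for $k\ge 0$.

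To apply condition (i) we need $h_{top}(X_f,S_{|X_f})=0$. The map $\pi_v(\psi)=(\psi(T^kv))_{k\ge0}$ from $B$ into $\ell_\infty(\mathbb N)$ is weak*-continuous, as in Propositions \ref{compare-h} and \ref{separable}, and it intertwines $T^*$ with $S$. Hence $\pi_v(B)$ is a factor of $(B,T^*_{|B})$, so it has zero entropy. The set $X_f$ (the weak* orbit closure of $f=\pi_v(\phi)$) is a closed $S$-invariant subset of $\pi_v(B)$, so by \cite{AKM} its entropy is also $0$. Condition (i) then gives $\frac1N\sum_{n=1}^N\mu(n)\phi(T^nv)\to0$. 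Since $\phi\in B$ and $v$ are arbitrary, \eqref{sarnak-op-E} follows.

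The only delicate point is identifying $X_f$ with the orbit closure, and hence as a subset of $\pi_v(B)$. I would take this from the identification stated when $X_f$ was introduced before Theorem \ref{veech-conjecture}. The remaining worry is the index shift, since $\ell_\infty(\mathbb N)$ starts at $0$, but this does not affect the averages.
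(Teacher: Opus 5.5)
Your proposal is correct and is essentially the paper's argument. The paper simply combines the equivalence (i)$\Leftrightarrow$(ii) of Theorem \ref{veech-conjecture} with Proposition \ref{h*}, applied to the compact metric system $(B,T^*_{|B})$, and this is exactly what the (i)$\Rightarrow$(ii) step of Theorem \ref{sarnak-equiv} that you invoke does. Your direct argument via the factor map $\pi_v$ and the closed invariant subset $X_f\subset\pi_v(B)$ is also valid, and it does not actually need separability of $E$, since $X_f$ is automatically metrizable.
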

\begin{proof}
Combine Theorem \ref{veech-conjecture} with Proposition \ref{h*}.
\end{proof}

\bigskip

\section{Pointwise convergence for contractions in $L^p$ spaces}

Let $(X,\Sigma,m)$ be a probability space, $1 \le p<\infty$, and $T$ an operator on $L^p(m)$.
In this section we study the almost everywhere convergence of the "weighted" averages
$\frac1N\sum_{n=1}^N \mu(n)T^nf (x)$. When $1<p< \infty$ and $T$ is power-bounded, we
have norm convergence by Theorem \ref{wap}.

Let $\tau: X \mapsto X$ be measure preserving, and for $1\le p< \infty$ put $Tf=f\circ\tau$.
Sarnak \cite[page 6]{sarnak} observed that Bourgain's results in \cite{Bo} yield that
for $f \in L^2(m)$,
\begin{equation} \label{a-e}
\frac1N\sum_{n=1}^N \mu(n)T^nf (x) \to 0  \quad \text{a.e. }
\end{equation}
El Abdalaoui et al. \cite[Proposition 3.1]{AKLR1}, observed that this yields that
\eqref{a-e} holds for every $f \in L^1(m)$, using the spectral theorem and Davenport's
estimate \eqref{davenport}.  Independently, Cuny and Weber \cite[Corollary 2.14]{CW}
deduced \eqref{a-e} for $f \in L^1(m)$ from the Banach principle, since by Birkhoff's
ergodic theorem, every $f \in L^1(m)$ satisfies
$$
\sup\big|\frac1N\sum_{n=1}^N \mu(n)T^nf (x)\big| \le
\sup\frac1N\sum_{n=1}^N T^n|f| (x)  <\infty \quad\text{a.e.}
$$

{\bf Remark.} Note that the  convergence  \eqref{a-e} for every $f \in L^1$
 makes no assumption on the entropy of $\tau$.

\smallskip

{\bf Definition.}
 A contraction $T$ on $L^1(X,\Sigma,m)$ of a probability space is called  a
 {\it Dunford-Schwartz} contraction if $\|Tf\|_\infty \le \|f\|_\infty$ for every
 $f \in L^\infty(m)$ (i.e.  $T$ is also a contraction of $L^\infty(m)$).
 A Dunford-Schwartz contraction is then a contraction of every $L^p(m)$ space,
 $1<p<\infty$ (by the Riesz-Thorin theorem).

\begin{theorem} \label{ds}
Let $(X,\Sigma,m)$ be a probability space and $T$ a Dunford-Schwartz
contraction on $L^1(m)$. Then
\begin{equation} \label{ds-a-e}
\frac1N\sum_{n=1}^N \mu(n)T^nf (x) \to 0  \quad \text{a.e.}, \quad \forall f \in L^1(m)
\end{equation}
\end{theorem}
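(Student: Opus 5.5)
The plan is to follow the Banach principle scheme that Cuny and Weber used for the measure preserving case, recalled just before the statement. There are two ingredients: a maximal inequality on all of $L^1(m)$, and a dense subset of $L^1(m)$ on which \eqref{ds-a-e} holds.

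For the maximal inequality, I would use the pointwise bound $|T^n f|\le |T|^n|f|$, where $|T|$ is the linear modulus of $T$. For a Dunford--Schwartz contraction, $|T|$ is a positive contraction of $L^1$ and also of $L^\infty$. This gives
$$
\sup_N\Big|\frac1N\sum_{n=1}^N \mu(n)T^nf\Big| \le \sup_N \frac1N\sum_{n=1}^N |T|^n|f|.
$$
The right-hand side is finite a.e. by the Dunford--Schwartz (Hopf) maximal ergodic theorem, which gives a weak type $(1,1)$ estimate. The operators $f\mapsto \frac1N\sum_{n=1}^N\mu(n)T^nf$ are therefore dominated by a maximal function that is finite a.e. on $L^1$. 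By the Banach principle, the set of $f\in L^1$ for which \eqref{ds-a-e} holds is closed in $L^1$.

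It then suffices to prove \eqref{ds-a-e} for $f\in L^2(m)$, which is dense in $L^1$. Since $T$ is a contraction of $L^2(m)$ by Riesz--Thorin, Proposition \ref{H-contraction} gives
$$
\Big\|\frac1N\sum_{n=1}^N\mu(n)T^n\Big\|_{L^2}\le \frac{C_\alpha}{(\log N)^\alpha}.
$$
Fix $\rho>1$ and set $N_k=\lfloor \rho^k\rfloor$. With $\alpha=1$ the $L^2$ norms are $O(1/k)$, which is not square summable, so I would take $\alpha=2$. Then $\sum_k \|A_{N_k}f\|_2^2<\infty$, where $A_N f$ denotes the averages in \eqref{ds-a-e}. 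By Beppo Levi, $A_{N_k}f\to0$ a.e.

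To fill in the gaps between $N_k$ and $N_{k+1}$, write $N_k\le N<N_{k+1}$. Then
$$
|A_Nf-\tfrac{N_k}{N}A_{N_k}f|\le \frac1{N_k}\sum_{n=N_k+1}^{N_{k+1}}|T|^n|f|,
$$
and the right-hand side equals $\frac{N_{k+1}}{N_k}\cdot\frac{1}{N_{k+1}}\sum_{n=1}^{N_{k+1}}|T|^n|f|-\frac{1}{N_k}\sum_{n=1}^{N_k}|T|^n|f|$. The ergodic averages of $|T|$ converge a.e. on $L^1$ by the Dunford--Schwartz pointwise ergodic theorem, so the $\limsup$ of this quantity is at most $(\rho-1)\lim_N\frac1N\sum_{n\le N}|T|^n|f|$. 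Letting $\rho\downarrow1$ along a countable sequence gives $A_Nf\to0$ a.e. for $f\in L^2$.

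The step I expect to be the main obstacle is the passage to the modulus $|T|$, which is needed both for the maximal inequality and for the gap-filling estimate. The issue is that $T$ is not assumed positive. I would rely on the Chacon--Krengel linear modulus, which satisfies $|T^nf|\le|T|^n|f|$, together with the fact that for Dunford--Schwartz $T$ the modulus $|T|$ is again Dunford--Schwartz. For real $L^1$ the argument then goes through. In the complex case I would apply the same argument to real and imaginary parts after using the modulus bounds, or else invoke the complex version of the linear modulus.
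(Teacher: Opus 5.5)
Your proof is correct, but it takes a different route from the paper's.

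\textbf{The paper's route.} It gets a.e.\ convergence by transference. It quotes Baxter and Olsen: a bounded weight that is good for a.e.\ convergence for every invertible measure preserving transformation on $L^1$ is good for every Dunford--Schwartz operator. It feeds in the measure preserving case (Bourgain--Sarnak, extended to $L^1$ in \cite{CW}, \cite{AKLR1}). Only afterwards does it identify the limit as $0$, using the $L^2$-norm convergence of Theorem \ref{wap}, density of $L^2$ in $L^1$, and Fatou's lemma.

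\textbf{Your route.} You never use Bourgain's theorem or transference. You get a.e.\ convergence on $L^2(m)$ along a lacunary sequence directly from the logarithmic operator-norm rate of Proposition \ref{H-contraction} (Davenport plus unitary dilation). The Chacon--Krengel modulus $|T|$, a positive Dunford--Schwartz contraction, together with the Dunford--Schwartz ergodic theorem does the rest. It fills the gaps $N_k\le N<N_{k+1}$, and it supplies the maximal inequality that carries the result from $L^2$ to $L^1$. This is the scheme of Theorem \ref{L(H)}, but the positive dominant $|T|$ lets you fill the gaps for all $f\in L^2$ instead of only bounded $f$.

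\textbf{What each buys.} Your argument is more self-contained and gives the limit $0$ at once. It applies to any bounded weight with a Davenport-type bound of order $(\log N)^{-\alpha}$ with $\alpha>1/2$. The paper's argument is shorter given the literature, and its transference step works for any bounded weight that is good for measure preserving maps, with no exponential-sum estimate.

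\textbf{Two small corrections.} First, with $\alpha=1$ one already has $\|A_{N_k}f\|_2^2=O(k^{-2})$, which is summable; this is the choice made in Theorem \ref{L(H)}. So your claim that $\alpha=1$ fails is wrong, though harmless. Second, in the complex case simply use the complex linear modulus, as in Corollary \ref{random-DS}. Splitting $f$ into real and imaginary parts does not help, since a complex $T$ need not preserve real functions.
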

\begin{proof} Baxter and Olsen \cite[Theorem 2.19]{BO} proved that if $(a_n)$ is a bounded
sequence such that $\frac1N \sum_{n=1}^N a_nT^nf(x)$ converges a.e for every $T$ induced
on $L^1(m)$ by an invertible measure preserving transformation and every $f\in L^1(m)$,
then the same convergence holds for every Dunford-Schwartz operator. In view of the above
results of \cite{CW} and \cite{AKLR1}, we have that $\frac1N \sum_{n=1}^N \mu(n)T^nf(x)$
converges a.e for every Dunford-Schwartz $T$ and $f \in L^1$. Since for  $f \in L^2$
we have $\big\|\frac1N \sum_{n=1}^N \mu(n)T^nf(x)\big\|_1\le
\big\|\frac1N \sum_{n=1}^N \mu(n)T^nf(x)\big\|_2 \to 0$ by Theorem \ref{wap},
$\big\|\frac1N \sum_{n=1}^N \mu(n)T^nf(x)\big\|_1\to 0$ for any $f \in L^1$, and the a.e.
limit is zero by Fatou's lemma.
\end{proof}

\begin{cor} \label{markov}
Let $(X,\Sigma)$ be a measurable space and let $P(x,A)$ be a transition probability
on $X\times\Sigma$. Assume that $m$ is an invariant probability for the Markov operator
$P$, defined for $f$ bounded measurable by $Pf(x)=\int f(y)P(x,dy)$;
i.e. $m(A)= \int P(x,A) dm(x)$ for $A \in \Sigma$. Then $P$ extends to a contraction of
	$L^1(m)$, and
\begin{equation} \label{markov-a-e}
\frac1N\sum_{n=1}^N \mu(n)P^nf (x) \to 0  \quad \text{a.e.},\ \forall f \in L^1(m).
\end{equation}
\end{cor}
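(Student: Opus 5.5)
The plan is to reduce Corollary \ref{markov} to Theorem \ref{ds} by checking that $P$, suitably extended, is a Dunford--Schwartz contraction on $L^1(m)$.

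First, $P$ acts on bounded measurable functions. It is positive and satisfies $P\mathbf 1=\mathbf 1$, so $|Pf|\le P|f|\le \|f\|_\infty$ pointwise. Hence $\|Pf\|_\infty\le\|f\|_\infty$.

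Second, $P$ respects $m$-null sets. If $f=0$ $m$-a.e., then by invariance
$$\int P|f|\,dm=\int\!\!\int |f(y)|P(x,dy)\,dm(x)=\int |f|\,dm=0 .$$
So $Pf=0$ a.e., and $P$ is well defined on $L^\infty(m)$.

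Third, the same computation gives $\|Pf\|_1\le \int P|f|\,dm=\|f\|_1$ for bounded $f$. Since $L^\infty(m)$ is dense in $L^1(m)$, $P$ extends uniquely to a positive contraction of $L^1(m)$. Concretely, for $f\ge0$ in $L^1$ one has $Pf(x)=\int f(y)P(x,dy)$ by monotone convergence, and this value is finite a.e. because $\int Pf\,dm=\int f\,dm<\infty$. So the extension is still given by the kernel formula. Together with the first step, the extended operator is a Dunford--Schwartz contraction, and it satisfies $P\mathbf 1=\mathbf 1$.

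Finally, apply Theorem \ref{ds} to this extension. It gives \eqref{markov-a-e} for every $f\in L^1(m)$, where $P^nf$ means the $L^1$ extension. For $f\in L^1$, $P^nf$ agrees a.e. with the kernel iterate $\int f(y)P^n(x,dy)$. This can be checked by induction on $n$, using positivity and the splitting $f=f^+-f^-$.

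No step is a real obstacle. The only point needing care is well-definedness: $P$ must map $m$-null functions to $m$-null functions, and the kernel formula must agree with the $L^1$ extension. Both follow directly from the invariance of $m$.
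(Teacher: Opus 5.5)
Your proposal is correct and follows the paper's route: invariance of $m$ gives $\int Pf\,dm=\int f\,dm$, so $P$ (with $P\mathbf 1=\mathbf 1$) is a positive Dunford--Schwartz contraction of $L^1(m)$, and Theorem \ref{ds} then gives the a.e.\ convergence. The paper's proof is two lines; you additionally write out the routine details it leaves implicit: that $P$ preserves $m$-null functions, that it extends from $L^\infty(m)$ to $L^1(m)$, and that the extension agrees with the kernel iterates.
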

\begin{proof} By invariance, $\int Pf\,dm=\int f\,dm$ for $f \in L^\infty(m)$.
Hence $P$ is a positive Dunford-Schwartz operator on $L^1(m)$.
\end{proof}

{\bf Remark.} Under the assumptions of Corollary \ref{markov}, we can obtain
\eqref{markov-a-e} for $f \in L^\infty(m)$ from Bourgain's result  by Kakutani's method
for the pointwise ergodic theorem \cite{Ka} (see also \cite{D}), and deduce the
convergence for $L^1$ functions by the Banach principle as in \cite{CW}, using
Hopf's pointwise ergodic theorem \cite[Section 10]{H} instead of Birkhoff's.

\begin{theorem}
Fix $p \in (1,\infty)$.  Let $(X,\Sigma,m)$ be a probability space and $T$ a positive
contraction of $L^p(m)$. Then
$$
\frac1N\sum_{n=1}^N \mu(n)T^nf (x) \to 0  \quad \text{a.e.}, \quad \forall f \in L^p(m).
$$
\end{theorem}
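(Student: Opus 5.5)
The plan is to follow the structure of the proof of Theorem \ref{ds}. We first obtain a.e. convergence of the M\"obius-weighted averages for every $f\in L^p(m)$, and then identify the limit as $0$ using norm convergence.

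The key tool is Akcoglu's dilation theorem. A positive contraction $T$ of $L^p(m)$, $1<p<\infty$, admits a dilation of the following form. There are a measure space $(Y,\mathcal G,\nu)$, an invertible positive isometry $Q$ of $L^p(\nu)$, a positive isometric embedding $D:L^p(m)\to L^p(\nu)$ and a positive contractive projection-type map $P:L^p(\nu)\to L^p(m)$ such that $T^n=PQ^nD$ for all $n\ge 0$. Moreover, $P$ is the conditional-expectation type operator which commutes with suprema of positive sequences, so that the dilation transfers pointwise (maximal) statements.

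The invertible positive isometry $Q$ of $L^p(\nu)$ is given by a nonsingular invertible point map $\sigma$ via $Qg=h\cdot(g\circ\sigma)$ with $|h|^p$ equal to the Radon--Nikodym derivative. For such $Q$ one reduces the weighted averages to a measure preserving situation by the standard Lamperti-type transference. In Akcoglu's proof, $g\mapsto |g|^p$ conjugates $Q$ to an invertible positive isometry of $L^1$, i.e. a measure-preserving-type operator. Equivalently, one may pass to the $\sigma$-finite measure $|h_n|^p\,d\nu$ and realize $Q$ as a shift on a skew product.

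Along these lines I would aim to prove the a.e. convergence of $\frac1N\sum_{n=1}^N \mu(n)Q^ng(y)$ for $g\in L^p(\nu)$. For this I would use the Banach principle, in the manner of \cite{CW}. The maximal inequality comes from Akcoglu's dominated ergodic theorem: since $|\mu(n)|\le1$,
$$\sup_N\Big|\frac1N\sum_{n=1}^N\mu(n)Q^ng\Big|\le \sup_N\frac1N\sum_{n=1}^N Q^n|g|\in L^p.$$
For convergence on a dense class, I would invoke Bourgain's result as recalled for \eqref{a-e}, applied after transferring $Q$ to a measure preserving invertible map.

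I expect the main obstacle to be this transfer. The cleaner route is to bypass it by arguing directly for $T$: establish a.e. convergence on a dense subset of $L^p(m)$ and combine it with Akcoglu's maximal inequality $\|\sup_N\frac1N\sum_{n=1}^N T^n|f|\|_p\le \frac{p}{p-1}\|f\|_p$. The dense subset would be obtained by pushing Bourgain's result through the dilation, and the candidate is $L^p\cap L^\infty$-type functions, where the local measure preserving structure allows quoting the Dunford--Schwartz case, Theorem \ref{ds}. If this fails, I will do the transference explicitly as in Akcoglu's proof: since positive operators preserve pointwise domination, $\sup_N$ bounds pass through $P$, and a.e. limits pass from $Q^nDf$ to $PQ^nDf$ by the positivity and order continuity of $P$.

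Finally, by Theorem \ref{wap} ($L^p$ is reflexive, so $T$ is WAP) we have $\|\frac1N\sum_{n=1}^N\mu(n)T^nf\|_p\to0$. Hence the a.e. limit equals $0$ by Fatou's lemma.
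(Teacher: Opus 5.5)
\paragraph{The main gap.} Your outer framework is fine. $L^p$ is reflexive, so Theorem \ref{wap} together with Fatou's lemma identifies the a.e.\ limit as $0$, which is what the paper does. The passage from the dilation back to $T$ also works. For it you only need $\sup_N|Pg_N|\le P(\sup_N|g_N|)$ and $|Pg_N-Pg|\le P(\sup_{M\ge N}|g_M-g|)$. Note that $P$ does \emph{not} commute with suprema as you claim; it only satisfies this inequality, but that suffices.

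The gap is the step that carries all the content: a.e.\ convergence of $\frac1N\sum_{n=1}^N\mu(n)Q^ng$ (or of the averages for $T$) on a dense class. For ordinary averages the dense class $F(T)+(I-T)L^p$ comes for free by telescoping, which is why Akcoglu's maximal inequality alone suffices there. With the weights $\mu(n)$ nothing telescopes, so the Banach principle needs a genuinely new input, and you never supply it.

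\paragraph{Why your proposed routes do not supply it.}
\begin{itemize}
\item Bourgain's theorem and its $L^1$ extension concern measure preserving maps of probability spaces. Your $Qg=h\cdot(g\circ\sigma)$ is a weighted composition with a nonsingular, non-measure-preserving $\sigma$ on a $\sigma$-finite space.
\item The conjugation $g\mapsto|g|^p$ is nonlinear and does not intertwine the signed sums $\sum\mu(n)Q^ng$.
\item The skew-product idea is not carried out. You would have to show how the averages of $Q$ are recovered from Koopman averages of an infinite measure preserving map, and then extend Bourgain's result to that setting.
\item Your \emph{cleaner route} fails outright. A positive contraction of $L^p(m)$ need not be Dunford--Schwartz: take $Tf=(\int f\,dm)\,g$ with $0\le g\in L^p\setminus L^\infty$ and $\|g\|_p\le1$. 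Then $L^p\cap L^\infty$ is not even $T$-invariant, so there is no ``local measure preserving structure'' and Theorem \ref{ds} cannot be quoted.
\item Your last fallback only redoes the $Q\to T$ step.
\end{itemize}

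\paragraph{How the paper closes it.} The paper combines Theorem \ref{ds} with the transfer theorem \cite[Theorem 2.4]{CLO}. That theorem carries a.e.\ convergence of such weighted averages from Dunford--Schwartz operators to positive contractions of $L^p$, $1<p<\infty$.

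\paragraph{A way to keep your dilation.} Use the dilation at the level of operator norms rather than pointwise.
\begin{enumerate}
\item From $T^n=PQ^nD$ you get $\|\sum c_nT^n\|_p\le\|\sum c_nQ^n\|_p$.
\item The Calder\'on--Coifman--Weiss transference argument, which only uses that $Q$ is an isometry of $L^p$, bounds the right side by the norm of convolution with $(c_n)$ on $\ell^p(\mathbb Z)$.
\item For $c_n=\mu(n)/N$, interpolate by Riesz--Thorin between the trivial $\ell^1$ (or $\ell^\infty$) bound $1$ and the $\ell^2$ bound from \eqref{davenport}. This gives $\|\frac1N\sum_{n=1}^N\mu(n)T^n\|_p\le C_{p,\alpha}(\log N)^{-\alpha}$ for every $\alpha>0$.
\item The lacunary argument in the proof of Theorem \ref{L(H)} (with $\alpha p>1$) then gives a.e.\ convergence to $0$ along $N_m=[\rho^m]+1$.
\item For $N_m<N\le N_{m+1}$, positivity bounds the remainder by $\frac1{N_m}\sum_{N_m<n\le N_{m+1}}T^n|f|$. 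By Akcoglu's pointwise ergodic theorem this tends a.e.\ to $(\rho-1)$ times the a.e.\ limit of the averages of $|f|$.
\item Let $\rho\downarrow1$.
\end{enumerate}
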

\begin{proof} Almost everywhere convergence of $\frac1N\sum_{n=1}^N \mu(n)T^nf (x)$
follows from Theorem \ref{ds} and \cite[Theorem 2.4]{CLO}. The limit is zero a.e.
by Fatou's lemma, since $L^p$-norm convergence to zero holds  by Theorem \ref{wap}.
\end{proof}
\medskip

We now look at contractions on spaces of Bochner integrable vector valued  functions.
Let $(X,\Sigma,m)$ be a probability space and $E$ a Banach space. For $1\le p <\infty$
we denote by $L^p(X;E)$ the space of (equivalence classes of) strongly measurable
$E$-valued functions $f:X\mapsto E$ such that
$$
\|f\|_p:= \Big(\int_X \|f(x)\|_E^p\ dm\Big)^{1/p} < \infty.
$$
The space of strongly measurable $f: X\mapsto E$ with
$\|f\|_\infty:=\text{ess }\sup_{x \in X} \|f(x)\|_E < \infty$ is denoted by $L^\infty(X;E)$. For more details see \cite[Section V.5]{Y}, \cite[p. 167]{K}.
Since we assume $m$ to be a probability,
$L^\infty(X;E) \subset L^p(X;E) \subset L^1(X;E)$ for $1<p< \infty$.

\begin{prop} \label{vector-me}
Let $(X,\Sigma,m)$ be a probability space, $E$ a reflexive Banach space, and $1<p<\infty$.
If $T$ is a power-bounded operator on $L^p(X;E)$, then for every $f \in L^p(X;E)$ we have
$$
\Big\| \frac1N\sum_{n=1}^N \mu(n)T^nf\Big\|_p =
\Big(\int_X \Big\| \frac1N\sum_{n=1}^N \mu(n)T^nf(x)\Big\|_E^p\ dm \Big) ^{1/p}
\longrightarrow 0.
$$
\end{prop}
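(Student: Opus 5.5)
The plan is to reduce the statement to Theorem \ref{wap} by showing that $L^p(X;E)$ is reflexive whenever $E$ is reflexive and $1<p<\infty$. By Remark 1 after the definition of weak almost periodicity, a power-bounded operator on a reflexive space is WAP, and Theorem \ref{wap} then gives norm convergence of $\frac1N\sum_{n=1}^N \mu(n)T^nf$ to $0$ for every $f\in L^p(X;E)$. This norm is exactly the displayed $L^p(X;E)$-norm, since the Bochner norm of $\frac1N\sum_{n=1}^N \mu(n)T^nf$ is the $p$-th root of $\int_X\|\cdot\|_E^p\,dm$. No assumption on the structure of $T$ (for example, that it is induced pointwise) is needed; power-boundedness suffices.

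The key step is therefore the reflexivity of $L^p(X;E)$. I would invoke the classical duality theorem for Bochner spaces (see e.g. Diestel--Uhl, Chapter IV). If $E^*$ has the Radon--Nikodym property, then $L^p(X;E)^*=L^q(X;E^*)$ isometrically, with $\frac1p+\frac1q=1$, $1<q<\infty$. A reflexive space has the RNP, and $E^*$ is reflexive whenever $E$ is. Applying the theorem twice gives
$$L^p(X;E)^{**}=L^q(X;E^*)^*=L^p(X;E^{**})=L^p(X;E).$$
One must also check that this composite identification is the canonical embedding. That follows by evaluating the pairing $\int_X\langle g(x),f(x)\rangle\,dm$ on $g\in L^q(X;E^*)$.

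The only real obstacle is citing or justifying this duality for a general probability space (not necessarily $\sigma$-finite issues arise only if $m$ were infinite, so here it is fine). If a self-contained route is preferred, one can avoid the duality theorem via the James characterization, or by using the fact that a Banach space is reflexive if it is uniformly convex after renorming. However, reflexive $E$ need not be superreflexive, so that approach fails in general, and I would rely on the RNP duality.

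For $E$ real, Theorem \ref{wap} already covers the real case, so no complexification is needed beyond what that theorem provides.
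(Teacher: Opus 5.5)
Your proposal is correct and follows the paper's proof: the paper also obtains reflexivity of $L^p(X;E)$ and then applies Theorem~\ref{wap}, using that power-bounded operators on reflexive spaces are WAP. The only difference is the justification of reflexivity. The paper cites Phillips' theorem, while you derive it from the duality $L^p(X;E)^*=L^q(X;E^*)$, valid because $E^*$ has the Radon--Nikodym property; this is an equally standard route.
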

\begin{proof} Phillips \cite[Theorem 5.7]{Ph} proved that for $1<p< \infty$, reflexivity of
$E$ implies that of $L^p(X;E)$. The claimed convergence now follows from Theorem  \ref{wap}.
\end{proof}

\begin{prop}
Let $T_0$ be power-bounded on a Banach space $E$, and for $f \in L^1(X;E)$
define $Tf$ by $(Tf)(x):=T_0(f(x))$. Then $T$ is power-bounded on $L^p(X;E)$,
$1\le p\le \infty$.

 When $E$ is reflexive,  every $f \in L^1(X;E)$ satisfies
\begin{equation}
\Big\| \frac1N \sum_{n=1}^N \mu(n) T^n f(x) \Big\|_E \to 0 \quad \text{a.e.}
\end{equation}
\end{prop}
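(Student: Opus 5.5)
Power-boundedness is immediate. If $M:=\sup_n\|T_0^n\|$, then $(T^nf)(x)=T_0^n(f(x))$, so $\|(T^nf)(x)\|_E\le M\|f(x)\|_E$ pointwise. Integrating, or taking the essential supremum, gives $\|T^nf\|_p\le M\|f\|_p$ for every $1\le p\le\infty$. Strong measurability of $Tf$ holds because $T_0$ is bounded and maps limits of simple functions to limits of simple functions.

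For the a.e. statement the plan is to work pointwise in $x$, applying Theorem \ref{wap} to $T_0$ itself rather than to $T$. Since $E$ is reflexive and $T_0$ is power-bounded, $T_0$ is weakly almost periodic (Remark 1 after the definition of WAP). Theorem \ref{wap} then gives $\|\frac1N\sum_{n=1}^N\mu(n)T_0^nw\|_E\to0$ for every $w\in E$. Since $f(x)\in E$ is defined for a.e. $x$, this suggests that the convergence should in fact hold at every $x$ where $f(x)$ is defined, not merely a.e. We have
$$
\frac1N\sum_{n=1}^N\mu(n)(T^nf)(x)=\frac1N\sum_{n=1}^N\mu(n)T_0^n\big(f(x)\big).
$$
This identity holds for a.e. $x$, taking one fixed representative of $f$ and defining $T^nf$ through that representative. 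Applying Theorem \ref{wap} with $w=f(x)$ gives convergence to $0$ in $E$ for each such $x$.

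The only points needing care are bookkeeping. First, the identity above must hold simultaneously for all $n$ off one null set. This is clear: $T^n$ is defined pointwise via $T_0^n$ on a representative, so there are no $n$-dependent null sets, and even if there were, a countable union of null sets is null. Second, the claim uses only $f(x)\in E$, so $f\in L^1(X;E)$ suffices and no integrability is actually needed. Reflexivity is used only to make $T_0$ WAP. I do not expect a real obstacle. The main step is recognizing that Theorem \ref{wap} applies to $T_0$ on $E$ fibrewise, which avoids any maximal inequality or Banach-principle argument.
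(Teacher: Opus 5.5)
Your proof is correct and follows the paper's argument. The pointwise bound $\|(T^nf)(x)\|_E\le M\|f(x)\|_E$ gives power-boundedness on every $L^p(X;E)$, and the convergence follows by applying Theorem~\ref{wap} to $T_0$ (WAP on the reflexive space $E$) at the vector $f(x)$ for each $x$. Your remarks on strong measurability of $Tf$ and on the absence of $n$-dependent null sets are correct details that the paper leaves implicit.
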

\begin{proof}
By induction we obtain that $(T^nf)(x)=T_0^n(f(x))$.  Hence  \newline
$\|T^nf(x)\|_E=\|T_0^n(f(x))\|_E \le C\|f(x)\|_E$, $x \in X$.
\smallskip

For $f \in L^p(X;E)$, $1\le p< \infty$, we then have
$$
\|T^n f\|_p^p=\int_X \|T_0^n(f(x))\|_E^p\ dm  \le C^p \int_X \|f(x)\|_E^p\ dm =
C^p \| f\|_p^p,
$$
For $f \in L^\infty(X;E)$ we  have  $\|T^n f\|_\infty \le C\|f\|_\infty$.
\smallskip

When $E$ is reflexive, Theorem \ref{wap}, applied to $T_0$, yields
$$
\Big\| \frac1N \sum_{n=1}^N \mu(n) T^n f(x) \Big\|_E =
\Big\| \frac1N \sum_{n=1}^N \mu(n) T_0^n (f(x)) \Big\|_E \to 0, \quad x\in X.
$$
\end{proof}

\begin{theorem} \label{chacon-0}
Let $S$ be a positive Dunford-Schwartz contraction on $L^1$ of a probability space
$(X,\Sigma,m)$ and $E$ a Banach space. Then there exists a contraction $T$ of $L^1(X;E)$
such that 
$$
T(\sum_{j=1}^J v_j{\bf 1}_{A_j})=\sum_{j=1}^J v_jS{\bf 1}_{A_j}, \qquad 
	v_1,\dots, v_J \in E, \quad A_1,\dots,A_J \in \Sigma,
$$
	and every $f \in L^1(X;E)$ satisfies
\begin{equation} \label{pre-chacon0}
\Big\| \frac1N \sum_{n=1}^N \mu(n) T^n f(x) \Big\|_E \to 0 \quad \text{a.e.}
\end{equation}
\end{theorem}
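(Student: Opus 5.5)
We plan to follow the classical Chacon-type construction of a tensor extension of $S$ to vector-valued functions, and then transfer the scalar a.e. result of Theorem \ref{ds} via a domination argument.

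\emph{Step 1: construction of $T$.} On simple functions $f=\sum_{j=1}^J v_j{\bf 1}_{A_j}$ with the $A_j$ disjoint, define $Tf:=\sum_j v_j S{\bf 1}_{A_j}$; well-definedness follows from linearity of $S$. Positivity of $S$ gives the pointwise bound
$$
\|Tf(x)\|_E\le \sum_j \|v_j\|_E\, S{\bf 1}_{A_j}(x)=S\big(\|f(\cdot)\|_E\big)(x)\quad\text{a.e.},
$$
and iterating, $\|T^nf(x)\|_E\le S^n(\|f\|_E)(x)$. Since $S$ is an $L^1$ contraction, $\|Tf\|_1\le\|f\|_1$, so $T$ extends by density of simple functions to a contraction of $L^1(X;E)$. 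The domination $\|T^nf\|_E\le S^n\|f\|_E$ persists for all $f\in L^1(X;E)$ by passing to a.e. convergent subsequences. Likewise $T$ is a contraction of $L^\infty(X;E)$, since $S$ is Dunford--Schwartz.

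\emph{Step 2: convergence on a dense class.} For $f=\sum_j v_j{\bf 1}_{A_j}$ we have $T^nf=\sum_j v_j S^n{\bf 1}_{A_j}$. Then
$$
\frac1N\sum_{n=1}^N\mu(n)T^nf(x)=\sum_j v_j\,\frac1N\sum_{n=1}^N\mu(n)S^n{\bf 1}_{A_j}(x),
$$
and each scalar average tends to $0$ a.e. by Theorem \ref{ds}. Hence \eqref{pre-chacon0} holds for all simple $f$, which are dense in $L^1(X;E)$.

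\emph{Step 3: Banach principle.} This is the main step. Define the maximal function $M f(x):=\sup_N\big\|\frac1N\sum_{n=1}^N\mu(n)T^nf(x)\big\|_E$. By Step 1,
$$
Mf(x)\le \sup_N \frac1N\sum_{n=1}^N S^n\|f\|_E(x).
$$
The Dunford--Schwartz (Hopf) maximal ergodic inequality gives $m\{\sup_N \frac1N\sum_{n=1}^N S^n g>\lambda\}\le \|g\|_1/\lambda$ for $g\ge 0$ in $L^1$. Now fix $f\in L^1(X;E)$ and choose a simple $g$ with $\|f-g\|_1<\varepsilon$. Applying the maximal inequality to $f-g$ gives
$$
m\Big\{\limsup_N\Big\|\frac1N\sum_{n=1}^N\mu(n)T^nf\Big\|_E>2\lambda\Big\}\le m\{M(f-g)>\lambda\}\le \varepsilon/\lambda.
$$
Letting $\varepsilon\to0$ and then $\lambda\to0$ yields \eqref{pre-chacon0}.

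The expected obstacle is Step 1: making the pointwise domination rigorous for general $f$, and in particular checking that the extension is well defined on strongly measurable functions. This should follow from the a.e. limit argument, since the simple functions used are countably valued limits.
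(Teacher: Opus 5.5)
Your proposal is correct and follows the paper's proof essentially step for step. It uses the same definition of $T$ on finitely valued functions, the same domination $\|T^nf(x)\|_E\le S^n\|f\|_E(x)$ (obtained from positivity and extended via a.e.\ convergent subsequences), and Theorem \ref{ds} applied to each $S^n{\bf 1}_{A_j}$. There are two minor differences. The paper finishes the approximation step with the Dunford--Schwartz pointwise ergodic theorem, integrating the a.e.\ limit of $\frac1N\sum_{n=1}^N S^n\|f-f_k\|_E$, where you use the weak-type maximal inequality. The paper also proves explicitly (its Claim 1) that $T(v\,g)=v\,Sg$ for non-simple $g$; your Step 2 uses this tacitly when writing $T^nf=\sum_j v_jS^n{\bf 1}_{A_j}$, and it follows at once from continuity of $T$ and density.
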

\begin{proof} Let $f$ be finitely valued, $f =\sum_{j=1}^J v_j{\bf 1}_{A_j}$, with
$v_j \in E$ and disjoint $A_j$. Then $f \in L^1(X;E)$, and we define 
$Tf(x):=\sum_{j=1}^J v_jS{\bf 1}_{A_j}(x)$, which is obviously strongly measurable.  Then
$$
\|Tf\|_1=\int \Big\|\sum_{j=1}^J v_jS{\bf 1}_{A_j}(x)\Big\|_E\ dm \le
$$
$$
\sum_{j=1}^J \|v_j\|_E \int |S{\bf 1}_{A_j}(x)|\ dm \le
\sum_{j=1}^J \|v_j\|_E\  m({A_j})=\|f\|_1.
$$
By the above, when $f_k$ are finitely valued in $L^1(X;E)$, with 
$f_k \to f \in L^1(X;E)$ in norm, $(Tf_k)$ is a Cauchy sequence in norm; the limit defines 
$Tf$, and does not depend on the sequence $(f_k)$. This yields 
$\|Tf\|_1 =\lim \|Tf_k\|_1 \le \lim \|f_k\|_1=  \|f\|_1$.
	\smallskip

{\it Claim 1: Let $g \in L^\infty(X,m)$ and $v \in E$. Then $v\,g(x) \in L^1(X;E)$
and $T(vg)(x)=v\,Sg(x)$ a.e.}\newline
For $g$  with finitely many values this is by the definition of $T$.  Let $g_k \to g$
uniformly on $X$ with $g_k$ finitely valued. Then $Sg_k \to Sg$ uniformly, and then
$v\,g_k(x) \to v\,g(x)$ a.e. and in $L^1(X;E)$-norm, so $T(v\,g)=\lim T(v\,g_k)=
\lim v\,Sg_k=v\,Sg$, which proves the claim.
\smallskip

{\it Claim 2: For $f \in L^1(X;E)$ we have $\|T^nf(x)\|_E \le S^n\|f\|_E(x)$ a.e.}\newline
It is enough to prove for $n=1$. For $f =\sum_j v_j {\bf 1}_{A_j}$ with $A_j$ disjoint
we have $\|f(x)\|_E= \|v_j\|$ when $x \in A_j$. Hence
$\|f(x)\|_E =\sum_{j=1}^J \|v_j\|{\bf 1}_{A_j}(x)$. By Claim 1 and positivity of $S$,
$$
\|Tf(x)\|_E  = \Big\|\sum_j v_j S{\bf 1}_{A_j}(x)\Big\|_E  \le
\sum_j \|v_j\|_E\  S{\bf 1}_{A_j}(x) =
$$
$$
S\big(\sum_j \|v_j\|_E\  {\bf 1}_{A_j}\big)(x) = S\|f\|_E(x).
$$
This proves the claim for finitely valued $f$. For general $f \in L^1(X;E)$, let $f_k$ 
be finitely valued with $f_k \to f$ a.e. and in $L^1(X;E)$-norm. Then 
$\|f_k\|_E \to \|f\|_E$ a.e. and in $L^1(m)$-norm, so 
$S\|f_k\|_E \to S\|f\|_E$ in $L^1(m)$; hence for some subsequence, still denoted $f_k$, 
we have $S\|f_k\|_E(x) \to S\|f\|_E(x)$ a.e. Then
$\|Tf(x)\|_E =\lim_k \|Tf_k(x)\|_E \le \lim_k S\|f_k\|_E(x) = S\|f\|_E(x)$ a.e.,
and Claim 2 is proved. 
\medskip

Claim 1 yields $T^n(v\,g)=vS^ng$ a.e. Let $f=\sum_{j=1}^J v_j{\bf 1}_{A_j}$; then
$$
\Big\| \frac1N \sum_{n=1}^N \mu(n) T^n f(x) \Big\|_E=
\Big\| \sum_{j=1}^J v_j\frac1N \sum_{n=1}^N \mu(n) \ S^n{\bf 1}_{A_j}(x) \Big\|_E \le
$$
\begin{equation} \label{simple-f0}
\sum_{j=1}^J \|v_j\|_E\ \Big|\frac1N \sum_{n=1}^N \mu(n) \ S^n{\bf 1}_{A_j}(x)\Big|
\underset{N\to\infty}\longrightarrow 0 \quad \text{a.e.},
\end{equation}
 by Theorem \ref{ds}. For the general case,
 let $f_k$ be finitely valued with $f_k(x) \to f(x)$ a.e. and in $L^1(X;E)$-norm. Then
$$
\limsup_N \Big\| \frac1N \sum_{n=1}^N \mu(n) T^n f(x) \Big\|_E \le
$$
$$
\limsup_N \Big\| \frac1N \sum_{n=1}^N \mu(n) T^n(f-f_k)(x)\Big\|_E +
\limsup_N \Big\| \frac1N \sum_{n=1}^N \mu(n) T^n f_k(x) \Big\|_E.
$$
As $N\to\infty$, the last term converges to 0  a.e. by \eqref{simple-f0}. Claim 2 yields
$$
\Big\| \frac1N \sum_{n=1}^N \mu(n) T^n(f-f_k)(x))\Big\|_E \le
\frac1N \sum_{n=1}^N S^n\|f-f_k\|_E( x) .
$$
By the Dunford-Schwartz pointwise  ergodic theorem \cite[Theorem VIII.6.6]{DS}, the last
 expression converges a.e. as $N \to \infty$ to  an integrable function $g_k$; since $S$ is
positive, $\int g_k dm\ \le \int \|f-f_k\|_E\ dm < \varepsilon$ for large $k$.
Together, we obtain $\big\| \frac1N \sum_{n=1}^N \mu(n) T^n f(x) \big\|_E \to 0$ a.e.
\end{proof}

{\bf Remarks.} 1. The operator $T$ in the  Theorem  is also a contraction of $L^\infty(X;E)$.

2. Note that there is no assumption of reflexivity in Theorem \ref{chacon-0}.
\smallskip

The next theorem is a corollary of Theorem \ref{chacon-0}. However, we present its proof
since it is somewhat simpler. It is a vector-valued version of
the $L^1$-extension of Bourgain-Sarnak's \eqref{a-e} \cite{AKLR1},\cite{CW}.

\begin{theorem} \label{chacon0}
Let $\tau$ be a measure preserving transformation of a probability space $(X,\Sigma,m)$
and $E$ a Banach space. For $f \in L^1(X;E)$ define $Tf=f\circ \tau$. Then $T$ is
a contraction of $L^1(X;E)$, and every $f \in L^1(X;E)$ satisfies
\begin{equation} \label{pre-chacon}
\Big\| \frac1N \sum_{n=1}^N \mu(n) T^n f(x) \Big\|_E=
\Big\| \frac1N \sum_{n=1}^N \mu(n)  f(\tau^n x) \Big\|_E \to 0 \quad \text{a.e.}
\end{equation}
\end{theorem}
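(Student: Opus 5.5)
The plan is to mimic the proof of Theorem \ref{chacon-0}, but to use the scalar Bourgain--Sarnak result \eqref{a-e} (extended to $L^1(m)$ in \cite{AKLR1},\cite{CW}) directly in place of Theorem \ref{ds}. Since $T$ is a composition operator, we have the pointwise identity $T^nf(x)=f(\tau^nx)$. This removes the need for Claims 1 and 2 of that proof.

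\textbf{Step 1: $T$ is a contraction.} For $f\in L^1(X;E)$, the function $f\circ\tau$ is strongly measurable, being an a.e.\ limit of finitely valued functions composed with $\tau$. Since $\tau$ preserves $m$,
$$\|f\circ\tau\|_1=\int\|f(\tau x)\|_E\,dm=\int\|f(x)\|_E\,dm=\|f\|_1 ,$$
so $T$ is an isometry, in particular a contraction.

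\textbf{Step 2: finitely valued $f$.} Let $f=\sum_{j=1}^J v_j{\bf 1}_{A_j}$. Then $\frac1N\sum_{n=1}^N\mu(n)f(\tau^nx)=\sum_j v_j\,\frac1N\sum_{n=1}^N\mu(n){\bf 1}_{A_j}(\tau^nx)$. Its $E$-norm is bounded by $\sum_j\|v_j\|_E\,\bigl|\frac1N\sum_{n=1}^N\mu(n){\bf 1}_{A_j}(\tau^nx)\bigr|$, and each of these terms tends to $0$ a.e.\ by \eqref{a-e}.

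\textbf{Step 3: approximation.} This is the only point needing care, and it is handled by Birkhoff's theorem. Given $f\in L^1(X;E)$, choose finitely valued $f_k$ with $\|f-f_k\|_1\to0$. Pointwise,
$$\Big\|\frac1N\sum_{n=1}^N\mu(n)(f-f_k)(\tau^nx)\Big\|_E\le\frac1N\sum_{n=1}^N\|f-f_k\|_E(\tau^nx).$$
By Birkhoff's pointwise ergodic theorem applied to the scalar function $\|f-f_k\|_E\in L^1(m)$, the right-hand side converges a.e.\ to a function $g_k\ge0$ with $\int g_k\,dm=\|f-f_k\|_1$. Combining this with Step 2 gives
$$\limsup_N\Big\|\frac1N\sum_{n=1}^N\mu(n)f(\tau^nx)\Big\|_E\le g_k(x)\quad\text{a.e.}$$
Since $\int g_k\,dm\to0$, a subsequence of $(g_k)$ tends to $0$ a.e. Hence the $\limsup$ vanishes a.e., which is \eqref{pre-chacon}.
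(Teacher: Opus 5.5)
Your proposal is correct and follows essentially the same route as the paper: scalar a.e.\ convergence for finitely valued $f$ via \cite{CW}/\cite{AKLR1}, followed by approximation in which the error is dominated by the Birkhoff averages of $\|f-f_k\|_E$. The differences are cosmetic. You obtain $\|Tf\|_1=\|f\|_1$ directly from measure preservation rather than through finitely valued functions and density, and you spell out the final step (letting $k\to\infty$ along a subsequence with $g_k\to0$ a.e.), which the paper leaves implicit.
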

\begin{proof} It is easily shown that $f(\tau x)$ is strongly measurable. Let $f$ be
finitely valued, $f =\sum_{j=1}^J v_j{\bf 1}_{A_j}$, with $v_j \in E$ and $A_j$ disjoint. Then
	$\|f(x)\|_E=\sum_{j=1}^J\|v_j\|{\bf 1}_{A_j}(x)$, and
$$
\|Tf\|_1=\int \Big\|\sum_{j=1}^J v_j{\bf 1}_{A_j}(\tau  x)\Big\|_E\ dm \le
$$
$$
\sum_{j=1}^J \|v_j\|_E \int {\bf 1}_{A_j}(\tau  x) dm =
\sum_{j=1}^J \|v_j\|_E\  m({A_j})=\|f\|_1.
$$
By the above, when $f_k$ are finitely valued with $f_k \to f$ in norm, $(Tf_k)$ is a
 Cauchy sequence in norm, and the limit is $Tf$; this yields $\|Tf\|_1 \le \|f\|_1$.

Since $T^nf(x)=f(\tau^nx)$, for $f=\sum_{j=1}^J v_j{\bf 1}_{A_j}$ we obtain
\begin{equation} \label{simple-f}
\Big\| \frac1N \sum_{n=1}^N \mu(n) T^n f(x) \Big\|_E=
\Big\| \frac1N \sum_{n=1}^N \mu(n) \sum_{j=1}^J v_j{\bf 1}_{A_j}(\tau^n x) \Big\|_E \le
$$
$$
\sum_{j=1}^J \|v_j\|_E\ \Big| \frac1N \sum_{n=1}^N \mu(n) {\bf 1}_{A_j}(\tau^n x)\Big|
	\underset{N\to\infty}\longrightarrow  0 \quad \text{a.e.},
\end{equation}
 by \cite{CW} (or by Theorem \ref{ds}). For the general case,
 let $f_k$ be finitely valued with $f_k(x) \to f(x)$ a.e. and in $L^1(X;E)$-norm. Then
$$
\limsup_N \Big\| \frac1N \sum_{n=1}^N \mu(n) T^n f(x) \Big\|_E \le
$$
$$
	\limsup_N \Big\| \frac1N \sum_{n=1}^N \mu(n) T^n(f-f_k)(x))\Big\|_E +
\limsup_N \Big\| \frac1N \sum_{n=1}^N \mu(n) T^n f_k(x) \Big\|_E.
$$
As $N\to\infty$, the last term converges to 0  a.e. by \eqref{simple-f}. Now
$$
	\Big\| \frac1N \sum_{n=1}^N \mu(n) T^n(f-f_k)(x))\Big\|_E \le
\frac1N \sum_{n=1}^N \|f(\tau^n x)-f_k(\tau^n x)\|_E\ ;
$$
By Birkhoff's ergodic theorem, the last expression converges a.e. as
$N \to \infty$ to  an integrable function $g_k$ with
$\int g_k dm=\int \|f-f_k\|_E\ dm < \varepsilon$ for large $k$. Together, we obtain
$ \big\| \frac1N \sum_{n=1}^N \mu(n) T^n f(x) \big\|_E \to 0$ a.e.
\end{proof}

{\bf Remark.} The operator $T$ in Theorem \ref{chacon0} is also a contraction of
$L^\infty(X;E)$.
\medskip

Chacon \cite{Ch} extended the Dunford-Schwartz pointwise ergodic theorem
to contractions $T$ of $L^1(X;E)$ of $E$ reflexive which contract also the
$L^\infty(X;E)$-norm\footnote{As in \cite{DS}, Chacon assumes $m$ to be $\sigma$-finite,
not necessarily a probability.}. He proved that for every $f \in L^p(X;E)$, $1\le p< \infty$,
the averages $\frac1N\sum_{n=1}^N T^nf(x)$ converge strongly in $E$ for a.e. $x \in X$.
For $1<p<\infty$, Proposition \ref{vector-me} yields, by Fatou's lemma, that
$$
\liminf_{N\to \infty} \Big\|\frac1N \sum_{n=1}^N \mu(n) T^hf(x)\Big\|_E = 0 \quad \text{a.e.}
$$
The question is whether, under Chacon's assumptions, every $f \in L^1(X;E)$ satisfies
\eqref{pre-chacon0}.

\begin{theorem} \label{L(H)}
Let $(X,\Sigma,m)$ be a probability space.
Let $\mathcal H$ be a Hilbert space and $T$ a contraction of $L^1(X;\mathcal H)$ which
contracts the $L^\infty(X;\mathcal H)$-norm. Then every $f \in L^\infty(X;\mathcal H)$ satisfies
\begin{equation} \label{chacon-H}
\Big\| \frac1N \sum_{n=1}^N \mu(n) T^n f(x) \Big\|_\mathcal H \to 0 \quad
 \text{for a.e.}\ x\in X.
\end{equation}
\end{theorem}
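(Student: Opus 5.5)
The plan is to reduce to the Hilbert space result, Proposition \ref{H-contraction}, and then pass from norm estimates with a logarithmic rate to almost everywhere convergence along a sparse sequence of indices. As in the earlier proofs, it is enough to treat the complex case: in the real case we complexify $\mathcal H$ and $T$ as in the proof of Proposition \ref{H-contraction}. Write $A_N:=\frac1N\sum_{n=1}^N\mu(n)T^n$.

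\emph{Step 1: $T$ is a contraction of $L^2(X;\mathcal H)$.} By hypothesis $T$ is a contraction of $L^1(X;\mathcal H)$ that also contracts the $L^\infty(X;\mathcal H)$-norm. I would invoke Calder\'on's complex interpolation theorem for Bochner spaces, $[L^1(X;\mathcal H),L^\infty(X;\mathcal H)]_{1/2}=L^2(X;\mathcal H)$ with equal norms; this is the vector-valued Riesz--Thorin theorem. It gives $\|Tf\|_2\le\|f\|_2$ on $L^1\cap L^\infty$, and hence on all of $L^2(X;\mathcal H)$ by density. Since $L^2(X;\mathcal H)$ is a Hilbert space, Proposition \ref{H-contraction} applies and gives, for every $\alpha>0$,
$$\|A_Nf\|_2\le C_\alpha(\log N)^{-\alpha}\|f\|_2 .$$
I expect this interpolation step to be the main technical point. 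If one prefers to avoid abstract interpolation, I would try a direct Riesz--Thorin-type argument on finitely valued functions, using the scalar function $x\mapsto\|f(x)\|$ and the duality $L^2(X;\mathcal H)^*=L^2(X;\mathcal H)$.

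\emph{Step 2: a.e. convergence along a geometric subsequence.} Fix $\rho>1$ and set $N_k=\lfloor\rho^k\rfloor$. Then $\log N_k\sim k\log\rho$. Taking $\alpha=1$ in Step 1, for $f\in L^\infty(X;\mathcal H)\subset L^2(X;\mathcal H)$ we get
$$\sum_k\int\|A_{N_k}f(x)\|_{\mathcal H}^2\,dm\le C\sum_k k^{-2}\|f\|_2^2<\infty .$$
By monotone convergence, $\sum_k\|A_{N_k}f(x)\|^2<\infty$ a.e., and therefore $\|A_{N_k}f(x)\|\to0$ a.e.

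\emph{Step 3: filling the gaps, where $f\in L^\infty$ is used.} Since $T$ contracts the $L^\infty$-norm, $\|T^nf(x)\|\le\|f\|_\infty$ a.e. for every $n$. For $N_k\le N<N_{k+1}$ we have
$$A_Nf=\frac{N_k}{N}A_{N_k}f+\frac1N\sum_{n=N_k+1}^{N}\mu(n)T^nf .$$
Hence
$$\|A_Nf(x)\|\le\|A_{N_k}f(x)\|+\frac{N_{k+1}-N_k}{N_k}\|f\|_\infty ,$$
and the last fraction tends to $\rho-1$. This yields $\limsup_N\|A_Nf(x)\|\le(\rho-1)\|f\|_\infty$ a.e. Finally, let $\rho$ decrease to $1$ along a countable sequence and intersect the corresponding full-measure sets. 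This gives \eqref{chacon-H}.
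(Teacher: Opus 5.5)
Your argument is correct and follows the paper's proof essentially step for step. Both use Riesz--Thorin (your Calder\'on interpolation for Bochner spaces) to make $T$ a contraction of $L^2(X;\mathcal H)$, apply Proposition \ref{H-contraction} with $\alpha=1$, use Beppo Levi along $N_k\approx\rho^k$, fill the gaps with $\|T^nf(x)\|_{\mathcal H}\le\|f\|_\infty$, and let $\rho\downarrow1$.

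There is one minor caveat, which the paper also passes over. In the real case, the complexified operator $\tilde T(f,g)=(Tf,Tg)$ is not evidently a contraction of the $L^1$ and $L^\infty$ spaces with values in the Hilbert complexification of $\mathcal H$; the obvious estimate only gives $\sup_n\|\tilde T^n\|\le\sqrt2$ there. It is therefore cleaner to conclude that $T$ is power-bounded on $L^2(X;\mathcal H)$ and invoke Theorem \ref{haase} instead of Proposition \ref{H-contraction}. Your Steps 2 and 3 are unaffected.
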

\begin{proof} We first note that $L^2(X;\mathcal H)$ is a Hilbert space, with the inner product
$$
\langle f,g\rangle := \int \langle f(x),g(x)\rangle_\mathcal H\ dm(x), \quad
f,g \in L^2(X;\mathcal H).
$$
By the Riesz-Thorin theorem (as in \cite{Ch}) $T$ is a contraction of $L^2(X;\mathcal H)$.
Applying  Proposition \ref{H-contraction} we obtain
$\| \frac1N \sum_{n=1}^N \mu(n) T^n\|_2 \le C_\alpha/(\log N)^\alpha$ for any $\alpha>0$.
We take $\alpha=1$, so
$$
\Big\| \frac1N \sum_{n=1}^N \mu(n) T^n f\Big\|_2 \le \frac{C_1\|f\|_2}{(\log N)}, \quad
f\in L^2(X;\mathcal H).
$$
Fix $f\in L^2(X;\mathcal H)$, and let $\rho>1$. Taking $N_m=[\rho^m]+1$ we obtain
$$
\Big\|\frac1{N_m} \sum_{n=1}^{N_m} \mu(n) T^n f\Big\|_2^2 \le \frac{C\|f\|_2^2}{(m\log \rho)^2}\ ,
$$
which yields the convergence of the series
$$
\sum_{m\ge 1}
 \int \Big\| \frac1{N_m} \sum_{n=1}^{N_m} \mu(n) T^n f(x) \Big\|_\mathcal H^2\ dm=
\sum_{m\ge 1} \Big\| \frac1{N_m} \sum_{n=1}^{N_m} \mu(n) T^n f \Big\|_2^2 < \infty.
$$
By Beppo Levi's theorem, we obtain
$$
 \Big\| \frac1{N_m} \sum_{n=1}^{N_m} \mu(n) T^n f(x) \Big\|_\mathcal H \to 0
\quad \text{a.e.}
$$
We now assume $f \in L^\infty(X;\mathcal H)$. Let $[\rho^m]<N \le [\rho^{m+1}]+1$.
Then $N\ge N_m> \rho^m$, and using $\|T^nf(x)\|_\mathcal H \le \|f\|_\infty$ a.e., we get
$$
\Big\| \frac1{N} \sum_{n=1}^{N} \mu(n) T^n f(x) \Big\|_\mathcal H =
$$
$$
\Big\| \frac1{N} \sum_{n=1}^{N_m} \mu(n) T^n f(x) +
 \frac1{N} \sum_{n=N_m+1}^{N} \mu(n) T^n f(x) \Big\|_\mathcal H \le
$$
$$
\Big\| \frac1{N_m} \sum_{n=1}^{N_m} \mu(n) T^n f(x) \Big\|_\mathcal H +
\frac{(\rho^{m+1}+1 -\rho^m)\|f\|_\infty}{\rho^m}
 \underset{m\to\infty}\longrightarrow (\rho-1)\|f\|_\infty\quad \text{a.e.}
$$
Hence $\limsup_N\big\| \frac1{N} \sum_{n=1}^{N} \mu(n) T^n f(x) \big\|_\mathcal H
\le (\rho-1)\|f\|_\infty$ a.e. Letting $\rho \to 1^+$ we obtain \eqref{chacon-H}.
\end{proof}

{\bf Definition.} A (real or complex) Banach lattice\footnote{We refer to
\cite[Sections 1 and 3]{La} for the definitions and properties.} $\mathcal L$ is called
a {\it Hilbert lattice} if it is a Hilbert space, i.e. if the norm satisfies the
parallelogram equality.

\begin{prop} \label{hilbert-lattice}
Let $\mathcal H$ be a separable real Hilbert space and $(e_n)_{n\ge1}$ an orthonormal
basis. Then $\mathcal H$ is a Hilbert lattice in the partial order defined by
$$
\sum_{n=1}^\infty a_ne_n  \succeq \sum_{n=1}^\infty b_n e_n \quad \text{\rm if and only if}
\quad a_n \ge b_n\  \forall n\ge 1.
$$
With this order $\mathcal H$ is isometric and order isomorphic to the real $\ell_2$.
\end{prop}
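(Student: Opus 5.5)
The plan is to transport everything to coordinates through the orthonormal basis. Define $U:\mathcal H\to \ell_2$ (real) by $Uv=(\langle v,e_n\rangle)_{n\ge1}$. By Parseval's identity and completeness of $(e_n)$, $U$ is a linear isometry of $\mathcal H$ onto $\ell_2$. By the very definition of $\succeq$, $v\succeq w$ holds if and only if $Uv\ge Uw$ coordinatewise in $\ell_2$. So $U$ is an order isomorphism between $(\mathcal H,\succeq)$ and $\ell_2$ with its usual pointwise order.

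It therefore suffices to check that the real $\ell_2$ with the pointwise order is a Banach lattice that is a Hilbert space. Both the Banach lattice property and the parallelogram law pass through an isometric order isomorphism, which settles the last sentence of the statement as well. The steps, in order, are the following.

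First, $\succeq$ is a partial order compatible with the vector structure. If $a\ge b$ then $a+c\ge b+c$, and $\lambda a\ge\lambda b$ for $\lambda\ge0$.

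Second, $\ell_2$ is a vector lattice. For $a=(a_n)\in\ell_2$, both $a\vee b=(\max(a_n,b_n))$ and $|a|=(|a_n|)$ lie in $\ell_2$, since $|\max(a_n,b_n)|\le |a_n|+|b_n|$. These sequences are clearly the supremum and the modulus in the pointwise order.

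Third, the norm is a lattice norm. If $|a|\le|b|$ coordinatewise, then $\sum a_n^2\le\sum b_n^2$, so $\|a\|\le\|b\|$.

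Completeness and the parallelogram equality are the standard facts for $\ell_2$. Pulling everything back by $U$, the lattice operations in $\mathcal H$ are given explicitly by
$$
\Big(\sum a_ne_n\Big)\vee\Big(\sum b_ne_n\Big)=\sum \max(a_n,b_n)e_n, \qquad \Big|\sum a_ne_n\Big|=\sum |a_n|e_n .
$$
With these formulas, $\mathcal H$ is a Hilbert lattice.

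There is no serious obstacle. The only point needing care is that the coordinatewise supremum and modulus again lie in $\ell_2$, so that the series defining them converge in $\mathcal H$. The domination $|\max(a_n,b_n)|\le|a_n|+|b_n|$ handles this. One also relies on the basis being orthonormal, so that $U$ is onto and the expansion $\sum a_ne_n$ is unique, which makes $\succeq$ well defined.
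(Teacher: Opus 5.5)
Your proof is correct: transporting the order through the coordinate isometry $v\mapsto(\langle v,e_n\rangle)_{n\ge1}$ onto the real $\ell_2$ with its pointwise order, then checking that coordinatewise maxima and moduli stay in $\ell_2$ and that $|a|\le|b|$ forces $\|a\|\le\|b\|$, is exactly the routine verification behind this statement. The paper gives no proof of this proposition and treats it as evident, so your argument supplies the details it leaves implicit.
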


{\bf Definition.} A linear operator $T$ on a Banach lattice $\mathcal L$ with partial order
$\preceq$ is called {\it positive} if $v \succeq 0$ implies $Tv \succeq 0$.

 \begin{prop} \label{Lp-lat}
Let $\mathcal L$ be a real Banach lattice and $1\le p < \infty$. Then $L^p(X;\mathcal L)$,
with the partial order $f\underset{p}\succeq g$ if and only if $f(x) \succeq g(x)$
for every $x \in X$, is a Banach lattice.
 \end{prop}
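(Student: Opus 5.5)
We verify in turn that $L^p(X;\mathcal L)$ is an ordered vector space, that it is a vector lattice, and that its norm is a lattice norm; completeness is already known, since $L^p(X;\mathcal L)$ is a Banach space. The order is understood on equivalence classes: $f\underset{p}\succeq g$ means $f(x)\succeq g(x)$ for a.e. $x$. This is well defined, and it is a partial order. Reflexivity and transitivity are clear. For antisymmetry, $f\succeq g\succeq f$ a.e. gives $f=g$ a.e. Compatibility with addition and with multiplication by nonnegative scalars holds pointwise, because $\mathcal L$ is an ordered vector space.

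For the lattice operations, we define for $f,g\in L^p(X;\mathcal L)$
$$
(f\vee g)(x):=f(x)\vee g(x),\qquad (f\wedge g)(x):=f(x)\wedge g(x).
$$
The main point to check is that $f\vee g$ is again strongly measurable and lies in $L^p$.

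\emph{Measurability.} In a Banach lattice the maps $(u,w)\mapsto u\vee w$ and $u\mapsto |u|$ are norm continuous. Indeed, $|u_1\vee w_1-u_2\vee w_2|\preceq |u_1-u_2|+|w_1-w_2|$ (Birkhoff's inequality), and the lattice norm then gives the bound $\|u_1\vee w_1-u_2\vee w_2\|\le\|u_1-u_2\|+\|w_1-w_2\|$. Now take finitely valued $f_k\to f$ and $g_k\to g$ a.e. in norm. Then each $f_k\vee g_k$ is finitely valued, and $f_k\vee g_k\to f\vee g$ a.e. So $f\vee g$ is strongly measurable, and likewise $|f|$.

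\emph{Integrability.} Since $|f\vee g|\preceq |f|+|g|$ pointwise, we get $\|f(x)\vee g(x)\|\le\|f(x)\|+\|g(x)\|$. Hence $f\vee g\in L^p(X;\mathcal L)$.

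\emph{Supremum property.} $f\vee g$ is the supremum in $L^p(X;\mathcal L)$: it dominates $f$ and $g$ a.e., and any $h$ with $h\succeq f$ and $h\succeq g$ a.e. satisfies $h(x)\succeq f(x)\vee g(x)$ a.e. The infimum is handled the same way, using $f\wedge g=-((-f)\vee(-g))$.

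\emph{Lattice norm.} Since $|f|(x)=|f(x)|$, if $|f|\preceq|g|$ a.e. then $\|f(x)\|\le\|g(x)\|$ a.e., because $\mathcal L$ has a lattice norm. Integrating the $p$-th powers gives $\|f\|_p\le\|g\|_p$. Together with completeness of $L^p(X;\mathcal L)$, this shows it is a Banach lattice.

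The only step needing care is measurability of the lattice operations, and it is handled by the continuity estimate above.
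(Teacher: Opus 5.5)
Your proposal is correct and follows essentially the paper's route: define $f\vee g$ pointwise, note $|f|(x)=|f(x)|$, and integrate the pointwise lattice-norm inequality $\|f(x)\|_{\mathcal L}\le\|g(x)\|_{\mathcal L}$ to get $\|f\|_p\le\|g\|_p$. You also supply two points the paper's proof takes for granted: that $f\vee g$ is strongly measurable and lies in $L^p(X;\mathcal L)$, which you get from Birkhoff's inequality, and that the order should be read almost everywhere on equivalence classes.
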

 \begin{proof} The order on $\mathcal L$ yields that for $f,g \in L^p(X;\mathcal L)$
 we have $f\underset{p}\vee g$, which is given by $(f\underset{p}\vee g)(x)=f(x)\vee g(x)$.
Thus $L^p(X;\mathcal L)$ is a vector lattice. It is easy to see that $|f|(x)=|f(x)|$.
 In order to show that $L^p(X;\mathcal L)$ is a Banach lattice, we need to show
that $|f| \underset{p}\succeq |g|$ implies $\|f\|_p \ge \|g\|_p$.
Let $|f| \underset{p}\succeq |g|$  in $L^p(X;\mathcal L)$; then $|f(x)| \succeq |g(x)|$
for every $x$, which yields $\|f(x)\|_\mathcal L \ge\|g(x)\|_\mathcal L$ for every $x \in X$,
 so $\|f\|_p \ge \|g\|_p$.
 \end{proof}

 A linear operator $T$ of $L^p(X;\mathcal L)$ is  positive if $(Tf)(x) \succeq 0$
for every $x \in X$ when  $f(x)\succeq 0$ for every $x \in X$.

\begin{theorem} \label{L1(H-lat)}
Let $(X,\Sigma,m)$ be a probability space, let $\mathcal L$ be a real Hilbert lattice,
and $T$ a positive contraction of $L^1(X;\mathcal L)$ which contracts the
$L^\infty(X;\mathcal L)$-norm. Then every $f \in L^1(X;\mathcal L)$ satisfies
\begin{equation} \label{chacon-lat}
\Big\| \frac1N \sum_{n=1}^N \mu(n) T^n f(x) \Big\|_\mathcal L \to 0 \quad \text{a.e.}
\end{equation}
\end{theorem}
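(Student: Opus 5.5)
The plan is to combine Theorem \ref{L(H)} with a Banach principle argument, exactly as in the scalar case of \cite{CW}. The missing ingredient is a pointwise maximal inequality for the positive operator $T$. Positivity is what lets us obtain one.

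First, by Theorem \ref{L(H)}, \eqref{chacon-lat} holds for every $f \in L^\infty(X;\mathcal L)$. A real Hilbert lattice is in particular a real Hilbert space, and $T$ contracts both $L^1(X;\mathcal L)$ and $L^\infty(X;\mathcal L)$. Also, $L^\infty(X;\mathcal L)$ is dense in $L^1(X;\mathcal L)$. So it suffices to show that the maximal function $\sup_N \big\|\frac1N\sum_{n=1}^N \mu(n)T^nf(x)\big\|_\mathcal L$ is finite a.e. and controlled in measure by $\|f\|_1$; the standard approximation argument from the proofs of Theorems \ref{chacon-0} and \ref{chacon0} then finishes.

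Second, we obtain the pointwise domination from positivity. In the Banach lattice $L^1(X;\mathcal L)$ (Proposition \ref{Lp-lat}), $|f|(x)=|f(x)|$, and positivity of $T$ gives $|T^nf| \preceq T^n|f|$ pointwise. Hence
$$
\Big|\frac1N\sum_{n=1}^N\mu(n)T^nf(x)\Big| \preceq \frac1N\sum_{n=1}^N T^n|f|(x),
$$
and by the lattice-norm monotonicity,
$$
\Big\|\frac1N\sum_{n=1}^N\mu(n)T^nf(x)\Big\|_\mathcal L \le \Big\|\frac1N\sum_{n=1}^N T^n|f|(x)\Big\|_\mathcal L.
$$
Since a Hilbert lattice is reflexive, Chacon's vector-valued Dunford–Schwartz theorem \cite{Ch} applies to $T$. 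It shows that the averages $\frac1N\sum_{n=1}^N T^n g(x)$ converge in $\mathcal L$ a.e. for every $g \in L^1(X;\mathcal L)$, with $g=|f|$ here. In particular the right-hand side above is bounded in $N$ a.e., so the Möbius-weighted maximal function is finite a.e. for every $f\in L^1$.

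Third, we apply the Banach principle. For $f\in L^1$ and $\varepsilon>0$, choose $f_k\in L^\infty(X;\mathcal L)$ with $\|f-f_k\|_1<\varepsilon$. Then
$$
\limsup_N\Big\|\frac1N\sum_{n=1}^N\mu(n)T^nf(x)\Big\|_\mathcal L \le \lim_N\Big\|\frac1N\sum_{n=1}^N T^n|f-f_k|(x)\Big\|_\mathcal L=:\|g_k(x)\|_\mathcal L,
$$
by the first step applied to $f_k$. We need $\int\|g_k\|_\mathcal L\,dm\le\|f-f_k\|_1$. This follows from Fatou's lemma, since $\big\|\frac1N\sum_{n=1}^N T^n h\big\|_1\le\|h\|_1$. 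Letting $k\to\infty$ along a subsequence with $g_k\to0$ a.e. gives \eqref{chacon-lat}.

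The main obstacle is the second step. I expect to have to justify carefully that positivity yields $|Tf|\preceq T|f|$ in this vector-valued lattice setting, and that Chacon's theorem, stated for $\sigma$-finite measures and reflexive $E$, applies here to produce a.e. finiteness of the sup. If Chacon's convergence theorem is awkward to invoke, the fallback is the maximal inequality underlying it, from which the same conclusion follows.
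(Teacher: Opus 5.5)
Your proposal is correct and follows essentially the paper's proof. Positivity gives the pointwise domination $\big\|\frac1N\sum_{n=1}^N\mu(n)T^ng(x)\big\|_{\mathcal L}\le\big\|\frac1N\sum_{n=1}^N T^n|g|(x)\big\|_{\mathcal L}$, Chacon's theorem and Fatou's lemma bound the integral of the $\limsup$ by $\|g\|_1$, and Theorem \ref{L(H)} handles a dense class. The only difference is that the paper first splits $f=f_1+f_2$ via mean ergodicity and approximates $f_2$ by elements of $(I-T)L^\infty(X;\mathcal L)$. You instead approximate $f$ directly by functions in $L^\infty(X;\mathcal L)$, which is legitimate and slightly simpler, since Theorem \ref{L(H)} already covers all of $L^\infty(X;\mathcal L)$.
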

\begin{proof} We first note that since $T$ is also a contraction of $L^2(X;\mathcal L)$,
it is mean ergodic in $L^2(X;\mathcal L)$, and density of $L^2(X;\mathcal L)$ in
$L^1(X;\mathcal L)$ yields that $T$ is mean ergodic in $L^1(X;\mathcal L)$.

By Proposition \ref{Lp-lat}, all $L^p(X;\mathcal L)$, $1 \le p < \infty$, are Banach lattices, 
and $T$ is a positive contraction of each of them.

Let $g \in L^1(X;\mathcal L)$. Then by positivity $\pm T^ng \underset{1}\preceq T^n|g|$, so
	$\pm\mu(n)T^n g \underset{1}\preceq T^n|g|$. This yields
$\pm \Big( \frac1N\sum_{n=1}^N\mu(n)T^ng\Big) \underset{1}\preceq
 \frac1N\sum_{n=1}^N T^n|g|$, so
\begin{equation} \label{dominate}
\Big| \frac1N\sum_{n=1}^N\mu(n)T^ng\Big| \underset{1}\preceq
 \frac1N\sum_{n=1}^N T^n|g|.
\end{equation}
By Chacon's theorem \cite{Ch}, $ \frac1N\sum_{n=1}^N T^n|g|(x)$ converges in $\mathcal L$-norm
for almost every $x \in X$. Hence, \eqref{dominate} and Fatou's Lemma yield
$$
\int_X \limsup_{N\to\infty}\Big\| \frac1N\sum_{n=1}^N\mu(n)T^ng(x) \Big\|_\mathcal L\,dm \le
$$
$$
\int_X \lim_{N\to\infty}\Big\| \frac1N\sum_{n=1}^N T^n|g|(x) \Big\|_{\mathcal L}\,dm \le
\liminf_{N\to\infty}\int_X\Big\| \frac1N\sum_{n=1}^N T^n|g|(x) \Big\|_{\mathcal L}\,dm =
$$
\begin{equation} \label{limsup}
\liminf_{N\to\infty} \Big\| \frac1N\sum_{n=1}^N T^n|g| \Big\|_1\,dm  \le
\|g\|_1.
\end{equation}

Let $f \in L^1(X;\mathcal L)$. By mean ergodicity $f=f_1+f_2$, with $Tf_1=f_1$ and
$f_2 \in\overline{(I-T)L^1(X;\mathcal L)}$. By density of $L^\infty(X;\mathcal L)$,
for $k\ge 1$ there is $h_k \in L^\infty(X;\mathcal L)$ such that $g_k:=f_2-(I-T)h_k$
satisfies $\|g_k\|_1 <\frac1k$. Since $\mathcal L$ is a Hilbert space and $\|T\|_\infty \le 1$,
Theorem \ref{L(H)} yields
$\big\|\frac1N\sum_{n=1}^N \mu(n)T^n(I-T)h_k(x)\big\|_\mathcal L \to 0$ as $N\to\infty$.
Hence
$$
\limsup_{N\to \infty} \Big\|\frac1N\sum_{n=1}^N \mu(n)T^n f(x)\Big\|_\mathcal L \le
$$
$$
\limsup_{N\to \infty} \Big\|\frac1N\sum_{n=1}^N \mu(n)T^n f_1(x)\Big\|_\mathcal L +
\limsup_{N\to \infty} \Big\|\frac1N\sum_{n=1}^N \mu(n)T^n g_k(x)\Big\|_\mathcal L=
$$
$$
\limsup_{N\to \infty} \Big\|\frac1N\sum_{n=1}^N \mu(n)T^n g_k(x)\Big\|_\mathcal L \ ,
$$
since $ \big\|\frac1N\sum_{n=1}^N \mu(n)T^n f_1(x)\big\|_\mathcal L 	=
\|f_1(x)\|_\mathcal L \big|\frac1N\sum_{n=1}^N \mu(n)\big| \to 0$ a.e. as $N\to \infty$.
Using \eqref{limsup} we then have
$$
\int_X \limsup_{N\to \infty} \Big\|\frac1N\sum_{n=1}^N \mu(n)T^n f(x)\Big\|_\mathcal L\, dm\le
$$
$$
\int_X\limsup_{N\to \infty} \Big\|\frac1N\sum_{n=1}^N \mu(n)T^n g_k(x)\Big\|_\mathcal L\, dm
 \le \|g_k\|_1 < \frac1k\ .
$$
Letting $k\to\infty$ we obtain
 $\int_X \limsup_{N} \big\|\frac1N\sum_{n=1}^N \mu(n)T^n f(x)\big\|_\mathcal L\, dm=0$,
which proves the theorem.
\end{proof}

{\bf Remarks.} 1. If the norm in a Banach lattice $\mathcal L$ satisfies
${\|u+v\|^2}=\|u\|^2+\|v\|^2$ whenever $u\wedge v=0$, then $\mathcal L$ is linearly isometric
and order isomorphic to an $L^2$-space (Bohnenblust-Nakano theorem \cite[p. 135]{La}).
Hence $\mathcal L$ is a Hilbert lattice.

2. If $\mathcal L$ is a real Banach lattice such that $\|u+v\|^2 +\|u-v\|^2=2(\|u\|^2+\|v\|^2)$
for every $u,v \succeq 0$, then $\mathcal L$ is a Hilbert lattice \cite[Lemma 5]{El}.
\medskip

\section{Problems}

In this section we present problems that arose in the present research.
\medskip

{\bf Problem 1.} {\it Let $E$ be a Banach space. If every power-bounded $T$
satisfies \eqref{sarnak-op-E}, is $E$ reflexive?}

A candidate for a negative answer would be the order 1 quasi-reflexive space of James,
which is "closest" to reflexive. Another candidate is a space with "few" operators,
where every $T$ is of the form  "scalar plus compact"; there are several spaces
of this type.

All the above spaces have a Schauder  basis, so by \cite{FLW} have power-bounded
operators which are not mean ergodic.

Note that in the James space, every invertible $T$ with 
$\sup_{n\in \mathbb Z} \|T^n\|< \infty$ satisfies \eqref{sarnak-op-E}, by Theorem 
\ref{rosenthal-rep}.
\medskip

{\bf Problem 2.} {\it Let $T$ be a rigid contraction on a separable Banach space $E$.
Does $T$ satisfy \eqref{sarnak-op-E}?}

By Theorem  \ref{rigid}, $h^*_{top}(T)=0$, so if Sarnak's conjecture is true, then the
answer is positive by Proposition \ref{h*}. By Proposition \ref{log}, any $T$ rigid satisfies
\eqref{log-sarnak-op}. By Corollary \ref{rigid-sarnak}, the answer is positive when $E$
does not contain an isomorphic copy of $\ell_1$. A special case of the question is when
$T$ is a rigid contraction on $\ell_1$.
\medskip

{\bf Problem 3.} {\it Is Theorem \ref{dor2} true when $T^*$ has countably many unimodular
eigenvalues?}

Let $R_\sigma(T)$ be the residual spectrum of $T$. by the Hahn-Banach theorem it is the
set of eigenvalues of $T^*$. Thus, the assumption on $T$ means that
$R_\sigma(T)\cap \mathbb T $ is countable. A special case is when $E^*$ is separable;
then $E$ does not contain an isomorphic copy of $\ell_1$, and $T^*$ has at most countably
many unimodular eigenvalues, by a theorem of Jamison \cite{Jami}. Spectral conditions on a
power-bounded $T$ which yield strong convergence of $T^n$ (hence almost periodicity of $T$)
were studied by Arendt and Batty \cite{AB} and by Katznelson and Tzafriri \cite{KT}.

In view of Theorem \ref{rosenthal-rep}, this question is a special case of the question
whether every power-bounded operator on a space $E$ which does not contain an isomorphic
copy of $\ell_1$ satisfies \eqref{sarnak-op-E}.
\medskip

{\bf Problem 4.} {\it Let $\tau$ be a continuous map of a compact metric space $X$ and
$Tf=f\circ\tau$ for $f \in C(X)$. Is $h_{top}^*(T)=h_{top}(\tau)$?}

By Proposition \ref{tau-on-X} we have $h_{top}^*(T) \ge h_{top}(\tau)$. 
By Theorem \ref{metric}, the answer to the problem is positive in the special case that 
$h_{top}(\tau)=0$, thus extending \cite[Theorem A]{GW}.
\smallskip

The Downarowicz-Frej topological entropy $h^{DF}_{top}(T)$ equals $h_{top}(\tau)$, 
so a related question is whether every Markov operator $P$ on $C(X)$ satisfies 
$h^{DF}_{top}(P) \le h^*_{top}(P)$.
\medskip

{\bf Problem 5.} {\it Let $T$ be a quasi-compact contraction. Is $h^*_{top}(T)=0$?}

This question is motivated by Proposition \ref{quasi-compact}. The answer is positive
when $T^n$ is compact for some $n$: $T$ is then WAP by Corollary \ref{compact},
so $h^*_{top}(T)=0$  by  Proposition \ref{wap-zero}.
\medskip

{\bf Problem 6.} {\it Let $T$ be a contraction. If $h_{top}(T)=0$, does
$h^*_{top}(T)=0$?}

It was shown in Proposition \ref{compare-h} that $h_{top}(T) \le h^*_{top}(T)$,
so the question is whether  zero entropy is equivalent in both definitions. A related
(harder) question (not relevant to the Sarnak conjecture) is whether
$h_{top}(T) = h^*_{top}(T)$.

\medskip

{\bf Problem 7.} {\it Let $\tau$ be a continuous map of a compact metric space $X$ and
$Tf=f\circ\tau$ for $f \in C(X)$. If $h_{top}(T)=0$, is $h_{top}(\tau)=0$?}

The answer is positive when $\tau$ is minimal, by Proposition \ref{minimal}. If the
answer is positive wtihout minimality, then the  minimality assumption can be removed
from Corollary \ref{all-zero}.
\medskip

{\bf Problem 8.}  {\it Under the assumptions of Theorem \ref{L(H)}, does \eqref{chacon-H}
hold for every $f \in L^1(X;\mathcal H)$?}

A positive answer will  include Theorem \ref{L1(H-lat)} as Corollary. A related question,
motivated by Chacon's theorem \cite{Ch}, is whether $\mathcal H$ in Theorem \ref{chacon-H}
can be replaced by any reflexive Banach space $E$.
\bigskip

\section{Appendix A: Weakly rigid operators}

In this appendix we discuss weak rigidity of power-bounded operators on a Banach space.
We recall the definition: a bounded linear operator $T$ on a Banach space $E$ is called
{\it (weakly) rigid} if there exists an increasing sequence $(n_j)$ such that
$T^{n_j}v \to v$  (weakly) for every $v \in E$.

Rigid operators were introduced by Furstenberg and Weiss \cite{FW}
for the Koopman operators induced on $L^2$ by probability preserving transformations.
Weak rigidity was introduced for topological dynamical systems by Glasner and Maon \cite{GM}
(under the name "rigidity", since it yields rigidity in $L^2$ of any invariant probability).
Rigid operators were studied by Costakis et al. \cite{CMP}.

\begin{prop} \label{not-rigid}
There exists a weakly rigid  contraction on $C(X)$ of a compact metric space
which is not rigid.
\end{prop}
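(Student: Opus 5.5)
The plan is to take a Koopman operator $Tf=f\circ\tau$ on $C(X)$, with $\tau$ a homeomorphism of a compact metric space, and to translate both properties into statements about $\tau$.

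\emph{Weak rigidity.} By the Riesz representation theorem, a bounded sequence $(g_j)$ in $C(X)$ converges weakly to $g$ if and only if it converges pointwise: integrate against measures and use dominated convergence. Since $\|T^{n_j}f\|_\infty=\|f\|_\infty$, $T$ is weakly rigid along $(n_j)$ exactly when $\tau^{n_j}x\to x$ for every $x\in X$.

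\emph{Rigidity.} $T^{n_j}f\to f$ in norm for every $f$ if and only if $\sup_{x}d(\tau^{n_j}x,x)\to0$. For the "only if" direction, if this supremum stays $\ge\varepsilon$ along a subsequence, pick points $x_j$ realizing it and, by compactness, pass to a limit configuration. Then some fixed $f$, for instance $f=d(\cdot,y)$ with $y$ a suitable limit of the $x_j$, or a finite family of distance functions, fails to converge uniformly. I expect this equivalence to be the only slightly delicate step. If it resists, it is enough to exhibit one explicit $f$ for the concrete example below.

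\emph{The example.} Let $X\subset\mathbb C$ be the union of the unit circle $\mathbb T$ and the finite sets
\[
O_k=\{(1+\tfrac1k)e^{2\pi i m/2^k}: 0\le m<2^k\},\qquad k\ge1 .
\]
This set is closed and bounded, hence compact metric. Define $\tau$ to be the identity on $\mathbb T$ and the rotation by $2\pi/2^k$ on $O_k$. Then $\tau$ is a bijection. It is continuous, and in fact a homeomorphism, because points of $O_k$ approaching $\mathbb T$ are moved by at most $2\pi(1+1/k)/2^k\to0$. So $T$ is an invertible isometry of $C(X)$.

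\emph{$T$ is weakly rigid.} Take $n_j=2^j$. Then $\tau^{n_j}$ is the identity on $\mathbb T$ and on every $O_k$ with $k\le j$. Hence each point is eventually fixed by $\tau^{n_j}$, and $T$ is weakly rigid along $(2^j)$.

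\emph{$T$ is not rigid.} Fix any $n\ge1$ and choose $k$ with $2^{k-2}\le n<2^{k-1}$. On $O_k$ the map $\tau^n$ is the rotation by angle $2\pi n/2^k\in[\pi/2,\pi)$, so it moves every point of $O_k$ by at least $\sqrt2$. Therefore $\sup_x d(\tau^nx,x)\ge\sqrt2$ for all $n$, and no sequence makes $T^{n_j}\to I$ strongly.

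To avoid relying on the general equivalence, one can use the explicit function $f(z)=z$ (or its real part, together with the imaginary part, in the real case). Then $\|T^nf-f\|_\infty=\sup_x|\tau^nx-x|\ge\sqrt2$ for every $n$, which shows directly that $T$ is not rigid.
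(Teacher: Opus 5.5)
Your proof is correct, but it takes a different route from the paper.

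\textbf{The paper's route.} It invokes K\"orner's construction of a \emph{minimal} map $\tau$ with $\tau^{n_j}x\to x$ for every $x$ and $\sup_x d(\tau^n x,x)\ge 1$ for all $n\ge 1$. It then proves non-rigidity by the compactness argument you sketched in your ``Rigidity'' paragraph: choose $z_n$ realizing the supremum, extract $z_{n_i}\to y$, and test against $g=d(\cdot,y)$.

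\textbf{Your route.} You build an explicit, elementary example: the identity on $\mathbb T$, and rotation by $2\pi/2^k$ on the finite sets $O_k$ accumulating on $\mathbb T$. Every point is periodic, so weak rigidity along $(2^j)$ is immediate. Non-rigidity is witnessed by the single function $f(z)=z$, or its real and imaginary parts in the real case, via the bound $|\tau^n x-x|\ge\sqrt2$ on $O_k$ when $2^{k-2}\le n<2^{k-1}$. I checked the closedness of $X$, the continuity of $\tau$ at points of $\mathbb T$, and the angle computation; all are fine.

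\textbf{Comparison.} Your argument is self-contained: it needs neither a deep construction nor the general equivalence between rigidity of $T$ and uniform rigidity of $\tau$. The paper's example is stronger, because $\tau$ is minimal. In your example $\tau$ fixes the whole circle and every other point is periodic, which is what makes pointwise-but-not-uniform rigidity easy. Producing such behaviour in a minimal system is exactly the nontrivial content of K\"orner's result. Invertibility is not an extra gain on your side: any pointwise rigid continuous self-map of a compact space is automatically a homeomorphism. Both arguments use the same standard fact, which you justify correctly via the Riesz representation theorem and dominated convergence: a bounded, pointwise convergent sequence in $C(X)$ converges weakly.
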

\begin{proof}
K\"orner \cite{Ko} constructed a compact metric space $X$ with metric $d$ and a continuous
map $ \tau:\, X \longrightarrow X$ such that the the system $(X,\tau)$ is minimal,
\begin{equation} \label{not-uniform}
\sup_{x \in X} d(\tau^n x,x) \ge 1 \,\text{ for every }\, n\ge 1,
\end{equation}
and there exists an increasing sequence $(n_j)$ such that
\begin{equation} \label{weak}
\tau^{n_j}x \to x\, \text{ for every } x \in X.
\end{equation}
The operator $Tf:=f\circ\tau$ on $C(X)$ is then weakly rigid by \eqref{weak}.

Since $d (x,y)$ is continuous in $(x,y)$, for  $n\ge 1$ $\ d(\tau^nx,x)$
is continuous, so by \eqref{not-uniform} there exists $z_n \in X$
such that $d(\tau^n z_n,z_n)=\sup_{x \in X} d(\tau^n x,x) \ge 1$.
\smallskip

By compactness, for any increasing sequence $(k_\ell)$ there is a subsequence
$(n_i) \subset(k_\ell)$ such that $z_{n_i}$ converges to some $y \in X$.
Put $g(x):=d(x,y)$, so $T^ng(x)=d(\tau^nx,y)$. Then
$$
1 \le d(\tau^{n_i}z_{n_i},z_{n_i}) \le d(\tau^{n_i}z_{n_i},y) +d(z_{n_i},y)=
$$
$$
T^{n_i}g(z_{n_i}) -g(z_{n_i}) + 2d(z_{n_i},y).
$$
Hence $\|T^{n_i}g -g\|_\infty \ge T^{n_i}g(z_{n_i}) -g(z_{n_i}) \ge 1- 2d(z_{n_i},y)\to 1$.
	This shows that $T$ is not rigid on $C(X)$.
\end{proof}
	
If $T$ is a weakly rigid {\it  contraction}, then it is an isometry, since for $v \in E$
and $\phi \in E^*$ we have
$$
|\phi(v)| = \lim_k|\phi(T^{n_k}v)| \le \lim_k \|T^{n_k}v\| \cdot \|\phi\| \le \|Tv\|\cdot\|\phi\|,
$$
which yields $\|v\|= \sup_{\|\phi\| \le 1} |\phi(v)| =\|Tv\|$.

\begin{prop} \label{rigid-isometry}
Let $T$ be a weakly rigid power-bounded operator on $E$. Then $T$ is invertible, and
$T^{-1}$ is also power-bounded.
\end{prop}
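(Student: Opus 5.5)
First I would pass to an equivalent norm in which $T$ is a contraction, namely $\||v\||:=\sup_{n\ge0}\|T^nv\|$. Invertibility of $T$ and power-boundedness of $T^{-1}$ do not depend on the choice of equivalent norm. Weak rigidity also carries over, since weak convergence is unchanged. So I may assume $\|T\|\le 1$ and $T^{n_k}v\to v$ weakly for every $v\in E$. The remark just before the statement then applies, and $T$ is an isometry: for $\phi\in E^*$ we have $|\phi(v)|=\lim_k|\phi(T^{n_k}v)|\le\|Tv\|\,\|\phi\|$, so $\|v\|\le\|Tv\|\le\|v\|$. This already gives injectivity, closed range, and the bound $\|T^n v\|=\|v\|$ for all $n\ge0$.

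The main remaining step is surjectivity. The range $TE$ is closed, because $T$ is an isometry. It is also a linear subspace, hence convex, so by Mazur's theorem it is weakly closed. Given $v\in E$, the vectors $T^{n_k}v=T(T^{n_k-1}v)$ lie in $TE$ (taking $n_k\ge1$) and converge weakly to $v$. Therefore $v\in TE$, and $T$ is onto. This is the step I expect to need the most care. One point is that the rigidity sequence must have $n_k\ge 1$, which is automatic for an increasing sequence after discarding at most one term. The other is that the weak closedness argument really uses the closedness of the range, and in the original norm this comes from the isometry property in the renormed space.

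Finally, $T$ is a surjective isometry in $\||\cdot\||$, so $T^{-1}$ is also an isometry in that norm, and $\||T^{-n}v\||=\||v\||$ for all $n\ge0$. Returning to the original norm, with $M=\sup_n\|T^n\|$ we get $\|v\|\le\||v\||\le M\|v\|$. Hence
$$\|T^{-n}v\|\le\||T^{-n}v\||=\||v\||\le M\|v\|,$$
so $\sup_{n\ge0}\|T^{-n}\|\le M$ and $T^{-1}$ is power-bounded. As a bonus, $T^{-1}$ is weakly rigid along the same sequence: $T^{-n_k}v-v=-T^{-n_k}(T^{n_k}v-v)$, and one checks that this tends to $0$ weakly by applying a functional $\phi$ and using boundedness. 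This last point is not needed for the statement.
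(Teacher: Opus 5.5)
Your proof is correct and follows the paper's route. You renorm by $\||v\||=\sup_{n\ge0}\|T^nv\|$ so that $T$ becomes an isometry, you get surjectivity because $v$ is the weak limit of $T^{n_k}v\in T(E)$ and the range (closed, since $T$ is an isometry) is weakly closed, and you conclude that $T^{-1}$ is an isometry in the new norm. The paper reaches the same Mazur-type conclusion through Krein--Shmulian and strong limits of convex combinations. Your direct appeal to Mazur, with the closedness of $T(E)$ stated explicitly, is a cleaner version of the same argument.

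The one flaw is in your closing aside, which the statement does not need. There,
$\phi\bigl(T^{-n_k}(T^{n_k}v-v)\bigr)=\bigl((T^{-n_k})^*\phi\bigr)(T^{n_k}v-v)$
pairs a sequence that is only weakly null with functionals that change with $k$, so boundedness does not force it to $0$. That argument works only for norm rigidity, so you should either drop the remark or prove it by other means.
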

\begin{proof}
The operator $T$ is a contraction in the equivalent norm $\||v\||=\sup_{n\ge 0}\|T^nv\|$,
so it is an isometry. Hence $T$ is one-to-one.

Fix $v \in E$. Since $T^{n_k}v \to v$ weakly, the sequence $(T^{n_k}v)_{k\ge 1}$ is weakly
sequentially compact. By The Krein-Shmulian theorem \cite[p. 434]{DS}, the closed convex hull
of $(T^{n_k}v)$ is weakly compact, so contains $v$. But then $v$ is a strong limit of finite
convex combinations $\sum c^{(j)}_kT^{n_k}v$, which are all in the image $T(E)$. Hence
$T$ is onto, and therefore invertible. In the equivalent norm $T$ is an isometry, so also
$T^{-1}$ is an isometry, so power-bounded in the original norm.
\end{proof}

Glasner and Maon \cite{GM} called a continuous map $\tau$ of a compact metric space
$X$ into itself {\it uniformly rigid} if for some increasing $(n_k)$ we have
$\tau^{n_k}x \to x$ for every $x \in X$ uniformly. It was proved in \cite[Lemma 2.2]{JKLS}
that $\tau$ is uniformly rigid on $X$ if and only if the operator $Tf:= f\circ\tau$ on
$C(X)$ is rigid.

\begin{prop} \label{not-ME}
There exists a rigid contraction on $C(X)$ of a compact metric space
which is not mean ergodic.
\end{prop}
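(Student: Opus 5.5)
We need a compact metric system $(X,\tau)$ which is uniformly rigid but not uniquely ergodic, and whose minimal components behave badly, so that $Tf=f\circ\tau$ on $C(X)$ is rigid but not mean ergodic. By \cite[Lemma 2.2]{JKLS}, $T$ is rigid on $C(X)$ if and only if $\tau$ is uniformly rigid. So it suffices to find a uniformly rigid homeomorphism $\tau$ of a compact metric space such that the Ces\`aro averages $\frac1N\sum_{n=1}^N f\circ\tau^n$ fail to converge uniformly for some $f\in C(X)$.

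The simplest candidate is an isometry-type example that is rigid but has non-unique invariant measures on a single orbit closure. I would first try a minimal example: take a minimal, uniformly rigid homeomorphism which is not uniquely ergodic. Glasner and Maon \cite{GM} showed that uniformly rigid minimal systems exist which are weakly mixing, and such constructions can be arranged to be non-uniquely ergodic; alternatively, Furstenberg's skew product of Proposition \ref{not-UE} with a Liouville $\alpha$ and $h$ chosen so that $q_k\alpha\to0$ very fast and the cocycle $\sum_{j<q_k}h(x+j\alpha)$ tends to $0$ mod $1$ uniformly along $q_k$ (that is, $h$ is a uniformly small coboundary along $q_k$ in the appropriate sense, while not being a measurable coboundary) would give uniform rigidity of $(x,y)\mapsto(x+\alpha,y+h(x))$. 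Then, as in the Corollary after Proposition \ref{not-UE}, minimality together with the failure of unique ergodicity yields that $T$ is not mean ergodic on $C(X)$ \cite[p. 180]{K}.

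So the steps, in order, are as follows. First, construct $\alpha$ and $h$, via Furstenberg's Fourier-series construction $h(x)=\sum_k a_k(e^{2\pi i q_k x}+\text{c.c.})$ with rapidly decaying coefficients, so that the map is minimal and not uniquely ergodic. Second, verify uniform rigidity along a subsequence of denominators $q_{k}$. Here $\tau^{n}(x,y)=(x+n\alpha,\,y+S_nh(x))$, and one needs $\|n\alpha\|\to0$ and $\sup_x\|S_nh(x)\|_{\mathbb T}\to0$ along $n=q_k$. Writing $h=g\circ R_\alpha-g$ formally, with $g$ given termwise, gives $S_nh=g(x+n\alpha)-g(x)$, and each Fourier mode contributes $O(|\hat g(m)|\,\|nm\alpha\|)$. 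The growth condition chosen for $\alpha$ should make this tend to $0$ uniformly. Third, apply \cite[Lemma 2.2]{JKLS} to get that $T$ is rigid, and the non-unique ergodicity of the minimal system to get that $T$ is not mean ergodic.

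The main obstacle is the second step: reconciling uniform smallness of $S_{q_k}h$ with the non-measurable-coboundary property that Furstenberg's argument needs for non-unique ergodicity. The divergent tail of $\hat g$ must be invisible at the times $q_k$ yet make $g$ non-measurable. I would handle this by choosing the frequencies of $h$ to be exactly the $q_k$, and then choosing $q_{k+1}$ so large relative to $a_k$ and $q_k$ that $\|q_j q_k\alpha\|$ is tiny for $j\le k$. Modes with $j>k$ then contribute at most $\sum_{j>k}|a_j|\,q_k$-type terms, made small by the fast decay of $a_j$. If this bookkeeping fails, the fallback is to cite an existing construction of a minimal uniformly rigid non-uniquely ergodic system from \cite{GM}.
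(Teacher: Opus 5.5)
Your last step is the paper's proof. The paper takes a minimal, uniformly rigid, not uniquely ergodic $\tau$, gets rigidity of $Tf=f\circ\tau$ from \cite[Lemma 2.2]{JKLS}, and gets non-mean-ergodicity from minimality plus non-unique ergodicity \cite[p. 180]{K}. The difference is where $\tau$ comes from. The paper just quotes an unpublished construction of Glasner and Weiss. You want to verify directly that a Furstenberg skew product on $\mathbb T^2$ is uniformly rigid. That route works and would make the proposition self-contained. However, you have Furstenberg's coboundary mechanism backwards, and following your description literally would destroy non-unique ergodicity.

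Two side remarks. First, an ``isometry-type'' example can never work: for equicontinuous $\tau$ the operator $T$ is almost periodic on $C(X)$, hence mean ergodic. Second, your fallback claim that the \cite{GM} constructions ``can be arranged'' to be non-uniquely ergodic is unsupported. The paper has to rely on a private communication for exactly this existence statement.

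Non-unique ergodicity in Furstenberg's example comes from $h$ \emph{being} a measurable, indeed $L^2$, coboundary. Take $g(x)=\sum_j c_j e^{2\pi i q_j x}+\text{c.c.}$ with $\sum_j|c_j|^2<\infty$, and set $h=g\circ R_\alpha-g$. Then $F(x,y)=e^{2\pi i(y-g(x))}$ is a non-constant $\tau$-invariant function in $L^2$ of Lebesgue measure. So Lebesgue measure is invariant but not ergodic.

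What must fail is continuity, not measurability. Choose $\sum_j|c_j|=\infty$ with $(q_j)$ lacunary; Sidon's theorem then makes $g$ unbounded. Hence no $mh$ with $m\neq0$ is a continuous coboundary mod $1$, since that would force $mg$ to agree a.e. with a continuous function. This gives minimality. Making $g$ ``non-measurable,'' as you propose, removes exactly the invariant function that kills unique ergodicity.

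Once this is corrected there is no tension. $S_{q_k}h=g\circ R_{q_k\alpha}-g$ is an absolutely convergent series, and
\[
\sup_x|S_{q_k}h(x)|\le 4\pi\sum_j|c_j|\,\|q_jq_k\alpha\|, \qquad \|q_jq_k\alpha\|\le\min\big(q_j\|q_k\alpha\|,\;q_k\|q_j\alpha\|\big).
\]
Assume, say, $|c_j|\le 1$ and $\|q_j\alpha\|\le e^{-q_j}$ for all $j$; this is the same Liouville condition that makes $h$ smooth. Splitting the sum at $j=k$, as in your bookkeeping, gives $\sup_x|S_{q_k}h(x)|\le 4\pi\big(kq_ke^{-q_k}+2q_ke^{-q_{k+1}}\big)\to0$. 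Together with $\|q_k\alpha\|\to0$, this shows $\tau^{q_k}\to\mathrm{id}$ uniformly, and the rest of your argument goes through.
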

\begin{proof} Glasner and Weiss (private communication) constructed $\tau$ which
is minimal, uniformly rigid, and not uniquely ergodic. Then $Tf=f\circ\tau$ is rigid
on $C(X)$ \cite{JKLS}. Since $\tau$ is minimal and not uniquely ergodic, $T$ is
	not mean ergodic \cite[p. 180]{K} (or \cite[Theorem 2.2]{S}).
\end{proof}

The following is included for completeness; we leave its proof to the reader.
\begin{prop}
Let $T$ be a contraction on a separable complex Banach space $E$ and $(n_i)_{i>0}$
an increasing sequence of natural numbers. If, for some $1 \ne \lambda \in \mathbb T$,
we have $\|T^{n_i}v - \lambda v\| \to 0$ for every $v \in E$, then $T$ is rigid.
\end{prop}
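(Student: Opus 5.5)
Set $S:=\bar\lambda T$, so that $\|S^{n_i}v-v\|\to 0$ for every $v\in E$. The goal is to find a single increasing sequence $(m_k)$ with $T^{m_k}\to I$ strongly.

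The first step is to use the $\lambda$-powers. For fixed $q\ge 1$, a telescoping estimate with $\|T\|\le 1$ gives
$$
\|T^{qn_i}v-\lambda^q v\| \le \sum_{r=0}^{q-1}\|T^{(q-1-r)n_i}\lambda^{r}(T^{n_i}v-\lambda v)\|\le q\,\|T^{n_i}v-\lambda v\|,
$$
after writing $T^{qn_i}-\lambda^q=\sum_r \lambda^{r}T^{(q-1-r)n_i}(T^{n_i}-\lambda)$. Hence $T^{qn_i}v\to\lambda^q v$ for every $v\in E$ and every fixed $q$.

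The argument then splits according to $\lambda$.

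If $\lambda$ is a root of unity, say $\lambda^q=1$, then $T^{qn_i}\to I$ strongly. Since $(qn_i)$ is increasing, $T$ is rigid along this sequence.

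If $\lambda$ is not a root of unity, the powers $\lambda^q$ are dense in $\mathbb T$. Choose $q_k$ with $|\lambda^{q_k}-1|<1/k$. Fix a countable dense set $\{v_j\}\subset E$; this is where separability enters. For each $k$, pick $i_k$ large enough that
$$
\|T^{q_kn_{i_k}}v_j-\lambda^{q_k}v_j\|<1/k \quad \text{for } j\le k,
$$
and also $m_k:=q_kn_{i_k}>m_{k-1}$. Then $\|T^{m_k}v_j-v_j\|<(1+\|v_j\|)/k$ for $j\le k$, so $T^{m_k}v_j\to v_j$ for every $j$.

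Finally, $\|T^{m_k}\|\le 1$ gives a uniform bound, and an $\varepsilon/3$ argument extends the convergence from the dense set $\{v_j\}$ to all $v\in E$. Therefore $T$ is rigid along $(m_k)$.

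The step needing the most care is the diagonal selection in the irrational-rotation case. The single index $i_k$ must work simultaneously for finitely many $v_j$ and must keep $m_k$ increasing. Both are possible because for each fixed $q_k$ the convergence holds for every $v$, so finitely many conditions can be met at once. Note that the hypothesis $\lambda\ne 1$ plays no role in the argument; it only excludes the trivial case.
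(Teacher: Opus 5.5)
Your argument is correct and complete. The paper gives no proof of this proposition; it is explicitly left to the reader. Your route is the natural way to fill it in: the telescoping estimate $\|T^{qn_i}v-\lambda^q v\|\le q\,\|T^{n_i}v-\lambda v\|$, then exponents $q_k$ with $\lambda^{q_k}\to 1$, then a diagonal selection over a countable dense set, finished by an $\varepsilon/3$ argument using $\|T^{m_k}\|\le 1$.

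One minor remark: the case split is not needed. For every $\lambda\in\mathbb T$, root of unity or not, Dirichlet's approximation theorem gives integers $q_k\ge 1$ with $|\lambda^{q_k}-1|\to 0$, so the diagonal argument handles both cases at once. What the split does buy you is that in the root-of-unity case $T$ is rigid along $(qn_i)$ without using separability.
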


\bigskip

\section{Appendix B (by Christophe Cuny): Power-bounded operators on $\mathcal H$}\label{CC}

Power-bounded operators on a Hilbert space are weakly almost periodic, so Theorem
\ref{wap} applies, and an operator norm convergence holds for contractions by
Proposition \ref{H-contraction}.  The following improves both these results.

\begin{theorem}\label{haase}
Let $T$ be power-bounded on a Hilbert space $\mathcal H$. Then
\begin{equation}
\Big\| \frac1N \sum_{n=1}^{N} \mu(n) T^n\Big\|\le \frac{K_\alpha}{(\log N)^\alpha}.
\qquad \forall \alpha> 0.
\end{equation}
\end{theorem}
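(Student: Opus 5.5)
The plan is to combine Davenport's inequality \eqref{davenport} with a polynomial functional calculus estimate for power-bounded operators on Hilbert space. In that estimate the constant grows only logarithmically in the degree. Write $P_N(\lambda):=\frac1N\sum_{n=1}^N\mu(n)\lambda^n$, so that $\frac1N\sum_{n=1}^N\mu(n)T^n=P_N(T)$. We first treat $\mathcal H$ complex; the real case is then handled by the complexification $\tilde{\mathcal H}=\mathcal H\times\mathcal H$, exactly as in the proof of Proposition \ref{H-contraction}. The operator $\tilde T(x,y)=(Tx,Ty)$ is again power-bounded with the same bound.

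The key ingredient is Peller's estimate. There is a constant $C$, depending only on $M:=\sup_n\|T^n\|$, such that for every polynomial $p$ of degree at most $N\ge 2$,
$$\|p(T)\|\le C\,\log N\,\sup_{|\lambda|=1}|p(\lambda)|.$$
I would derive this from Peller's Besov-space functional calculus $\|p(T)\|\le C\|p\|_{B^0_{\infty,1}}$, which holds for power-bounded Hilbert space operators. The argument decomposes $p=\sum_{k=0}^{\lceil\log_2 N\rceil}W_k*p$ using de la Vallée Poussin kernels $W_k$ with Fourier support in $[2^{k-1},2^{k+1}]$. Each block satisfies $\|W_k*p\|_\infty\le 3\|p\|_\infty$, because $\|W_k\|_{L^1}\le 3$. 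For a polynomial $q$ with spectrum in a dyadic block $[2^{k-1},2^{k+1}]$, one has $\|q(T)\|\le C\|q\|_\infty$. Summing over the $O(\log N)$ blocks gives the displayed bound.

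That block estimate is the step that uses the Hilbert space structure, and I expect it to be the main obstacle. If I had to prove it rather than cite Peller, I would first factor $q=\lambda^{2^{k-1}}r$ with $\deg r\le 3\cdot2^{k-1}$. Then I would express $\langle q(T)x,y\rangle$ through Cauchy-type pairings $\sum_j \hat r_j\langle T^{a_j}x,T^{*b_j}y\rangle$. These are estimated by Cauchy--Schwarz together with square-function bounds of the form $\sum_{n\le 2^{k+1}}\|T^nx\|^2\le M^2 2^{k+1}\|x\|^2$, using orthogonality (Plancherel) in the variable $n$.

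Granting the log-degree estimate, the conclusion is immediate. Fix $\alpha>0$ and apply Davenport's inequality \eqref{davenport} with exponent $\alpha+1$:
$$\sup_{|\lambda|=1}|P_N(\lambda)|\le \frac{C_{\alpha+1}}{(\log N)^{\alpha+1}}.$$
Since $\deg P_N=N$, it follows that
$$\Big\|\frac1N\sum_{n=1}^N\mu(n)T^n\Big\|\le C\log N\cdot\frac{C_{\alpha+1}}{(\log N)^{\alpha+1}}=\frac{K_\alpha}{(\log N)^\alpha},$$
with $K_\alpha=C\,C_{\alpha+1}$. For small $N$ the bound holds trivially after enlarging $K_\alpha$, since $\|P_N(T)\|\le M$.
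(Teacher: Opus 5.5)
Your proposal is correct and follows essentially the same route as the paper. Both arguments combine a $\log N$-growth polynomial functional calculus bound for power-bounded Hilbert space operators with Davenport's inequality at exponent $1+\alpha$, and handle the real case by complexification. The only difference is the source of the $\log N$ bound: the paper cites Haase's Theorem 4.6 (a transference form of Peller's theorem, where on Hilbert space the multiplier norm is the sup norm), while you derive it from Peller's Besov calculus via a dyadic de la Vall\'ee Poussin decomposition. Your Cauchy--Schwarz/vector-valued Plancherel sketch of the block estimate is sound. The bound $\|W_k\|_{L^1}\le 3$ should be $6$ if $W_k$ is a difference of two de la Vall\'ee Poussin kernels, which is immaterial.
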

\begin{proof} We first prove the theorem when $\mathcal H$ is over $\mathbb C$.

 For $N \ge 1$, denote $\mu_N=\frac1N(\mu(1),\dots,\mu(N),0,0,\dots) \in\ell_1(\N)$,
  define the polynomial
$\hat\mu_N(z):= \frac1N \sum_{k=1}^N \mu(k)z^k,\ |z|\le 1$, and put
$\hat\mu_N(T):=  \frac1N \sum_{k=1}^N \mu(k)T^k$. The support of $\mu_N$ is a
subset of $[1,N]$, so, with $M:= \sup_n\|T^n\|$, Haase's \cite[Theorem 4.6]{Ha},
with $p=2$ and $X=\mathcal H$, yields
$$
\|\hat\mu_N(T)\| \le c_2(1+\log N)M^2 \sup_{|z|\le1}|\hat\mu_N(z)|,
$$
using the fact (explicit in \cite{Ha} for $C_0$-semigroups) that for Hilbert spaces
the multiplier norm is the sup norm.

For $\alpha >0$ we use Davenport's \eqref{davenport} with $\epsilon=1+\alpha$ and
obtain
$$
\|\hat\mu_N(T)\| \le C_{1+\alpha}c_2(1+\log N)M^2 \frac1{(\log N]^{1+\alpha}},
$$
which yields $\|\hat\mu_N(T)\| \le K_\alpha/(\log N)^\alpha$.
\smallskip

When  $\mathcal H$ is over $\mathbb R$, we use complexification, as in the proof
of Proposition \ref{H-contraction}.
\end{proof}
\bigskip

\section{Appendix C: Extension of non-invertible topological systems   }

Many results in topological dynamics are proved for topological systems with a homeomorphism.  
In this appendix we extend  systems  with non-invertible transformations  (which appear, for
example, in the definition of $h^*_{top}(T)$ for contractions) to become factors of a system 
with homeomorphism. The purpose of the extension is to enable extension of results from 
systems with homeomorphism to the general case.
\smallskip

The main tool will be the "natural extension" of non-invertible probability preserving 
transformations, defined by Rokhlin \cite{R1},\cite[pp. 12-13]{R}, presented in the following.

\begin{theorem} \label{rokhlin}
Let $\tau$ be a non-invertible measure preserving transformation (endomorphism) of a
Lebesgue probability space $(X,\Sigma,m)$. Then there exists a "minimal" {\rm invertible}
measure preserving transformation $\hat\tau$ on a probability space 
$(\hat X,\hat\Sigma,\hat m)$ (automorphism), such that the system $(X,\Sigma,m,\tau)$
 is  factor of the system $(\hat X,\hat\Sigma,\hat m,\hat\tau)$.

Moreover, $\hat\tau$ is ergodic if and only if $\tau$ is.
\end{theorem}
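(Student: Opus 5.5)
The plan is Rokhlin's inverse-limit construction. Let
$$
\hat X:=\{\hat x=(x_n)_{n\in\mathbb Z}\in X^{\mathbb Z}:\ x_{n+1}=\tau x_n\ \ \forall n\in\mathbb Z\}.
$$
Define the left shift $\hat\tau(x_n)_n:=(x_{n+1})_n$. Its inverse is the right shift $(x_n)_n\mapsto(x_{n-1})_n$, which again lands in $\hat X$. So $\hat\tau$ is a bijection of $\hat X$. Let $\pi(\hat x):=x_0$; then $\pi\circ\hat\tau=\tau\circ\pi$. Equip $\hat X$ with the trace $\hat\Sigma$ of the product $\sigma$-algebra, i.e. the $\sigma$-algebra generated by the coordinate maps $\pi_n(\hat x)=x_n$. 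Since $\pi_n\circ\hat\tau=\pi_{n+1}$, both $\hat\tau$ and $\hat\tau^{-1}$ are measurable.

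The measure comes from Kolmogorov extension. For $n\ge 0$, the finite-dimensional distribution on coordinates $(x_{-n},\dots,x_0)$ is the image of $m$ under
$$
y\mapsto(y,\tau y,\dots,\tau^n y).
$$
Invariance $m\circ\tau^{-1}=m$ makes these distributions consistent under dropping the leftmost coordinate, and they are also shift-consistent. I will define $\hat m$ on the cylinders $\pi_{-n}^{-1}(A)$ by $\hat m(\pi_{-n}^{-1}A)=m(A)$. Every cylinder depending on coordinates $-n,\dots,k$ is of this form, because those coordinates are determined by $x_{-n}$.

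The main obstacle is countable additivity of $\hat m$ on the algebra of cylinders. This is where the Lebesgue (standard Borel) assumption is needed. Taking $X$ standard Borel, Kolmogorov's extension theorem for inverse limits of standard Borel probability spaces applies and gives countable additivity. Equivalently, $\hat X$ is a closed subset of a standard Borel space $X^{\mathbb Z}$, so the extension exists. Completing the resulting measure gives a Lebesgue space.

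Next I check invariance of $\hat m$ under $\hat\tau$ on cylinders:
$$
\hat\tau^{-1}\pi_{-n}^{-1}A=\pi_{-n+1}^{-1}A=\pi_{-n}^{-1}\tau^{-1}A,
$$
and this set has measure $m(\tau^{-1}A)=m(A)$. Moreover $\pi_*\hat m=m$, so $\pi$ is a factor map.

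Minimality means $\hat\Sigma=\bigvee_{n}\hat\tau^{n}\pi^{-1}\Sigma$, i.e. the factor generates under the automorphism. This holds by construction, since $\pi_{-n}=\pi\circ\hat\tau^{-n}$. Any other invertible extension $(Y,\sigma)$ with factor map $p$ then maps onto $\hat X$ via $y\mapsto(p\sigma^n y)_n$.

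For ergodicity, if $\hat\tau$ is ergodic then so is the factor $\tau$. Conversely, suppose $\tau$ is ergodic and $\hat f\in L^2(\hat m)$ is $\hat\tau$-invariant. Let $\hat\Sigma_n:=\pi_{-n}^{-1}\Sigma$; these form an increasing filtration generating $\hat\Sigma$, so $E(\hat f\mid\hat\Sigma_n)\to\hat f$ in $L^2$. Write $E(\hat f\mid\hat\Sigma_n)=g_n\circ\pi_{-n}$. Invariance of $\hat f$ together with $\hat\tau^{-1}\hat\Sigma_n=\hat\Sigma_{n-1}\subset\hat\Sigma_n$ yields $g_n\circ\tau=g_{n}$ in $L^2(m)$. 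Ergodicity of $\tau$ then makes each $g_n$ constant, so $\hat f$ is constant.
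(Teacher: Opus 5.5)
Your proposal is correct and is essentially Rokhlin's inverse-limit construction of the natural extension, which is exactly what the paper relies on: it cites Rokhlin and gives no proof of its own. The only points to tighten are these. First, $\hat X$ is in general only a Borel (not closed) subset of $X^{\mathbb Z}$, and both the existence of $\hat m$ and the well-definedness of $\hat m(\pi_{-n}^{-1}A)=m(A)$ when $\tau$ is not onto come from the fact that the Kolmogorov measure gives full measure to each set $\{x_{n+1}=\tau x_n\}$. Second, in the ergodicity step invariance first gives $g_{n-1}=g_n$, and $g\circ\tau=g$ then follows from the tower property together with $\|g\circ\tau\|_2=\|g\|_2$.
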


Rokhlin \cite[p. 28]{R} proved that the Kolmogorov-Sinai entropies satisfy 
$h_m(\tau)=h_{\hat m}(\hat\tau)$.

\begin{prop} \label{extension}
Let $X$ be a compact metric space and $\tau$ a non-invertible continuous map of $X$ into 
itself. Then there exists a homeomorphism $\hat\tau$ of a compact metric space $\hat X$ 
such that for every $\tau$-invariant probability $\nu$ on $X$ there exists a $
\hat\tau$-invariant probability $\hat\nu$ on $\tilde X$ so that 
$(\hat X, \hat\tau, \hat\nu)$ is the natural extension of $(X,\tau,\nu)$.
\end{prop}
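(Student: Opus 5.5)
Use the inverse-limit (solenoid) construction; it does not depend on $\nu$. Set
$$
\hat X:=\{(x_k)_{k\ge 0}\in X^{\mathbb N}: \tau x_{k+1}=x_k \ \ \forall k\ge 0\},
$$
with the product topology, and define $\hat\tau(x_0,x_1,\dots):=(\tau x_0,x_0,x_1,\dots)$.

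\textbf{Step 1: the topological system.} $X^{\mathbb N}$ is compact metric, with metric $\sum_k 2^{-k}d(x_k,y_k)$. The conditions $\tau x_{k+1}=x_k$ are closed, by continuity of $\tau$, so $\hat X$ is compact metric. $\hat\tau$ is continuous and maps $\hat X$ into itself. It is a bijection, with inverse the shift $(x_0,x_1,\dots)\mapsto(x_1,x_2,\dots)$, which is also continuous. So $\hat\tau$ is a homeomorphism.

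\textbf{Step 1a: nonemptiness and surjectivity.} If $\tau$ is not onto, $\hat X$ only sees $X_\infty:=\bigcap_n\tau^n X$. This is a nonempty compact set (decreasing compacts), and $\tau X_\infty=X_\infty$ by a standard compactness argument. So $\hat X\neq\emptyset$, and the projection $\pi(x_k)=x_0$ maps $\hat X$ onto $X_\infty$, using König-type compactness to build backward orbits. In all cases $\pi$ is continuous and satisfies $\pi\circ\hat\tau=\tau\circ\pi$. Every $\tau$-invariant probability is carried by $X_\infty$, since $\nu(\tau^nX)\ge\nu(\tau^{-n}\tau^nX)=1$. This is enough for the measure statement.

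\textbf{Step 2: lifting measures.} Fix an invariant $\nu$. Define $\hat\nu$ through its finite-dimensional marginals: the law of $(x_0,\dots,x_n)$ is the image of $\nu$ under $y\mapsto(\tau^n y,\tau^{n-1}y,\dots,y)$. These marginals are consistent, because $\nu\circ\tau^{-1}=\nu$. Kolmogorov's extension theorem (valid in compact metric spaces) then gives a Borel probability $\hat\nu$ on $X^{\mathbb N}$ supported on $\hat X$. The checks are:
\begin{itemize}
\item $\hat\tau$-invariance: compare cylinder marginals, again using invariance of $\nu$.
\item $\pi_*\hat\nu=\nu$.
\end{itemize}

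\textbf{Step 3: identification as the natural extension.} The $\sigma$-algebras $\hat\tau^{n}\pi^{-1}\mathcal B_X$ increase with $n$. Their union generates the Borel $\sigma$-algebra of $\hat X$, because coordinate $x_n$ equals $\pi\circ\hat\tau^{-n}$. This is exactly Rokhlin's characterization of the minimal invertible extension in Theorem \ref{rokhlin}. I would cite that uniqueness up to isomorphism: the natural extension is the unique invertible extension in which the pulled-back factor $\sigma$-algebra generates under the automorphism. Hence $(\hat X,\hat\tau,\hat\nu)$ is the natural extension of $(X,\tau,\nu)$.

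The main obstacle is Step 3: matching precisely Rokhlin's abstract definition of minimality, i.e. that the generating condition characterizes the natural extension. I would handle it by quoting Rokhlin's construction directly, which is itself realized on this same space of backward orbits. The Lebesgue-space hypothesis holds automatically, since $X$ and $\hat X$ are compact metric with Borel probabilities.
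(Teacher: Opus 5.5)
Your proposal is correct and takes the same route as the paper. The paper's whole argument is that the natural-extension construction (the space of backward orbits) depends only on $(X,\tau)$ and gives a compact metric $\hat X$ on which $\hat\tau$ is a homeomorphism; you supply the details it leaves implicit, namely closedness of $\hat X$, the consistent marginals and Kolmogorov extension, invariance, and the generating property of $\bigvee_{n\ge 0}\hat\tau^{n}\pi^{-1}\mathcal B_X$.

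Your remark that $\pi(\hat X)=\bigcap_n\tau^nX$, which can be a proper subset of $X$ when $\tau$ is not onto, is harmless for this statement, since every invariant $\nu$ is carried by that set. It is, however, a point the paper glosses over when it later uses surjectivity of $\pi$ onto $X$.
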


The Proposition follows from the construction of the natural extension: the system
$(\hat X,\hat\tau)$ is defined only by $(X,\tau)$, and $\hat X$ is compact (metric)
when $X$ is.
\smallskip

\begin{prop} \label{both0}
Let $(X,\tau)$ and $(\hat X,\hat\tau)$ be as in Proposition \ref{extension}.
Then  $h_{top}(\hat\tau) =h_{top}(\tau)$.
\end{prop}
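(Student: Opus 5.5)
We prove the two inequalities $h_{top}(\tau)\le h_{top}(\hat\tau)$ and $h_{top}(\hat\tau)\le h_{top}(\tau)$ separately. We use the explicit construction behind Proposition \ref{extension}. Concretely, $\hat X$ is the inverse-limit space
$$
\hat X=\{(x_k)_{k\le 0}\in X^{\mathbb Z_{\le 0}}:\ \tau x_{k-1}=x_k\ \forall k\le 0\}
$$
(restricted if necessary to $\bigcap_n\tau^nX$, so that the projection is onto). It carries the product topology, and $\hat\tau(x_k)=(\tau x_k)$. The projection $\pi(x_k)=x_0$ is continuous, onto $\bigcap_n \tau^n X$, and satisfies $\pi\circ\hat\tau=\tau\circ\pi$.

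\textbf{First inequality.} If $\tau$ is onto, $\pi$ is a continuous factor map, and \cite{AKM} (proof of Theorem 5) gives $h_{top}(\tau)\le h_{top}(\hat\tau)$. If $\tau$ is not onto, the variational principle \eqref{goodman} avoids any surjectivity issue. Every $\tau$-invariant probability $\nu$ on $X$ lives on the closed invariant set $\bigcap_n\tau^nX$. By Proposition \ref{extension} it lifts to a $\hat\tau$-invariant $\hat\nu$ with $(\hat X,\hat\tau,\hat\nu)$ the natural extension of $(X,\tau,\nu)$. Rokhlin's entropy equality then gives $h_\nu(\tau)=h_{\hat\nu}(\hat\tau)\le h_{top}(\hat\tau)$, and taking the supremum over $\nu$ yields $h_{top}(\tau)\le h_{top}(\hat\tau)$.

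\textbf{Second inequality.} Again we use \eqref{goodman}. Let $\eta$ be any $\hat\tau$-invariant probability on $\hat X$, and set $\nu:=\eta\circ\pi^{-1}$, which is $\tau$-invariant. The key claim is that $(\hat X,\hat\tau,\eta)$ is isomorphic to the natural extension of $(X,\tau,\nu)$; in particular $\eta=\hat\nu$. This holds because the natural extension measure is determined by its finite-dimensional marginals. The marginal of any invariant $\eta$ on the coordinates $(x_{-n},\dots,x_0)$ is the image of $\nu$ under $x\mapsto(x,\tau x,\dots,\tau^n x)$, since $x_{-n+j}=\tau^j x_{-n}$ and $x_{-n}$ has law $\nu$ by $\hat\tau$-invariance ($x_{-n}=\pi(\hat\tau^{-n}\cdot)$). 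Hence $\eta$ coincides with Rokhlin's $\hat\nu$, and $h_\eta(\hat\tau)=h_\nu(\tau)\le h_{top}(\tau)$. Taking the supremum over $\eta$ gives $h_{top}(\hat\tau)\le h_{top}(\tau)$.

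An alternative, purely topological route for the second inequality uses the compatible metric $\hat d(\hat x,\hat y)=\sum_{k\le0}2^{k}d(x_k,y_k)$. For $(n,\varepsilon)$-separated sets, $\hat d$-closeness along $n$ iterates of $\hat\tau$ is controlled by $d$-closeness along roughly $n+m$ iterates of $\tau$, starting from the coordinate $x_{-m}$ with $2^{-m}<\varepsilon/2$. This gives $s_n(\varepsilon,\hat\tau)\le s_{n+m}(\varepsilon',\tau)$, and hence the bound after dividing by $n$. This route serves as a fallback.

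\textbf{Main obstacle.} The step we expect to be hardest is the identification of an arbitrary $\hat\tau$-invariant $\eta$ with the natural extension of its projection. This requires checking carefully that the cylinder marginals determine $\eta$ (Kolmogorov consistency on the inverse limit) and that Rokhlin's entropy equality applies in this compact-metric, Lebesgue setting. If this becomes delicate, we will switch to the separated-set argument above.
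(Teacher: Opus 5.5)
Your proposal is correct and follows the paper's proof: both inequalities come from the variational principle \eqref{goodman}, lifting each $\tau$-invariant $\nu$ to its natural extension $\hat\nu$ for $h_{top}(\tau)\le h_{top}(\hat\tau)$, and, for the reverse inequality, identifying an arbitrary $\hat\tau$-invariant $\eta$ with the natural extension of $\eta\circ\pi^{-1}$ and then applying Rokhlin's equality $h_\nu(\tau)=h_{\hat\nu}(\hat\tau)$. You also fill in details the paper leaves implicit: the cylinder-marginal argument that $\eta=\hat\nu$, and the observation that for non-surjective $\tau$ the map $\pi$ is onto $\bigcap_n\tau^nX$ rather than onto $X$, which the variational principle handles (the paper simply calls $\pi$ a factor map onto $X$); your separated-set fallback is also valid.
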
 
\begin{proof} Under the assumptions of Proposition \ref{extension}, the variational 
principle \eqref{goodman} and the equality $h_\nu(\tau)=h_{\hat\nu}(\hat\tau)$ yield
 $h_{top}(\tau) \le h_{top}(\hat\tau)$.
\smallskip

Let $\pi$ be the factor map of $(\hat X,\hat\tau)$ onto $(X,\tau)$. Let $\eta$
be a $\hat\tau$-invariant probability, and define $\nu(A)=\eta(\pi^{-1}A)$ for Borel 
sets of $X$. Then $(\hat X,\hat\tau,\eta)$ is a natural extension of $(X,\tau,\nu)$,
 so $\eta=\hat\nu$, and $h_\nu(\tau)=h_{\hat\nu}(\hat\tau)$. Then
 $h_{top}(\tau)\ge h_\nu(\tau)=h_{\hat\nu}(\hat\tau)=h_\eta(\hat\tau)$. Since $\eta$
was arbitrary, \eqref{goodman} yields  
$h_{top}(\hat\tau) =\sup_{\{\eta =\eta\hat\tau^{-1}\} } h_\eta(\hat\tau)\le h_{top}(\tau)$.
\end{proof}

\bigskip

\section{Appendix D: Modulation by some $\pm 1$ sequences}

In most of the proofs in Section \ref{modulated}, and also  in Section \ref{CC}, 
the M\"obius function was used only through Davenport's inequality \eqref{davenport}. 
In this appendix we look at modulation by ceratin $\pm 1$ sequences.

\subsection{Modulation by the Liouville function}

The Liouville function is defined for $n\ge 1$ by $\lambda(n):=(-1)^{\Omega(n)}$, where
$\Omega(n)$ is the number of prime factors of $n$, counted with multipilicity. Similarly
to the M\"obius function, it is of great importance in number theory.

Bateman and Chowla \cite[Lemma 1]{BC} deduced from Davenport's \eqref{davenport} 
the estimate
\begin{equation} \label{liouville}
\max_{\theta \in [0,1)}\left|\displaystyle\sum_{k \leq t}\lambda(k)e^{2\pi ik\theta}\right| 
\le C_\varepsilon \frac{t}{\log(1+t)^{\varepsilon}}, \qquad\text{\rm for any}\ \varepsilon >0.
\end{equation}
We can therefore replace the M\"obius function  in Section  \ref{modulated} and in 
Theorem \ref{haase} by the Liouville function.

\subsection{Modulation by the Rudin-Shapiro sequence}

An inequality similar to \eqref{davenport} was proved  for the Rudin-Shapiro sequence 
constructed in \cite{Ru}: a real sequence $(r(n)_{n\ge 1}$ with $|r(n)|=1$, which satisfies
\begin{equation} \label{RS}
\max_{\theta \in [0,1)}\left|\displaystyle\sum_{k=1}^N r(k)e^{2\pi ik\theta}\right|
\le 5N^{1/2}.
\end{equation}
We can therefore replace the M\"obius sequence in most theorems of Section
 \ref{modulated} and in Theorem \ref{haase} by the Rudin-Shapiro sequence.
 We state two examples of particular interest.

\begin{theorem}
Let $T$ be weakly almost periodic on a Banach space $E$. Then
\begin{equation}
\Big\| \frac1N\sum_{n=1}^{N} r(n) T^nv\Big\| \to 0 \qquad \text{\rm for every }\ v\in E.
\end{equation}
\end{theorem}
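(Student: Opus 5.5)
The plan is to repeat the proof of Theorem \ref{wap} verbatim, with the Möbius function replaced by the Rudin–Shapiro sequence. The only arithmetic input used there was Davenport's inequality \eqref{davenport}. Here it is replaced by the much stronger bound \eqref{RS}. The first step is to treat the case where $E$ is a complex Banach space. For such $E$, the weakly almost periodic operator $T$ gives the Jacobs–DeLeeuw–Glicksberg decomposition $E=E_0\oplus E_1$.

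On $E_0$ I would appeal, as in the proof of Theorem \ref{wap}, to the argument in the proof of \cite[Theorem 1.2]{LOT}. That argument shows that any bounded weight sequence $(b_n)$ gives $\big\|\frac1N\sum_{n=1}^N b_nT^nw\big\|\to0$ for every $w\in E_0$. Since $|r(n)|=1$, the sequence $(r(n))$ is bounded, so this applies directly.

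On $E_1$, let $u$ satisfy $Tu=\lambda u$ with $|\lambda|=1$, and write $\lambda=e^{2\pi i\theta}$. Then
$$
\Big\|\frac1N\sum_{n=1}^N r(n)T^nu\Big\|=\Big|\frac1N\sum_{n=1}^N r(n)e^{2\pi in\theta}\Big|\,\|u\|\le \frac{5\|u\|}{N^{1/2}}\to0
$$
by \eqref{RS}. By linearity the convergence holds on the linear span of such eigenvectors. The averaging operators $\frac1N\sum_{n=1}^N r(n)T^n$ have norms bounded by $\sup_n\|T^n\|$, uniformly in $N$. A standard $\varepsilon/3$ argument therefore extends the convergence to the closed span $E_1$. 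Adding the two components gives the result for every $v\in E$.

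For real $E$, I would follow the second half of the proof of Theorem \ref{wap} without change. First renorm so that $T$ is a contraction, then pass to the Taylor-norm complexification $\tilde E$ with $\tilde T(u,v)=(Tu,Tv)$. The proof of Theorem \ref{wap} already shows that $\tilde T$ is WAP. Since the weights $r(n)$ are real, we have $\frac1N\sum r(n)\tilde T^n(u,0)=\big(\frac1N\sum r(n)T^nu,0\big)$, and $E$ embeds isometrically as $E\times\{0\}$. The complex case then yields the real one.

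I do not expect a genuine obstacle, since every step is either quoted from the proof of Theorem \ref{wap} or is a direct substitution of \eqref{RS}. The only point to check is that the $E_0$ part in \cite{LOT} uses nothing about the weights beyond boundedness. That is exactly how it is used in Theorem \ref{wap}, so I would simply cite it.
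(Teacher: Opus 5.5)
Your proposal is correct and matches the paper's intended argument: the paper states this theorem in Appendix D without a separate proof, noting only that Davenport's inequality can be replaced by \eqref{RS} in the proof of Theorem \ref{wap}. Your argument does exactly this, using only boundedness of $(r(n))$ on $E_0$, the bound \eqref{RS} on the eigenvectors spanning $E_1$, and the same Taylor-norm complexification for real $E$.
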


\begin{theorem}
Let $T$ be power-bounded on a Hilbert space. Then
\begin{equation}
\Big\| \frac1N \sum_{n=1}^{N} r(n) T^n\Big\|\le \frac{C\log N}{N^{1/2}}.
\end{equation}
\end{theorem}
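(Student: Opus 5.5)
The plan is to copy the proof of Theorem \ref{haase}, with the Rudin--Shapiro sequence in place of the M\"obius function, and with the estimate \eqref{RS} in place of Davenport's \eqref{davenport}.

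First assume $\mathcal H$ is over $\mathbb C$. For $N\ge 1$ put $\hat r_N(z):=\frac1N\sum_{k=1}^N r(k)z^k$ and $\hat r_N(T):=\frac1N\sum_{k=1}^N r(k)T^k$. The coefficient sequence $\frac1N(r(1),\dots,r(N),0,\dots)\in\ell_1(\mathbb N)$ is supported in $[1,N]$. Let $M:=\sup_n\|T^n\|$. Exactly as in the proof of Theorem \ref{haase}, Haase's \cite[Theorem 4.6]{Ha} with $p=2$ and $X=\mathcal H$ applies; on Hilbert space the multiplier norm is the sup norm. This gives
$$
\|\hat r_N(T)\|\le c_2(1+\log N)M^2\sup_{|z|\le 1}|\hat r_N(z)|.
$$
By the maximum modulus principle the supremum over the closed disk equals the supremum over $\mathbb T$. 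By \eqref{RS} that supremum is at most $5N^{1/2}/N=5N^{-1/2}$. Hence
$$
\|\hat r_N(T)\|\le 5c_2M^2\,\frac{1+\log N}{N^{1/2}}\le \frac{C\log N}{N^{1/2}}\quad (N\ge 3),
$$
and the finitely many small $N$ are absorbed into $C$, using $\|\hat r_N(T)\|\le M$.

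If $\mathcal H$ is real, complexify as in the proof of Proposition \ref{H-contraction}: take $\tilde{\mathcal H}=\mathcal H\times\mathcal H$ with $\|u+iv\|^2=\|u\|^2+\|v\|^2$, and set $\tilde T(u,v)=(Tu,Tv)$. Then $\|\tilde T^n(u,v)\|^2=\|T^nu\|^2+\|T^nv\|^2$, so $\tilde T$ is power-bounded with the same constant $M$. Since $r(k)$ is real, $\hat r_N(\tilde T)(u,0)=(\hat r_N(T)u,0)$. Therefore the bound for $\tilde T$ gives the bound for $T$.

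The only step needing care is the use of Haase's estimate: its logarithmic factor in the support length must be exactly $1+\log N$, and the constant must depend only on $M$. The proof of Theorem \ref{haase} already uses these facts, so no new obstacle arises. The gain over the M\"obius case is that \eqref{RS} decays polynomially rather than logarithmically, so the $\log N$ loss from Haase's theorem survives in the final rate instead of being absorbed.
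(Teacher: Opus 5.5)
Your proposal is correct and matches the paper's intended argument: the theorem is stated without a separate proof, as the outcome of rerunning Cuny's proof of Theorem \ref{haase} (Haase's bound with factor $c_2(1+\log N)M^2$, then the maximum modulus principle) with \eqref{RS} in place of \eqref{davenport}, and using the same complexification when $\mathcal H$ is real. One small caveat: at $N=1$ the right-hand side $C\log N/N^{1/2}$ is zero, so the estimate must be read for $N\ge 2$, and that case cannot be absorbed into $C$; for $N\ge 2$ your absorption argument is fine.
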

\medskip

\subsection{Modulation by random signs.}  \label{random}

Let $(X_n(t))_{n\ge 1}$ be the Rademacher sequence defined on $[0,1]$ (with Lebegue 
measure), which is an iid sequence of random variables taking the values +1 or -1 with 
probability $\frac12$. From Salem and Zygmund \cite[Theorem 4.3.1]{SZ} (see also 
\cite{Hal}), we obtain
\begin{equation} \label{salem}
\max_{\theta\in [0,1]}\left| \sum_{k=1}^N X_k(t) \cos(k\theta) \right| \le C_t\sqrt{N\log N}
\quad \text{for a.e. }t\in[0,1].
\end{equation}
As observed in \cite{SZ}, the result holds also with phase shifts, so we conclude that
\begin{equation} \label{zygmund}
\max_{\theta\in [0,1]}\left| \sum_{k=1}^N X_k(t) \e^{2\pi ik\theta} \right| \le 
	C_t\sqrt{N\log N} \quad \text{for a.e. }t\in[0,1].
\end{equation}
See also \cite{SZ1}. Then, for a.e. $t$, we can replace the M\"obius sequence by 
$(X_n(t))_n$ in most of the results of Section 2. We obtain, for example,

\begin{theorem}
Let $T$ be weakly almost periodic on a Banach space $E$. Then, for a.e. $t \in [0,1]$,
\begin{equation}
\Big\| \frac1N\sum_{n=1}^{N} X_n(t) T^nv\Big\| \to 0 \qquad \text{\rm for every }\ v\in E.
\end{equation}
\end{theorem}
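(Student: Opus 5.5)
The plan is to repeat the proof of Theorem \ref{wap}, using \eqref{zygmund} in place of Davenport's inequality \eqref{davenport}. The point is to fix a single full-measure set of $t$ that works for every WAP operator and every $v$. By \eqref{zygmund}, there is a set $\Omega\subset[0,1]$ of full measure such that for each $t\in\Omega$
$$
\sup_{|\lambda|=1}\Big|\frac1N\sum_{n=1}^N X_n(t)\lambda^n\Big| \le C_t\sqrt{\frac{\log N}{N}} \to 0 .
$$
Each $X_n(t)$ takes values in $\{1,-1\}$, so the sequence $(X_n(t))_n$ is bounded by $1$ for every $t$. These two properties, uniform smallness of the unimodular exponential sums and boundedness, are all that the proof of Theorem \ref{wap} used about $\mu$.

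Fix $t\in\Omega$ and first let $E$ be complex. Consider the Jacobs--DeLeeuw--Glicksberg decomposition $E=E_0\oplus E_1$.

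For $v\in E_0$, we argue as in the proof of \cite[Theorem 1.2]{LOT}. For any bounded weights $(b_n)$, $\|\frac1N\sum b_nT^nv\|\to0$, because the orbit of $v$ is ``flight'' (weakly stable) and the averages are dominated uniformly by $\sup|b_n|$. This applies with $b_n=X_n(t)$.

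For an eigenvector $u$ with $Tu=\lambda u$ and $|\lambda|=1$, we have
$$
\frac1N\sum_{n=1}^N X_n(t)T^nu=\Big(\frac1N\sum_{n=1}^N X_n(t)\lambda^n\Big)u ,
$$
which tends to $0$ by the display above. By linearity this holds on the span of the eigenvectors. The operators $A_N:=\frac1N\sum_{n=1}^N X_n(t)T^n$ satisfy $\|A_N\|\le\sup_n\|T^n\|$ uniformly in $N$. A density argument therefore extends the convergence to the closed span $E_1$, and hence to all of $E$.

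For real $E$, we complexify exactly as in the proof of Theorem \ref{wap}. We renorm so that $T$ is a contraction, pass to $\tilde E$ with the Taylor norm, note that $\tilde T$ is WAP, and apply the complex case to $\tilde T(v,0)$. The weights $X_n(t)$ are real, so nothing changes in this step.

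The main obstacle is the quantifier order. The exceptional null set must not depend on $T$ or on $v$. It does not, because \eqref{zygmund} gives a null set depending only on the Rademacher sequence, with a uniform bound over all $\theta$, hence over all $\lambda\in\mathbb T$ simultaneously. One must also check that the $E_0$ step of \cite{LOT} needs only boundedness of the weights and no arithmetic property of them. I expect that to hold, since for $\mu$ only boundedness was invoked there.
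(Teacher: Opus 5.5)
Your proposal is correct and follows the paper's intended route. The paper gives no separate proof; it only remarks that for a.e.\ $t$ one may replace $\mu$ by $(X_n(t))$ in the proof of Theorem \ref{wap}. That means using \eqref{zygmund}, whose null set does not depend on $T$ or $v$, in place of \eqref{davenport} on $E_1$, boundedness of the weights on $E_0$ as in \cite{LOT}, and the same complexification for real $E$. This is exactly what you carry out, including the correct observations about the quantifier order and about the $E_0$ step needing only bounded weights.
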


\begin{theorem}
Let $T$ be power-bounded on a Hilbert space. Then, for a.e. $t \in [0,1]$,
\begin{equation}
\Big\| \frac1N \sum_{n=1}^{N} X_n(t) T^n\Big\|\le \frac{C_t(\log N)^{3/2}}{N^{1/2}}.
\end{equation}
\end{theorem}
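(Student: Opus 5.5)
The plan is to copy the proof of Theorem \ref{haase}, with Davenport's inequality \eqref{davenport} replaced by the estimate \eqref{zygmund} for random signs.

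First assume $\mathcal H$ is complex and fix $t$ in the full-measure set on which \eqref{zygmund} holds. For such $t$ the constant $C_t$ is finite. Set $M:=\sup_n\|T^n\|$ and define the polynomial
$$
P_N(z):=\frac1N\sum_{k=1}^N X_k(t)z^k, \qquad |z|\le 1,
$$
so that $P_N(T)=\frac1N\sum_{n=1}^N X_n(t)T^n$. The coefficient sequence of $P_N$ is supported in $[1,N]$. Haase's estimate \cite[Theorem 4.6]{Ha}, applied with $p=2$ and $X=\mathcal H$, is used in the same form as in the proof of Theorem \ref{haase}: on Hilbert space the multiplier norm is the sup norm. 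This gives
$$
\|P_N(T)\|\le c_2(1+\log N)M^2\sup_{|z|\le 1}|P_N(z)|.
$$

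By the maximum modulus principle, the supremum over the disk equals the supremum over $|z|=1$. By \eqref{zygmund}, that supremum is at most $C_t\sqrt{N\log N}/N$. Combining the two bounds gives
$$
\|P_N(T)\|\le c_2 M^2 C_t\,\frac{(1+\log N)(\log N)^{1/2}}{N^{1/2}}\le \frac{C'_t(\log N)^{3/2}}{N^{1/2}}
$$
for $N\ge 2$. This is the claimed bound, with a constant that depends on $t$ (and on $M$). Note that the exceptional null set of $t$ comes only from \eqref{zygmund}, so it does not depend on $T$.

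For a real Hilbert space we complexify as in Proposition \ref{H-contraction}, setting $\tilde{\mathcal H}=\mathcal H\times\mathcal H$ and $\tilde T(x,y)=(Tx,Ty)$. Then $\tilde T$ is power-bounded with the same bound $M$. The coefficients $X_n(t)$ are real, so $P_N(\tilde T)$ acts coordinatewise as $P_N(T)$, and the complex case gives the same estimate for $\|P_N(T)\|$.

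The main point needing care is that Haase's bound is deterministic in the polynomial. It applies to each fixed realization $t$, so no probabilistic argument beyond \eqref{zygmund} should be needed.
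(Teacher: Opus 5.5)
Your proof is correct and is essentially the argument the paper intends. The paper gives no separate proof: it obtains this theorem by rerunning the proof of Theorem \ref{haase} (Haase's bound $c_2(1+\log N)M^2\sup_{|z|\le 1}|P_N(z)|$, using that on Hilbert space the multiplier norm is the sup norm) with Davenport's inequality replaced by \eqref{zygmund}, together with complexification in the real case. Your additional remarks are correct: the bound holds for $N\ge 2$, the constant depends on $M$, and the exceptional null set of $t$ comes only from \eqref{zygmund}, so it is independent of $T$.
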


It is known \cite[p. 191]{K} that if $T$ is a contraction on $L^2$ of a probability space
which is not positive, then the pointwise ergodic theorem for $T$ may fail. Randomizing
the signs yields the following, by \cite[Corollary 4.3]{CL05} and Kronecker's lemma.

\begin{theorem} \label{L2-contractions}
For a.e. $t \in [0,1]$, the sequence $(X_n(t))_{n\ge 1}$ has the following property:

For every contraction (not necessarily positive) on $L^2(X,\Sigma,m)$ of a probability space 
and any $f \in L^2(m)$,
$$
\frac1N\sum_{n=1}^N X_n(t)T^nf(x)= 
o\Big(\frac{(\log N)^2(\log\log N)^{1/2+\epsilon}}{N^{1/2}}\Big) \quad m\text{\rm -a.e.}
$$
\end{theorem}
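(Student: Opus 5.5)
The plan is to reduce the statement to an almost everywhere convergence result for random series and then apply Kronecker's lemma pointwise. Put
$$
b_n:=n^{1/2}(\log n)^2(\log\log n)^{1/2+\epsilon},
$$
defined for $n$ large, with $b_n$ set to $1$ for small $n$. This sequence increases to $\infty$. The claimed estimate is equivalent to $\frac{1}{b_N}\sum_{n=1}^N X_n(t)T^nf(x)\to 0$ $m$-a.e., since dividing by $N$ then gives exactly the stated $o(\cdot)$ bound. By Kronecker's lemma, applied for each fixed $x$ to the scalar sequence $X_n(t)T^nf(x)$, it suffices to show that the series
$$
\sum_{n\ge 1}\frac{X_n(t)\,T^nf(x)}{b_n}
$$
converges for $m$-a.e. $x$. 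This must hold for a single full-measure set of $t$ that does not depend on the space, on $T$, or on $f$.

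For this series convergence I would invoke \cite[Corollary 4.3]{CL05}. As I understand it, that result says the following for weights of exactly this size. There is a set $\Omega_0\subset[0,1]$ of full measure such that for $t\in\Omega_0$, every contraction $T$ on every $L^2(m)$ of a probability space, and every $f\in L^2(m)$, the series $\sum_n X_n(t)T^nf/b_n$ converges $m$-a.e.; indeed its maximal partial sums are in $L^2$. The main check is that the weights appearing there match our $b_n$, including the exponent $2$ on $\log n$ and $1/2+\epsilon$ on $\log\log n$. If that corollary is instead stated with a general weight condition, I would verify that condition for $b_n$.

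For transparency, this is how I expect the universality in $T$ to be obtained, and it is the genuinely hard step.

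1. The first ingredient is a uniform polynomial bound in $\theta$ for the random trigonometric polynomials $\sum_{k\in I}X_k(t)e^{2\pi ik\theta}$ over dyadic blocks $I$, in the spirit of \eqref{zygmund}. Only this step uses the randomness, and it is done once for all $t$ outside a null set.

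2. The second ingredient is the unitary dilation $T^n=PU^n$, used as in the proof of Proposition \ref{H-contraction}. Together with the spectral theorem, it converts the uniform bound into operator-norm bounds on the block sums $\sum_{k\in I}X_k(t)T^k$. These bounds do not depend on $T$.

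3. Finally, a dyadic chaining (Rademacher--Menshov type) maximal inequality turns the block operator bounds into $L^2$-control of $\sup_N\big|\sum_{n\le N}X_n(t)T^nf/b_n\big|$. This yields a.e. convergence without any positivity of $T$. The extra factor $\log n$ beyond \eqref{zygmund}, and the $(\log\log n)^{1/2+\epsilon}$, are the price of the chaining and of summability over dyadic scales.

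Since the paper allows citing \cite{CL05}, the proof itself would be short: quote the corollary to get the a.e. convergence of the series for all $t\in\Omega_0$, then apply Kronecker's lemma for each such $t$, each $T$, each $f$ and a.e. $x$.
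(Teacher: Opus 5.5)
Your proposal is correct and takes the same approach as the paper. The paper obtains the statement directly from \cite[Corollary 4.3]{CL05}, which gives a single full-measure set of $t$ on which, for every $L^2$-contraction $T$ and every $f\in L^2(m)$, the series $\sum_n X_n(t)T^nf/b_n$ with $b_n=n^{1/2}(\log n)^2(\log\log n)^{1/2+\epsilon}$ converges $m$-a.e. It then applies Kronecker's lemma pointwise in $x$, exactly as you do. Your sketch of how \cite{CL05} achieves universality in $T$ is background and is not needed for the proof.
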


\begin{cor} \label{random-DS}
For a.e. $t \in [0,1]$, the sequence $(X_n(t))_{n\ge 1}$ has the following property:

For every Dunford-Schwartz contraction on $L^1(m)$ of a probability space $(X,\Sigma,m)$
and any $f \in L^1(m)$,
$$
\frac1N\sum_{n=1}^N X_n(t)T^nf(x)  \to 0 \quad m\text{\rm -a.e.}
$$
\end{cor}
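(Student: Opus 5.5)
The plan is to mimic the proof of Theorem \ref{ds}, replacing the M\"obius weights by the random signs $X_n(t)$. We fix a single set $\Omega_0\subset[0,1]$ of full measure on which Theorem \ref{L2-contractions} holds and on which also $\frac1N\sum_{n=1}^N X_n(t)\to 0$. The latter holds by the strong law of large numbers, or, if one wants, it can be extracted from \eqref{zygmund} with $\theta=0$. Fix $t\in\Omega_0$, a probability space $(X,\Sigma,m)$, a Dunford--Schwartz contraction $T$ on $L^1(m)$ and $f\in L^1(m)$. Note that $\Omega_0$ does not depend on $T$ or $f$, which is what the statement requires.

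\emph{Step 1 (dense class).} By the Riesz--Thorin theorem, $T$ is a contraction of $L^2(m)$. Theorem \ref{L2-contractions} then gives, for every $g\in L^2(m)$, that $\frac1N\sum_{n=1}^N X_n(t)T^ng(x)\to 0$ $m$-a.e. Since $m$ is a probability measure, $L^2(m)$ is dense in $L^1(m)$.

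\emph{Step 2 (maximal inequality / Banach principle).} For arbitrary $h\in L^1(m)$ we have $|X_n(t)|=1$ and $|T^nh|\le |T|^n|h|$, where $|T|$ is the linear modulus of $T$. This modulus is a positive Dunford--Schwartz contraction (\cite{DS}, Chacon--Krengel). Hence
$$
\sup_N\Big|\frac1N\sum_{n=1}^N X_n(t)T^nh(x)\Big|\le \sup_N\frac1N\sum_{n=1}^N |T|^n|h|(x).
$$
By the Dunford--Schwartz pointwise ergodic theorem \cite[Theorem VIII.6.6]{DS}, the right-hand side is finite a.e. Moreover $\frac1N\sum_{n=1}^N |T|^n|h|$ converges a.e. to a function $\bar h$ with $\int\bar h\,dm\le\|h\|_1$.

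\emph{Step 3 (conclusion).} Write $f=g+h$ with $g\in L^2(m)$ and $\|h\|_1<\varepsilon$. By Step 1,
$$
\limsup_N\Big|\frac1N\sum_{n=1}^N X_n(t)T^nf\Big|\le \bar h \quad\text{a.e.},
$$
so the integral of the left-hand side is at most $\varepsilon$. Letting $\varepsilon\to0$ gives a.e. convergence to $0$.

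The main obstacle is Step 2 for non-positive $T$. We need a domination of $|T^nh|$ by a positive operator for which the pointwise ergodic theorem holds, and this is exactly the Chacon--Krengel linear modulus. If one prefers to avoid it, an alternative is to argue as in Theorem \ref{ds}: invoke Baxter--Olsen \cite[Theorem 2.19]{BO}, reducing to invertible measure preserving transformations, for which Theorem \ref{L2-contractions} together with Birkhoff's theorem and the Banach principle suffice. The remaining work is to check that the exceptional $t$-set in \cite{BO}'s hypothesis is uniform, which it is because $\Omega_0$ was chosen independently of the system.
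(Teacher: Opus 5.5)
Your proof is correct and is essentially the paper's argument. You dominate $\bigl|\frac1N\sum_{n=1}^N X_n(t)T^nh\bigr|$ by the Dunford--Schwartz averages of the linear modulus applied to $|h|$, and combine this with Theorem \ref{L2-contractions} on the dense class $L^2(m)$, using the universal full-measure set of $t$, via the Banach principle; you simply carry out the Banach principle explicitly by approximation and Fatou's lemma. The extra condition $\frac1N\sum_{n=1}^N X_n(t)\to 0$ in $\Omega_0$ and the Baxter--Olsen alternative are not needed, and the latter would still require a separate argument identifying the limit as $0$.
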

\begin{proof} Let $\mathbf{T}$ be the linear modulus of $T$ \cite[p. 159]{K}, which is
a positive Dunford-Schwartz contraction. Then for any $f \in L^1(m)$ the Dunford-Schwatz
ergodic theorem yields
$$
\sup_{N\ge 1}\Big|\frac1N \sum_{n=1}^N X_n(t) T^nf\Big| \le
\sup_{N\ge 1}\frac1N \sum_{n=1}^N \mathbf{T}^n|f| < \infty\quad m\text{\rm-a.e.}
$$
This and Theorem \ref{L2-contractions} yield the result, by the Banach principle.
\end{proof}

{\bf Remark.} Corollary \ref{random-DS} holds if $(X_n)$ is any centered independent
 sequence (not necessarily iid) with $\sup_{n\ge 1} \|X_n\|_\infty < \infty$, since
\cite[Corollary 4.3]{CL05} applies to it.

\medskip

\subsection{Comments}
In subsection \ref{orbital} we defined the topological entropy of  bounded sequences
$(a_n)_{n\ge 0}$. Note that the entropy  of $(\mu(n))$ does not appear in Sarnak's
 conjecture, nor in the limit theorems in Section \ref{modulated} and in Appendix B; 
they all depend on Davenport's estimate, as do the limit theorems in this section.

The independence on the topological entropy of the sequence is shown  by the fact that
 the topological entropy of the M\"obius sequence is positive \cite[Section 6]{AKLR1}
(already noted by Sarnak \cite{sarnak},\cite{sarnak1}). The topological
 entropy of the Rudin-Shapiro sequence is zero, since   its dynamical system is
uniquely ergodic with spectral multiplicity 2 (e.g. \cite{Q}), which implies zero entropy
by \cite[Corollaries 14.4]{R} (see also \cite[p. 71, Corollary]{P}).
\bigskip

\section*{Acknowledgements}

The work of the first author was  supported   by an  ARC Grant.
Part of this work was carried out during  visits of the second author to the University 
of Rouen - Normandie, to whom the second author is grateful for its
hospitality and support. The second author is also grateful to Jer\^ome Dedecker
for his warm hospitality in Paris.

The authors thank Eli Glasner for fruitful discussions. The second author thanks
Yuri Tomilov for his comments, which led to Proposition \ref{quasi-compact}, and
Guy Cohen for discussions concerning subsection \ref{random}.

\end{document}